 %&latex
\documentclass[10pt]{article}
\usepackage{delarray} % LF: I have added this package.
\usepackage{amscd}
\usepackage{amssymb}
\usepackage{amsmath}
\usepackage{amsthm,amsxtra}
\usepackage{latexsym,amsmath,mathrsfs}
\usepackage[bookmarks,colorlinks]{hyperref}
\usepackage[all]{xy}
\usepackage{amsfonts}
\usepackage{color}
\usepackage{fancybox}
\usepackage{colordvi}
\usepackage{multicol}
\usepackage{textcomp}
\usepackage{stmaryrd}
\usepackage[dvips]{graphicx}
\usepackage{enumerate,xspace}
\usepackage{geometry}
\usepackage{tocbibind}
% \usepackage[frenchb]{babel}
% \frenchbsetup{ItemLabels=\textbullet}
% \usepackage{tikz}
% \usetikzlibrary{arrows,calc,matrix}
% \usepackage{fontspec}
% \usepackage{unicode-math}
% \usepackage[babel]{microtype}
%\usepackage[pdfencoding=unicode]{hyperref}
% \usepackage{lualatex-math}
% \textwidth 13cm \textheight 20cm

\renewcommand{\mod}{\hskip 6pt \mathrm{mod} \hskip 3pt}

\def\BZ{\mathbb{Z}}
\def\BQ{\mathbb{Q}}
\def\BR{\mathbb{R}}
\def\BA{\mathbb{A}}

\def\GL{\mathrm{GL}} \def\res{\mathrm{res}}

\def\fin{\mathrm{fin}} \def\sing{\mathrm{sing}}
\def\ord{\mathrm{ord}} \def\Sel{\mathrm{Sel}}
 
\def\ord{\mathrm{ord}} \def\Gal{\mathrm{Gal}}

    \theoremstyle{plain}

    \newtheorem{thm}{Theorem}[section] \newtheorem{cor}[thm]{Corollary}
    \newtheorem{lem}[thm]{Lemma} 
    \newtheorem{prop}[thm]{Proposition}
    \newtheorem {conj}[thm]{Conjecture}

    \theoremstyle{definition}

    \newtheorem{defn}[thm]{Definition}

    \theoremstyle{remark}
    \newtheorem {rem}[thm]{Remark}

    %accented words

    \numberwithin{equation}{section}

\newcommand{\wvec}[4]{{\scriptsize{\big ( \!\!
\begin{array}{cc} #1 \!\!\! & \!\!\! #2 \\ #3 \!\!\! & \!\!\! #4 \end{array} \!\! \big ) }}}

\tolerance 400 \pretolerance 200

\begin{document}

\title{Iwasawa Theory of Hilbert modular forms for anticyclotomic extensions}
\author{Bingyong Xie \footnote{This paper is supported by
the National Natural Science Foundation of China (grant 11671137),
and supported in part by Science and Technology Commission of
Shanghai Municipality (no. 18dz2271000).}
 \\ \small Department of Mathematics, East China Normal University, Shanghai, China \\ \small byxie@math.ecnu.edu.cn} \date{}
\maketitle

\begin{abstract} Following Bertolini and Darmon's method, with ``Ihara's lemma'' among
other conditions Longo and Wang proved one divisibility of Iwasawa
main conjecture for Hilbert modular forms of weight $2$ and general
low even parallel weight in the anticyclotomic setting respectively.
In this paper, we remove the ``Ihara's lemma'' condition in their
results.
\end{abstract}

Key words: Selmer groups, $p$-adic $L$-functions, Iwasawa main
conjecture, anticyclotomic extensions.

MSC classification code: 11R23

\section*{Introduction}

Iwasawa theory studies the mysterious relation between pure
arithmetic objects and special values of complex $L$-functions. Its
precise statement is usually called ``main conjecture'' that
provides an equality between a quality measuring Selmer groups and a
$p$-adic $L$-function (interpolating the special values of a complex
$L$-function). Its proof is usually divided into two parts, one part
proving one divisibility by Ribet's method, the other proving the
converse divisibility by Euler systems.

In \cite{BD05} Bertolini and Darmon proved one divisibility of the
Iwasawa main conjecture for elliptic curve over $\BQ$ in the
anticyclotomic setting. Note that Bertolini and Darmon assumed a
$p$-isolated condition among other technical conditions. The
$p$-isolated condition was removed by Pollack and Weston \cite{PW}.
In \cite{CH15} Chida and Hsieh generalized this one divisibility to
low even weight elliptic modular forms. Their results were
generalized to the setting of Hilbert modular forms by Longo
\cite{Longo} for parallel weight $2$, and by Wang \cite{Wang} for
general low even parallel weights. There are other generalizations
obtained by Fouquet \cite{Fou} and Nekovar \cite{Nek}.

Their approach relies on a version of Ihara's Lemma. In the case of
elliptic modular forms, the needed Ihara's Lemma is Theorem 12 in
\cite{DT}. In the totally real case, \cite[Theorem 12]{DT} is
partially generalized by Jarvis \cite{Jav}. It seems that in the
unpublished paper \cite{Cheng} Ihara's Lemma was proved under the
conditions that the base totally real number field $F$ is
sufficiently small, i.e. $[F:\BQ]<p$, and that the level of the
Hilbert modular form in question is sufficiently large.  In their
recent preprint \cite{ManSho} Manning and Shotton proved Ihara's
lemma under the hypothesis that the image of $\bar{\rho}_f$ (a
modulo $p$ representation defined in our text) contains a subgroup
isomorphic to $\mathrm{SL}_2(\mathbb{F}_p)$. Thus under this strong
hypothesis Longo's and Wang's results are unconditional. In Section
\ref{sec:example} we will provide examples that do not satisfy
Manning and Shotton's hypothesis.

In this paper we remove the condition of Ihara's Lemma, and thus
obtain an unconditional result for all totally real number fields.
We need to persist technical conditions in \cite{Longo,Wang} other
than Ihara's Lemma. Instead of proving Ihara's Lemma, we take an
approach of avoiding it.

Let $F$ be a totally real number field, $\mathfrak{p}$ a place of
$F$ above $p$. Let $K$ be a totally imaginary quadratic extension of
$F$.  We form the anticyclotomic
$\BZ_p^{[F_\mathfrak{p}:\BQ_p]}$-extension $K_\infty$ of $K$. Put
$\Gamma=\mathrm{Gal}(K_\infty/K)$.

Let $f$ be a new Hilbert cusp form of parallel weight $k\geq 2$. Let
us write the level $\mathfrak{n}$ of $f$ in the form
$\mathfrak{n}=\mathfrak{n}^+\mathfrak{n}^-$, where $\mathfrak{n}^+$
is only divisible by prime ideals that split in $K$, and
$\mathfrak{n}^-$ is only divided by prime ideals that do not split
in $K$. We assume that $\mathfrak{n}^-$ is the product of different
prime ideals whose cardinal number has the same parity as $[F:\BQ]$.
This condition ensures that $f$ comes from a modular form on a
definite quaternion algebra with discriminant $\mathfrak{n}^-$. We
also assume $p\nmid \mathfrak{n}D_{K/F}$ and $f$ is ordinary at $p$.
Namely one of the two Hecke eigenvalues of $f$ at each place of $F$
above $p$ is a $p$-adic unit.
% We assume that $f$ comes from a modular form $\mathbf{f}$ on a totally
% definite quaternion algebra $B$ via the Jacquet-Langlands correspondence.

Let $\rho_f:G_F\rightarrow \mathrm{GL}_2(E_f)$ be the $p$-adic
Galois representation attached to $f$ (see \cite{Wiles, Tay} among
other references), where $E_f$ is the defining field of $\rho_f$.
Then $\det \rho_f= \epsilon^{k-1}$, where $\epsilon$ is the $p$-adic
cyclotomic character of $G_F=\mathrm{Gal}(\overline{F}/F)$. We
consider the self-dual twist of $\rho_f$, namely $\rho_f^*= \rho_f
\otimes \epsilon^{\frac{2-k}{2}}$. Let $V_f$ be the underlying
representation space for $\rho_f^*$. Fix a Galois stable lattice
$T_f$ of $V_f$, and put $A_f=V_f/T_f$.

Let $\mathrm{Sel}(K_\infty, A_f)$ be the minimal Selmer group of
$A_f$. Put $\Lambda=\mathcal{O}_f[[\Gamma]]$, where $\mathcal{O}_f$
is the ring of integers in $E_f$. Then $\mathrm{Sel}(K_\infty, A_f)$ and its Pontryagin dual
$\mathrm{Sel} (K_\infty, A_f)^\vee $ are $\Lambda$-modules.

On the other hand, one can attach to $f$ an anticyclotomic $p$-adic
$L$-function $L_p(K_\infty, f)\in\Lambda$ that interpolates the
special values $L(f/K,\chi,k/2)$ of the $L$-function of $f$ (where
$\chi$ runs over anticyclotomic characters).

\begin{conj} $($Iwasawa main conjecture$)$.  $\mathrm{Sel}(K_\infty, A_f)$ is a cofinitely
generated cotorsion $\Lambda$-module, and its characteristic ideal
$\mathrm{char}_\Lambda\mathrm{Sel} (K_\infty, A_f)^\vee \in \Lambda$
satisfies  $$ \mathrm{char}_\Lambda\mathrm{Sel} (K_\infty, A_f)^\vee
= (L_p(K_\infty, f)). $$
\end{conj}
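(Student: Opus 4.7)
The plan is to establish both divisibilities in the stated equality of ideals: the upper divisibility $\mathrm{char}_\Lambda\,\mathrm{Sel}(K_\infty,A_f)^\vee \mid L_p(K_\infty,f)$ via a Ribet-style congruence argument on the analytic side, and the lower divisibility $(L_p(K_\infty,f)) \subseteq \mathrm{char}_\Lambda\,\mathrm{Sel}(K_\infty,A_f)^\vee$ via an Euler system of Heegner-type classes on the arithmetic side. Before attacking either divisibility, I would first establish the $\Lambda$-cotorsion of $\mathrm{Sel}(K_\infty,A_f)$: by the interpolation property, the specialization of $L_p(K_\infty,f)$ at some finite-order character $\chi$ of $\Gamma$ is a nonzero element of $\mathcal{O}_f$, and a standard control theorem then shows that the $\chi$-specialization of the dual Selmer group is finite, forcing $\mathrm{Sel}(K_\infty,A_f)^\vee$ to be a finitely generated torsion $\Lambda$-module.

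For the upper divisibility I would carry out the Bertolini--Darmon program as developed for Hilbert modular forms in \cite{Longo, Wang}, routed around Ihara's lemma as promised in the abstract. Concretely: for each admissible set $\Sigma$ of $\mathfrak{n}^-$-admissible primes $\ell$, build a level-raised form $f_\Sigma$ on the definite quaternion algebra of discriminant $\mathfrak{n}^-\prod_{\ell\in\Sigma}\ell$ congruent to $f$ modulo $\varpi^n$; use Jacquet--Langlands to express $L_p(K_\infty,f)$ in terms of theta elements $\Theta_\infty(f_\Sigma)$ on the definite quaternionic side; and bound the finite-level Selmer group $\mathrm{Sel}_\Sigma(K,A_f[\varpi^n])^\vee$ by the order of the relevant theta element via a global duality argument. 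The classical approach invokes Ihara's lemma to control Hecke-module maps across level raising; this paper's contribution is to substitute a direct analysis of those maps, while the arithmetic core---Poitou--Tate duality combined with Cebotarev density to produce sufficient admissible primes---is unchanged. Taking inverse limits over $n$ and $\Sigma$ gives the containment.

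For the lower divisibility I would deploy an anticyclotomic Euler system of big Heegner-type classes $\kappa_\infty(n) \in H^1(K(\mathfrak{n})_\infty, T_f)$ constructed from CM points on Shimura curves attached to the indefinite quaternion algebra of discriminant $\mathfrak{n}^-$, generalizing Howard's construction in the elliptic setting and its Hilbert analogues by Longo--Vigni and \cite{Fou}. I would verify the norm compatibilities across the anticyclotomic tower and the local ramification properties at Kolyvagin primes, and then prove an explicit reciprocity law identifying $\kappa_\infty(1)$ with a unit multiple of $L_p(K_\infty,f)$---the $p$-adic avatar of a Waldspurger/Gross--Zagier formula for $f/K$. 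The Mazur--Rubin Kolyvagin machinery applied to these classes then yields the reverse containment.

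The main obstacle lies squarely on the Euler system side: constructing Heegner-type classes with the required integrality and norm relations for parallel weight $k \geq 2$ over the \emph{full} anticyclotomic $\mathbb{Z}_p^{[F_\mathfrak{p}:\mathbb{Q}_p]}$-extension, and establishing a sufficiently precise explicit reciprocity law, are both significantly harder than the congruence-side analysis and are not addressed by the methods of the present paper. When $[F_\mathfrak{p}:\mathbb{Q}_p] > 1$ one must manipulate deformations of CM points through a higher-dimensional tower, and the mod-$p$ multiplicity input required to pin down the bottom class is itself nontrivial; these are precisely the ingredients one would need to add on top of the Ihara-free argument of this paper in order to upgrade its one-sided divisibility to the full main conjecture as stated.
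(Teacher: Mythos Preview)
The statement you are attempting to prove is labeled a \emph{Conjecture} in the paper and is not proved there; the paper's main result is Theorem~\ref{thm:main}, which establishes only the single divisibility $\mathrm{char}_\Lambda \mathrm{Sel}(K_\infty,A_f)^\vee \mid (L_p(K_\infty,f))$ under the hypotheses $(\mathrm{CR}^+)$, $(\mathrm{PO})$, $(\mathfrak{n}^+\text{-}\mathrm{DT})$. There is no proof of the full equality in the paper to compare against.

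Your proposal also contains a logical slip: the two containments you label ``upper'' and ``lower'' are the \emph{same}. The relation $\mathrm{char}_\Lambda \mathrm{Sel}(K_\infty,A_f)^\vee \mid L_p(K_\infty,f)$ is by definition equivalent to $(L_p(K_\infty,f)) \subseteq \mathrm{char}_\Lambda \mathrm{Sel}(K_\infty,A_f)^\vee$, so you have written one divisibility twice and assigned two different methods to it. The missing reverse containment is $\mathrm{char}_\Lambda \mathrm{Sel}(K_\infty,A_f)^\vee \subseteq (L_p(K_\infty,f))$, and that remains open in this generality. Your attribution of methods is correspondingly scrambled: the Bertolini--Darmon argument of \cite{Longo,Wang} and of this paper \emph{is} an Euler-system argument with Heegner-type classes $\kappa_{\mathscr{D}}(\mathfrak{l})$ on Shimura curves (Sections~\ref{ss:first-second} and~\ref{sec:BD-argument}), and it produces exactly $\mathrm{char}\mid(L_p)$. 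A big-Heegner-point construction in the style of Howard or \cite{Fou} would target the \emph{other} containment; you correctly note in your final paragraph that the ingredients for that are not supplied here, but as written your plan never actually states that other containment.
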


Our main result is the following.

\begin{thm}\label{thm:main} Assume that $f$ satisfies the conditions $(\mathrm{CR}^+)$,
$(\mathrm{PO})$ and $(\mathfrak{n}^+\text{-}\mathrm{DT})$ given in
\cite{Wang}. Then $\mathrm{Sel}(K_\infty, A_f)$ is a cofinitely
generated cotorsion $\Lambda$-module, and
$$ \mathrm{char}_\Lambda\mathrm{Sel}
(K_\infty, A_f)^\vee \ | \  (L_p(K_\infty, f)). $$
\end{thm}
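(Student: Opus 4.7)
The plan is to follow the Bertolini--Darmon--Longo--Wang template, constructing an Euler-system-like family of cohomology classes $\kappa(n) \in H^1(K_\infty, T_f/p^M)$ indexed by squarefree products of admissible (Kolyvagin) primes, and then applying the standard Kolyvagin-style descent to deduce the claimed divisibility. Fix a large integer $M$ and recall from \cite{CH15,Longo,Wang} the notion of an admissible prime modulo $p^M$: a prime $\ell$ of $F$ coprime to $p\mathfrak{n}D_{K/F}$ that is inert in $K$ and satisfies an explicit congruence modulo $p^M$ between $\ell$ and the Hecke eigenvalues of $f$ at $\ell$. For each squarefree product $n$ of admissible primes, level-raising produces a Hilbert modular form $f_n$ on a definite quaternion algebra $B_n$ of discriminant $\mathfrak{n}^-\!\cdot n$ that is congruent to $f$ modulo $p^M$, and the corresponding theta element gives an explicit model for $L_p(K_\infty,f_n)$.

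From Gross points on the definite quaternion side, together with the local-global compatibility of the Jacquet--Langlands correspondence, one then constructs $\kappa(n)$ and verifies the first and second explicit reciprocity laws: the first identifies the local component of $\kappa(\ell)$ at $\ell$ with the image modulo $\ell$ of the theta element controlling $L_p(K_\infty,f_\ell)$, and the second relates the local component of $\kappa(n\ell)$ at $\ell$ to $\kappa(n)$ via global Tate duality. Once these reciprocity laws hold and the $\kappa(n)$ have full order $p^M$, the divisibility $\mathrm{char}_\Lambda\mathrm{Sel}(K_\infty, A_f)^\vee \mid (L_p(K_\infty, f))$ follows by the Kolyvagin descent together with a $\Lambda$-adic control argument, exactly as in \cite{Longo,Wang}. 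Cotorsionness of $\mathrm{Sel}(K_\infty, A_f)$ drops out of the same argument, since a single choice of admissible $\ell$ with non-vanishing theta element already yields a non-trivial bound.

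The hard part will be the verification that each $\kappa(n)$ attains the expected order $p^M$, which amounts to the surjectivity of a Hecke-equivariant level-raising map between spaces of quaternionic modular forms at levels $\mathfrak{n}^-\!\cdot n$ and $\mathfrak{n}^-\!\cdot n\ell$: this is precisely where \cite{Longo,Wang} invoke Ihara's Lemma. My approach to avoid Ihara's Lemma would be to reinterpret this surjectivity as a statement about the big nearly-ordinary Hecke algebra $\mathbf{T}$ containing $f$, and to deduce it from an $R=T$ theorem. The hypotheses $(\mathrm{CR}^+)$, $(\mathrm{PO})$ and $(\mathfrak{n}^+\text{-}\mathrm{DT})$ are exactly those under which the Taylor--Wiles--Kisin patching machinery for Hilbert modular forms yields freeness of the patched module over the corresponding deformation ring. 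This freeness, applied inductively as one adjoins one more admissible prime, can be translated into the required lifting of classes modulo $p^M$ at level $\mathfrak{n}^-\!\cdot n\ell$ without passing through the Ihara-type surjectivity.

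I expect the technical heart of the proof to lie in this replacement step: carefully tracking how the patched module at level $\mathfrak{n}^-\!\cdot n$ interacts with the addition of one more admissible prime, and bookkeeping the resulting error terms in the final bound on $\mathrm{Sel}(K_\infty, A_f)^\vee$. The remaining ingredients --- the definition of $\kappa(n)$, the theta-element formalism, the global Tate duality argument, and the final Kolyvagin descent --- are largely parallel to \cite{Longo,Wang} and should not introduce new difficulties beyond this one.
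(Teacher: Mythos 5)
There is a genuine gap, and it sits exactly at the step you yourself identify as the ``hard part.'' Your plan is to recover the Ihara-type surjectivity of the level-raising map (equivalently, that the class $\kappa(n\ell)$ has full order, i.e.\ that the specialization map $\gamma$ is nonzero modulo $\omega$) from freeness of a patched module via an $R=T$ theorem. That is precisely the Manning--Shotton strategy, and it is known to require a large-image hypothesis: their proof of Ihara's lemma needs the image of $\bar{\rho}_f$ to contain a conjugate of $\mathrm{SL}_2(\mathbb{F}_p)$. The hypotheses $(\mathrm{CR}^+)$, $(\mathrm{PO})$ and $(\mathfrak{n}^+\text{-}\mathrm{DT})$ of the theorem do \emph{not} imply this: Section \ref{sec:example} of the paper constructs CM forms $f_\chi$ with $\bar{\rho}_{f_\chi}\cong\mathrm{Ind}_{L/F}\bar{\chi}$ of dihedral image that satisfy all the stated hypotheses. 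For such $f$ your patching step has no known proof, so the argument as proposed does not establish the theorem in the claimed generality; it only re-derives the conditional results of Longo and Wang under Manning--Shotton's extra hypothesis.

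The paper's actual route is different in kind: it never proves surjectivity of the level-raising map. Instead it tolerates a controlled failure. The specialization map $\gamma$ is allowed to have image contained in $\omega^{n_0}\mathcal{O}_{f,n}$ for some $n_0>0$; one shows $n_0\le\ord_\varphi(v_{\mathfrak{l}_2}(\kappa_{\mathscr{D}}(\mathfrak{l}_1)_m))$ and extracts from $\gamma/\omega^{n_0}$ an $(N,n-n_0)$-admissible form $g''$ with $v_{\mathfrak{l}_2}(\kappa_{\mathscr{D}}(\mathfrak{l}_1)_m)=\omega^{n_0}\theta_m(g'')$ (Proposition \ref{thm:second}); the two-parameter notion of $(N,n)$-admissible form exists precisely to absorb this loss of precision through the induction. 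The second reciprocity law itself is replaced by a global Tate duality identity $\theta_m(g)(u_2\theta_1+u_1\theta_2)=0$ (Theorem \ref{prop:theta}), which gives $\ord_\varphi(v_{\mathfrak{l}_2}(\kappa_{\mathscr{D}}(\mathfrak{l}_1)_m))=\ord_\varphi(v_{\mathfrak{l}_1}(\kappa_{\mathscr{D}}(\mathfrak{l}_2)_m))$ only under the quantitative condition $\ord_\varphi(\partial_{\mathfrak{l}_1}(\kappa_{\mathscr{D}}(\mathfrak{l}_1)_m))+\ord_\varphi(v_{\mathfrak{l}_2}(\kappa_{\mathscr{D}}(\mathfrak{l}_1)_m))<n$; the admissible primes $\mathfrak{l}_1,\mathfrak{l}_2$ are then chosen (minimizing $e_{\mathfrak{l}}$) so that this condition holds at each inductive step, and the losses $r_0$ are absorbed because $2r_0+2t_{\varphi,g''}=2e_{\mathfrak{l}_1}<2t_{\varphi,g}$. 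If you want to salvage your write-up, you would need either to prove Ihara's lemma for dihedral residual image (open in general) or to switch to this ``bounded failure'' bookkeeping.
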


As applications of Theorem \ref{thm:main}, we have the following
consequences.

\begin{cor}\label{cor-a} Let $A$ be a modular elliptic curve $($or more generally a modular abelian variety of $\GL_2$-type$)$ over $F$.
Assume that $F_\mathfrak{p}=\mathbb{Q}_p$ and the modular form
attached to $A$ satisfies the assumption in Theorem \ref{thm:main}.
Then $A(K_\infty)$ is finitely generated.
\end{cor}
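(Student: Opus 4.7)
The strategy is to combine Theorem \ref{thm:main} with a standard Iwasawa-theoretic descent. Since $A$ is modular of $\GL_2$-type, the associated Hilbert modular form $f$ has parallel weight $k=2$, so the self-dual twist $\rho_f^* = \rho_f \otimes \epsilon^{\frac{2-k}{2}}$ coincides with $\rho_f$; the lattice $T_f$ is, up to extension of scalars, a factor of the $p$-adic Tate module of $A$, and $A_f$ is the corresponding factor of $A[p^\infty]$. By Theorem \ref{thm:main}, $\mathrm{Sel}(K_\infty, A_f)^\vee$ is then a finitely generated torsion $\Lambda$-module.

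The next step is to compare the minimal Selmer group appearing in Theorem \ref{thm:main} with the classical $p^\infty$-Selmer group $\mathrm{Sel}_{p^\infty}(K_\infty, A)$ of $A$. Because $f$ is ordinary at $p$ and $p \nmid \mathfrak{n}D_{K/F}$, the ordinary filtration of $V_f$ at $\mathfrak{p}$ recovers the usual Bloch--Kato local condition at primes above $p$; at the finitely many other places where the two conditions differ, one checks that the discrepancy is a cofinitely generated torsion $\Lambda$-module, so cotorsion-ness is transferred to $\mathrm{Sel}_{p^\infty}(K_\infty, A)$. The Kummer exact sequence
\[0 \rightarrow A(K_\infty)\otimes \BQ_p/\BZ_p \rightarrow \mathrm{Sel}_{p^\infty}(K_\infty, A) \rightarrow \mathrm{III}(A/K_\infty)[p^\infty]\rightarrow 0\]
then shows that $A(K_\infty)\otimes \BQ_p/\BZ_p$ is a cotorsion $\Lambda$-module.

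Since $F_\mathfrak{p}=\BQ_p$, we have $\Gamma \cong \BZ_p$ and $\Lambda$ is a one-variable Iwasawa algebra. I would invoke Mazur's control theorem, comparing $\mathrm{Sel}_{p^\infty}(K_n, A)$ to $\mathrm{Sel}_{p^\infty}(K_\infty, A)^{\mathrm{Gal}(K_\infty/K_n)}$ via a map with finite, uniformly bounded kernel and cokernel. Together with cotorsion-ness, this yields a uniform bound on the $\BZ$-rank and on the $p^\infty$-torsion of $A(K_n)$. For $\ell \neq p$, any pro-$p$ subgroup of $\GL_{2g}(\BZ_\ell)$ is finite, so the image of $\Gamma$ in $\mathrm{Aut}(A[\ell^\infty])$ is finite and hence $A(K_\infty)[\ell^\infty]$ is finite; moreover only finitely many primes $\ell$ can contribute to the torsion of $A(K_\infty)$. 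Therefore the finitely generated groups $A(K_n)$ eventually stabilize, so $A(K_\infty)=A(K_n)$ for some $n$, which is finitely generated by the Mordell--Weil theorem over the number field $K_n$. The principal obstacle is the second step: verifying that the refined local conditions of \cite{Wang} at primes dividing $\mathfrak{n}^+$ differ from the classical ones only by $\Lambda$-cotorsion modules, so that the cotorsion property descends cleanly to the classical Selmer group of $A$.
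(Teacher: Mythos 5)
Your proposal is correct and is essentially the standard descent that this corollary rests on: the paper gives no independent proof of Corollary \ref{cor-a}, but defers to Longo's deduction, which is exactly the chain you describe (cotorsionness of the dual Selmer group from Theorem \ref{thm:main}, comparison of the minimal and classical local conditions, the Kummer sequence, a control theorem to bound the ranks of $A(K_n)$, and finiteness of $A(K_\infty)_{\mathrm{tors}}$). The step you flag as the principal obstacle does go through: for $v\nmid p$ the local cohomology $H^1(K_{\infty,v},A[p^\infty])$ is itself $\Lambda$-cotorsion (even for primes inert in $K$, which split completely in $K_\infty/K$), and at $v\mid p$ ordinarity identifies the Greenberg and Bloch--Kato conditions up to finite index.
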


In \cite{Hun} Hung proved vanishing of the analytic $\mu$-invariant,
generalizing the result of Chida and Hsieh \cite{CH16}. Combining
Theorem \ref{thm:main} and Hung's result, we obtain the following

\begin{cor}\label{cor-b} Keep the assumption of Theorem \ref{thm:main}. Then the algebraic $\mu$-invariant of the $\Lambda$-module $\mathrm{Sel}(K_\infty, A_f)^\vee$ is zero.
\end{cor}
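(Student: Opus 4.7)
The plan is to deduce Corollary~\ref{cor-b} by combining the divisibility established in Theorem~\ref{thm:main} with Hung's vanishing theorem for the analytic $\mu$-invariant, without any further input. First, I would recall the relevant algebraic setup: $\Lambda = \mathcal{O}_f[[\Gamma]]$ with $\Gamma \cong \mathbb{Z}_p^{[F_\mathfrak{p}:\mathbb{Q}_p]}$ is a regular local Noetherian ring, hence a UFD. For a finitely generated torsion $\Lambda$-module $M$, the algebraic $\mu$-invariant $\mu(M)$ is the $\varpi$-adic valuation of any generator of the characteristic ideal $\mathrm{char}_\Lambda M$, where $\varpi$ denotes a uniformizer of $\mathcal{O}_f$. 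The key formal property I would invoke is monotonicity: if $(a) \mid (b)$ in $\Lambda$, then $\mathrm{ord}_\varpi(a) \leq \mathrm{ord}_\varpi(b)$.

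Granting this, the argument would be essentially a one-liner. Theorem~\ref{thm:main} guarantees that $\mathrm{Sel}(K_\infty, A_f)^\vee$ is a finitely generated torsion $\Lambda$-module whose characteristic ideal divides the principal ideal $(L_p(K_\infty, f))$. Hung's result \cite{Hun} asserts that the analytic $\mu$-invariant of $L_p(K_\infty, f)$ vanishes, i.e.\ $\mathrm{ord}_\varpi(L_p(K_\infty, f)) = 0$. Combining these, I would conclude
$$
0 \leq \mu\bigl(\mathrm{Sel}(K_\infty, A_f)^\vee\bigr) \leq \mathrm{ord}_\varpi\bigl(L_p(K_\infty, f)\bigr) = 0,
$$
which forces the algebraic $\mu$-invariant to vanish.

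The main obstacle is not the corollary itself, which is purely formal, but rather the bookkeeping of hypotheses: one must verify that the assumptions under which Hung proves the vanishing of the analytic $\mu$-invariant are implied by, or at least compatible with, the conditions $(\mathrm{CR}^+)$, $(\mathrm{PO})$, and $(\mathfrak{n}^+\text{-}\mathrm{DT})$ imposed in Theorem~\ref{thm:main}. I expect this compatibility to be immediate, since \cite{Hun} works in the same Hilbert-modular framework as \cite{Wang}, so no essential work beyond citing the two results should be required.
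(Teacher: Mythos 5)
Your argument is correct and is exactly the paper's intended proof: the paper derives Corollary \ref{cor-b} by combining the divisibility $\mathrm{char}_\Lambda\mathrm{Sel}(K_\infty,A_f)^\vee \mid (L_p(K_\infty,f))$ from Theorem \ref{thm:main} with Hung's result (stated as Proposition \ref{prop-Hung}, asserting $L_p(K_\infty,f)\not\equiv 0 \bmod \omega$), which forces $\omega\nmid \mathrm{char}_\Lambda\mathrm{Sel}(K_\infty,A_f)^\vee$ and hence the vanishing of the algebraic $\mu$-invariant. Your reading of Hung's theorem as giving a \emph{zero} analytic $\mu$-invariant is the right one (the word ``nonzero'' in Proposition \ref{prop-Hung} is a typo, as the displayed congruence makes clear), and the hypothesis bookkeeping you flag is indeed handled by the paper simply by citing Hung under the same standing assumptions.
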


Corollary \ref{cor-a} and Corollary \ref{cor-b} were already
obtained by Longo \cite{Longo} and Wang \cite{Wang} respectively,
under the assumption of ``Ihara's Lemma''. \vskip 10pt

The strategy for the proof of Theorem \ref{thm:main} is using the
Euler system of Heegner points
$\{\kappa_{\mathscr{D}}(\mathfrak{l})_m\}_{\mathfrak{l}}$ to bound
the Selmer groups. In \cite{Wang} these Heegner points were shown to
satisfy two properties called  the First Reciprocity Law and the
Second Reciprocity Law. The Second Reciprocity Law needs ``Ihara's
Lemma''. Our input is to prove a weaker form of the Second
Reciprocity Law without ``Ihara's Lemma''. Our weaker version is
sufficient for us to run through Bertolini and Darmon's Euler system
argument to prove Theorem \ref{thm:main}. This is done in Section
\ref{sec:BD-argument}. See Proposition \ref{thm:first} and Corollary
\ref{cor:theta-2} for the precise statements of the First
Reciprocity Law and the weaker version of the Second Reciprocity
Law.

Both the original Second Reciprocity Law
\begin{equation}\label{eq:sec}
v_{\mathfrak{l}_2}(\kappa_{\mathscr{D}}(\mathfrak{l}_1)_m) =
v_{\mathfrak{l}_1}(\kappa_{\mathscr{D}}(\mathfrak{l}_2)_m)
\end{equation} (with $\mathfrak{l}_1$ and $\mathfrak{l}_2$ being different $n$-admissible primes) and our weaker version, are based on an analysis of the specialization modulo
$\omega$ (=\:the uniformizing element of $\mathcal{O}_f$) of Heegner
points to supersingular points. Starting from an $(N,n)$-admissible
form $g$ (Definition \ref{defn:adm}), using this specialization we
obtain a map $$ \gamma: B''^\times\backslash
\widehat{B}''^{\times}/Y \mathfrak{U}'' \rightarrow
\mathcal{O}_{f,n},
$$ (see the proof of Proposition \ref{thm:second} for the meanings of the notations),
which is expected to define a new $(N,n)$-admissible form. In
\cite{Wang}, $N$ is taken to coincide with $n$. Our
$(N,n)$-admissible form is called $n$-admissible form in loc. cit.

In \cite{Wang} Ihara's Lemma was used to show that $\gamma$ is
nonzero modulo $\omega$, i.e. the order of $\gamma$ is zero, so that
$\gamma$ really defines an $(N,n)$-admissible form denoted by $g''$
in our text. Wang \cite{Wang} shows that
\begin{equation}\label{eq:new-forms}
v_{\mathfrak{l}_2}(\kappa_{\mathscr{D}}(\mathfrak{l}_1)_m)
=\theta_m(g''). \end{equation} With $\mathfrak{l}_1$ and
$\mathfrak{l}_2$ exchanged one obtains another $n$-admissible form
$h''$ such that
$$v_{\mathfrak{l}_1}(\kappa_{\mathscr{D}}(\mathfrak{l}_2)_m)
=\theta_m(h'').
$$ Then the multiplicity one result $g''=h''$ yields
$(\ref{eq:sec})$. Both (\ref{eq:sec}) and (\ref{eq:new-forms}) are
needed in Bertolini and Darmon's (inductive) Euler system argument.

In our approach, we deal with (\ref{eq:sec}) and
(\ref{eq:new-forms}) separately.

We use the global Tate pairing to prove a weaker version of
(\ref{eq:sec}). We show that
$v_{\mathfrak{l}_2}(\kappa_{\mathscr{D}}(\mathfrak{l}_1))$ and
$v_{\mathfrak{l}_1}(\kappa_{\mathscr{D}}(\mathfrak{l}_2))$ coincide
with each other after multiplying $\theta(g)$. Indeed, by relations
like
$$\sum_v\langle \kappa_{\mathscr{D}}(\mathfrak{l}_1)_m, \kappa_{\mathscr{D}}(\mathfrak{l}_2)_m
\rangle_{v}=0$$ provided by the global Tate pairing between
$H^1(K_m,T_{f,n})$ and itself (noting that $T_{f,n}\cong A_{f,n}$)
we obtain
$$\theta(g) \cdot v_{\mathfrak{l}_2}(\kappa_{\mathscr{D}}(\mathfrak{l}_1))= \theta(g) \cdot v_{\mathfrak{l}_1}(\kappa_{\mathscr{D}}(\mathfrak{l}_2)) $$
up to multiplication by a unit in $\mathcal{O}_{f,n}[[\Gamma]]$. If
a homomorphism $\varphi: \mathcal{O}_{f,n}[[\Gamma]]\rightarrow
\mathcal{O}_{f,n}$ satisfies \begin{equation}\label{eq:cond} \varphi
\Big(\theta(g) \cdot
v_{\mathfrak{l}_2}(\kappa_{\mathscr{D}}(\mathfrak{l}_1))\Big)\neq 0,
\end{equation} in $\mathcal{O}_{f,n}$,  then we have
\begin{equation} \label{eq;apply}
\varphi\Big(v_{\mathfrak{l}_2}(\kappa_{\mathscr{D}}(\mathfrak{l}_1))\Big)
= \varphi\Big(
v_{\mathfrak{l}_1}(\kappa_{\mathscr{D}}(\mathfrak{l}_2)) \Big)
\end{equation} up to multiplication by a unit in $\mathcal{O}_{f,n}$. It is
(\ref{eq;apply}) instead of (\ref{eq:sec}) itself that is used in
the Euler system argument. We choose $\mathfrak{l}_1$ and
$\mathfrak{l}_2$ carefully to ensure (\ref{eq:cond}), so that we can
apply (\ref{eq;apply}).

For (\ref{eq:new-forms}), without ``Ihara Lemma'', the order of
$\gamma$, denoted by $n_0$ in our Proposition \ref{thm:second}, may
be not zero. Fortunately, we can bound $n_0$ by
$v_{\mathfrak{l}_2}(\kappa_{\mathscr{D}}(\mathfrak{l}_1))$.
Especially, for our good choice of $\mathfrak{l}_1$ and
$\mathfrak{l}_2$ we have $n_0<n$. Then we obtain from $\gamma$ an
$(N, n-n_0)$-admissible form denoted by $g''$ such that
\begin{equation} \label{eq:weak-new-form}
v_{\mathfrak{l}_2}(\kappa_{\mathscr{D}}(\mathfrak{l}_1))
=\omega^{n_0}\theta(g''). \end{equation} Thus we have a weaker
version of (\ref{eq:new-forms}). In the (inductive) Euler system
argument, the relation (\ref{eq:new-forms}) is used to show that
$\varphi \big(
v_{\mathfrak{l}_2}(\kappa_{\mathscr{D}}(\mathfrak{l}_1))\big)$
bounds a module $S_{\mathfrak{l}_1,\mathfrak{l}_2}^\vee$ (see
Section \ref{sec:BD-argument} for the meaning of this notation),
since this module is bounded by $\varphi(\theta(g''))$ by the
inductive assumption. Clearly, our weaker version
(\ref{eq:weak-new-form}) is sufficient for this purpose.

\vskip 10pt

A part of this paper was prepared when the author visited Professor
R. Sujatha and Department of Mathematics at the University of
British Columbia. The author thanks Professor R. Sujatha for her
kind hospitality.

The author thanks C.-H. Kim for pointing out mistakes in the
original version of this paper, and thanks Haining Wang for
discussions on his thesis \cite{Wang}.

\section*{Notations}

Fix a prime number $p\nmid \mathfrak{n}D_{K/F}$ and a prime ideal
$\mathfrak{p}$ of $\mathcal{O}_F$ above $p$.

Let $\widetilde{K}_m$ be the ring class field over $K$ of conductor
$\mathfrak{p}^m$ and put $G_m=\mathrm{Gal}(\widetilde{K}_m/K)$. Set
$\widetilde{K}_\infty=\cup_m \widetilde{K}_m$.

Let $K_\infty$ be the unique subfield of $\widetilde{K}_\infty$ such
that $\Gamma:=\mathrm{Gal}(K_\infty/K)\simeq
\mathbb{Z}_p^{[F_\mathfrak{p}:\mathbb{Q}_p]}$. Put
$K_m=\widetilde{K}_m\cap K_\infty$ and
$\Gamma_m=\mathrm{Gal}(K_m/K)$.

Let $\epsilon$ denote the $p$-adic cyclotomic character of
$G_F=\mathrm{Gal}(\overline{F}/F)$.

We will fix an isomorphism $\jmath: \mathbb{C}\cong \mathbb{C}_p$.

\section{Automorphic forms and Galois representations}

\subsection{Galois representation attached to $f$} \label{ss:Galois}

Throughout this paper we will fix a Hilbert cusp newform $f$ of
parallel weight $k\geq 2$ and trivial central character. Let
$\mathfrak{n}$ be the conductor of $f$, and we decompose
$\mathfrak{n}$ into $\mathfrak{n}=\mathfrak{n}^+\mathfrak{n}^-$
where $\mathfrak{n}^+$ is the product of primes split in $K$, and
$\mathfrak{n}^-$ is the product of primes inert or ramified in $K$.
We assume that $\mathfrak{n}$ is coprime to $p$.

We assume that $\mathfrak{n}^-$ satisfies the following two
conditions:

(sq-fr) $\mathfrak{n}^-$ is square-free, that is, $\mathfrak{n}^-$
is the product of different primes.

(card) The cardinal number of prime factors of $\mathfrak{n}^-$ has
the same parity as $[F:\mathbb{Q}]$.

By \cite{Wiles, Tay} (among other references) up to isomorphisms
there exists a unique $p$-adic Galois representation
$$\rho_f:G_F\rightarrow \mathrm{GL}_2(\mathbb{C}_p)$$ that satisfies the
following two properties.
\begin{quote} $\bullet$ $\rho_f$ is unramified outside of $p\mathfrak{n}$.
\\
$\bullet$ If $\mathfrak{l}$ is a prime of $\mathcal{O}_F$ not
dividing $p\mathfrak{n}$, then for the Frobenius element
$\mathrm{Frob}_\mathfrak{l}$ at $\mathfrak{l}$, the characteristic
polynomial of $\rho_f(\mathrm{Frob}_\mathfrak{l})$ is
$x^2-a_\mathfrak{l}(f) x + \mathbf{N}(\mathfrak{l})^{k-1}$.  Here,
$a_{\mathfrak{l}}(f)$ is the Hecke eigenvalue of $f$ at
$\mathfrak{l}$. \end{quote} Here we view $a_\mathfrak{l}(f)$ as an
element of $\mathbb{C}_p$ via $\jmath$. Let $E_f$ be the defining
field of $\rho_f$, which contains all $a_\mathfrak{l}(f)$.

A consequence of the latter property is
$$\det \rho_f= \epsilon^{k-1}.$$
The reader may consult
\cite{Wiles, Tay} for the construction of $\rho_f$ and more
properties of $\rho_f$.

Let $$\rho_f^*= \rho_f \otimes \epsilon^{\frac{2-k}{2}}$$ be the
self-dual twist of $\rho_f$, $V_f$ the underlying representation
space for $\rho_f^*$.  The representation $\rho_f^*$ has the
following properties.
\begin{quote}
$\bullet$ $\rho_f^*$ is unramified outside of $p\mathfrak{n}$. \\
$\bullet$
$\rho_f^*|_{G_{F_v}}=\wvec{\chi_v^{-1}\epsilon^{\frac{k}{2}}}{*}{0}{\chi_v
\epsilon^{\frac{2-k}{2}}}$ for any $v|p$. Here $\chi_v$ is the
unramified
character such that $\chi_v(\mathrm{Frob}_v)=a_v(f)$.\\
$\bullet$ $\rho_f^*|_{G_{F_\mathfrak{l}}}=\wvec{\pm
\epsilon}{*}{0}{\pm 1}$ for any $\mathfrak{l}$ dividing
$\mathfrak{n}$ exactly once.
\end{quote}

Fix a $G_F$-stable lattice $T_f$ of $V_f$, and put $A_f=V_f/T_f$.
For each positive integer $n$ we put
$\mathcal{O}_{f,n}=\mathcal{O}_f/\omega^n$, where $\omega$ is a
uniformizer of $\mathcal{O}_f$. We set
$$T_{f,n}=T_f/\omega^n=T_f\otimes_{\mathcal{O}_f}\mathcal{O}_{f,n}$$
and $$A_{f,n}=\mathrm{ker}(A_f\xrightarrow{\omega^n}A_f).$$ They are
all $G_F$-modules. We also use $\bar{\rho}_f$ to denote the residue
Galois representation $T_{f,1}$.

We state the conditions $(\mathrm{CR}^+)$,
$(\mathfrak{n}^+\text{-}\mathrm{DT})$ and $(\mathrm{PO})$ in Theorem
\ref{thm:main}. \vskip 5pt

\noindent {\bf Hypothesis} $(\mathrm{CR}^+)$. 1. $p>k+1$ and
$(\#(\mathcal{O}_{F}/\mathfrak{p})^\times)^{k-1}>5$.

2. The restriction of $\bar{\rho}_f$ to $G_{F(\sqrt{p^*})}$ is
irreducible, where $p^*=(-1)^{\frac{p-1}{2}}p$.

3. $\bar{\rho}_f$ is ramified at $\mathfrak{l}$ if
$\mathfrak{l}|\mathfrak{n}^-$ and $\mathrm{N}(\mathfrak{l})^2\equiv
1 \  (\mathrm{mod} \  p)$.

4. If $\mathfrak{n}_{\bar{\rho}}$ denotes the Artin conductor of
$\bar{\rho}_{f}$, then $\mathfrak{n}/\mathfrak{n}_{\bar{\rho}}$ is
coprime to $\mathfrak{n}_{\bar{\rho}}$. \vskip 5pt

\noindent {\bf Hypothesis} $(\mathrm{PO})$. $a_v^2(f)\
{\backslash\hskip -10pt \equiv } 1 \ (\mathrm{mod} \ p)$ for all
$v|p$ if $k=2$. \vskip 5pt

\noindent {\bf Hypothesis} $(\mathfrak{n}^+\text{-}\mathrm{DT})$. If
$\mathfrak{l}||\mathfrak{n}^+$ and $\mathrm{N}(\mathfrak{l})\equiv 1
\ (\mathrm{mod} \ p)$, then $\bar{\rho}_f$ is ramified at
$\mathfrak{l}$. \vskip 5pt

We also need an auxiliary condition
$(\mathfrak{n}^+\text{-}\mathrm{min})$. \vskip 5pt

\noindent {\bf Hypothesis} $(\mathfrak{n}^+\text{-}\mathrm{min})$.
If $\mathfrak{l}|\mathfrak{n}^+$, then $\bar{\rho}_f$ is ramified at
$\mathfrak{l}$.

\begin{rem} By $(\mathrm{CR}^+$-$2)$, $\bar{\rho}_f$ is itself
irreducible. So, the $G_F$-stable lattice $T_f$ of $V_f$ is unique
up to isomorphisms. Hence, up to isomorphisms $T_{f,n}$ and
$A_{f,n}$ are independent of the choice of $T_f$.
\end{rem}

\begin{lem}\label{lem:divide} Assume that $(\mathrm{CR}^+$-$4)$ holds.
If $\mathfrak{l}|\mathfrak{n}^+$ and $\bar{\rho}_f$ is ramified at
$\mathfrak{l}$, then $H^0(F_\mathfrak{l}^{\mathrm{nr}}, A_f)$ is
divisible.
\end{lem}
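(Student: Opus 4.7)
The plan is to use condition $(\mathrm{CR}^+$-$4)$ to match the local Artin conductors of $V_f$ and $\bar{\rho}_f$ at $\mathfrak{l}$, deduce that the $I_\mathfrak{l}$-invariants have matching dimensions over $E_f$ and $\mathbb{F}=\mathcal{O}_f/\omega$, and then translate this into divisibility of $A_f^{I_\mathfrak{l}}$ by comparing corank with $\omega$-torsion. For the conductor matching: since $\mathfrak{l}\mid\mathfrak{n}^+$ and $p\nmid\mathfrak{n}$ we have $\mathfrak{l}\nmid p$, so the cyclotomic character $\epsilon$ is unramified at $\mathfrak{l}$ and the twist $V_f=\rho_f\otimes\epsilon^{(2-k)/2}$ has the same Artin conductor at $\mathfrak{l}$ as $\rho_f$, which by local--global compatibility equals $v_\mathfrak{l}(\mathfrak{n})$. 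Ramification of $\bar\rho_f$ at $\mathfrak{l}$ forces $v_\mathfrak{l}(\mathfrak{n}_{\bar\rho})\geq 1$, and the coprimality in $(\mathrm{CR}^+$-$4)$ then yields $v_\mathfrak{l}(\mathfrak{n}_{\bar\rho})=v_\mathfrak{l}(\mathfrak{n})$, so the Artin conductors of $V_f$ and $\bar\rho_f$ at $\mathfrak{l}$ agree.

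Next I would decompose the Artin conductor as $f_\mathfrak{l}=\epsilon+\mathrm{sw}$, with tame part $\epsilon(W)=\dim W-\dim W^{I_\mathfrak{l}}$ and Swan conductor $\mathrm{sw}$. Because $\mathfrak{l}\nmid p$, the wild inertia $P_\mathfrak{l}$ acts on $T_f$ through a finite quotient of order prime to $p$, so Maschke's argument applies: $T_f$ splits $P_\mathfrak{l}$-equivariantly into isotypic $\mathcal{O}_f$-direct summands that reduce compatibly modulo $\omega$. In particular $\mathrm{sw}(V_f)=\mathrm{sw}(\bar\rho_f)$, and combined with the equality of Artin conductors this forces $\epsilon(V_f)=\epsilon(\bar\rho_f)$, i.e.\ $\dim_{E_f}V_f^{I_\mathfrak{l}}=\dim_{\mathbb{F}}\bar\rho_f^{I_\mathfrak{l}}$.

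Finally, $A_f^{I_\mathfrak{l}}$ is a cofinitely generated $\mathcal{O}_f$-module. Its $\mathcal{O}_f$-corank equals the $\mathcal{O}_f$-rank of its Tate module $T_f^{I_\mathfrak{l}}$ (using that $I_\mathfrak{l}$-invariance commutes with the inverse limit $T_pA_f=T_f$), which in turn equals $\dim_{E_f}V_f^{I_\mathfrak{l}}$. Its $\omega$-torsion satisfies $A_f^{I_\mathfrak{l}}[\omega]=(A_f[\omega])^{I_\mathfrak{l}}=\bar\rho_f^{I_\mathfrak{l}}$, of $\mathbb{F}$-dimension $\dim_{\mathbb{F}}\bar\rho_f^{I_\mathfrak{l}}$. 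Since the $\mathbb{F}$-dimension of the $\omega$-torsion equals the $\mathcal{O}_f$-corank, the structure theorem for cofinitely generated $\mathcal{O}_f$-modules forces $A_f^{I_\mathfrak{l}}\cong(E_f/\mathcal{O}_f)^r$, which is divisible. The main subtlety I anticipate is bookkeeping the local--global compatibility and the Swan conductor comparison cleanly; once these are in place, the divisibility conclusion is routine.
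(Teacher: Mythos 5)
Your proposal is correct and follows essentially the same route as the paper: both arguments use $(\mathrm{CR}^+$-$4)$ together with local--global compatibility (Taylor, Henniart) and the unramifiedness of $\epsilon$ at $\mathfrak{l}$ to equate the Artin conductors of $\rho^*_{f,\mathfrak{l}}$ and $\bar{\rho}_{f,\mathfrak{l}}$, from which divisibility of $H^0(F_\mathfrak{l}^{\mathrm{nr}},A_f)$ follows. The paper leaves the last step as ``our assertion follows,'' and your Swan-conductor comparison plus the corank-versus-$\omega$-torsion bookkeeping is exactly the correct way to fill it in.
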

\begin{proof} By \cite{Tay} the Weil-Deligne representation attached
to $\rho_{f,\mathfrak{l}}$ has Frobenius-simplification the
Weil-Deligne representation attached to $\pi_{f,\mathfrak{l}}$ via
local Langlands correspondence.  Thus the Artin conductor of
$\rho_{f,\mathfrak{l}}$ is equal to the conductor of
$\pi_{f,\mathfrak{l}}$ \cite{Hen}. As $\epsilon$ is unramified at
$\mathfrak{l}$, the Artin conductor of $\rho^*_{f,\mathfrak{l}}$ is
equal to that of $\rho_{f,\mathfrak{l}}$.

When $\bar{\rho}_{f}$ is ramified at $\mathfrak{l}$,
$(\mathrm{CR}^+$-$4)$ ensures that the conductor of
$\pi_{f,\mathfrak{l}}$ is equal to the Artin conductor of
$\bar{\rho}_{f,\mathfrak{l}}$. Therefore, the Artin conductor of
$\rho^*_{f,\mathfrak{l}}$ is equal to that of
$\bar{\rho}_{f,\mathfrak{l}}$. Our assertion follows.
\end{proof}

\begin{defn} $($\cite[Definition 2.2.1]{Wang}$)$ A prime ideal $\mathfrak{l}$ of $\mathcal{O}_F$ is said to be
{\it $n$-admissible} for $f$ if the following conditions hold.

$\bullet$ $\mathfrak{l}\nmid p\mathfrak{n}$.

$\bullet$ $\mathfrak{l}$ is inert in $K$.

$\bullet$ $\mathrm{N}(\mathfrak{l})^2-1$ is not divided by $p$.

$\bullet$ $\omega^n$ divides $\mathrm{N}(\mathfrak{l})^{\frac{k}{2}}
    +\mathrm{N}(\mathfrak{l})^{\frac{k-2}{2}}-\epsilon_\mathfrak{l}a_\mathfrak{l}(f)$, where $\epsilon_\mathfrak{l}=\pm 1$.
\end{defn}

\subsection{$(N,n)$-admissible form} \label{ss:n-adm}

In this subsection we recall the definition of $n$-admissible forms
\cite{Wang}.

Let $B_\Delta$ be a quaternion algebra over $F$ with discriminant
$\Delta$. Suppose $\Delta$ is coprime to $p$. For each $v\nmid
\Delta$ we fix an isomorphism $(B_\Delta)_v \cong M_2(F_v)$.

Let $\mathfrak{n}^+$ be an ideal of $\mathcal{O}_F$ which is coprime
to $p\Delta$, and let $R_{\mathfrak{n}^+}\subset B_\Delta$ be an
Eichler order of level $\mathfrak{n}^+$. For a fixed positive
integer $n$ we put
$$\mathfrak{U}=\mathfrak{U}_{\mathfrak{n}^+, \mathfrak{p}^N} = \left\{  x\in \widehat{R}^\times_{\mathfrak{n}^+}: x_\mathfrak{p}
\equiv \wvec{a}{b}{0}{a} \hskip 5pt (\mathrm{mod}\ \
\mathfrak{p}^N), \ a, b\in \mathcal{O}_{F_\mathfrak{p}} \right\}. $$
Let $\mathbb{T}_{B_\Delta}(\mathfrak{n}^+, \mathfrak{p}^N)$ be the
(commutative) Hecke algebra generated by
$$ \{ T_v, S_v : v\nmid \mathfrak{p}\mathfrak{n}^+ \Delta \}\cup\{ U_v: v|\mathfrak{p}\mathfrak{n}^+\Delta \} \cup\{ \langle a\rangle : a\in \mathcal{O}^\times_{F,\mathfrak{p}} \}. $$
Here, as usual, for $v\nmid \mathfrak{p}\mathfrak{n}^+ \Delta$
$$T_v = [\ \mathfrak{U} \wvec{\pi_v}{0}{0}{1} \mathfrak{U}\ ] , \ \ \ S_v=[\ \mathfrak{U} \wvec{\pi_v}{0}{0}{\pi_v} \mathfrak{U}\ ]
$$ where $\pi_v$ is a uniformizing element of $F_v$; for
$v|\mathfrak{p}\mathfrak{n}^+$,
$$U_v = [ \ \mathfrak{U} \wvec{\pi_v}{0}{0}{1} \mathfrak{U} \ ] ;
$$ for $v|\Delta$, we choose an element $\pi'_v$ of $(B_\Delta)_v$
whose norm is a uniformizing element of $F_v$, and put
$$ U_v = [ \ \mathfrak{U} \pi'_v \mathfrak{U} \ ]; $$ for
$v=\mathfrak{p}$
$$ \langle a \rangle = [\ \mathfrak{U} \wvec{a}{0}{0}{1} \mathfrak{U} \ ] . $$

To define $n$-admissible form we need the notion of $p$-adic modular
form.

Let $\Phi$ be a finite extension of $\BQ_p$ that contains the image
of all embeddings $\sigma: F\hookrightarrow \overline{\BQ}_p$. Let
$\Omega$ be the maximal ideal of $\mathcal{O}_{\overline{\BQ}_p}$.
Then $\mathfrak{p}_\sigma:=\sigma^{-1}(\Omega)$ is a maximal ideal
of $\mathcal{O}_F$ lying above $p$. We extend $\sigma$ continuously
to $F_{\mathfrak{p}_\sigma}$. Let $A$ be an
$\mathcal{O}_\Phi$-algebra. Then we have a decomposition
$$ A\otimes_\BZ \mathcal{O}_F \cong \bigoplus_\sigma A , \hskip 5pt
a\otimes b \mapsto (a\sigma(b))_\sigma
$$ where $\sigma$ runs over all embeddings $F\hookrightarrow
\Phi$.

For each embedding $\sigma$ let
$$ L_{k,\sigma}(A)=A [X_\sigma,
Y_\sigma]_{k-2} $$ be the space of homogenous polynomials of degree
$k-2$ with two variables over $A$; we have an action of
$M_2(\mathcal{O}_{F_{\mathfrak{p}_\sigma}})$ on $L_{k}(A)$ by
$$ \widehat{\rho}_{k,\sigma}(g) P(X_\sigma, Y_\sigma)=  P((X_\sigma, Y_\sigma)g).
$$ We use $\rho_{k,\sigma}$ to denote the action $\det ^{\frac{2-k}{2}} \cdot
\widehat{\rho}_{k,\sigma}|_{\mathrm{GL}_2(\mathcal{O}_{F_{\mathfrak{p}_\sigma}})}$
of $\mathrm{GL}_2(\mathcal{O}_{F_{\mathfrak{p}_\sigma}})$. Put
$$L_k(A)=\bigotimes_\sigma L_{k,\sigma}(A) \cong
\bigotimes_{\mathfrak{q}|p}\bigotimes_{\sigma:\mathfrak{p}_\sigma=\mathfrak{q}}L_{k,\sigma}(A).$$
Then we consider $L_k(A)$ as a
$\mathrm{GL}_2(\mathcal{O}_{F_p})$-module by the action
$$\rho_k(u_p)=\bigotimes_{\mathfrak{q}|p}\bigotimes_{\sigma:\mathfrak{p}_\sigma=\mathfrak{q}}\rho_{k,\sigma}(\sigma(u_\mathfrak{q}))
.$$ Similarly, we consider $L_k(A)$ as a
$M_2(\mathcal{O}_{F_p})$-module by the action
$$\widehat{\rho}_k(u_p)=\bigotimes_{\mathfrak{q}|p}\bigotimes_{\sigma:\mathfrak{p}_\sigma=\mathfrak{q}}\widehat{\rho}_{k,\sigma}(\sigma(u_\mathfrak{q})).$$
Note that ${\rho}_k$ is self-dual; this means that there is
a ${\rho}_k$-invariant pairing $\langle\cdot,\cdot\rangle_k$
on $L_k(A)\times L_k(A)$.

Now, let $B_\Delta$ be definite. One defines the space
$S^{B_\Delta}_k(\mathfrak{U}, A)$ of {\it $p$-adic Hilbert modular
forms of level $\mathfrak{U}$ and weight $k$} by
$$ S^{B_\Delta}_k(\mathfrak{U}, A) = \left\{ f: B^\times\backslash \widehat{B}^\times\rightarrow L_k(A) \ |\ f(bu) = \rho_k(u_p)^{-1}f(b)  \ \ \ \forall\ u\in \mathfrak{U} \right\} . $$
It is equipped with a natural $\mathbb{T}_{B_\Delta}(\mathfrak{n}^+,
\mathfrak{p}^N)$-action: for any $ [ \mathfrak{U} x \mathfrak{U} ]
\in \mathbb{T}_{B_\Delta}(\mathfrak{n}^+, \mathfrak{p}^N)$, if
$x_{\mathfrak{p}}=\wvec{a}{0}{0}{1}$ with $a\in
\mathcal{O}_{F_\mathfrak{p}}^\times$ one defines
$$ [ \mathfrak{U} x \mathfrak{U} ] f ( b ) = \sum_{u\in \mathfrak{U}/\mathfrak{U}\cap x\mathfrak{U}x^{-1}} \rho_k(u_\mathfrak{p}x_{\mathfrak{p}})  f(bux);
$$ if $x_{\mathfrak{p}}=\wvec{\pi_\mathfrak{p}}{0}{0}{1}$ one defines
$$ [ \mathfrak{U} x \mathfrak{U} ] f ( b ) = \sum_{u\in \mathfrak{U}/\mathfrak{U}\cap x\mathfrak{U}x^{-1}}
\rho_k(u_\mathfrak{p})\widehat{\rho}_k(x_{\mathfrak{p}})
f(bux).
$$

When $k=2$, $S^{B_\Delta}_2(\mathfrak{U},A)$ can be naturally
identified with $A[B_\Delta^\times\backslash
\widehat{B}^\times_\Delta/\mathfrak{U}]$; it is compatible with
Hecke actions if we define the Hecke action on the divisor group of
the Shimura set $B_\Delta^\times\backslash
\widehat{B}^\times_\Delta/\mathfrak{U}$ via Picard functoriality.

Set $Y=\widehat{F}^\times$. Then there is an action of $Y$ on
$S^{B_\Delta}_k(\mathfrak{U}, A)$.

Let $f$ be the Hilbert modular form of level $\mathfrak{n}$ and
weight $k$ as in the introduction. In particular, $f$ is ordinary at
$p$. One can attach to $f$ a Hecke character $\lambda_{f,N}:
\mathbb{T}_{B_\Delta}(\mathfrak{n}^+, \mathfrak{p}^N)\otimes
\mathcal{O}_f \rightarrow \mathcal{O}_f$ as follows. As in Section
\ref{ss:Galois}, let $\{a_v(f)\}_v$ be the system of Hecke
eigenvalues attached to $f$. Set
\[\alpha_v(f) =\left\{ \begin{array}{ll} \text{the unit root of } x^2-a_v(f)x+\mathrm{N}(v)^{k-1} & \text{ if } v|p , \\
a_v(f)\mathrm{N}(v)^{\frac{2-k}{2}} & \text{ if } v\nmid p.
\end{array}\right.\]
Then we define $\lambda_{f,N}$ by
\begin{eqnarray*} \lambda_{f,N}(T_v) &=& a_v(f) , \\
\lambda_{f,N}(S_v) &=& 1 \hskip 5pt \text{ for } v\nmid p\mathfrak{n}, \\
\lambda_{f,N}(U_v) &=& \alpha_v(f) \hskip 5pt \text{ for } v| p\mathfrak{n}, \\
\lambda_{f,N}(\langle a \rangle) &=& a^{\frac{2-k}{2}} \hskip 5pt
\text{ for }a\in \mathcal{O}^\times_{F_\mathfrak{p}}.
\end{eqnarray*}

\begin{defn}\label{defn:adm}$($See \cite[Definition 5.1.1]{Wang}.$)$ Let $N$ and $n$ be two positive integers. By an {\it $(N,n)$-admissible form}  we mean a pair
$\mathscr{D}=(\Delta, g)$ such that

$\bullet$ $\Delta$ is a square-free product of prime ideals in
$\mathcal{O}_F$ inert in $K$; $\mathfrak{n}^-|\Delta$;
$\Delta/\mathfrak{n}^-$ is a product of $n$-admissible prime ideals,
and the cardinal number of prime factors of $\Delta/\mathfrak{n}^-$
is even;

$\bullet$ $g\in S_2^{B_\Delta}( \mathfrak{U}_{\mathfrak{n}^+,
\mathfrak{p}^N}, \mathcal{O}_{f,n})^Y$ such that $$g \:(\mathrm{mod}
\ \omega) \neq 0$$ and $$\lambda_g\equiv \lambda_{f,N} \
\mathrm{mod} \ \omega^n.$$

Let $\mathcal{I}_g$ be the kernel of $\lambda_g$.
\end{defn}

When $n=N$, $(N,n)$-admissible forms are just $N$-admissible forms
defined by \cite[Definition 5.1.1]{Wang}.

Let $\tau_N \in \widehat{B}^\times$ be the Atkin-Lehner element
given by
$$\tau_{N,v}=\wvec{0}{1}{\omega_v^{\mathrm{ord}_v(\mathfrak{p}^N\mathfrak{n}^+)}}{0}.$$
Then $\tau_N$ normalizes $\mathfrak{U}_{\mathfrak{n}^+,
\mathfrak{p}^N}$ and gives an involution, the Atkin-Lehner
involution, on $B_\Delta^\times\backslash
\widehat{B}^\times_\Delta/\mathfrak{U}_{\mathfrak{n}^+,
\mathfrak{p}^N}$. We define  a perfect pairing $$ \langle
\cdot,\cdot\rangle_{N}: \
S^{B_\Delta}_2(\mathfrak{U}_{\mathfrak{n}^+,
\mathfrak{p}^N},A)^{Y}\times
S_2^{B_\Delta}(\mathfrak{U}_{\mathfrak{n}^+,
\mathfrak{p}^N},A)^Y\rightarrow A $$ by
$$ \langle f,g \rangle_{N} =\sum_b f(b)g(b\tau_N)\sharp(B^\times \cap
b\mathfrak{U}_{\mathfrak{n}^+, \mathfrak{p}^N} b^{-1}/F^\times)^{-1}
$$ where $b$ runs over the Shimura set $B_\Delta^\times\backslash
\widehat{B}^\times_\Delta/\mathfrak{U}_{\mathfrak{n}^+,
\mathfrak{p}^N}$. The action of
$\mathbb{T}_{B_\Delta}(\mathfrak{n}^+, \mathfrak{p}^N)$ is
self-adjoint with respect to this pairing.

For each $g\in S^{B_\Delta}_2(\mathfrak{U}_{\mathfrak{n}^+,
\mathfrak{p}^N}, \mathcal{O}_{f,n})^Y$ define the map
$$ \psi_g: S^{B_\Delta}_2(\mathfrak{U}_{\mathfrak{n}^+,
\mathfrak{p}^N}, \mathcal{O}_f)^Y\rightarrow \mathcal{O}_{f,n},
\hskip 10pt h \mapsto \langle g,
h\rangle_{\mathfrak{U}_{\mathfrak{n}^+, \mathfrak{p}^N}}. $$ Via the
identity $$ S^{B_\Delta}_2(\mathfrak{U}_{\mathfrak{n}^+,
\mathfrak{p}^N}, \mathcal{O}_{f,n})^Y\cong
A[B_\Delta^\times\backslash
\widehat{B}^\times_\Delta/Y\mathfrak{U}_{\mathfrak{n}^+,
\mathfrak{p}^N}], $$ we have
\begin{equation}\label{eq:psi-g} \psi_g ( x \:\tau_N) = g (x).
\end{equation}

\begin{prop} \label{prop:revise} $($\cite[Proposition 5.1.2]{Wang}$)$ Assume  $\mathrm{CR}^+$ and
$(\mathfrak{n}^+\text{-}\mathrm{DT})$. If $n\leq N$, and if
$(\Delta,g)$ is an $(N,n)$-admissible form, then we have an
isomorphism
$$ \psi_g: S^{B_\Delta}_2(\mathfrak{U}_{\mathfrak{n}^+,
\mathfrak{p}^N}, \mathcal{O}_f)^Y / \mathcal{I}_g \xrightarrow{\sim}
\mathcal{O}_{f,n}. $$
\end{prop}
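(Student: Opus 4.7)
The plan is to descend $\psi_g$ to $M/\mathcal{I}_g M$ using Hecke self-adjointness of $\langle\cdot,\cdot\rangle_N$, and then reduce bijectivity to a freeness statement for the localized quaternionic Hecke module. Write $M:=S^{B_\Delta}_2(\mathfrak{U}_{\mathfrak{n}^+, \mathfrak{p}^N}, \mathcal{O}_f)^Y$ and $\mathbb{T}:=\mathbb{T}_{B_\Delta}(\mathfrak{n}^+, \mathfrak{p}^N)\otimes \mathcal{O}_f$. Since $\lambda_g:\mathbb{T}\to\mathcal{O}_{f,n}$ is a surjective $\mathcal{O}_f$-algebra map, $\mathbb{T}/\mathcal{I}_g\cong\mathcal{O}_{f,n}$; as $\mathcal{O}_{f,n}$ is local, the ideal $\mathcal{I}_g$ sits inside a unique maximal ideal $\mathfrak{m}\subset\mathbb{T}$, namely the one cut out by $\lambda_{f,N}\bmod\omega$.

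\textbf{Descent and surjectivity.} For any $T\in\mathcal{I}_g$ and $h\in M$, Hecke-adjointness of $\langle\cdot,\cdot\rangle_N$ gives
$$\psi_g(Th)=\langle g,Th\rangle_N=\langle Tg,h\rangle_N=\lambda_g(T)\langle g,h\rangle_N=0$$
in $\mathcal{O}_{f,n}$, so $\psi_g$ factors through $M/\mathcal{I}_g M$. For surjectivity, I would pass to the divisor-group description $M\otimes\mathcal{O}_{f,n}\cong\mathcal{O}_{f,n}[B_\Delta^\times\backslash\widehat{B}_\Delta^\times/Y\mathfrak{U}_{\mathfrak{n}^+, \mathfrak{p}^N}]$ and apply (\ref{eq:psi-g}): pick $b$ with $g(b)\not\equiv 0\pmod\omega$ (possible since $g\not\equiv 0\bmod\omega$); then $\psi_g([b\tau_N])=g(b)$ is a unit in $\mathcal{O}_{f,n}$, so Nakayama's lemma yields surjectivity onto $\mathcal{O}_{f,n}$.

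\textbf{Injectivity --- the main obstacle.} Since $\mathcal{I}_g\subset\mathfrak{m}$ and $\mathbb{T}/\mathcal{I}_g$ is local, $M/\mathcal{I}_g M$ is supported entirely at $\mathfrak{m}$, hence equals $M_\mathfrak{m}/\mathcal{I}_g M_\mathfrak{m}$. The essential input is a freeness/multiplicity-one theorem of Taylor--Wiles--Fujiwara--Kisin type for definite quaternionic Hilbert modular forms: under $(\mathrm{CR}^+)$ and $(\mathfrak{n}^+\text{-}\mathrm{DT})$, the localization $M_\mathfrak{m}$ is free of rank one over $\mathbb{T}_\mathfrak{m}$. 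Each hypothesis plays a familiar role: $(\mathrm{CR}^+\text{-}2)$ supplies the ``big image'' that produces abundant Taylor--Wiles primes; $(\mathrm{CR}^+\text{-}1)$ provides enough residue cardinality to close the patching argument; $(\mathrm{CR}^+\text{-}3,4)$ together with $(\mathfrak{n}^+\text{-}\mathrm{DT})$ rigidify the local components at the bad primes so that the universal minimal deformation ring matches $\mathbb{T}_\mathfrak{m}$ and level-raising at the $n$-admissible primes dividing $\Delta/\mathfrak{n}^-$ is unobstructed; the inequality $n\leq N$ keeps the $\mathfrak{p}$-level large enough to carry the mod-$\omega^n$ eigensystem. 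Granted freeness, $M_\mathfrak{m}/\mathcal{I}_g M_\mathfrak{m}\cong\mathbb{T}_\mathfrak{m}/\mathcal{I}_g\cong\mathcal{O}_{f,n}$, and the surjection from the previous step between two free $\mathcal{O}_{f,n}$-modules of rank one is therefore an isomorphism. The freeness itself is the nontrivial content of \cite[Proposition 5.1.2]{Wang}, whose proof builds on Fujiwara's thesis and Kisin-style patching; I would invoke it as a black box.
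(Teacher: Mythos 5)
Your overall architecture matches the paper's: descend $\psi_g$ through $\mathcal{I}_g$ by self-adjointness, get surjectivity from $g\not\equiv 0\pmod\omega$ via (\ref{eq:psi-g}), and get injectivity from a multiplicity statement for the localized quaternionic Hecke module. The difference lies in which multiplicity statement you invoke and where. You posit freeness of rank one of $S^{B_\Delta}_2(\mathfrak{U}_{\mathfrak{n}^+,\mathfrak{p}^N},\mathcal{O}_f)^Y_\mathfrak{m}$ over $\mathbb{T}_\mathfrak{m}$ as a black box; the paper uses only the weaker statement that is actually available in \cite[Theorem 9.2.4]{Wang}, namely cyclicity of the \emph{weight-$k$, prime-to-$p$-level} module $S^{B_\Delta}_k(\mathfrak{U}_{\mathfrak{n}^+},\mathcal{O}_f)^Y_\mathfrak{m}$, transported to weight $2$ and level $\mathfrak{p}^N$ via the ordinary control isomorphism $S^{B_\Delta}_2(\mathfrak{U}_{\mathfrak{n}^+,\mathfrak{p}^N},\mathcal{O}_f)^Y_\mathfrak{m}/(P_k,\omega^N)\simeq S^{B_\Delta}_k(\mathfrak{U}_{\mathfrak{n}^+},\mathcal{O}_f)^Y_\mathfrak{m}/(\omega^N)$, where $P_k\subset\mathcal{I}_g$ is the diamond-operator ideal. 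Cyclicity is all that is needed: it makes $M/\mathcal{I}_g M$ a quotient of $\mathbb{T}/\mathcal{I}_g\cong\mathcal{O}_{f,n}$, and your surjectivity step then forces $\psi_g$ to be an isomorphism, with no appeal to $R=\mathbb{T}$ or faithfulness. Two points you should repair. First, your explanation of the hypothesis $n\leq N$ is off target: it is used precisely to reduce the control isomorphism above from mod $\omega^N$ to mod $\omega^n$ (this reduction is the entire content of the generalization of \cite[Proposition 5.1.2]{Wang} from $n=N$ to $n\leq N$, which is what this proposition adds). Second, be cautious about asserting full freeness at the non-minimal discriminant $\Delta\supsetneq\mathfrak{n}^-$ as "familiar" Taylor--Wiles output: multiplicity-one statements after level raising are exactly the territory where Ihara's lemma usually intervenes, and the point of this paper is to avoid it; the cyclicity statement of \cite[Theorem 9.2.4]{Wang} is the safe input here.
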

\begin{proof} When $n=N$, this is \cite[Proposition
5.1.2]{Wang}. For the general case $n\leq N$, we only need to
slightly adjust the proof of \cite[Proposition 5.1.2]{Wang}. Let
$P_k$ be the ideal $\{ \langle a\rangle -a^{\frac{k-1}{2}}: a\in
\mathcal{O}^\times_{F_\mathfrak{p}}\}$ which is clearly contained in
$\mathcal{I}_g$, and let $\mathfrak{m}$ be the maximal ideal
containing $\mathcal{I}_g$. In loc. cit. it is showed that
$$ S^{B_\Delta}_2(\mathfrak{U}_{\mathfrak{n}^+,
\mathfrak{p}^N}, \mathcal{O}_f)^Y_\mathfrak{m} / (P_k, \omega^N)
\simeq S^{B_\Delta}_k(\mathfrak{U}_{\mathfrak{n}^+},
\mathcal{O}_f)_\mathfrak{m}^Y/(\omega^N).
$$ Since $n\leq N$, it follows that $$ S^{B_\Delta}_2(\mathfrak{U}_{\mathfrak{n}^+,
\mathfrak{p}^N}, \mathcal{O}_f)^Y_\mathfrak{m} / (P_k, \omega^n)
\simeq S^{B_\Delta}_k(\mathfrak{U}_{\mathfrak{n}^+},
\mathcal{O}_f)_\mathfrak{m}^Y/(\omega^n).
$$ By \cite[Theorem 9.2.4]{Wang} $S^{B_\Delta}_k(\mathfrak{U}_{\mathfrak{n}^+}, \mathcal{O}_f)^Y_\mathfrak{m}$ is a cyclic
$\mathbb{T}_{B_\Delta}(\mathfrak{n}^+)$-module. Thus
$S^{B_\Delta}_2(\mathfrak{U}_{\mathfrak{n}^+, \mathfrak{p}^N},
\mathcal{O}_f)^Y/ \mathcal{I}_g$ is generated by some $h$ as a
$\mathbb{T}_{B_\Delta}(\mathfrak{n}^+,\mathfrak{p}^N)$-module. Since
$\psi_g$ is surjective, $\psi_g(h)\in \mathcal{O}^\times_{f,n}$. Now
our assertion follows from that
$\mathbb{T}_{B_\Delta}(\mathfrak{n}^+,\mathfrak{p}^N)$ is
self-adjoint with respect to $\langle\cdot,\cdot\rangle_{N}$.
\end{proof}

\subsection{Gross points and Theta elements} \label{ss:gross-theta}

We define Gross points and Theta elements following \cite{Wang}.

If $\Delta$ in Section \ref{ss:n-adm} is a product of inert primes
in $K$, then $K$ can be embedded into $B_\Delta$. We choose a basis
of $B=K\oplus K J$ over $K$ such that

$\bullet$ $J^2=\beta\in F^\times$ is totally negative, and
$Jt=\bar{t}J$ for $t\in K$;

$\bullet$ $\beta\in (\mathcal{O}_{F_v}^\times)^2$ for all
$v|\mathfrak{p}\mathfrak{n}^+$ and $\beta\in
\mathcal{O}_{F_v}^\times$ for all $v|D_{K/F}$.

To define the Gross point we need to choose a precise isomorphism
$$\prod_{v\nmid \Delta} i_v: \widehat{B}^{(\Delta)}\rightarrow
M_2(\widehat{F}^{(\Delta)}).$$ For this we fix a CM type $\Sigma$ of
$K$. Choose an element $\vartheta$ such that

$\bullet$ $\mathrm{Im}(\sigma(\vartheta))>0$ for all $\sigma\in
\Sigma$;

$\bullet$ $\{1, \vartheta_v\}$ is a basis of $\mathcal{O}_{K_v}$
over $\mathcal{O}_{F_v}$ for all
$v|D_{K/F}\mathfrak{p}\mathfrak{n}$;

$\bullet$ $\vartheta$ is local uniformizer at primes $v$ that are ramified in
$K$.

\noindent Then we require that for each
$v|\mathfrak{p}\mathfrak{n}^+$, $i_v$ is given by
$$ i_v(\vartheta) =\wvec{T(\vartheta)}{-N(\vartheta)}{1}{0}, \hskip 5pt i_v(J) =\sqrt{\beta} \wvec{-1}{T(\vartheta)}{0}{1}
 $$ where $T(\vartheta)=\vartheta+\bar{\vartheta}$ and $N(\vartheta)=\vartheta\bar{\vartheta}$; for $v\nmid \mathfrak{p}\mathfrak{n}^+\Delta$,
$i_v(\mathcal{O}_{K_v})\subset M_2(F_v)$.

Now we define the Gross points. For $v|\mathfrak{n}^+$ we put
$\zeta_v=(\vartheta-\bar{\vartheta})^{-1}\wvec{\vartheta}{\bar{\vartheta}}{1}{1}\in
\mathrm{GL}_2(K_w)=\mathrm{GL}_2(F_v)$ if $v=w\bar{w}$ in $K$. If
$m$ is a positive integer we put
$$ \zeta_\mathfrak{p}^{(m)} =
\left\{\begin{array}{ll}
\wvec{\vartheta}{-1}{1}{0}\wvec{\omega_\mathfrak{p}^m}{0}{0}{1}  \in
\mathrm{GL}_2(K_\mathfrak{P})=\mathrm{GL}_2(F_\mathfrak{p}) & \text{
if } \mathfrak{p}=\mathfrak{P}\bar{\mathfrak{P}}, \\
\wvec{0}{1}{-1}{0}\wvec{\omega_\mathfrak{p}^m}{0}{0}{1} & \text{ if
}\mathfrak{p} \text{ is inert}.
\end{array}\right. $$ Set $\zeta^{(m)}=\zeta_{\mathfrak{p}}^{(m)}\prod_{v|\mathfrak{n}^+}\zeta_v \in
\widehat{B}_\Delta^{\times}$.

Let $R_m$ be the order $\mathcal{O}_F+\mathfrak{p}^m\mathcal{O}_K$
of $K$. Then $(\zeta^{(m)})^{-1}\widehat{R}_m^\times
\zeta^{(m)}\subset \mathfrak{U}_{\mathfrak{n}^+,\mathfrak{p}^N}$.
Thus we have a map \begin{eqnarray*} x_m: K^\times\backslash
\widehat{K}^\times/ Y \widehat{R}_m^\times &\rightarrow&
B_\Delta^\times\backslash \widehat{B}_\Delta^\times/ Y
\mathfrak{U}_{\mathfrak{n}^+,\mathfrak{p}^N} \\
a & \mapsto & [a\zeta^{(m)}] . \end{eqnarray*}

If $\mathscr{D}=(\Delta, g)$ is an $(N,n)$-admissible form $(n\leq
N)$, for each $m\geq n$ we define
$$\Theta_m(g)=\frac{1}{\alpha_\mathfrak{p}^m}\sum_{[a]_m\in G_m}
g(x_m(a))[a]_m \in \mathcal{O}_{f,n}[G_m],
$$ where $a\mapsto [a]_m$ is the map induced by the normalized
geometrical reciprocity law. These elements $\Theta_m(g)$ are
compatible in the sense that
$\pi_{m+1,m}(\Theta_{m+1}(g))=\Theta_m(g)$. Here, $\pi_{m+1,m}$ is
the quotient map $\mathcal{O}_{f,n}[\Gamma_{m+1}]\rightarrow
\mathcal{O}_{f,n}[\Gamma_m] $.

Let $\pi_m:G_m\rightarrow \Gamma_m$ be the natural map, and put
$$ \theta_m(g)=\pi_m(\Theta_m(g))\in \mathcal{O}_{f,n}[\Gamma_m].$$
Then $\theta_m(g)$ are compatible and thus define an element
$\theta_\infty(g)$ of $ \mathcal{O}_{f,n}[[\Gamma]]$.

Now we restrict to the case $\Delta=\mathfrak{n}^-$, and put
$B=B_{\mathfrak{n}^-}$. Let $\widehat{R}_{\mathfrak{n}^+}$ be an
Eichler order in $B$ of level $\mathfrak{n}^+$.

By Jacquet-Langlands correspondence we find a
$\mathbb{C}_p$-automorphic representation $\pi'$ for the group
$G=\mathrm{Res}_{F/\mathbb{Q}}B^\times$ corresponding to $f$ (more
precisely $\jmath(f)$) and an eigenform $f_B\in
S_k^B(\widehat{R}^\times_{\mathfrak{n}^+}, \mathbb{C}_p)$ with the
property $T_vf_B=a_v(f)$ for $v\nmid \mathfrak{n}$ and
$U_vf_B=\alpha_v(f)f_B$ for $v|\mathfrak{n}$. Put
$$ \varphi_B(x)=\langle \rho_{k,\infty}(x_\infty)\mathbf{v}_0,
f_B(x^\infty) \rangle_k $$ where
$\mathbf{v}_0=X^{\frac{k-2}{2}}Y^{\frac{k-2}{2}}$. Then $\varphi_B$
is in the $\pi'$-part of the space of $\mathbb{C}_p$-automorphic
forms for $G$. We normalize $f_B$ such that $\varphi_B$ takes values
in $\mathcal{O}_f$ (enlarging $E_f$ if necessary) and is nonzero
modulo $\omega$.

Define the $\mathfrak{p}$-stabilization $\varphi^\dagger_B$ of
$\varphi_B$ as
$$ \varphi_B^\dagger = \varphi_B-\frac{1}{\alpha_\mathfrak{p}} \pi'(\wvec{1}{0}{0}{\omega_\mathfrak{p}})\varphi_B . $$
Then we define
$$\Theta_m(f)=\frac{1}{\alpha_\mathfrak{p}^m}\sum_{[a]_m\in G_m}
\varphi_B^\dagger(x_m(a))[a]_m \in \mathcal{O}_f[G_m].
$$  These elements $\Theta_m(f)$ are
compatible in the sense that
$\pi_{m+1,m}(\Theta_{m+1}(f))=\Theta_m(f)$ if $\pi_{m+1,m}$ denotes the quotient map $G_{m+1}\rightarrow G_m$. Then we define
$\theta_m(f)$ and $\theta(f)$ as above.

Finally we define the $p$-adic $L$-adic function $L_p(K_\infty,f)$
by $L_p(K_\infty, f)=\theta(f)^2$. Hung \cite{Hun} proved an
interpolation formula for $L_p(K_\infty, f)$. We do not state it
here, since we will not use it.

\begin{prop} $($\cite[Theorem 6.9]{Hun}$)$ \label{prop-Hung}
The analytic $\mu$-invariant of $L_p(K_\infty, f)$ is nonzero, i.e.
$L_p(K_\infty, f) \hskip 5pt / \hskip -10pt \equiv  0 \mod \omega$. In particular,
$L_p(K_\infty,f)\neq 0$.
\end{prop}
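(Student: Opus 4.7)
My plan is to deduce this directly from Hung's theorem cited in the statement, but let me sketch what such a proof entails so it is clear that the hypotheses carried over in the setup of the present paper are indeed all that is needed to invoke it. By the very definition $L_p(K_\infty, f) = \theta(f)^2$, so it suffices to establish the sharper claim that $\theta(f) \not\equiv 0 \pmod{\omega}$ as an element of $\mathcal{O}_f[[\Gamma]]$. Equivalently, I would like to exhibit a continuous character $\chi:\Gamma\to \overline{\mathbb{Q}}_p^\times$ (which I may as well take of finite order) such that $\chi(\theta(f))$ is a $p$-adic unit.

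The first step is the routine one: for a finite-order anticyclotomic character $\chi$ factoring through $\Gamma_m$, write out $\chi(\theta(f))$ using the definition of $\theta_m(f)$. Up to a $\mathfrak{p}$-Euler factor coming from the $\mathfrak{p}$-stabilization $\varphi_B^\dagger$, it equals a toric sum
\[
\frac{1}{\alpha_\mathfrak{p}^m}\sum_{[a]_m\in G_m}\varphi_B(x_m(a))\,\chi(\pi_m([a]_m)).
\]
Hypothesis $(\mathrm{PO})$ together with ordinarity ensures that both the normalizing factor $\alpha_\mathfrak{p}^m$ and the stabilization adjustment remain units modulo $\omega$ once $m$ is large, so the mod $\omega$ nonvanishing of $\chi(\theta(f))$ is reduced to the mod $\omega$ nonvanishing of the character sum attached to $\varphi_B$ itself.

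The main obstacle is the second step: proving the existence of at least one $\chi$ for which this character sum is a unit modulo $\omega$. This is where Hung's work (extending Chida--Hsieh and Vatsal) enters. The argument uses that the Galois action on the Gross points $\{x_m(a)\}$ in the finite Shimura set $B^\times\backslash \widehat{B}^\times/Y\mathfrak{U}_{\mathfrak{n}^+,\mathfrak{p}^N}$ is encoded by Hecke operators at $\mathfrak{p}$-power primes, so that the image of $\{x_m(a)\}_{m,a}$ equidistributes in a sufficiently robust sense over the reduction modulo $\omega$ of $\varphi_B$. The irreducibility clause $(\mathrm{CR}^+\text{-}2)$ (the restriction of $\bar\rho_f$ to $G_{F(\sqrt{p^*})}$ is irreducible) is precisely what guarantees that the relevant mod-$p$ Galois orbit is large enough to produce a nontrivial character sum, via a Chebotarev/Rankin--Selberg argument in Hung's paper.

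Combining the two steps yields $\theta(f)\not\equiv 0 \pmod{\omega}$. Squaring gives $L_p(K_\infty,f)\not\equiv 0\pmod{\omega}$, which is the assertion about the $\mu$-invariant; in particular $L_p(K_\infty,f)\ne 0$. In short, the proof consists of matching the definitions of $\theta(f)$ and $L_p(K_\infty,f)$ used here with those in \cite{Hun}, verifying that the hypotheses $(\mathrm{CR}^+)$ and $(\mathrm{PO})$ imposed throughout the present paper imply those of Hung's Theorem 6.9, and then quoting that theorem.
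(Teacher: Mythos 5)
Your proposal is correct and matches the paper exactly: the paper offers no proof of this proposition at all, simply quoting \cite[Theorem 6.9]{Hun}, and your argument likewise reduces to invoking that theorem after matching definitions and hypotheses. (One cosmetic remark: the nonvanishing of the toric character sum in Hung's and Chida--Hsieh's work rests on Vatsal's equidistribution of CM points via Ratner's theorem rather than a Chebotarev/Rankin--Selberg argument, but since the result is being quoted rather than reproved this does not affect anything.)
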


\begin{prop}\label{prop:exist-adm} $($\cite[Proposition 7.4.2]{Wang}$)$
If $\Delta=\mathfrak{n}^-$, there exists an $(N,N)$-admissible form
$\mathscr{D}_N=(\mathfrak{n}^-, f_N^\dagger)$ such that
$$\theta_m(\mathscr{D}_N)\equiv \theta_m(f) \hskip 10pt (\mathrm{mod}\ \omega^N)
$$ for each $m\geq N$. In particular $$\theta(\mathscr{D}_N)\equiv \theta(f) \hskip 10pt (\mathrm{mod}\ \omega^N)
$$
\end{prop}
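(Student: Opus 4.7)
The plan is to construct $f_N^\dagger$ as a weight-2 reduction (modulo $\omega^N$) of the $\mathfrak{p}$-stabilized form $\varphi_B^\dagger$ attached to $f$, in the spirit of Hida's passage from an ordinary weight-$k$ eigenform to a family of weight-2 eigenforms with growing $\mathfrak{p}$-level. Since we have already chosen $f_B\in S_k^B(\widehat{R}^\times_{\mathfrak{n}^+},\mathcal{O}_f)$ such that $\varphi_B$ is $\mathcal{O}_f$-valued and nonzero mod $\omega$, the object $\varphi_B^\dagger=\varphi_B-\alpha_\mathfrak{p}^{-1}\pi'\wvec{1}{0}{0}{\omega_\mathfrak{p}}\varphi_B$ is $U_\mathfrak{p}$-ordinary with eigenvalue $\alpha_\mathfrak{p}(f)\in\mathcal{O}_f^\times$, and the same function appears in the definition of $\theta_m(f)$. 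The task is to show that the reduction $\bar\varphi_B^\dagger\in \mathcal{O}_{f,N}$-valued functions on $B^\times\backslash\widehat{B}^\times$ actually lies in $S_2^B(\mathfrak{U}_{\mathfrak{n}^+,\mathfrak{p}^N},\mathcal{O}_{f,N})^Y$; granting this, we set $f_N^\dagger := \bar\varphi_B^\dagger$.

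The key verification is the transformation law under $\mathfrak{U}_{\mathfrak{n}^+,\mathfrak{p}^N}$. For $u\in\mathfrak{U}_{\mathfrak{n}^+,\mathfrak{p}^N}$, the component $u_\mathfrak{p}$ is congruent to $\wvec{a}{b}{0}{a}$ mod $\mathfrak{p}^N$ with $a\in\mathcal{O}_{F_\mathfrak{p}}^\times$. I would compute $\rho_k(u_\mathfrak{p})^{-1}$ acting on $L_k(\mathcal{O}_f)$ and check, using the twist $\det^{(2-k)/2}$ and the unipotent structure mod $\mathfrak{p}^N$, that the value $\langle \rho_{k,\infty}(x_\infty)\mathbf{v}_0, f_B(xu)\rangle_k$ agrees with $\langle \rho_{k,\infty}(x_\infty)\mathbf{v}_0, f_B(x)\rangle_k$ modulo $\omega^N$; here $(\mathrm{CR}^+$-$1)$ (in particular $p>k+1$) ensures that the binomial coefficients and $(k-2)/2$-th power maps on $1+\mathfrak{p}^N$ behave well. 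The $\mathfrak{p}$-stabilization is precisely the device that ensures ordinariness: on the ordinary part the action of the upper-triangular part of $\mathfrak{U}_{\mathfrak{n}^+,\mathfrak{p}^N}$ factors through the character $\langle a\rangle\mapsto a^{(2-k)/2}$, which becomes trivial on $L_k(\mathcal{O}_{f,N})$ under the weight-2 normalization. The $Y$-invariance is inherited from that of $\varphi_B^\dagger$ (trivial central character).

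With $(\Delta, f_N^\dagger)=(\mathfrak{n}^-, \bar\varphi_B^\dagger)$ in hand I would verify the axioms of Definition \ref{defn:adm}: (i) $\Delta/\mathfrak{n}^-=(1)$ is vacuously a product of an even number of $N$-admissible primes, so the first bullet holds trivially; (ii) the nonvanishing $f_N^\dagger\not\equiv 0\mod\omega$ follows from the normalization $\varphi_B\not\equiv 0\mod\omega$ together with $\alpha_\mathfrak{p}(f)\in\mathcal{O}_f^\times$, so stabilization does not kill the reduction; (iii) $\lambda_{f_N^\dagger}\equiv \lambda_{f,N}\mod\omega^N$ because $\varphi_B^\dagger$ is literally an eigenvector with eigenvalues $a_v(f)$ for $T_v$, $\alpha_v(f)$ for $U_v$, and $a^{(2-k)/2}$ for $\langle a\rangle$, matching the recipe for $\lambda_{f,N}$ on the nose. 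The congruence $\theta_m(\mathscr{D}_N)\equiv\theta_m(f)\mod\omega^N$ is then tautological: both sums $\frac{1}{\alpha_\mathfrak{p}^m}\sum_{[a]_m} (\cdot)(x_m(a))[a]_m$ over $G_m$ (then projected via $\pi_m$) are built from the same Gross points and the same function $\varphi_B^\dagger$ reduced mod $\omega^N$, and $\alpha_\mathfrak{p}$ is a $p$-adic unit so division commutes with the reduction.

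The main obstacle is the transformation-law verification in the second paragraph, i.e.\ showing that the naive reduction $\bar\varphi_B^\dagger$ really is left-$\mathfrak{U}_{\mathfrak{n}^+,\mathfrak{p}^N}$-invariant in the weight-2 sense modulo $\omega^N$. Every other ingredient (nonvanishing, Hecke eigenvalues, compatibility of Theta elements) is essentially formal once this transformation property is established, and is the place where the hypothesis $p>k+1$ from $(\mathrm{CR}^+$-$1)$ and the ordinarity of $f$ at $\mathfrak{p}$ are genuinely used.
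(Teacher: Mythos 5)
The paper offers no proof of this proposition; it is imported wholesale from \cite[Proposition 7.4.2]{Wang} (following Chida--Hsieh), and your outline does follow the construction used there: take $f_N^\dagger$ to be the mod-$\omega^N$, weight-two incarnation of the $\mathfrak{p}$-stabilization $\varphi_B^\dagger$, after which the congruence of theta elements is indeed tautological since both sides are the same sum of values $\varphi_B^\dagger(x_m(a))$ reduced mod $\omega^N$. So the route is right, but the two steps you defer are not equally soft, and one of them is supported by an argument that does not work.

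On the transformation law: as literally written the claim is false for the unstabilized $f_B$. For $u\in\mathfrak{U}_{\mathfrak{n}^+,\mathfrak{p}^N}$ with $u_\mathfrak{p}\equiv\wvec{a}{b}{0}{a}$ one computes $\rho_k(u_\mathfrak{p})\mathbf{v}_0\equiv X^{\frac{k-2}{2}}\bigl((b/a)X+Y\bigr)^{\frac{k-2}{2}}$, which for $k\geq 4$ and $b\not\equiv 0$ is not congruent to $\mathbf{v}_0$ even mod $\omega$; so $\langle\mathbf{v}_0,f_B(xu)\rangle_k=\langle\rho_k(u_\mathfrak{p})\mathbf{v}_0,f_B(x)\rangle_k$ genuinely differs from $\langle\mathbf{v}_0,f_B(x)\rangle_k$. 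The invariance holds only for the stabilized vector, and the mechanism is not that a character ``becomes trivial'' but the contraction property of $U_\mathfrak{p}$: one writes $f_B^\dagger=\alpha_\mathfrak{p}^{-N}U_\mathfrak{p}^Nf_B^\dagger$ and checks that, modulo $\omega^N$, the image of $U_\mathfrak{p}^N$ in $L_k$ lies in a submodule against which $\rho_k(u_\mathfrak{p})\mathbf{v}_0-\mathbf{v}_0$ pairs to zero. You correctly identify this as the crux, but the proposition essentially \emph{is} this lemma, so leaving it as a gesture leaves the proof unwritten.

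The genuine gap is the nonvanishing $f_N^\dagger\not\equiv 0\pmod\omega$. This does \emph{not} follow from $\varphi_B\not\equiv 0\pmod\omega$ together with $\alpha_\mathfrak{p}\in\mathcal{O}_f^\times$: the stabilization subtracts a unit multiple of the translate $\pi'\bigl(\wvec{1}{0}{0}{\omega_\mathfrak{p}}\bigr)\varphi_B$, and nothing formal rules out $\varphi_B\equiv\alpha_\mathfrak{p}^{-1}\pi'\bigl(\wvec{1}{0}{0}{\omega_\mathfrak{p}}\bigr)\varphi_B\pmod{\omega}$, i.e.\ the reduction of $\varphi_B$ being killed by the stabilization. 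Excluding this is a real lemma: for $k=2$ it is exactly where Hypothesis $(\mathrm{PO})$ enters (if $a_\mathfrak{p}^2\equiv 1$ the stabilized form can vanish mod $\omega$), and for $k>2$ one uses the distinct valuations of the two roots of the Hecke polynomial together with $p>k+1$. Your proposal never invokes $(\mathrm{PO})$ at this point, which signals that the step has been asserted rather than proved; alternatively one can deduce the nonvanishing from Proposition \ref{prop-Hung}, but that is a substantial input, not a formality.
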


\section{Selmer groups}

For the convenience of readers, we recall the definition of Selmer
groups. See \cite{BD05,CH15,Longo,Wang} for more details.

\subsection{Basic properties of Selmer groups}

Let $L$ be a finite extension of $F$. For each place $\mathfrak{l}$
of $F$ and each discrete $G_F$-module $M$, we put
$$ H^1(L_\mathfrak{l},M)= \bigoplus_{\lambda|\mathfrak{l}} H^1(L_\lambda, M), \hskip
10pt  H^1(I_{L_\mathfrak{l} }, M) = \bigoplus_{\lambda|\mathfrak{l}
} H^1(I_{L_\mathfrak{l}}, M), $$ where $\lambda$ runs through all
places of $L$ above $\mathfrak{l}$. Denote by
$$\res_\mathfrak{l}: H^1(L,M)\rightarrow H^1(L_\mathfrak{l}, M)$$ the restriction
map at $\mathfrak{l}$.

We define the {\it finite part} $H^1(L_\mathfrak{l},M)$ as
$$ H^1_\fin(L_\mathfrak{l},M) = \ker(H^1(L_\mathfrak{l},M)\rightarrow H^1(I_{L_\mathfrak{l}}, M))
$$ and the {\it singular quotient} as
$$ H^1_\sing(L_\mathfrak{l},M) = H^1(L_\mathfrak{l},M)/H^1_\fin(L_\mathfrak{l},M) .
$$ From the Hochschild-Serre spectral sequence  $$ E^{ij}_2=
H^i(G_{L_\lambda}/I_{L_\lambda}, H^j(I_{L_\lambda},
M))\Longrightarrow H^{i+j}(G_{L_\lambda},M) $$  we obtain the the
following exact sequence
\[ \xymatrix{  \bigoplus_{\lambda|\mathfrak{l}} H^1(G_{L_\lambda}/I_{L_\lambda}, M^{I_{L_\lambda}})
\ar[r] & H^1(L_\mathfrak{l},M) \ar[r]^{\hskip -30pt
\partial_\mathfrak{l}} &
 \bigoplus_{\lambda|\mathfrak{l}} H^1(I_{L_\lambda}, M)^{G_{L_\lambda}/I_{L_\lambda}} . } \]
Then $H^1_\fin(L_\mathfrak{l},M)$ coincides with the image of the
map $$ \bigoplus_{\lambda|\mathfrak{l}}
H^1(G_{L_\lambda}/I_{L_\lambda}, M^{I_{L_\lambda}}) \rightarrow
H^1(L_\mathfrak{l}, M), $$  and $H^1_\sing(L_\mathfrak{l},M)$ is
naturally isomorphic to the image of the residue map
$\partial_{\mathfrak{l}}.$ By abuse of notation, the composition map
$\partial_\mathfrak{l}\circ \res_\mathfrak{l}$ is also denoted by
$\partial_\mathfrak{l}$. If an element $s\in H^1(G_L, M)$ satisfies
$\partial_\mathfrak{l}(s)=0$, then $\res_{\mathfrak{l}}(s)$ is in
$H^1_\fin(L_\mathfrak{l}, M)$ and we will denote it as
$v_{\mathfrak{l}}(s)$.

If $\mathfrak{l}|\mathfrak{n}^-$, if $\mathfrak{l}$ is
$n$-admissible, or if $\mathfrak{l}|p$, then the restriction
$\rho^*_f|_{G_{F_\mathfrak{l}}}$ of $A_{f,n}$ to
$G_{F_\mathfrak{l}}$ sits in a $G_{F_\mathfrak{l}}$-equivariant
short exact sequence of free $\mathcal{O}_{f,n}$-modules
\[\xymatrix{ 0 \ar[r] & F^+_\mathfrak{l}A_{f,n} \ar[r] & A_{f,n} \ar[r] & F^-_\mathfrak{l}A_{f,n} \ar[r] & 0
, }\] where $G_{F_\mathfrak{l}}$ acts on $F^+_\mathfrak{l}A_{f,n}$
by $\pm \epsilon$ (resp. $\chi^{-1}\epsilon^{k/2}$) if
$\mathfrak{l}|\mathfrak{n}^{-}$ or $\mathfrak{l}$ is
$n$-admissible, (resp. $\mathfrak{l}|p$). Here, when $\mathfrak{l}|p$, $\chi$ is the
unramified character of $G_{F_{\mathfrak{l}}}$
such that $\chi(\mathrm{Frob})=\alpha_\mathfrak{l}$, where
$\alpha_\mathfrak{l}$ is the unit root of the Hecke polynomial
$x^2-a_\mathfrak{l}(f) x +N(\mathfrak{l})^{k-1}$. Then we define the
{\it ordinary part} of $H^1_\ord(L_\mathfrak{l}, A_{f,n})$ to be the
image of
$$ H^1(G_{L_\mathfrak{l}}, F^+_\mathfrak{l}A_{f,n})\rightarrow H^1(G_{L_\mathfrak{l}}, A_{f,n}).$$ We define $H^1_\ord(L_\mathfrak{l},T_{f,n})$ similarly.

Let $\Delta$ be a square free product of prime ideals in
$\mathcal{O}_F$ such that $\Delta/\mathfrak{n}^-$ is a product of
$n$-admissible prime ideals. Let $S$ be a finite (maybe empty) set
of places of $F$ that are coprime to $p\Delta \mathfrak{n}$.

\begin{defn} For $M=A_{f,n}$ or $T_{f,n}$ we define the Selmer group
$\Sel^S_{\Delta}(G_L, M)$ to be the the group of elements $s\in
H^1(G_L,M)$ such that

$\bullet$ $\res_\mathfrak{l}(s)\in H^1_\fin(L_\mathfrak{l},M)$ if
$\mathfrak{l}\nmid p\Delta$ and $\mathfrak{l}\notin S$;

$\bullet$ $\res_\mathfrak{l}(s)\in H^1_\ord(L_\mathfrak{l}, M)$ for
all $\mathfrak{l}| p\Delta$;

$\bullet$ $\res_\mathfrak{l}(s)$ is arbitrary if $\mathfrak{l}\in
S$.
\end{defn}

The group $\Gal(K_m/F)$ acts on $H^1(K_m, T_{f,n})$ and $H^1(K_m,
A_{f,n})$.

\begin{lem}\label{lem:preserve }
$\Gal(K_m/F)$ preserves $\Sel^S_{\Delta}(G_{K_m}, T_{f,n})$ and
$\Sel^S_{\Delta}(G_{K_m}, A_{f,n})$.
\end{lem}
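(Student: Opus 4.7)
The plan is to verify that the conjugation action of $\mathrm{Gal}(K_m/F)=G_F/G_{K_m}$ on $H^1(G_{K_m},M)$ (which is well-defined since $G_{K_m}\trianglelefteq G_F$) preserves each of the local conditions that cut out $\Sel^S_\Delta(G_{K_m},M)$, for $M=T_{f,n}$ or $A_{f,n}$.

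First I would fix $\sigma\in \mathrm{Gal}(K_m/F)$, lift it to $\tilde\sigma\in G_F$, and note that for any place $\lambda$ of $K_m$ over a place $\mathfrak{l}$ of $F$, conjugation by $\tilde\sigma$ sends $G_{K_{m,\lambda}}$ to $G_{K_{m,\sigma\lambda}}$ and $I_{K_{m,\lambda}}$ to $I_{K_{m,\sigma\lambda}}$. Consequently the induced isomorphism
\[
\tilde\sigma^*\colon H^1(K_{m,\lambda},M)\xrightarrow{\sim} H^1(K_{m,\sigma\lambda},M)
\]
sits in a commutative square with the corresponding isomorphism on $H^1$ of inertia, so it identifies the finite parts $H^1_\fin$ on both sides and likewise the singular quotients. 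Since $\sigma$ merely permutes the places $\lambda\mid\mathfrak{l}$, the condition $\res_{\mathfrak{l}}(s)\in H^1_\fin(K_{m,\mathfrak{l}},M)$ transforms into the same type of condition on $s^\sigma$, and similarly the ``arbitrary'' condition for $\mathfrak{l}\in S$ is manifestly stable.

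For the ordinary condition at $\mathfrak{l}\mid p\Delta$, the key observation is that the filtration $F^+_{\mathfrak{l}}M$ is a $G_{F_{\mathfrak{l}}}$-submodule attached to the place $\mathfrak{l}$ of $F$, not to any particular choice of $\lambda$ above it; in particular it is canonically the same submodule whether one restricts to $G_{K_{m,\lambda}}$ or to $G_{K_{m,\sigma\lambda}}$. Hence $\tilde\sigma^*$ carries $H^1(K_{m,\lambda},F^+_\mathfrak{l}M)\to H^1(K_{m,\sigma\lambda},F^+_\mathfrak{l}M)$, and the image of this in $H^1(K_{m,\cdot},M)$, namely $H^1_{\ord}$, is preserved.

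Combining these observations, $\tilde\sigma^*$ permutes the summands of $H^1(G_{K_m},M)=\bigoplus_\lambda H^1(K_{m,\lambda},M)$ via $\lambda\mapsto\sigma\lambda$ and carries each of the three kinds of local subspaces (finite, ordinary, total) to the same kind at the image place. Since the three Selmer conditions of Definition~3.1 are imposed uniformly on all $\lambda$ above a given $\mathfrak{l}$, the overall condition defining $\Sel^S_\Delta(G_{K_m},M)$ is intact, so $s\in\Sel^S_\Delta\Rightarrow s^\sigma\in\Sel^S_\Delta$. There is no real obstacle here; the only point requiring care is the observation that the ordinary filtration is indexed by places of $F$, which makes its $\mathrm{Gal}(K_m/F)$-equivariance automatic.
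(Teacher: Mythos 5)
Your proposal is correct and follows essentially the same route as the paper: the finite condition is preserved because conjugation by a lift of $\sigma$ carries inertia subgroups to inertia subgroups (the paper phrases this as the maximal unramified extension of $K_{m,\lambda}$ being Galois over $F_{\mathfrak{l}}$), and the ordinary condition is preserved because $F^+_{\mathfrak{l}}M$ is a $G_{F_{\mathfrak{l}}}$-stable submodule attached to the place $\mathfrak{l}$ of $F$. The paper's proof is just a terser version of the same two observations.
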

\begin{proof}
If $\mathfrak{l}\nmid p\Delta$ and if $\mathfrak{l}\notin S$, then
for each place $\lambda$ of $K_m$ above $\mathfrak{l}$, the largest
unramified extension of $K_{m,\lambda}$ is Galois over
$F_\mathfrak{l}$. Thus $\mathrm{Gal}(K_m/F)$ acts on
$$
\bigoplus_{\lambda|\mathfrak{l}}H^1(G_{K_{m,\lambda}}/I_{K_{m,\lambda}},
T_{f,n}^{I_{K_{m,\lambda}}}) $$ and thus preserves $
H^1_\fin(K_{m,\mathfrak{l}}, T_{f,n})$.

If $\mathfrak{l}|p\Delta$, then $\Gal(K_m/F)$ preserves
$H^1_\ord(K_{m,\mathfrak{l}}, T_{f,n})$. This follows from the fact
that $G_{F_\mathfrak{l}}$ preserves the subspace $F^+_\mathfrak{l}
T_{f,n}$ of $T_{f,n}$ used to define the ordinary part. \end{proof}

\begin{prop}\label{prop:tool} $($\cite[Proposition 7.5]{Longo}, \cite[Theorem
7.1.2]{Wang}$)$ Assume the condition  $(\mathrm{CR}^+)$  holds. Let
$t\leq n$ be positive integers. Let $\kappa$ be a nonzero element in
$H^1(K,T_{f,t})$. Then there exist infinitely many $n$-admissible
primes $\mathfrak{l}$ such that $\partial_\mathfrak{l}(\kappa)=0$
and the map $$ v_\mathfrak{l}: \langle\kappa \rangle\rightarrow
H^1_\fin(K_{\mathfrak{l}}, T_{f,t})
$$ is injective, where $\langle\kappa \rangle$ denotes the
$\mathcal{O}_f$-submodule of $H^1(K,T_{f,t})$ generated by $\kappa$.
\end{prop}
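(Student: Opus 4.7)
The plan is to combine inflation--restriction with a Chebotarev density argument, following the standard recipe for producing Kolyvagin-type primes in the Bertolini--Darmon framework.

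First I would set $L = K(T_{f,n}, \mu_{p^n})$, the smallest extension of $K$ over which $T_{f,n}$ becomes a trivial Galois module and which contains the $p^n$-th roots of unity. The inflation-restriction sequence reads
$$0 \longrightarrow H^1(\mathrm{Gal}(L/K),\, T_{f,t}) \longrightarrow H^1(K, T_{f,t}) \xrightarrow{\mathrm{res}} \mathrm{Hom}(G_L, T_{f,t})^{\mathrm{Gal}(L/K)}.$$
Under $(\mathrm{CR}^+$-$2)$, the residual image $\bar\rho_f(G_{F(\sqrt{p^*})})$ is large enough that the leftmost term vanishes by a standard group-cohomology argument (\emph{cf.} \cite[\S\,7]{Longo}, \cite{Wang}); typically one produces a central-like element of $\mathrm{Gal}(L/K)$ acting as a scalar $a \not\equiv 1 \pmod{\omega}$ on $T_{f,t}$ and invokes Sah's lemma. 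Hence $\kappa$ restricts to a nonzero $\mathrm{Gal}(L/K)$-equivariant homomorphism $\tilde\kappa\colon G_L \to T_{f,t}$.

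Next I would apply Chebotarev density to $L/F$, seeking primes $\mathfrak{l}$ whose Frobenius in $\mathrm{Gal}(L/F)$ is conjugate to a prescribed element $\sigma$. Writing $\sigma = c\cdot\tau$ with $c$ restricting to the nontrivial element of $\mathrm{Gal}(K/F)$ (which forces $\mathfrak{l}$ to be inert in $K$) and $\tau \in \mathrm{Gal}(L/K)$, I would choose $\tau$ so that $\sigma^2 = (c\tau)^2 \in \mathrm{Gal}(L/K)$ acts on $T_{f,n}$ with eigenvalues $\epsilon_\mathfrak{l} N(\mathfrak{l})$ and $\epsilon_\mathfrak{l}$ modulo $\omega^n$ --- this realises the characteristic-polynomial condition of $n$-admissibility for the prime $\lambda$ of $K$ above $\mathfrak{l}$. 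Further, by translating $\tau$ by an element of $G_L$ --- which does not affect the $n$-admissibility conditions but does vary $\tilde\kappa(\sigma^2)$ --- and using the nontriviality of $\tilde\kappa$, I can arrange that the image of $\kappa(\mathrm{Frob}_\lambda)$ in $T_{f,t}/(\mathrm{Frob}_\lambda-1)T_{f,t}$ is not annihilated by $\omega^{t-1}$, so that $v_\mathfrak{l}$ is injective on the cyclic submodule $\langle\kappa\rangle$.

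For each $\mathfrak{l}$ produced by Chebotarev, unramifiedness of $T_{f,t}$ at $\mathfrak{l}$ gives $\partial_\mathfrak{l}(\kappa)=0$ automatically, and the identification $H^1_{\mathrm{fin}}(K_\mathfrak{l}, T_{f,t}) \simeq T_{f,t}/(\mathrm{Frob}_\lambda-1)T_{f,t}$ via evaluation on Frobenius turns the injectivity of $v_\mathfrak{l}$ on $\langle\kappa\rangle$ into the nonvanishing statement arranged above. The main obstacle lies precisely in this Chebotarev step: one must simultaneously impose the admissible eigenvalue condition on $T_{f,n}$, inertness in $K$, and non-annihilation of $\tilde\kappa(\sigma^2)$, all inside a single $\mathrm{Gal}(L/F)$-conjugacy class. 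The hypotheses $(\mathrm{CR}^+$-$2)$ and $p>k+1$ from $(\mathrm{CR}^+$-$1)$ ensure that the image of Galois in $\mathrm{GL}_2(\mathcal{O}_{f,n}) \times (\mathbb{Z}/p^n\mathbb{Z})^\times \rtimes \mathrm{Gal}(K/F)$ is large enough to make these three conditions jointly realisable; the bookkeeping to verify this compatibility is the only nontrivial calculation.
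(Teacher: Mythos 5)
The paper offers no proof of Proposition \ref{prop:tool}: it is quoted directly from \cite[Proposition 7.5]{Longo} and \cite[Theorem 7.1.2]{Wang}, so your proposal can only be measured against those sources. Your strategy --- kill $H^1(\mathrm{Gal}(L/K),T_{f,t})$ using $(\mathrm{CR}^+)$ and Sah's lemma so that $\kappa$ survives as a nonzero equivariant homomorphism $\tilde\kappa$ on $G_L$, then run Chebotarev in the composite of $L$ with the field cut out by $\tilde\kappa$ --- is exactly the one used there (going back to Bertolini--Darmon), so the architecture is right.

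Two of the details you defer are not mere bookkeeping, and one statement is wrong as written. (i) $n$-admissibility constrains $a_\mathfrak{l}(f)$, hence the characteristic polynomial of $\rho_f^*(\mathrm{Frob}_\mathfrak{l})=\rho_f^*(\sigma)$: the requirement is that $\sigma$ itself have eigenvalues $\epsilon_\mathfrak{l}N(\mathfrak{l})$ and $\epsilon_\mathfrak{l}$ modulo $\omega^n$. Imposing those eigenvalues on $\sigma^2$ is inconsistent with $\det\rho_f^*=\epsilon$ (it would force $\det\sigma^2\equiv N(\mathfrak{l})$ rather than $N(\mathfrak{l})^2$); with the correct normalisation $\sigma^2$ has eigenvalues $N(\mathfrak{l})^2$ and $1$, which is what makes $T_{f,t}/(\mathrm{Frob}_\lambda-1)T_{f,t}$ free of rank one. (ii) Replacing $\sigma$ by $g\sigma$ with $g\in G_L$ changes $\tilde\kappa(\sigma^2)$ by $(1+\sigma)\tilde\kappa(g)$, and $1+\sigma$ acts as $1+\epsilon_\mathfrak{l}$ on the eigenline that survives in the quotient $T_{f,t}/(\sigma^2-1)T_{f,t}$; so for $\epsilon_\mathfrak{l}=-1$ your translation moves nothing in the relevant quotient. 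One must construct $\sigma$ with $\epsilon_\mathfrak{l}=+1$ and $N(\mathfrak{l})\not\equiv\pm1\ (\mathrm{mod}\ \omega)$, and producing such an element inside the image of Galois, together with the fact that $\tilde\kappa(G_L)$ generates a sufficiently large $\mathrm{Gal}(L/K)$-submodule of $T_{f,t}$, is precisely where $(\mathrm{CR}^+$-$1)$ and $(\mathrm{CR}^+$-$2)$ are consumed --- this cannot simply be waved at. Finally, injectivity of $v_\mathfrak{l}$ on $\langle\kappa\rangle\cong\mathcal{O}_f/\omega^{t_0}$ concerns the exact order $\omega^{t_0}$ of $\kappa$, which may be smaller than $\omega^{t}$; the statement to arrange is $v_\mathfrak{l}(\omega^{t_0-1}\kappa)\neq 0$, and it is the nonzero $\omega$-torsion class $\omega^{t_0-1}\kappa$ to which the Chebotarev argument should actually be applied.
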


We put \begin{eqnarray*} && H^1(K_\infty, A_{f,n}) =
\lim_{\overrightarrow{\;\;r\;\;}} H^1 (K_r, A_{f,n}), \hskip 10pt
\widehat{H}^1(K_\infty, T_{f,n}) = \lim_{\overleftarrow{\;\;
m\;\;}} H^1 (K_m, T_{f,n}) \\
&& H^1(K_{\infty,\mathfrak{l}}, A_{f,n}) =
\lim_{\overrightarrow{\;\;m\;\;}} H^1 (K_{m,\mathfrak{l}}, A_{f,n}),
\hskip 10pt \widehat{H}^1(K_{\infty,\mathfrak{l}}, T_{f,n}) =
\lim_{\overleftarrow{\;\; m\;\;}} H^1 (K_{m,\mathfrak{l}}, T_{f,n})
\end{eqnarray*} The finite parts and the singular quotients
$H^1_?(K_{\infty,\mathfrak{l}}, A_{f,n})$ and
$\widehat{H}^1_{?}(K_{\infty,\mathfrak{l}}, T_{f,n})$ for $?\in
\{\fin, \sing\}$ are defined similarly.

For each $\mathfrak{l}$ we have the local Tate pairing
$$ \langle\cdot,\cdot\rangle_{\mathfrak{l}}: \widehat{H}^1(K_{\infty,\mathfrak{l}}, T_{f,n})\times {H}^1(K_{\infty,\mathfrak{l}}, A_{f,n})\rightarrow E_f/\mathcal{O}_f. $$

\begin{prop}\label{prop-kill}\begin{enumerate}
\item\label{it:fine-zero} If $\mathfrak{l}$ splits in $K$, then $H^1_\fin(K_{\infty,\mathfrak{l}},
A_{f,n})=0$ and $\widehat{H}^1_\sing(K_{\infty,\mathfrak{l}},
T_{f,n})=0$.
\item\label{it:use-later} If $\mathfrak{l}$ is inert in $K$, then $\widehat{H}^1_{\sing}(K_{\infty,\mathfrak{l}}, T_{f,n})\cong H^1_\sing(K_\mathfrak{l},T_{f,n})\otimes \Lambda$.
\item\label{it:fine-kill} If $\mathfrak{l}\nmid p$, then
$H^1_\fin(K_{\infty,\mathfrak{l}}, A_{f,n})$ and
$\widehat{H}^1_\fin(K_{\infty,\mathfrak{l}}, T_{f,n})$ are
orthogonal to each other under the pairing $
\langle\cdot,\cdot\rangle_{\mathfrak{l}}$.
\item\label{it:free} If $\mathfrak{l}$ is $n$-admissible, then $\widehat{H}^1_\fin(K_{\infty,\mathfrak{l}},
T_{f,n})$, $\widehat{H}^1_\sing(K_{\infty,\mathfrak{l}}, T_{f,n})$
and  $\widehat{H}^1_\ord(K_{\infty,\mathfrak{l}}, T_{f,n})$ are free
of rank one over $\mathcal{O}_{f}[[\Gamma]]/(\omega^n)$.
\item\label{it:ord-kill}  Assume $(\mathrm{CR}^+)$ and $(\mathrm{PO})$ hold. If $\mathfrak{l}$ is
$n$-admissible or if $\mathfrak{l}|p\mathfrak{n}^-$, then
$H^1_\ord(K_{\infty,\mathfrak{l}}, A_{f,n})$ and
$\widehat{H}^1_\ord(K_{\infty,\mathfrak{l}}, T_{f,n})$ are
orthogonal to each other under the pairing $
\langle\cdot,\cdot\rangle_{\mathfrak{l}}$.
\end{enumerate}
\end{prop}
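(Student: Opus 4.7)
The proof is local and splits into five essentially independent computations. The two recurring tools are: the anticyclotomic extension $K_\infty/K$ is unramified outside $\mathfrak{p}$, so for $\mathfrak{l}\nmid p$ the tower $K_{\infty,\mathfrak{l}}/K_\mathfrak{l}$ is unramified; and local Tate duality combined with the self-duality $T_{f,n}\cong A_{f,n}$ coming from the twist in the definition of $\rho_f^*$.

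Parts (a), (b), (c) follow from the local structure of the tower and classical Tate duality. Complex conjugation acts as $-1$ on the pro-$p$ group $\Gamma$. At an inert prime $\mathfrak{l}\nmid p$, complex conjugation fixes the unique place $\lambda$ above $\mathfrak{l}$, so $\mathrm{Frob}_\lambda=\mathrm{Frob}_\lambda^{-1}$ in $\Gamma$, which forces $\mathrm{Frob}_\lambda=1$ since $\Gamma$ is pro-$p$ with $p$ odd; consequently $\lambda$ splits completely in $K_\infty/K$, and a direct computation on the inverse system identifies $\widehat{H}^1_\sing(K_{\infty,\mathfrak{l}},T_{f,n})\cong H^1_\sing(K_\mathfrak{l},T_{f,n})\otimes_{\mathcal{O}_{f,n}}\Lambda/\omega^n$, proving (b). At a split prime, a standard argument in anticyclotomic Iwasawa theory shows that the Frobenius at $\lambda$ has infinite order in $\Gamma$, so the residual degree of $K_{m,\lambda}/K_\lambda$ tends to infinity; the transition maps on $H^1_\fin$ of the finite module $A_{f,n}$ are then eventually divisible by arbitrarily high powers of $\omega$, and the direct limit vanishes, giving (a). The dual statement $\widehat{H}^1_\sing=0$ follows from local Tate duality: once (c) is established, $\widehat{H}^1_\fin(T_{f,n})$ and $H^1_\fin(A_{f,n})$ are exact annihilators, so $H^1_\fin(A_{f,n})=0$ forces $\widehat{H}^1_\fin(T_{f,n})$ to be all of $\widehat{H}^1$, i.e. $\widehat{H}^1_\sing=0$. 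Part (c) itself is immediate from local Tate duality for $\mathfrak{l}\nmid p$ applied layer-by-layer and passed to the Iwasawa limit.

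For (d), the $n$-admissibility condition yields a $G_{F_\mathfrak{l}}$-stable splitting $T_{f,n}|_{G_{F_\mathfrak{l}}}\cong\mathcal{O}_{f,n}(\eta_1)\oplus\mathcal{O}_{f,n}(\eta_2)$ into two unramified characters whose Frobenius eigenvalues differ by a unit modulo $\omega$ thanks to $\mathrm{N}(\mathfrak{l})^2\not\equiv 1\pmod p$; a direct computation of the local Galois cohomology of each character, combined with the trivial-tower behavior at inert primes established in (b), gives rank-one freeness over $\mathcal{O}_{f,n}[[\Gamma]]/(\omega^n)$ for each of the finite, singular, and ordinary parts. Part (e) reduces via Tate duality and the short exact sequence $0\to F^+_\mathfrak{l}\to T_{f,n}\to F^-_\mathfrak{l}\to 0$ to showing that the self-pairing $T_{f,n}\otimes T_{f,n}\to\mathcal{O}_{f,n}(1)$ restricts trivially to $F^+_\mathfrak{l}\otimes F^+_\mathfrak{l}$: for $\mathfrak{l}$ admissible or dividing $\mathfrak{n}^-$ this is a character computation controlled by $\mathrm{N}(\mathfrak{l})\not\equiv 1\pmod\omega$, which holds by admissibility, respectively by condition 3 of $(\mathrm{CR}^+)$; for $\mathfrak{l}\mid p$, Hypothesis $(\mathrm{PO})$ is invoked to rule out the degenerate configuration in low weight. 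The main technical obstacle is this case analysis in (e), where one must track the ordinary filtrations compatibly under self-duality and extract from each hypothesis the precise character-level inequality that forces the pairing to vanish; once done, Tate duality delivers the orthogonality of $H^1_\ord$ and $\widehat{H}^1_\ord$ uniformly.
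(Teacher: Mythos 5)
Your sketch is correct and follows the same route as the source the paper relies on: the paper gives no argument of its own here beyond citing Wang's thesis (Proposition 2.4.1, Lemma 2.4.2, Proposition 2.4.4), and those proofs run exactly as you describe --- complete splitting of inert primes and unbounded residue degree at split primes in the anticyclotomic tower, unramified local Tate duality away from $p$, the eigenvalue splitting forced by $\mathrm{N}(\mathfrak{l})^2\not\equiv 1\pmod p$ at admissible primes, and isotropy of $F^+_{\mathfrak{l}}$ for part (e). The one imprecision worth flagging is in (e): condition 3 of $(\mathrm{CR}^+)$ does not assert $\mathrm{N}(\mathfrak{l})\not\equiv 1\pmod{\omega}$ for $\mathfrak{l}\mid\mathfrak{n}^-$ (it only forces $\bar{\rho}_f$ to be ramified there when $\mathrm{N}(\mathfrak{l})^2\equiv 1\pmod p$), but this does not damage the argument, since the pairing on the rank-two module $T_{f,n}$ is alternating, so the rank-one summand $F^+_{\mathfrak{l}}$ is automatically isotropic and the orthogonality (as opposed to the stronger exact-annihilator statement, for which the hypotheses are really used in the cited source) follows in every case.
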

\begin{proof} This is \cite[Proposition 2.4.1,  Lemma 2.4.2, Proposition
2.4.4]{Wang}.
\end{proof}

We define
\begin{eqnarray*} && \mathrm{Sel}_\Delta^S(K_\infty, A_{f,n}) =
\lim_{\overrightarrow{\;\;m\;\;}} \mathrm{Sel}_\Delta^S (K_m,
A_{f,n}), \hskip 10pt \widehat{\mathrm{Sel}}_\Delta^S(K_\infty,
T_{f,n}) = \lim_{\overleftarrow{\;\; m \;\;}}\mathrm{Sel}_\Delta^S
(K_m, T_{f,n}) .
\end{eqnarray*}

If $S$ is empty, we drop $S$ from the above notations. When
$S=\emptyset$ and $\Delta=\mathfrak{n}^-$, we drop both $S$ and
$\Delta$ from the notations; the Selmer group in Theorem
\ref{thm:main} is in this case.

\subsection{Control theorems}

\begin{lem}\label{lem:control} Assume that $(\mathrm{CR}^+)$ holds. Let $L/K$ be a finite
extension contained in $K_\infty$.
\begin{enumerate}
\item\label{it:control-a} The the restriction maps  $$H^1(K, A_{f,n})\rightarrow H^1(L,
A_{f,n})^{\mathrm{Gal}(L/K)}$$ and $$ \mathrm{Sel}_\Delta^S(K,
A_{f,n})\rightarrow \mathrm{Sel}_\Delta^S(L,
A_{f,n})^{\mathrm{Gal}(L/K)}
$$ are isomorphisms.
\item\label{it:control-b} If $S$ contains all
prime  $\mathfrak{q}|\mathfrak{n}^+$ with
$\bar{\rho}_{f,\mathfrak{q}}$ unramified, then
\begin{equation}\label{eq:control} \mathrm{Sel}_\Delta^S(K,
A_{f,n})=\mathrm{Sel}_\Delta^S(K, A_{f})[\omega^n]. \end{equation}
In particular, if further $(\mathfrak{n}^+\text{-}\mathrm{min})$
holds, then for any set $S$ of primes, $($\ref{eq:control}$)$ holds.
\end{enumerate}
\end{lem}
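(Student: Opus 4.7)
The plan is to prove (a) via an inflation--restriction argument and (b) via the Kummer-type long exact sequence coming from multiplication by $\omega^n$ on $A_f$, tracking local conditions carefully in both cases. The common input is that $A_{f,n}^{G_L}=0$ for every finite $L\subseteq K_\infty$. Hypothesis $(\mathrm{CR}^+\text{-}2)$ forces $\bar{\rho}_f|_{G_K}$ to be absolutely irreducible, and since $\Gal(K_\infty/K)\cong\mathbb{Z}_p^{[F_\mathfrak{p}:\mathbb{Q}_p]}$ is torsion-free pro-$p$ with $p>2$ (by $(\mathrm{CR}^+\text{-}1)$), $\bar{\rho}_f|_{G_L}$ remains absolutely irreducible. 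Hence $A_{f,1}^{G_L}=0$, and induction on $n$ via $0\rightarrow A_{f,1}\rightarrow A_{f,n}\rightarrow A_{f,n-1}\rightarrow 0$ upgrades this to every $n$.

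For (a), the inflation--restriction sequence immediately yields the isomorphism on $H^1$. To promote this to the Selmer statement, one verifies that a class $c\in H^1(K,A_{f,n})$ satisfies the prescribed local condition at a place $\mathfrak{l}$ if and only if its restriction does at every $\lambda\mid\mathfrak{l}$ in $L$. For $\mathfrak{l}\nmid p\Delta$ with $\mathfrak{l}\notin S$, the finite parts $H^1_\fin(K_\mathfrak{l},A_{f,n})$ and $\bigoplus_{\lambda\mid\mathfrak{l}}H^1_\fin(L_\lambda,A_{f,n})$ correspond under inflation--restriction at $I_{K_\mathfrak{l}}$ (using that $L_\lambda/K_\mathfrak{l}$ is unramified away from $\mathfrak{p}$). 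For $\mathfrak{l}\mid p\Delta$ the filtration $F^+_\mathfrak{l}A_{f,n}$ is defined over $G_{F_\mathfrak{l}}$ and hence over each $G_{L_\lambda}$, so the analogous inflation--restriction argument for $F^+$-cohomology takes care of the ordinary condition.

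For (b), the short exact sequence $0\rightarrow A_{f,n}\rightarrow A_f\xrightarrow{\omega^n}A_f\rightarrow 0$ yields
\begin{equation*}
0\rightarrow A_f^{G_K}/\omega^n A_f^{G_K}\rightarrow H^1(K,A_{f,n})\rightarrow H^1(K,A_f)[\omega^n]\rightarrow 0.
\end{equation*}
Since $A_f^{G_K}=0$, the middle group embeds as $H^1(K,A_f)[\omega^n]$. To identify the Selmer subgroups, the same snake diagram applied at $I_{K_\mathfrak{l}}$ shows, for any $\mathfrak{l}\nmid p\Delta$ with $\mathfrak{l}\notin S$, that the kernel of $H^1(I_\mathfrak{l},A_{f,n})\rightarrow H^1(I_\mathfrak{l},A_f)$ equals $A_f^{I_\mathfrak{l}}/\omega^n A_f^{I_\mathfrak{l}}$; this vanishes exactly when $A_f^{I_\mathfrak{l}}$ is divisible. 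Divisibility is automatic for $\mathfrak{l}\nmid\mathfrak{n}$, and it holds for $\mathfrak{l}\mid\mathfrak{n}^+$ with $\bar{\rho}_f$ ramified at $\mathfrak{l}$ by Lemma \ref{lem:divide}. The remaining primes $\mathfrak{l}\mid\mathfrak{n}^+$ where $\bar{\rho}_f$ is unramified are precisely those absorbed into $S$ by hypothesis; under $(\mathfrak{n}^+\text{-}\mathrm{min})$ no such primes exist, so every $S$ works. For $\mathfrak{l}\mid p\Delta$, the same argument applied to the ordinary subquotient $F^+_\mathfrak{l}A_f$ transports the ordinary condition.

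The main obstacle is the prime-by-prime local matching in (b): at each place in the support of $p\Delta\mathfrak{n}$ one must verify that the prescribed local subspace for $A_{f,n}$ is precisely the preimage of the corresponding subspace for $A_f$ under $H^1(K_\mathfrak{l},A_{f,n})\hookrightarrow H^1(K_\mathfrak{l},A_f)$. Lemma \ref{lem:divide} together with the hypothesis on $S$ are exactly the inputs that kill the divisibility obstruction at the bad primes in $\mathfrak{n}^+$; elsewhere the verification is a standard (if slightly delicate) diagram chase.
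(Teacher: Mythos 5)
Your proposal follows essentially the same route as the paper's proof: both derive $A_f^{G_L}=0$ from $(\mathrm{CR}^+)$ and the abelianness of $L/K$, handle part (a) by inflation--restriction, and for part (b) reduce via the $\omega^n$-Kummer sequence to the divisibility of $H^0(I_{\mathfrak{l}},A_f)$ at primes dividing $\mathfrak{n}^+$ (supplied by Lemma \ref{lem:divide}, with the unramified primes of $\mathfrak{n}^+$ absorbed into $S$) and to the vanishing of $H^0(L_{\mathfrak{l}}, A_f/F^+_{\mathfrak{l}}A_f)$ at $\mathfrak{l}\mid p\Delta$. One small caveat: $(\mathrm{CR}^+\text{-}2)$ does not by itself force $\bar{\rho}_f|_{G_K}$ to be absolutely irreducible (it can be dihedral over $K$, as for the CM forms of Section \ref{sec:example}); the standard route to $A_{f,1}^{G_L}=0$ is instead that a nonzero $G_L$-invariant vector would yield a $G_K$-constituent given by a character of the $p$-group $\mathrm{Gal}(L/K)$ valued in the prime-to-$p$ group $\mathbb{F}^\times$, hence a trivial constituent, contradicting irreducibility over $G_{F(\sqrt{p^*})}$.
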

\begin{proof} Assertion (\ref{it:control-a}) is \cite[Proposition
2.5.1(1)]{Wang}. We prove (\ref{it:control-b}).

Since $L/K$ is abelian, by $(\mathrm{CR}^+)$ we have
$A_{f,1}^{G_L}=0$. Then $A_{f,m}^{G_L}=0$ for every $m$, and
$A_f^{G_L}=0$. So from the exact sequence
$$\xymatrix{ 0 \ar[r] & A_{f,n} \ar[r] & A_f \ar[r]^{\omega^n} & A_f\ar[r] & 0
}$$ we obtain the isomorphism $H^1(G_L, A_{f,n})\cong H^1(G_L,
A_f)[\omega^n]$ and the injectivity of $\mathrm{Sel}_\Delta^S(L,
A_{f,n})\hookrightarrow \mathrm{Sel}_\Delta^S(L, A_{f})[\omega^n]$.
To prove the surjectivity of $\mathrm{Sel}_\Delta^S(L,
A_{f,n})\hookrightarrow \mathrm{Sel}_\Delta^S(L, A_{f})[\omega^n]$,
it suffices to prove

$(\text{i})$ $H^1(L^{\mathrm{ur}}_\mathfrak{l}, A_{f,n})\rightarrow
H^1(L^{\mathrm{ur}}_\mathfrak{l}, A_{f})$ is injective for
$\mathfrak{l}\nmid p\Delta$ and $\mathfrak{l}\notin S$.

$(\text{ii})$ $H^1(L_\mathfrak{l}, A_{f,n}/
F^+_\mathfrak{l}A_{f,n})\rightarrow H^1(L_\mathfrak{l},
A_{f}/F^+_\mathfrak{l}A_{f})$ is injective for $\mathfrak{l}|
p\Delta$.

For (\text{i}) if $\mathfrak{l}\nmid \mathfrak{n}^+$, the action of
$I_{L,\mathfrak{l}}$ is trivial and the claim follows immediately.
If $\mathfrak{l}| \mathfrak{n}^+$, then by Lemma \ref{lem:divide},
$H^0(F_\mathfrak{l}^{\mathrm{nr}}, A_f)$ is  divisible. The claim
again follows.

For (\text{ii}) if $\mathfrak{l}|\Delta$, the actions of
$G_{L_\mathfrak{l}}$ on $A_{f,n}/ F^+_\mathfrak{l}A_{f,n}$ and
$A_{f}/ F^+_\mathfrak{l}A_{f}$ are trivial and the claim is clear.
If $\mathfrak{l}|p$, then $G_{L_\mathfrak{l}}$ acts on $A_{f,m}/
F^+_\mathfrak{l}A_{f,m}$ by
$\chi_\mathfrak{l}\epsilon^{1-\frac{k}{2}}$, where
$\chi_\mathfrak{l}$ is an unramified character. Thus
$H^0(L_\mathfrak{l}, A_{f,m}/ F^+_\mathfrak{l}A_{f,m})=0$ for each
$m$. Then $H^0(L_\mathfrak{l}, A_{f}/ F^+_\mathfrak{l}A_{f})=0$. The
claim follows.
\end{proof}

\begin{thm}\label{prop-free} $($\cite[Proposition
7.2.3]{Wang}$)$ Assume the conditions $(\mathrm{CR}^+)$,
$(\mathrm{PO})$ and $(\mathfrak{n}^+\text{-}\mathrm{min})$ hold. For
each positive integer $n$ there exists a finite set $S$ of
$n$-admissible prime ideals such that
$\widehat{\mathrm{Sel}}^S_\Delta(K_\infty, T_{f,n})$ is free over
$\Lambda/(\omega^{n})$.
\end{thm}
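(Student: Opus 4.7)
The plan is to choose $S$ iteratively by applying Proposition \ref{prop:tool}, using $n$-admissible primes to relax the local conditions until the resulting Selmer group becomes free over $\Lambda/(\omega^n)$. The guiding observation is that, for an $n$-admissible prime $\mathfrak{l}$, the local cohomology groups $\widehat{H}^1_{\fin}(K_{\infty,\mathfrak{l}}, T_{f,n})$, $\widehat{H}^1_{\sing}(K_{\infty,\mathfrak{l}}, T_{f,n})$ and $\widehat{H}^1_{\ord}(K_{\infty,\mathfrak{l}}, T_{f,n})$ are each free of rank one over $\Lambda/(\omega^n)$ by Proposition \ref{prop-kill}(\ref{it:free}), so modifying a local condition at $\mathfrak{l}$ alters the Selmer group by a controlled free module.

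First I would verify that $\widehat{\mathrm{Sel}}^S_\Delta(K_\infty, T_{f,n})$ is finitely generated over $\Lambda/(\omega^n)$, using the control theorem (Lemma \ref{lem:control}), hypothesis $(\mathfrak{n}^+\text{-}\mathrm{min})$, and standard bounds on cohomology with restricted ramification. Recall the criterion that a finitely generated module $M$ over the local ring $\Lambda/(\omega^n)$ is free iff $M[\omega^i]=\omega^{n-i}M$ for every $1\leq i\leq n$, equivalently iff $M$ is flat. For $S=\emptyset$, the module $\widehat{\mathrm{Sel}}_\Delta(K_\infty, T_{f,1})$ need not be free over $\Lambda/\omega$; choose finitely many classes $\kappa_1,\ldots,\kappa_r$ whose images span an obstruction to freeness, apply Proposition \ref{prop:tool} with $t=1$ to find $n$-admissible primes $\mathfrak{l}_i$ with $\partial_{\mathfrak{l}_i}\kappa_i=0$ and $v_{\mathfrak{l}_i}(\kappa_i)\neq 0$, and add them to $S$. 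Relaxing the local condition at each $\mathfrak{l}_i$ yields an injection of the resulting quotient into $\bigoplus_i\widehat{H}^1_{\sing}(K_{\infty,\mathfrak{l}_i},T_{f,1})$, whose target is free by Proposition \ref{prop-kill}(\ref{it:free}).

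The induction step lifts this to $T_{f,n}$ along the $\omega$-filtration. At layer $t$, any element violating the flatness equality $M[\omega^t]=\omega^{n-t}M$ is detected using Proposition \ref{prop:tool} applied with this value of $t$, and further $n$-admissible primes are added to $S$ to eliminate it. The freeness of the local cohomology at $n$-admissible primes (Proposition \ref{prop-kill}(\ref{it:free})) ensures that relaxing at these new primes contributes only free summands and therefore does not disturb the structure at lower layers. Global Poitou--Tate duality is invoked throughout to switch between Selmer groups for $T_{f,n}$ and for $A_{f,n}$ and to pin down ranks.

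The hard part will be proving termination, namely that only finitely many primes need to be added before $\widehat{\mathrm{Sel}}^S_\Delta(K_\infty, T_{f,n})$ becomes free. This rests on the finite generation of the non-free part as a $\Lambda/(\omega^n)$-module, on Nakayama's lemma, and on the fact that each application of Proposition \ref{prop:tool} strictly shrinks the non-free quotient after specialization at the augmentation ideal of $\Lambda$. Careful bookkeeping across the $\omega$-adic layers, combined with the control theorem of Lemma \ref{lem:control}, then closes the argument.
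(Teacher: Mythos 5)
The paper itself offers no proof of this statement: it is imported verbatim from \cite[Proposition 7.2.3]{Wang}, and the only in-paper analogue is the proof of Theorem \ref{prop-cond-free}, which runs on an entirely different mechanism (the functor $\Phi_n$ together with the assumed vanishing of the algebraic $\mu$-invariant, a hypothesis not available here). So your argument must stand on its own, and it has a genuine gap at its central step.

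You have assembled the right ingredients --- Proposition \ref{prop:tool} to manufacture $n$-admissible primes, the rank-one freeness of the local cohomology at such primes (Proposition \ref{prop-kill}(\ref{it:free})), the control theorem, Poitou--Tate duality --- but the deduction of freeness does not work as described. Injecting the quotient $\widehat{\mathrm{Sel}}{}^S_\Delta/\widehat{\mathrm{Sel}}_\Delta$ into $\bigoplus_i\widehat{H}^1_{\sing}(K_{\infty,\mathfrak{l}_i},T_{f,1})$ proves nothing: a submodule of a free $\Lambda/(\omega^n)$-module need not be free (e.g. $\omega\Lambda/(\omega^n)$), and in any case the unrelaxed piece $\widehat{\mathrm{Sel}}_\Delta$ is precisely the part whose structure you do not control, so exhibiting the relaxed group as an extension of something injecting into a free module by it gets you nowhere. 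The argument this statement actually rests on runs in the dual direction: one applies Proposition \ref{prop:tool} with $t=1$, iteratively, to the finite-dimensional residual Selmer group $\Sel_\Delta(K,A_{f,1})$ so as to produce one finite set $S$ for which the localization map $\Sel_\Delta(K,A_{f,1})\rightarrow\bigoplus_{v\in S}H^1_\fin(K_v,A_{f,1})$ is injective, i.e.\ the $S$-\emph{strict} residual Selmer group vanishes; a Poitou--Tate duality computation combined with the control theorem and a count of minimal generators then shows that this single residual condition already forces $\widehat{\mathrm{Sel}}{}^S_\Delta(K_\infty,T_{f,n})$ to be free of rank $\sharp S$ for the given $n$. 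In particular no layer-by-layer induction over $t=1,\dots,n$ occurs, and termination is automatic because $\Sel_\Delta(K,A_{f,1})$ is finite-dimensional. Your termination claim ("each application of Proposition \ref{prop:tool} strictly shrinks the non-free quotient after specialization") is asserted rather than proved: you exhibit no quantity that demonstrably decreases when a prime is added at layer $t>1$, so the induction does not close. The missing idea is the duality step converting vanishing of the strict residual Selmer group into freeness of the relaxed one.
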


\begin{thm}\label{prop-cond-free} If
$\mathrm{Sel}_{\mathfrak{n}^-}^{\mathfrak{n}^+}(K_\infty, A_f)$ is
$\Lambda$-cotorsion and the algebraic $\mu$-invariant of
$\mathrm{Sel}_{\mathfrak{n}^-}^{\mathfrak{n}^+}(K_\infty, A_f)^\vee$
vanishes, then for any set $S$ of $n$-admissible primes that does
not divide $p\mathfrak{n}\Delta$,
$\widehat{\mathrm{Sel}}_\Delta^{S}(K_\infty, T_{f,n})$ is free over
$\Lambda/\omega^n$.
\end{thm}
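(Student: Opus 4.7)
The plan is to adapt Wang's proof of Theorem~\ref{prop-free}, substituting our global cotorsion/$\mu$-invariant hypothesis for the condition $(\mathfrak{n}^+\text{-}\mathrm{min})$ used there. That condition enters the earlier argument essentially only through Lemma~\ref{lem:control}(b), which identifies $\mathrm{Sel}^S_\Delta(K, A_{f,n})$ with $\mathrm{Sel}^S_\Delta(K, A_f)[\omega^n]$ for every $S$. Without it this identification persists only when $S$ already contains the ``bad'' set $T:=\{\mathfrak{q}\mid\mathfrak{n}^+ : \bar\rho_f\text{ is unramified at }\mathfrak{q}\}$; the hypotheses of the theorem are designed precisely to bridge this gap, since the set $\mathfrak{n}^+$ appearing in $\mathrm{Sel}^{\mathfrak{n}^+}_{\mathfrak{n}^-}$ automatically contains $T$.

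First I would invoke global Poitou--Tate duality over $K_\infty$ for the self-dual pair $(T_{f,n}, A_{f,n})$. Proposition~\ref{prop-kill}(3) and (5) show that the finite and ordinary local conditions are their own orthogonal complements under local Tate duality, so the Poitou--Tate sequence identifies (up to controlled error) $\widehat{\mathrm{Sel}}^S_\Delta(K_\infty, T_{f,n})^\vee$ with a discrete dual Selmer group $\mathrm{Sel}^{\mathrm{str}(S)}_\Delta(K_\infty, A_{f,n})$, having the usual finite/ordinary conditions of $\mathrm{Sel}_\Delta$ but with the strict (zero) condition additionally imposed at each $\mathfrak{l}\in S$. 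Freeness of $\widehat{\mathrm{Sel}}^S_\Delta(K_\infty, T_{f,n})$ over $\Lambda/\omega^n$ thus reduces to $\omega$-torsion-freeness together with the expected mod-$\omega$ rank.

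Second, I would compare $\mathrm{Sel}^{\mathrm{str}(S)}_\Delta(K_\infty, A_{f,n})$ with $\mathrm{Sel}^{\mathfrak{n}^+}_{\mathfrak{n}^-}(K_\infty, A_{f,n})$ by successively adjusting local conditions: relaxing ordinary to finite at the $n$-admissible primes in $\Delta/\mathfrak{n}^-$, relaxing strict to finite at $S$, and tightening unrestricted to finite at the primes dividing $\mathfrak{n}^+$. By Proposition~\ref{prop-kill}(4), the local cohomology at $n$-admissible primes is free over $\Lambda/\omega^n$, so these adjustments produce only free (resp. controlled) defect terms; by Lemma~\ref{lem:divide}, the $\mathfrak{n}^+$-contributions at the ramified places are divisible. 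Combined with Lemma~\ref{lem:control}(a) and (b) applied to $\mathrm{Sel}^{\mathfrak{n}^+}_{\mathfrak{n}^-}$ (whose set $S=\mathfrak{n}^+$ contains $T$), the hypothesis that $\mathrm{Sel}^{\mathfrak{n}^+}_{\mathfrak{n}^-}(K_\infty, A_f)^\vee$ is $\Lambda$-torsion with $\mu=0$ transfers to give that $\mathrm{Sel}^{\mathfrak{n}^+}_{\mathfrak{n}^-}(K_\infty, A_{f,n})^\vee$ has no nontrivial $\omega$-torsion.

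Third, propagating this information back through the Poitou--Tate sequence of step one yields that $\widehat{\mathrm{Sel}}^S_\Delta(K_\infty, T_{f,n})$ itself has no $\omega$-torsion. The desired freeness then follows by a Nakayama argument: reducing mod $\omega$ produces a finitely generated module over the local ring $\Lambda/\omega$ whose minimal number of generators determines the rank, and torsion-freeness over $\Lambda/\omega^n$ ensures that any lift of a $\Lambda/\omega$-basis is a $\Lambda/\omega^n$-basis. The principal obstacle I expect lies in step two: one must carry out the adjustment of local conditions in a manner that the defect groups are genuinely free (not merely torsion-free) $\Lambda/\omega^n$-modules, so that $\omega$-torsion transfers cleanly along the resulting comparison maps, especially at the primes in $T$ where $\bar\rho_f$ may be unramified and local cohomology need not behave uniformly.
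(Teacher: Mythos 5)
Your overall strategy (use the hypothesis on $\mathrm{Sel}^{\mathfrak{n}^+}_{\mathfrak{n}^-}(K_\infty,A_f)$ to control the $S$-relaxed Selmer group, then descend to the compact Selmer group) points in the right direction, but two of your steps would fail. First, the Poitou--Tate step is both unnecessary and misstated: $\widehat{\mathrm{Sel}}^S_\Delta(K_\infty,T_{f,n})$ is not (even up to controlled error) the Pontryagin dual of a strict discrete Selmer group; Poitou--Tate gives a four-term exact sequence relating the two, not a duality, and extracting freeness from that sequence is exactly the hard point. The paper avoids duality entirely: since $T_{f,n}\cong A_{f,n}$, the compact Selmer group is by definition $\lim_{\leftarrow m}\mathrm{Sel}^{\mathfrak{n}^+S}_{\mathfrak{n}^-}(K_m,A_{f,n})$, and the control theorem (Lemma \ref{lem:control}, both parts, the superscript $\mathfrak{n}^+S$ being what makes part (b) applicable) identifies this with $\Phi_n\bigl(\mathrm{Sel}^{\mathfrak{n}^+S}_{\mathfrak{n}^-}(K_\infty,A_f)\bigr)$, where $\Phi_n(M)=\lim_{\leftarrow m}M[\omega^n]^{\Gamma_m}$.

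Second, and more seriously, your concluding Nakayama argument is wrong: a finitely generated $\Lambda/\omega^n$-module with no (nontrivial) $\omega$-torsion need not be free, because $\Lambda/\omega\cong\mathbb{F}_q[[T_1,\dots,T_d]]$ with $d=[F_\mathfrak{p}:\mathbb{Q}_p]$ possibly $\geq 2$, and already for $n=1$ torsion-free finitely generated modules over this ring (e.g.\ its maximal ideal) are not free. Freeness in the paper does not come from torsion-freeness; it comes from the precise structure of the relaxed Selmer group. Wang's exact sequence
$0\to\mathrm{Sel}^{\mathfrak{n}^+}_{\mathfrak{n}^-}(K_\infty,A_f)\to\mathrm{Sel}^{\mathfrak{n}^+S}_{\mathfrak{n}^-}(K_\infty,A_f)\to\prod_{v\in S}\mathcal{H}_v\to 0$
with $\mathcal{H}_v\cong(\Lambda/\omega^{t_v})^\vee$, $t_v\geq n$, together with the hypothesis that $\mathrm{Sel}^{\mathfrak{n}^+}_{\mathfrak{n}^-}(K_\infty,A_f)^\vee$ is torsion with $\mu=0$, shows the dual of the middle term is pseudo-isomorphic to $\bigl(\bigoplus_{v\in S}\Lambda/\omega^{t_v}\bigr)\times Y$ with $\mu(Y)=0$; the functor $\Phi_n$ kills pseudo-null modules and $\mu=0$ cotorsion modules and sends each $(\Lambda/\omega^{t_v})^\vee$ to $\Lambda/\omega^n$, producing a free module of rank $\sharp S$ on the nose. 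One then adds the primes of $\Delta/\mathfrak{n}^-$ one at a time using the exact sequence ending in $\widehat{H}^1_{\mathrm{fin}}(K_{\infty,\mathfrak{l}},T_{f,n})$, free by Proposition \ref{prop-kill}(\ref{it:free}), and finally drops the superfluous $\mathfrak{n}^+$ from the superscript via Proposition \ref{prop-kill}(\ref{it:fine-zero}) (vanishing of the finite part at split primes), a step your proposal does not address.
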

\begin{proof} This was essentially proved by Wang \cite[Chapter 10]{Wang} following Kim, Pollack
and Weston's idea \cite{Poll-West}. However, the assertion in the
above form is not clearly stated in loc. cit., so we give a sketch
of the proof.

Let
$$ \Phi_n: \{\text{cofinitely generated }\Lambda\text{-modules}\}\rightarrow \{ \text{finitely generated }\Lambda/\omega^n\text{-modules} \}
$$ be the functor defined by $\Phi(M)=\lim\limits_{\overleftarrow{\;\; m \;}} M[\omega^n]^{\Gamma_m}
$. It follows from Lemma \ref{lem:control} (\ref{it:control-a}) that
$$ \Phi_n (\mathrm{Sel}^{\mathfrak{n}^+S}_{\mathfrak{n}^-}(K_\infty, A_f))\cong
\lim\limits_{\overleftarrow{\;\; m \;}}\mathrm{Sel}^{\mathfrak{n}^+S}_{\mathfrak{n}^-}(K_\infty, A_f)[\omega^n]^{\Gamma_m}
= \lim\limits_{\overleftarrow{\;\; m
\;}}\mathrm{Sel}^{\mathfrak{n}^+S}_{\mathfrak{n}^-}(K_m, A_{f,n})
\cong
\widehat{\mathrm{Sel}}^{\mathfrak{n}^+S}_{\mathfrak{n}^-}(K_\infty,
T_{f,n}).
$$ The functor $\Phi_n$ satisfies the following properties.

$\bullet$ If $A$ and $B$ are pseudo-isomorphic cofinitely generated
$\Lambda$-modules, then $\Phi_n(A)=\Phi_n(B)$.

$\bullet$ If $Y$ is a finitely cotorsion $\Lambda$-module with
vanishing (algebraic) $\mu$-invariant, then $\Phi_n(Y)=0$.

$\bullet$ If $Y=\Lambda/\omega^t$ with $t\geq n$, then
$\Phi_n(Y^\vee)=\Lambda/\omega^n$.

Wang \cite[Lemma 10.1.2]{Wang} showed that for any set $S$ away from
$p\mathfrak{n}\Delta$,
$\mathrm{Sel}^{S\mathfrak{n}^+}_{\mathfrak{n}^-}(K_\infty, A_f)$
sits in the exact sequence
$$\xymatrix{ 0 \ar[r] & \mathrm{Sel}^{\mathfrak{n}^+}_{\mathfrak{n}^-}(K_\infty, A_f) \ar[r] &
\mathrm{Sel}^{\mathfrak{n}^+S}_{\mathfrak{n}^-}(K_\infty, A_f)\ar[r]
& \prod_{v\in S}\mathcal{H}_v\ar[r] & 0 }$$ where
$\mathcal{H}_v=\lim\limits_{\overrightarrow{\;\;m\;}}\prod_{w|v}H^1(K_{m,w},
A_f)$. When $v\in S$ is $n$-admissible, $\mathcal{H}_v\cong
(\Lambda/\omega^{t_v})^\vee$ for some $t_v\geq n$ \cite[Lemma
10.1.3]{Wang}. Thus
$\mathrm{Sel}^{\mathfrak{n}^+S}_{\mathfrak{n}^-}(K_\infty,
A_f)^\vee$ is pseudo-isomorphic to $$(\bigoplus\limits_{v\in
S}\Lambda/\omega^{t_v}) \times Y,
$$ where $Y$ is a torsion $\Lambda$-module with $\mu(Y)=0$. Hence, by the above properties of $\Phi_n$, $\widehat{\mathrm{Sel}}^{S\mathfrak{n}^+}_{\mathfrak{n}^-}(K_\infty,
T_{f,n})$ is free of rank $\sharp S$ over $\Lambda/\omega^n$.

For $\mathfrak{l}\in \Delta$ we have the following exact sequence
$$ \xymatrix{ 0\ar[r] & \widehat{\mathrm{Sel}}^{\mathfrak{n}^+S}_{\mathfrak{l}\mathfrak{n}^-}(K_\infty,
T_{f,N}) \ar[r] &
\widehat{\mathrm{Sel}}^{\mathfrak{l}\mathfrak{n}^+S}_{\mathfrak{n}^-}(K_\infty,
T_{f,n}) \ar[r] &
\widehat{H}^1_\fin(K_{\infty,\mathfrak{l}},T_{f,n})\ar[r] & 0 . }
$$ So, the freeness of $\widehat{\mathrm{Sel}}^{\mathfrak{l}\mathfrak{n}^+S}_{\mathfrak{n}^-}(K_\infty,
T_{f,n})$ and $\widehat{H}^1_\fin(K_{\infty,\mathfrak{l}}, T_{f,n})$
implies the freeness of
$\widehat{\mathrm{Sel}}^{\mathfrak{n}^+S}_{\mathfrak{l}\mathfrak{n}^-}(K_\infty,
T_{f,n})$. Repeating this several times we obtain the freeness of
$\widehat{\mathrm{Sel}}^{\mathfrak{n}^+S}_{\Delta}(K_\infty,
T_{f,n})$. By Proposition \ref{prop-kill} (\ref{it:fine-zero}), we
have $\widehat{\mathrm{Sel}}^{S}_{\Delta}(K_\infty,
T_{f,n})=\widehat{\mathrm{Sel}}^{\mathfrak{n}^+S}_{\Delta}(K_\infty,
T_{f,n})$.
\end{proof}

\section{Euler system of Heegner points}

Fix $N\geq n \geq 1$. Let $\mathscr{D}=(\Delta, g)$,
$\mathfrak{n}^-|\Delta$, be an $(N,n)$-admissible form.

\subsection{Shimura curves}

In this subsection we collect needed results on Shimura curves
\cite{Longo, Wang}.

Let $\mathfrak{l} \nmid \Delta$ be an $n$-admissible prime ideal of
$f$ with
$\epsilon_\mathfrak{l}\alpha_\mathfrak{l}=\mathrm{N}(\mathfrak{l})+1
\ (\mathrm{mod} \  \omega^n)$. One defines a character of Hecke
algebra
$$\lambda_g^{[\mathfrak{l}]}:
\mathbb{T}_{B_\Delta}(\mathfrak{l}\mathfrak{n}^+,
\mathfrak{p}^N)\otimes \mathcal{O}_{f,n}\rightarrow
\mathcal{O}_{f,n}$$ by
$\lambda_g^{[\mathfrak{l}]}(U_{\mathfrak{l}})=\epsilon_\mathfrak{l}$
, and let $\mathcal{I}_g^{[\mathfrak{l}]}$ be the kernel of
$\lambda_g^{[\mathfrak{l}]}$.

Let $B'=B'_{\Delta \mathfrak{l}}$ be a quaternion algebra with
discriminant $\Delta\mathfrak{l}$ that splits at exactly one real
place. Then we have an isomorphism $\phi:
\widehat{B}_\Delta^{(\mathfrak{l})} \cong
\widehat{B}'^{(\mathfrak{l})}$. Let $\mathcal{O}_{B'_\mathfrak{l}}$
be the maximal order of $B'_\mathfrak{l}$. Put
$$ \mathfrak{U}'=\mathfrak{U}'_{\mathfrak{n}^+,\mathfrak{p}^N}=\phi((\mathfrak{U}_{\mathfrak{n}^+,\mathfrak{p}^N})^{(\mathfrak{l})})\mathcal{O}_{B'_\mathfrak{l}}^\times. $$
With $\mathfrak{U}'$ instead of
$\mathfrak{U}=\mathfrak{U}_{\mathfrak{n}^+,\mathfrak{p}^N}$ we have
a Hecke algebra $\mathbb{T}_{B_{\Delta\mathfrak{l}}}(\mathfrak{n}^+,
\mathfrak{p}^N)$.

% Let $t':K\hookrightarrow B'$ be the embedding that induces the composition $\widehat{K}^{(\mathfrak{l})}\xrightarrow{t}
% B^{(\mathfrak{l})}\xrightarrow{\phi} B'^{(\mathfrak{l})}$.

Associated to $(B',Y\mathfrak{U}')$ there is a Shimura curve
$M_N^{[\mathfrak{l}]}$ with complex points
$$ M_N^{[\mathfrak{l}]}(\mathbb{C}) = B'^\times \backslash (\mathbf{P}^1(\mathbb{C})-\mathbf{P}^1(\mathbb{R}))\times \widehat{B}'^\times/ Y \mathfrak{U}'; $$
$M_N^{[\mathfrak{l}]}$ is smooth and projective over $F$. We write
$[z, b']_N$ for a point in $M_N^{[\mathfrak{l}]}$ corresponding to
$z\in \mathbf{P}^1(\mathbb{C})-\mathbf{P}^1(\mathbb{R})$ and $b'\in
\widehat{B}'^\times$.

Let $J^{[\mathfrak{l}]}_N$ be the Jacobian of
$M^{[\mathfrak{l}]}_N$, and let $\Phi^{[\mathfrak{l}]}$ be the
component group of the Neron model of $J_N^{[\mathfrak{l}]}$ over
$F_{\mathfrak{l}^2}$, the unramified extension of $F_{\mathfrak{l}}$
of degree $2$. Let $r_\mathrm{l}:J^{[\mathfrak{l}]}_N\rightarrow
\Phi^{[\mathfrak{l}]}$ be the reduction map.

There is a natural action of
$\mathbb{T}_{B_{\Delta\mathfrak{l}}}(\mathfrak{n}^+,
\mathfrak{p}^N)$ on $J^{[\mathfrak{l}]}_N$ via Picard functoriality.
Note that
$$\mathbb{T}_{B_\Delta}^{(\mathfrak{l})}(\mathfrak{l}\mathfrak{n}^+,
\mathfrak{p}^N)\simeq \mathbb{T}_{B_{\Delta
\mathfrak{l}}}^{(\mathfrak{l})}(\mathfrak{n}^+, \mathfrak{p}^N).$$
We extend it to a homomorphism
$$\varphi_*: \mathbb{T}_{B_\Delta}(\mathfrak{l}\mathfrak{n}^+,
\mathfrak{p}^N)\rightarrow \mathbb{T}_{B_{\Delta
\mathfrak{l}}}(\mathfrak{n}^+, \mathfrak{p}^N)$$ which sends
$U_\mathfrak{l}=[\mathfrak{U}\wvec{\pi_\mathfrak{l}}{0}{0}{1}\mathfrak{U}]$
to $U_\mathfrak{l}=[\mathfrak{U}' \pi'_\mathfrak{l} \mathfrak{U}']$.
Via $\varphi_*$ we obtain an action of
$\mathbb{T}_{B_{\Delta}}(\mathfrak{l}\mathfrak{n}^+,
\mathfrak{p}^N)$ on $J^{[\mathfrak{l}]}_N$. It induces an action of
$\mathbb{T}_{B_{\Delta}}(\mathfrak{l}\mathfrak{n}^+,
\mathfrak{p}^N)$ on $\Phi^{[\mathfrak{l}]}$.

We need the relation between $\Phi^{[\mathfrak{l}]}$ and the graph
$\mathcal{G}$ of the special fiber of $M_N^{[\mathfrak{l}]}$
\cite{BLR}.

The set of vertices  which correspond to irreducible components of
$M_N^{[\mathfrak{l}]}$ is identified with
$$ \mathcal{V}(\mathcal{G}) = B^\times \backslash \widehat{B}^\times/ Y \mathfrak{U}_{\mathfrak{n}^+,\mathfrak{p}^N} \times \mathbb{Z}/2\mathbb{Z}. $$
The set of oriented edges which correspond to the set of singular
points on the special fiber is identified with
$$\vec{\mathcal{E}}(\mathcal{G}) =  B^\times \backslash \widehat{B}^\times/ Y \mathfrak{U}_{\:\mathfrak{l}\mathfrak{n}^+,\mathfrak{p}^N} \times \mathbb{Z}/2\mathbb{Z} . $$

We choose an orientation of $\vec{\mathcal{E}}(\mathcal{G})$  such
that the source and target maps $s, t:
\mathcal{E}(\mathcal{G})\rightarrow \mathcal{V}(\mathcal{G})$ are
given by
\begin{eqnarray*}
s: \mathcal{E}(\mathcal{G}) = B^\times \backslash
\widehat{B}^\times/ Y
\mathfrak{U}_{\:\mathfrak{l}\mathfrak{n}^+,\mathfrak{p}^N}
&\rightarrow& \mathcal{V}(\mathcal{G}) = B^\times \backslash
\widehat{B}^\times/ Y \mathfrak{U}_{\mathfrak{n}^+,\mathfrak{p}^N}
\times \mathbb{Z}/2\mathbb{Z} \\
B^\times b \: Y
\mathfrak{U}_{\:\mathfrak{l}\mathfrak{n}^+,\mathfrak{p}^N} &\mapsto
& (B^\times b \: Y \mathfrak{U}_{\mathfrak{n}^+,\mathfrak{p}^N},0)
\end{eqnarray*} and
\begin{eqnarray*}
t: \mathcal{E}(\mathcal{G}) = B^\times \backslash
\widehat{B}^\times/ Y
\mathfrak{U}_{\:\mathfrak{l}\mathfrak{n}^+,\mathfrak{p}^N}
&\rightarrow& \mathcal{V}(\mathcal{G}) = B^\times \backslash
\widehat{B}^\times/ Y \mathfrak{U}_{\mathfrak{n}^+,\mathfrak{p}^N}
\times \mathbb{Z}/2\mathbb{Z} \\
B^\times b \: Y
\mathfrak{U}_{\:\mathfrak{l}\mathfrak{n}^+,\mathfrak{p}^N} &\mapsto
& (B^\times b \: Y \mathfrak{U}_{\mathfrak{n}^+,\mathfrak{p}^N},1).
\end{eqnarray*}

Let
$$d_*=t_*-s_*: \mathbb{Z}[\mathcal{E}(\mathcal{G})]\rightarrow
\mathbb{Z}[\mathcal{V}(\mathcal{G})]$$ be the boundary map, and
$$d^*:t^*-s^*:\mathbb{Z}[\mathcal{V}(\mathcal{G})]\rightarrow
\mathbb{Z}[\mathcal{E}(\mathcal{G})]$$ its dual. Put
$\mathbb{Z}[\mathcal{V}(\mathcal{G})]_0=\mathrm{im}(d_*)$. By
\cite[Section 9.6, Theorem 1]{BLR} there exists a natural
identification
$$ \Phi^{[\mathfrak{l}]} \simeq \mathbb{Z}[\mathcal{V}(\mathcal{G})]_0/ d_*d^*
.$$

One can identify $\mathbb{Z}[\mathcal{V}(\mathcal{G})]$ with
$(S^{B_\Delta}_2(\mathfrak{U}, \mathcal{O}_f)^Y)^{\oplus 2}$, and
identify $\mathbb{Z}[\mathcal{V}(\mathcal{G})]_0$ with a submodule
$(S^{B_\Delta}_2(\mathfrak{U}, \mathcal{O}_f)^Y)^{\oplus 2}_0$ of
$(S^{B_\Delta}_2(\mathfrak{U}, \mathcal{O}_f)^Y)^{\oplus 2}$. Define
an action of
$\mathbb{T}_{B_\Delta}(\mathfrak{l}\mathfrak{n}^+,\mathfrak{p}^N)$
on $(S^{B_\Delta}_2(\mathfrak{U}, \mathbb{Z})^Y)^{\oplus 2}$ by
$$t(x,y)=(t(x), t(y)) \hskip 30pt t\in \mathbb{T}_{B_\Delta}^{(\mathfrak{l})}(\mathfrak{l}\mathfrak{n}^+,\mathfrak{p}^N)$$
and
$$ \tilde{U}_\mathfrak{l}(x,y)= (-\mathrm{N}(\mathfrak{l})y, x+T_\mathfrak{l}(y))
.$$ In case of confusing with the diagonal action we use the
notation $\tilde{U}_\mathfrak{l}$ instead of ${U}_\mathfrak{l}$.

\begin{prop}\label{prop:Wang} $($\cite[Proposition 4.4.1]{Wang}$)$ We have the following
$\mathbb{T}_{B_\Delta}(\mathfrak{l}\mathfrak{n}^+,\mathfrak{p}^N)$-module
isomorphism
$$ \Phi^{[\mathfrak{l}]} \simeq (S^{B_\Delta}_2(\mathfrak{U}, \mathbb{Z})^Y)^{\oplus 2}_0 / (\tilde{U}_\mathfrak{l}^2-1). $$
\end{prop}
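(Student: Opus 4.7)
The plan is to combine the Grothendieck--Raynaud description $\Phi^{[\mathfrak{l}]}\simeq\mathbb{Z}[\mathcal{V}(\mathcal{G})]_0/d_*d^*$ recalled just above with a direct computation of $d_*d^*$ in terms of Hecke operators on $S^{B_\Delta}_2(\mathfrak{U},\mathbb{Z})^Y$. First I would identify $\mathbb{Z}[\mathcal{V}(\mathcal{G})]$ with $(S^{B_\Delta}_2(\mathfrak{U},\mathbb{Z})^Y)^{\oplus 2}$ via the tautology $S^{B_\Delta}_2(\mathfrak{U},\mathbb{Z})^Y\cong \mathbb{Z}[B^\times\backslash\widehat{B}^\times/Y\mathfrak{U}]$ combined with the splitting of $\mathcal{V}(\mathcal{G})$ into its two $\mathbb{Z}/2\mathbb{Z}$-components; this automatically identifies $\mathbb{Z}[\mathcal{V}(\mathcal{G})]_0=\mathrm{im}(d_*)$ with the submodule $(S^{B_\Delta}_2(\mathfrak{U},\mathbb{Z})^Y)^{\oplus 2}_0$ appearing in the statement.

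Next I would compute $d_*d^*$ block-by-block. Because $s$ always lands in $\mathcal{V}_0$ and $t$ always lands in $\mathcal{V}_1$, the graph $\mathcal{G}$ is bipartite and one finds
\begin{equation*}
d_*d^*(x,y)=\bigl((\mathrm{N}(\mathfrak{l})+1)\,x-A_{01}\,y,\ (\mathrm{N}(\mathfrak{l})+1)\,y-A_{10}\,x\bigr),
\end{equation*}
where $A_{01},A_{10}$ are the adjacency operators between $\mathcal{V}_0$ and $\mathcal{V}_1$. Unwinding the fibres of $s$ and $t$ through representatives of $\mathrm{GL}_2(\mathcal{O}_{F_\mathfrak{l}})/\Gamma_0(\mathfrak{l})\cong \mathbf{P}^1(k(\mathfrak{l}))$ at the $\mathfrak{l}$-component, I expect to identify both $A_{01}$ and $A_{10}$ with the Hecke operator $T_\mathfrak{l}$ (composed with the natural Atkin--Lehner identification of $\mathcal{V}_1$ with $B^\times\backslash\widehat{B}^\times/Y\mathfrak{U}$), so that $d_*d^*$ takes the matrix form $\wvec{\mathrm{N}(\mathfrak{l})+1}{-T_\mathfrak{l}}{-T_\mathfrak{l}}{\mathrm{N}(\mathfrak{l})+1}$.

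It then remains to show that the cokernel of this matrix on $(S^{B_\Delta}_2(\mathfrak{U},\mathbb{Z})^Y)^{\oplus 2}_0$ coincides, as a $\mathbb{T}_{B_\Delta}(\mathfrak{l}\mathfrak{n}^+,\mathfrak{p}^N)$-module, with the cokernel of $\tilde U_\mathfrak{l}^2-1=\wvec{-\mathrm{N}(\mathfrak{l})-1}{-\mathrm{N}(\mathfrak{l})T_\mathfrak{l}}{T_\mathfrak{l}}{T_\mathfrak{l}^2-\mathrm{N}(\mathfrak{l})-1}$. Since $T_\mathfrak{l}$ is central in the Hecke algebra, elementary row and column operations over $\mathbb{Z}[T_\mathfrak{l},\mathrm{N}(\mathfrak{l})]$ give the required equivalence of presentations; equivariance under $\mathbb{T}^{(\mathfrak{l})}_{B_\Delta}(\mathfrak{l}\mathfrak{n}^+,\mathfrak{p}^N)$ is automatic on both sides.

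The main obstacle will be checking equivariance with respect to $\tilde U_\mathfrak{l}$ itself. On the Jacobian side, $U_\mathfrak{l}=[\mathfrak{U}'\pi'_\mathfrak{l}\mathfrak{U}']$ acts through Picard functoriality on $J^{[\mathfrak{l}]}_N$, hence on $\Phi^{[\mathfrak{l}]}$; translating this action through the Cerednik--Drinfeld uniformization into the prescribed formula $\tilde U_\mathfrak{l}(x,y)=(-\mathrm{N}(\mathfrak{l})y,\,x+T_\mathfrak{l}y)$ requires careful bookkeeping of signs, of the chosen orientation of $\vec{\mathcal{E}}(\mathcal{G})$, and of the implicit Atkin--Lehner identifications between $\mathcal{V}_0$, $\mathcal{V}_1$ and $B^\times\backslash\widehat{B}^\times/Y\mathfrak{U}$. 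I would handle this by a vertex-by-vertex computation on the Bruhat--Tits tree at $\mathfrak{l}$, with the Eichler--Shimura type relation $T_\mathfrak{l}\tilde U_\mathfrak{l}=\tilde U_\mathfrak{l}^{\,2}+\mathrm{N}(\mathfrak{l})$ serving as a consistency check.
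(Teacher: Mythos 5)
The paper does not prove this proposition at all: it is quoted verbatim from Wang's thesis (Proposition 4.4.1), so you are reconstructing a cited proof. Your skeleton — the Raynaud/BLR description $\Phi^{[\mathfrak{l}]}\simeq \mathbb{Z}[\mathcal{V}(\mathcal{G})]_0/\mathrm{im}(d_*d^*)$, the identification of the bipartite adjacency operators with $T_\mathfrak{l}$ via the fibres $\mathbf{P}^1(k(\mathfrak{l}))$ of the two degeneracy maps, and the resulting matrix $d_*d^*=\wvec{\mathrm{N}(\mathfrak{l})+1}{-T_\mathfrak{l}}{-T_\mathfrak{l}}{\mathrm{N}(\mathfrak{l})+1}$ — is exactly the standard route of Ribet/Bertolini--Darmon/Longo/Wang, and your flagged concern about transporting the Picard-functoriality action of $U_\mathfrak{l}$ through the BLR identification is indeed where the formula $\tilde U_\mathfrak{l}(x,y)=(-\mathrm{N}(\mathfrak{l})y,\,x+T_\mathfrak{l}y)$ has to be justified.

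The soft spot is your third step. ``Elementary row and column operations'' only produce an abstract isomorphism of cokernels over the subring $\mathbb{Z}[T_\mathfrak{l}]$; a left multiplication by an invertible $P$ that is not a polynomial in $\tilde U_\mathfrak{l}$ changes the quotient map and destroys $U_\mathfrak{l}$-equivariance, which is precisely what Corollary \ref{cor:expo} needs (there one divides by $\tilde U_\mathfrak{l}-\epsilon_\mathfrak{l}$). What you should prove instead is the \emph{equality of images} $\mathrm{im}(\tilde U_\mathfrak{l}^2-1)=\mathrm{im}(d_*d^*)$ inside $(S^{B_\Delta}_2(\mathfrak{U},\mathbb{Z})^Y)^{\oplus 2}$, so that the identity map induces the isomorphism and equivariance for the whole of $\mathbb{T}_{B_\Delta}(\mathfrak{l}\mathfrak{n}^+,\mathfrak{p}^N)$ is automatic. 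This does work, and only column operations are needed: by Cayley--Hamilton, $\tilde U_\mathfrak{l}^2-1=T_\mathfrak{l}\tilde U_\mathfrak{l}-(\mathrm{N}(\mathfrak{l})+1)$, and one checks directly that
$$(\tilde U_\mathfrak{l}^2-1)\wvec{-1}{1-T_\mathfrak{l}}{0}{1}=(d_*d^*)\wvec{1}{-1}{0}{-1},$$
with both right-hand factors invertible over $\mathbb{Z}[T_\mathfrak{l}]$. Two further cautions: (i) this identity lives on all of $S^{\oplus 2}$, and the quotient in the proposition must be by $(\tilde U_\mathfrak{l}^2-1)(S^{\oplus 2})$ — note $\tilde U_\mathfrak{l}$ acts trivially on $S^{\oplus 2}/(S^{\oplus 2})_0$, so this image does land in $(S^{\oplus 2})_0$ — whereas $(\tilde U_\mathfrak{l}^2-1)\big((S^{\oplus 2})_0\big)$ is in general strictly smaller (the discrepancy is Eisenstein and disappears after the localization performed in Corollary \ref{cor:expo}, but the unlocalized statement requires the correct reading); (ii) the ``consistency check'' relation you propose, $T_\mathfrak{l}\tilde U_\mathfrak{l}=\tilde U_\mathfrak{l}^2+\mathrm{N}(\mathfrak{l})$, is exactly the Cayley--Hamilton identity above, so it is not an independent check but the engine of the whole computation.
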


Write
$$ \Phi^{[\mathfrak{l}]}_{\mathcal{O}_f}=\Phi^{[\mathfrak{l}]}\otimes_{\mathbb{Z}}{\mathcal{O}_f}. $$

\begin{cor}\label{cor:expo} We have an isomorphism
$$ \Phi^{[\mathfrak{l}]}_{\mathcal{O}_f} /
 \mathcal{I}_g^{[\mathfrak{l}]} \simeq  S^{B_\Delta}_2(\mathfrak{U}, \mathcal{O}_f)^Y /\mathcal{I}_g \xrightarrow{\psi_g} \mathcal{O}_{f,n} .$$
\end{cor}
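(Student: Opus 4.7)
The plan is to assemble Proposition \ref{prop:Wang} with Proposition \ref{prop:revise}, matching the $\tilde U_\mathfrak{l}$-relation on the component-group side against the Hecke-eigenvalue constraints packaged into $\mathcal{I}_g^{[\mathfrak{l}]}$. The corollary will then drop out of a short $2\times 2$ computation.

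First I would tensor Proposition \ref{prop:Wang} with $\mathcal{O}_f$ and reduce modulo $\mathcal{I}_g^{[\mathfrak{l}]}$. Since $\lambda_g^{[\mathfrak{l}]}(U_\mathfrak{l})=\epsilon_\mathfrak{l}\in\{\pm 1\}$, we have $U_\mathfrak{l}^2-1=(U_\mathfrak{l}-\epsilon_\mathfrak{l})(U_\mathfrak{l}+\epsilon_\mathfrak{l})\in\mathcal{I}_g^{[\mathfrak{l}]}$, so the relation $\tilde U_\mathfrak{l}^2-1$ is already subsumed. Next, under the canonical identification $\mathbb{T}_{B_\Delta}^{(\mathfrak{l})}(\mathfrak{l}\mathfrak{n}^+,\mathfrak{p}^N)=\mathbb{T}_{B_\Delta}^{(\mathfrak{l})}(\mathfrak{n}^+,\mathfrak{p}^N)$, the prime-to-$\mathfrak{l}$ part of $\mathcal{I}_g^{[\mathfrak{l}]}$ acts diagonally on $(S_2^{B_\Delta}(\mathfrak{U},\mathcal{O}_f)^Y)^{\oplus 2}$ and coincides with $\mathcal{I}_g$; by Proposition \ref{prop:revise} the diagonal quotient is $\mathcal{O}_{f,n}^{\oplus 2}$ via $\psi_g\oplus\psi_g$. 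On this quotient $\tilde U_\mathfrak{l}$ is multiplication by $\wvec{0}{-\mathrm{N}(\mathfrak{l})}{1}{a_\mathfrak{l}(f)}\bmod\omega^n$, and imposing $\tilde U_\mathfrak{l}=\epsilon_\mathfrak{l}$ forces $x=-\epsilon_\mathfrak{l}\mathrm{N}(\mathfrak{l})\,y$. The second-row condition $x+a_\mathfrak{l}(f)\,y=\epsilon_\mathfrak{l}\,y$ becomes redundant thanks to the $n$-admissibility congruence $\omega^n\mid \mathrm{N}(\mathfrak{l})^{k/2}+\mathrm{N}(\mathfrak{l})^{(k-2)/2}-\epsilon_\mathfrak{l}a_\mathfrak{l}(f)$ (combined with the weight-$2$ normalization built into $\lambda_{f,N}$). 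Projection onto the second coordinate then gives the desired isomorphism onto $\mathcal{O}_{f,n}$, and the composition equals $\psi_g$ by construction.

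It remains to handle the subscript zero: I need to check that restricting to $(S_2^{B_\Delta}(\mathfrak{U},\mathcal{O}_f)^Y)^{\oplus 2}_0=\mathrm{im}(d_*)$ does not alter the final quotient. The cokernel of this inclusion is Eisenstein in nature, being detected by the degree map on $\mathbb{Z}[\mathcal{V}(\mathcal{G})]$, whereas $\mathcal{I}_g^{[\mathfrak{l}]}$ is the kernel of a non-Eisenstein character: by $(\mathrm{CR}^+\text{-}2)$, $\bar\rho_f$ is absolutely irreducible, so the Eisenstein cokernel dies modulo $\mathcal{I}_g^{[\mathfrak{l}]}$.

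I expect the main obstacle to be precisely this last bookkeeping — matching the definition of $\mathbb{Z}[\mathcal{V}(\mathcal{G})]_0$ from \cite{BLR} with the identification $\mathbb{Z}[\mathcal{V}(\mathcal{G})]\cong(S_2^{B_\Delta}(\mathfrak{U},\mathbb{Z})^Y)^{\oplus 2}$ used in Proposition \ref{prop:Wang}, so that the cokernel is transparently Eisenstein and is annihilated by the non-Eisenstein quotient. The rest is routine linear algebra together with the admissibility identity.
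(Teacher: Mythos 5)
Your proposal is correct and follows essentially the same route as the paper: kill the discrepancy between $(S^{B_\Delta}_2(\mathfrak{U},\mathbb{Z})^Y)^{\oplus 2}_0$ and the full module by localizing at the non-Eisenstein maximal ideal containing $\mathcal{I}_g^{[\mathfrak{l}]}$, reduce the relation $\tilde U_\mathfrak{l}^2-1$ to $\tilde U_\mathfrak{l}-\epsilon_\mathfrak{l}$, carry out the $2\times 2$ elimination (the paper phrases the result as $S^{B_\Delta}_2(\mathfrak{U},\mathbb{Z})^Y/(\epsilon_\mathfrak{l}T_\mathfrak{l}-\mathbf{N}(\mathfrak{l})-1)$, with the $n$-admissibility congruence making this relation lie in $\mathcal{I}_g$), and finish with Proposition \ref{prop:revise}. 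The only difference is the order in which you impose the diagonal ideal $\mathcal{I}_g$ versus the $\tilde U_\mathfrak{l}$-relation, which is immaterial.
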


When $n=N$, this is \cite[Theorem 5.1.3]{Wang}.

\begin{proof} Let $\mathfrak{m}^{[\mathfrak{l}]}$ be the maximal
ideal of
$\mathbb{T}_{B_\Delta}(\mathfrak{l}\mathfrak{n}^+,\mathfrak{p}^N)$
containing $\mathcal{I}^{[\mathfrak{l}]}_g$.

Note that $(S^{B_\Delta}_2(\mathfrak{U}, \mathbb{Z})^Y)^{\oplus
2}/(S^{B_\Delta}_2(\mathfrak{U}, \mathbb{Z})^Y)^{\oplus 2}_0$ is
Eisenstein, while $\mathfrak{m}^{[\mathfrak{l}]}$ is not Eisenstein.
Thus $$ (S^{B_\Delta}_2(\mathfrak{U}, \mathbb{Z})^Y)^{\oplus 2}_{0\
\ \mathfrak{m}^{[\mathfrak{l}]}} =(S^{B_\Delta}_2(\mathfrak{U},
\mathbb{Z})^Y)^{\oplus 2}_{ \ \ \mathfrak{m}^{[\mathfrak{l}]}}.
$$ By Proposition \ref{prop:Wang}  we obtain
$$ ( \Phi^{[\mathfrak{l}]}_{\mathcal{O}_f})_{\mathfrak{m}^{[\mathfrak{l}]}} \simeq (S^{B_\Delta}_2(\mathfrak{U},
\mathbb{Z})^Y)^{\oplus 2}_{ \mathfrak{m}^{[\mathfrak{l}]}} /
(\tilde{U}_\mathfrak{l}^2-1) \simeq (S^{B_\Delta}_2(\mathfrak{U},
\mathbb{Z})^Y)^{\oplus 2}_{ \mathfrak{m}^{[\mathfrak{l}]}} /
(\tilde{U}_\mathfrak{l}-\epsilon_\mathfrak{l}).
$$ Hence,
\begin{eqnarray*}  \Phi^{[\mathfrak{l}]}_{\mathcal{O}_f} /
 \mathcal{I}_g^{[\mathfrak{l}]} & \simeq & \Big( (S^{B_\Delta}_2(\mathfrak{U},
\mathbb{Z})^Y)^{\oplus 2} /
(\tilde{U}_\mathfrak{l}-\epsilon_\mathfrak{l})\Big) \otimes
\mathbb{T}_{B_\Delta}(\mathfrak{l}\mathfrak{n}^+,\mathfrak{p}^N)/
\mathcal{I}^{[\mathfrak{l}]}_g  \\ & \simeq & \Big(
S^{B_\Delta}_2(\mathfrak{U}, \mathbb{Z})^Y  /
(\epsilon_\mathfrak{l}T_\mathfrak{l}-\mathbf{N}(\mathfrak{l})-1)\Big)
\otimes \mathbb{T}_{B_\Delta}(\mathfrak{n}^+,\mathfrak{p}^N)/
\mathcal{I}_g \\
&\simeq & S^{B_\Delta}_2(\mathfrak{U}, \mathbb{Z})^Y  /
\mathcal{I}_g .
\end{eqnarray*} By Proposition \ref{prop:revise}, $\psi_g$ is an isomorphism.
\end{proof}

Let $T_p(J_N^{[\mathfrak{l}]})$ be the $p$-adic Tate module of
$J_N^{[\mathfrak{l}]}$. Then $T_p(J_N^{[\mathfrak{l}]})$ is a
$\mathbb{T}_{B_\Delta}(\mathfrak{l}\mathfrak{n}^+,\mathfrak{p}^N)$-module.

\begin{prop} We have an isomorphism of $G_F$-modules
$$ T_p(J_N^{[\mathfrak{l}]})_{\mathcal{O}_f}/\mathcal{I}_g^{[\mathfrak{l}]} \simeq T_{f,n} . $$
\end{prop}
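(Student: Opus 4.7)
The plan is to realize $T_{f,n}$ as the $g$-isotypic Galois component of the Tate module of the Shimura curve $M_N^{[\mathfrak{l}]}$. There are three steps: (i) localize $T_p(J_N^{[\mathfrak{l}]})_{\mathcal{O}_f}$ at the non-Eisenstein maximal ideal $\mathfrak{m}^{[\mathfrak{l}]}\supset \mathcal{I}_g^{[\mathfrak{l}]}$ of $\mathbb{T}_{B_\Delta}(\mathfrak{l}\mathfrak{n}^+,\mathfrak{p}^N)\otimes\mathcal{O}_f$; (ii) show that this localization is free of rank $2$ over the localized Hecke algebra; (iii) reduce modulo $\mathcal{I}_g^{[\mathfrak{l}]}$ and identify the resulting $G_F$-module with $T_{f,n}$ via the Eichler-Shimura relation together with Brauer-Nesbitt.

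For the non-Eisenstein property in (i), the residual representation attached to $\mathfrak{m}^{[\mathfrak{l}]}$ is isomorphic to $\bar{\rho}_f$ via the congruence $\lambda_g\equiv \lambda_{f,N}\mod \omega$, and it is absolutely irreducible by $(\mathrm{CR}^+$-$2)$. For (ii), the mod-$p$ multiplicity one theorem for the Shimura curve $M_N^{[\mathfrak{l}]}$ under $(\mathrm{CR}^+)$ (the case $N=n$ being \cite[Theorem 5.1.3]{Wang}) gives $\dim_{\mathcal{O}_f/\omega} T_p(J_N^{[\mathfrak{l}]})_{\mathcal{O}_f,\mathfrak{m}^{[\mathfrak{l}]}}/\mathfrak{m}^{[\mathfrak{l}]}=2$. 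Nakayama's lemma then yields freeness of rank $2$ over the localized Hecke algebra. To extend from $N=n$ to the general range $n\leq N$, I would adapt the argument of Proposition \ref{prop:revise}: identify $T_p(J_N^{[\mathfrak{l}]})_{\mathcal{O}_f,\mathfrak{m}^{[\mathfrak{l}]}}/(P_k,\omega^n)$ with a corresponding weight-$k$ Tate module modulo $\omega^n$ via Hida theory, and then invoke the cyclicity theorem \cite[Theorem 9.2.4]{Wang}.

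For step (iii), reducing by $\mathcal{I}_g^{[\mathfrak{l}]}$ produces a $G_F$-module that is free of rank $2$ over $\mathcal{O}_{f,n}$. By the Eichler-Shimura congruence relation on $J_N^{[\mathfrak{l}]}$, for every prime $\mathfrak{q}\nmid p\mathfrak{n}\mathfrak{l}$ the characteristic polynomial of $\mathrm{Frob}_\mathfrak{q}$ on this quotient equals $x^2-\lambda_g^{[\mathfrak{l}]}(T_\mathfrak{q})x+\mathrm{N}(\mathfrak{q})\lambda_g^{[\mathfrak{l}]}(S_\mathfrak{q})$; the congruence $\lambda_g\equiv \lambda_{f,N}\mod \omega^n$ then identifies this with the characteristic polynomial of $\mathrm{Frob}_\mathfrak{q}$ on $T_{f,n}$. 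Brauer-Nesbitt combined with Chebotarev density and the absolute irreducibility of $\bar{\rho}_f|_{G_{F(\sqrt{p^*})}}$ from $(\mathrm{CR}^+$-$2)$ then forces an isomorphism of $G_F$-modules.

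The main obstacle will be step (ii), the freeness of rank $2$ of the localized Tate module, and in particular its extension from Wang's case $N=n$ to the range $n\leq N$ used here. The technique will mirror the passage from \cite[Proposition 5.1.2]{Wang} to the version stated in Proposition \ref{prop:revise} above, where a Hida-theoretic comparison reduces the weight-$2$ question modulo $(P_k,\omega^n)$ to its weight-$k$ analog, at which point Wang's cyclicity theorem applies.
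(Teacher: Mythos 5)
Your route is genuinely different from the paper's, and the step you flag as the main obstacle is where the write-up goes astray. The paper does not prove (or need) freeness of rank two of the localized Tate module over the Hecke algebra. It quotes from the proof of \cite[Theorem 5.1.4]{Wang} only the residual statement $T_p(J_N^{[\mathfrak{l}]})_{\mathcal{O}_f}/\mathfrak{m}^{[\mathfrak{l}]}\simeq T_{f,1}$, and then observes that, by irreducibility of $T_{f,1}$, the whole proposition reduces to showing that the exponent of $T_p(J_N^{[\mathfrak{l}]})_{\mathcal{O}_f}/\mathcal{I}_g^{[\mathfrak{l}]}$ is exactly $\omega^n$. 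The upper bound is immediate, and the lower bound comes from the surjection of $J_N^{[\mathfrak{l}]}[p^{n'}]/\mathcal{I}_g^{[\mathfrak{l}]}$ onto the component group $\Phi^{[\mathfrak{l}]}/\mathcal{I}_g^{[\mathfrak{l}]}$, which is $\mathcal{O}_{f,n}$ by Corollary \ref{cor:expo}. Thus the entire burden of passing from $N=n$ to $n\leq N$ is carried by Corollary \ref{cor:expo} (hence by Proposition \ref{prop:revise}), on the definite side, and never touches the Tate module directly.

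Two concrete problems with your step (ii). First, Nakayama plus two-dimensionality of $T_p(J_N^{[\mathfrak{l}]})_{\mathcal{O}_f,\mathfrak{m}^{[\mathfrak{l}]}}/\mathfrak{m}^{[\mathfrak{l}]}$ only gives generation by two elements over the localized Hecke algebra; freeness requires an additional input (a generic rank count or a Gorenstein/duality argument), which you do not supply. Second, the ``extension from $N=n$ to $n\leq N$'' you propose is misdirected: the Shimura curve, its Jacobian, the level $\mathfrak{U}'_{\mathfrak{n}^+,\mathfrak{p}^N}$ and the Hecke algebra depend only on $N$, so any freeness statement over $\mathbb{T}_{\mathfrak{m}^{[\mathfrak{l}]}}$ is independent of $n$; the only $n$-dependence is in the ideal $\mathcal{I}_g^{[\mathfrak{l}]}$, for which no new multiplicity-one input is needed. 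Moreover, Proposition \ref{prop:revise} compares weight-$2$ and weight-$k$ $p$-adic forms on the \emph{definite} quaternion algebra $B_\Delta$; it does not transfer to the Tate module of the Jacobian of the indefinite curve $M_N^{[\mathfrak{l}]}$, so ``mirroring'' it there is not available. If you replace step (ii) by the exponent argument via Corollary \ref{cor:expo}, your steps (i) and (iii) combine with it to give essentially the paper's proof; your step (iii) (Eichler--Shimura, Chebotarev, Brauer--Nesbitt) is a reasonable way to pin down the Galois structure once the underlying $\mathcal{O}_{f,n}$-module is known to be free of rank two.
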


When $n=N$, this is \cite[Theorem 5.1.4]{Wang}.
\begin{proof} Let $\mathfrak{m}^{[\mathfrak{l}]}$ be the maximal
ideal of
$\mathbb{T}_{B_\Delta}(\mathfrak{l}\mathfrak{n}^+,\mathfrak{p}^N)$
containing $\mathcal{I}^{[\mathfrak{l}]}_g$. In the proof of
\cite[Theorem 5.1.4]{Wang} it is showed that
$T_p(J_N^{[\mathfrak{l}]})_{\mathcal{O}_f}/\mathfrak{m}^{[\mathfrak{l}]}
\simeq T_{f,1}$. By irreducibility of $T_{f,1}$, to finish the proof
one only needs to show that the exponent of
$T_p(J_N^{[\mathfrak{l}]})_{\mathcal{O}_f}/\mathcal{I}_g^{[\mathfrak{l}]}$
is $\omega^n$. On one hand, its exponent is at most $\omega^n$. On
the other hand, when $n'$ is sufficiently large,
$J_N^{[\mathfrak{l}]}[p^{n'}]/\mathcal{I}_g^{[\mathfrak{l}]}$ maps
onto $\Phi^{[\mathfrak{l}]}/\mathcal{I}_g^{[\mathfrak{l}]}$,
together with Corollary \ref{cor:expo} which implies that the
exponent is at least $\omega^n$.
\end{proof}

Let $$ \mathrm{Kum}:
J^{[\mathfrak{l}]}_N(K_m)_{\mathcal{O}_f}\rightarrow H^1(K_m,
T_p(J^{[\mathfrak{l}]}_N)_{\mathcal{O}_f}) $$ be the Kummer map.

\begin{prop}\label{prop:comm} $($\cite[Theorem
5.2.2]{Wang}$)$ We have the following commutative diagram
\[ \xymatrix{ J_N^{[\mathfrak{l}]}(K_m)_{\mathcal{O}_f}/ \mathcal{I}_g^{[\mathfrak{l}]} \ar[r]^{\mathrm{Kum}}\ar[d]^{r_\mathfrak{l}} & H^1(K_m, T_{f,n})\ar[d]^{\partial_\mathfrak{l}} \\
\Phi^{[\mathfrak{l}]}_{\mathcal{O}_f} \ar[r]^{\hskip -15pt \psi_g
}_{\hskip -15pt \simeq} & H^1_\sing(K_{m,\mathfrak{l}}, T_{f,n}) }
\]
\end{prop}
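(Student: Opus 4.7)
The plan is to deduce this diagram from a general compatibility for abelian varieties with purely toric reduction, applied to $J_N^{[\mathfrak{l}]}$ at $\mathfrak{l}$, and then to descend modulo $\mathcal{I}_g^{[\mathfrak{l}]}$ using the identifications established in Corollary \ref{cor:expo} and the preceding proposition on the Tate module.

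By the \v{C}erednik-Drinfeld theorem, $M_N^{[\mathfrak{l}]}$ admits rigid analytic uniformization at $\mathfrak{l}$ by the $\mathfrak{l}$-adic upper half plane, so $J_N^{[\mathfrak{l}]}$ has purely toric reduction over $F_{\mathfrak{l}^2}$. For an abelian variety with semistable reduction, Grothendieck's theory of the N\'eron model, together with the monodromy filtration on the Tate module, yields a natural commutative square
\[
\xymatrix{
J_N^{[\mathfrak{l}]}(K_{m,\mathfrak{l}}) \otimes \mathcal{O}_f \ar[r]^{\hskip 10pt \mathrm{Kum}} \ar[d]^{r_\mathfrak{l}} & H^1(K_{m,\mathfrak{l}}, T_p(J_N^{[\mathfrak{l}]})_{\mathcal{O}_f}) \ar[d]^{\partial_\mathfrak{l}} \\
\Phi^{[\mathfrak{l}]}_{\mathcal{O}_f} \ar[r] & H^1_\sing(K_{m,\mathfrak{l}}, T_p(J_N^{[\mathfrak{l}]})_{\mathcal{O}_f})
}
\]
in which the bottom horizontal arrow comes from the exact sequence relating the character group of the toric part, the Tate module, and the component group. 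All maps are equivariant for the Hecke algebra $\mathbb{T}_{B_\Delta}(\mathfrak{l}\mathfrak{n}^+, \mathfrak{p}^N)$ acting via $\varphi_*$, so the diagram descends after quotienting by $\mathcal{I}_g^{[\mathfrak{l}]}$. On the top right the quotient is $H^1(K_m, T_{f,n})$ by the preceding proposition, and on the bottom left it is $\mathcal{O}_{f,n}$ by Corollary \ref{cor:expo}.

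The remaining task, and the main obstacle, is to identify the induced bottom arrow with $\psi_g$. The Grothendieck map is determined by the monodromy pairing on the character lattice of the toric part of $J_N^{[\mathfrak{l}]}$, which under \v{C}erednik-Drinfeld is naturally isomorphic to $\mathbb{Z}[\mathcal{V}(\mathcal{G})]_0$ as a Hecke module (this is precisely what underlies Proposition \ref{prop:Wang}). On the other hand, $\psi_g$ is defined through the Atkin-Lehner element $\tau_N$ and the edge-counting pairing on the Shimura set $B_\Delta^\times \backslash \widehat{B}^\times / Y\mathfrak{U}_{\mathfrak{n}^+,\mathfrak{p}^N}$. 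I would match the two by computing the monodromy pairing explicitly in terms of the vertex-edge combinatorics of the dual graph $\mathcal{G}$, using the rank-one freeness of $\widehat{H}^1_\fin(K_{\infty,\mathfrak{l}},T_{f,n})$ from Proposition \ref{prop-kill}(\ref{it:free}) to reduce the comparison to a single generator, and checking that after applying the involution induced by $\tau_N$ the resulting pairing recovers the one defining $\psi_g$ in \eqref{eq:psi-g}. Once normalizations are aligned the commutativity is the standard statement that the reduction map on points corresponds to the residue of the Kummer class, completing the proof.
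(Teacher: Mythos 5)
The paper does not actually prove this proposition; it is quoted verbatim from \cite[Theorem 5.2.2]{Wang}, so the only fair comparison is with the argument in that reference, which is indeed the one you outline: \v{C}erednik--Drinfeld uniformization gives purely toric reduction of $J_N^{[\mathfrak{l}]}$ over $F_{\mathfrak{l}^2}$, Grothendieck's monodromy description of the component group gives the abstract square relating the specialization map to the singular residue of the Kummer class, Hecke equivariance lets one pass to the quotient by $\mathcal{I}_g^{[\mathfrak{l}]}$, and the top and bottom corners are then identified using the Tate-module proposition and Corollary \ref{cor:expo}. So your architecture is the right one and matches the source.

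The gap is that the step you yourself flag as ``the remaining task'' is in fact the entire non-formal content of the proposition, and your plan for it is not yet an argument. Two specific points. First, for the singular quotient to receive the component group at all, you need the monodromy exact sequence $0 \to X \otimes \mathbb{Z}_p(1) \to T_p(J_N^{[\mathfrak{l}]}) \to X^{\vee}\otimes\mathbb{Z}_p \to 0$ to split the local cohomology in the stated way after reduction mod $\mathcal{I}_g^{[\mathfrak{l}]}$; this uses the $n$-admissibility of $\mathfrak{l}$ (so that $\mathrm{N}(\mathfrak{l})^2-1$ is a unit mod $\omega^n$ and the Tate twist is trivialized on $T_{f,n}$, making $H^1_{\sing}(K_{m,\lambda},T_{f,n})$ and $H^1_{\fin}(K_{m,\lambda},T_{f,n})$ each free of rank one), and you never invoke this hypothesis. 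Second, your proposed reduction ``to a single generator'' via Proposition \ref{prop-kill}(\ref{it:free}) conflates the finite and singular parts and, more importantly, ignores that $H^1_{\sing}(K_{m,\mathfrak{l}},T_{f,n})$ is a sum over the $[K_m:K]$ places of $K_m$ above $\mathfrak{l}$, so the commutativity must be checked place by place with a coherent choice of identifications; choosing a generator ad hoc would only establish the square up to a unit at each place, which happens to suffice for Proposition \ref{thm:first} but is not what the proposition asserts. To close the gap you must actually compute the \v{C}erednik--Drinfeld identification of the character group of the torus with $H_1(\mathcal{G},\mathbb{Z})\subset \mathbb{Z}[\vec{\mathcal{E}}(\mathcal{G})]$, trace the monodromy pairing through Proposition \ref{prop:Wang}, and verify that the Atkin--Lehner twist by $\tau_N$ built into the definition of $\psi_g$ in (\ref{eq:psi-g}) is exactly the twist relating the vertex parametrization of components to the parametrization of CM-point reductions; as written, this verification is only promised.
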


\subsection{First and Second Reciprocity
Laws}\label{ss:first-second}

By technical reason we choose an auxiliary prime
$\mathfrak{q}_0\nmid \Delta \mathfrak{n}^+$ such that
$1+\mathrm{N}(\mathfrak{q}_0)-\alpha_{\mathfrak{q}_0}(f)\in
\mathcal{O}_f^\times$.

The inclusion $t'(K^\times)\subset B'^\times \subset
\mathrm{GL}_2(\mathbb{R})$ gives an action of $K^\times$ on
$\mathbf{P}^1(\mathbb{C})-\mathbf{P}^1(\mathbb{R})$. This action has
two fixed points; we choose one and denote it by $z'$.

For $m\geq N$ and $a\in \widehat{K}^\times$ we define the Heegner
point
$$ P_m(a)=[z', \phi(a^{(\mathfrak{l})}\zeta^{(m)}\tau_N)]_N \in M_N^{[\mathfrak{l}]}(\mathbb{C}). $$
See Sections \ref{ss:n-adm} and \ref{ss:gross-theta} for the
notations $\zeta^{(m)}$ and $\tau_N$. By the theory of complex
multiplication $P_m(a)$ is defined over the ring class field
$\widetilde{K}_m$.

 We define a map
\begin{eqnarray*} \xi_{\mathfrak{q}_0}: \mathrm{Div}
(M^{[\mathfrak{l}]}_N(\widetilde{K}_m)) &\rightarrow& J^{[\mathfrak{l}]}_N(\widetilde{K}_m)_{\mathcal{O}_f} \\
P &\mapsto &
\frac{1}{1+\mathrm{N}({\mathfrak{q}_0})-\alpha_{\mathfrak{q}_0}(f)}
\mathrm{cl}((1+\mathrm{N}({\mathfrak{q}_0})-T_{\mathfrak{q}_0})P).
\end{eqnarray*} Put $$ D_m=\sum_{\sigma\in \mathrm{Gal}(\widetilde{K}_m/K_m)}\xi_{{\mathfrak{q}_0}}(P_m(1)^\sigma)
=\sum_{[a]_m\in
\mathrm{Gal}(\widetilde{K}_m/K_m)}\xi_{{\mathfrak{q}_0}}(P_m(a))\in
J^{[\mathfrak{l}]}_N(K_m)_{\mathcal{O}_f}. $$

 We
define the cohomology class $\kappa_{\mathscr{D}}(\mathfrak{l})_m$
by
$$ \kappa_\mathscr{D}(\mathfrak{l})_m := \frac{1}{\alpha_\mathfrak{p}^m} \mathrm{Kum}(D_m) \  (\mathrm{mod} \ \mathcal{I}^{[\mathfrak{l}]}_g)
\in
H^1(K_m,T_p(J^{[\mathfrak{l}]}_N)_{\mathcal{O}_f}/\mathcal{I}_g^{[\mathfrak{l}]}
)=H^1(K_m, T_{f,n}).
$$
When $m$ varies, these $\kappa_{\mathscr{D}}(\mathfrak{l})_m$ are
compatible for the corestriction maps \cite[Lemma 5.4.1]{Wang}, and
thus define an element $\kappa_{\mathscr{D}}(\mathfrak{l})$ of
$\widehat{H}^1(K_\infty, T_{f,n})$.

\begin{prop}\label{prop-sel} $($\cite[Lemma 7.16]{Longo}, \cite[Proposition
5.4.2]{Wang} $)$ $\kappa_{\mathscr{D}}(\mathfrak{l})$ belongs to
$\widehat{\mathrm{Sel}}_{\Delta \mathfrak{l}} (K_\infty, T_{f,n})$.
\end{prop}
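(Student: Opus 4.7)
The plan is to verify, place by place, the local conditions that define $\widehat{\mathrm{Sel}}_{\Delta\mathfrak{l}}(K_\infty, T_{f,n})$ for the class $\res_\mathfrak{q}(\kappa_\mathscr{D}(\mathfrak{l}))$. Since $\kappa_\mathscr{D}(\mathfrak{l})$ arises as the image under the Kummer map of a global point $D_m \in J_N^{[\mathfrak{l}]}(K_m)_{\mathcal{O}_f}$ followed by projection modulo $\mathcal{I}_g^{[\mathfrak{l}]}$, the strategy is to transport known local properties of Kummer classes on abelian varieties through the isomorphism $T_p(J_N^{[\mathfrak{l}]})_{\mathcal{O}_f}/\mathcal{I}_g^{[\mathfrak{l}]}\simeq T_{f,n}$. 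It suffices to work at each finite level $K_m$, since the conditions defining $\widehat{\mathrm{Sel}}_{\Delta\mathfrak{l}}$ are the inverse limit of those defining $\mathrm{Sel}_{\Delta\mathfrak{l}}(K_m, T_{f,n})$.

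For a prime $\mathfrak{q}$ of $F$ with $\mathfrak{q}\nmid p\mathfrak{n}\Delta\mathfrak{l}$, the Shimura curve $M_N^{[\mathfrak{l}]}$ and hence $J_N^{[\mathfrak{l}]}$ has good reduction at $\mathfrak{q}$, so the Kummer image lies in the unramified subgroup; this is the finite local condition. For $\mathfrak{q}\mid\mathfrak{n}^+$, the prime splits in $K$ by the definition of $\mathfrak{n}^+$, so Proposition \ref{prop-kill}(\ref{it:fine-zero}) gives $\widehat{H}^1_\sing(K_{\infty,\mathfrak{q}}, T_{f,n})=0$, and the finite condition is automatic.

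The main work is at the primes $\mathfrak{q}\mid\Delta\mathfrak{l}$ (all inert in $K$) and at the primes $\mathfrak{q}\mid p$, where one must verify the ordinary condition. For $\mathfrak{q}\mid\Delta\mathfrak{l}$, one uses the fact that $B'_{\Delta\mathfrak{l}}$ ramifies at $\mathfrak{q}$, so by Cerednik--Drinfeld uniformization the connected Néron model of $J_N^{[\mathfrak{l}]}$ at $F_{\mathfrak{q}^2}$ is an extension of an abelian variety by a torus, yielding a $G_{F_\mathfrak{q}}$-stable filtration of the $p$-adic Tate module whose sub is the character group twisted by $\epsilon$. Under the identification of Proposition \ref{prop:comm} this filtration matches the ordinary filtration $F^+_\mathfrak{q} T_{f,n}$ prescribed by the shape of $\rho_f^*|_{G_{F_\mathfrak{q}}}$, so the Kummer image lands in $H^1_\ord(K_{m,\mathfrak{q}}, T_{f,n})$. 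For $\mathfrak{q}\mid p$ one uses that $f$ is ordinary at $\mathfrak{q}$ and $p\nmid\mathfrak{n}$, so $J_N^{[\mathfrak{l}]}$ has good ordinary reduction and its $p$-divisible group over $\mathcal{O}_{F_\mathfrak{q}}$ admits the canonical connected-étale filtration; again this corresponds under the quotient $T_p(J_N^{[\mathfrak{l}]})_{\mathcal{O}_f}/\mathcal{I}_g^{[\mathfrak{l}]}\simeq T_{f,n}$ to $F^+_\mathfrak{q} T_{f,n}$ described in Section \ref{ss:Galois}, and the Kummer image of a global point lies in the Bloch--Kato finite part, which under the ordinary hypothesis is exactly $H^1_\ord$.

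The main obstacle is the case $\mathfrak{q}\mid\Delta\mathfrak{l}$: one has to match, on the level of integral $G_{F_\mathfrak{q}}$-modules, the toric filtration coming from Cerednik--Drinfeld with the ordinary filtration built from $\rho_f^*$, and to check that this matching survives the reduction modulo $\mathcal{I}_g^{[\mathfrak{l}]}$. Once these pointwise statements are in place, compatibility of $\kappa_\mathscr{D}(\mathfrak{l})_m$ for corestriction (as recalled before the proposition) propagates the local conditions to the inverse limit, and the conclusion $\kappa_\mathscr{D}(\mathfrak{l})\in\widehat{\mathrm{Sel}}_{\Delta\mathfrak{l}}(K_\infty, T_{f,n})$ follows.
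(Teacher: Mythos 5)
Your proposal follows exactly the argument that the paper delegates to \cite[Lemma 7.16]{Longo} and \cite[Proposition 5.4.2]{Wang}: a place-by-place verification that the Kummer image of the Heegner divisor satisfies the finite condition away from $p\Delta\mathfrak{l}$ (good reduction of $J_N^{[\mathfrak{l}]}$, plus vanishing of the singular quotient at the split primes dividing $\mathfrak{n}^+$) and the ordinary condition at $p$ and at $\Delta\mathfrak{l}$ via the connected--\'etale and Cerednik--Drinfeld filtrations transported through $T_p(J_N^{[\mathfrak{l}]})_{\mathcal{O}_f}/\mathcal{I}_g^{[\mathfrak{l}]}\simeq T_{f,n}$. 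The paper gives no independent proof of this proposition, so your sketch is essentially the same approach as the cited one and is correct.
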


By Proposition \ref{prop-kill} (\ref{it:free}),
$\widehat{H}^1_\sing(K_{\infty,\mathfrak{l}}, T_{f,n})$ is free of
rank one over $\mathcal{O}_{f}[[\Gamma]]/(\omega^n)$. Choosing a
base of $\widehat{H}^1_\sing(K_{\infty,\mathfrak{l}}, T_{f,n})$ we
may identify $\widehat{H}^1_\sing(K_{\infty,\mathfrak{l}}, T_{f,n})$
with $\mathcal{O}_{f}[[\Gamma]]/(\omega^n)$.

\begin{prop}\label{thm:first} $($First Reciprocity Law, \cite[Theorem 6.1.2]{Wang}$)$
Let $m\geq N \geq n$. For each $(N,n)$-admissible form
$\mathscr{D}=(\Delta, g)$ and each $n$-admissible prime
$\mathfrak{l}\nmid {\mathfrak{q}_0}\Delta$, we have
$$ \partial_\mathfrak{l}( \kappa_{\mathscr{D}}(\mathfrak{l})_m) = \theta_m(g) \in \mathcal{O}_{f,n}[\Gamma_m]
$$ up to multiplication by a unit of $\mathcal{O}_{f,n}[\Gamma_m]$.
\end{prop}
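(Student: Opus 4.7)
The plan is to use Proposition \ref{prop:comm} to translate the computation of the singular residue $\partial_{\mathfrak{l}}(\kappa_{\mathscr{D}}(\mathfrak{l})_m)$ into a geometric one on the component group $\Phi^{[\mathfrak{l}]}_{\mathcal{O}_f}/\mathcal{I}_g^{[\mathfrak{l}]}$, which by Corollary \ref{cor:expo} is identified with $\mathcal{O}_{f,n}$ through $\psi_g$. Unwinding the definition of $\kappa_{\mathscr{D}}(\mathfrak{l})_m$, this gives
$$ \partial_{\mathfrak{l}}\bigl(\kappa_{\mathscr{D}}(\mathfrak{l})_m\bigr) \;=\; \frac{1}{\alpha_{\mathfrak{p}}^m}\sum_{[a]_m\in G_m} \psi_g\bigl( r_{\mathfrak{l}}(\xi_{\mathfrak{q}_0}(P_m(a))) \bigr)\,[a]_m, $$
and then one applies $\pi_m$ to descend from $G_m$ to $\Gamma_m$. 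The auxiliary operator $\xi_{\mathfrak{q}_0}$ only intervenes to promote divisors to Jacobian classes; since $g \equiv f$ modulo $\omega^n$ as Hecke eigensystems, $1+\mathrm{N}(\mathfrak{q}_0)-T_{\mathfrak{q}_0}$ acts on the $\mathcal{I}_g^{[\mathfrak{l}]}$-quotient by the unit $1+\mathrm{N}(\mathfrak{q}_0)-\alpha_{\mathfrak{q}_0}(f)$, so it may be absorbed into the stated unit ambiguity.

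Next I would compute the reduction $r_{\mathfrak{l}}(P_m(a))$. By the \v{C}erednik--Drinfeld uniformization of $M_N^{[\mathfrak{l}]}$ at the prime $\mathfrak{l}$, the special fiber over $F_{\mathfrak{l}^2}$ is a union of projective lines indexed by $\mathcal{V}(\mathcal{G})$, and a Heegner point attached to the CM order $R_m$ reduces to a single supersingular point, determined combinatorially by the pair $(a,m)$ via the local embedding of $K$ at $\mathfrak{l}$. Using the isomorphism $\phi:\widehat{B}^{(\mathfrak{l})}\cong \widehat{B}'^{(\mathfrak{l})}$ of Section \ref{ss:first-second} together with the explicit choices of $\zeta^{(m)}$ and $\tau_N$ made in Sections \ref{ss:n-adm} and \ref{ss:gross-theta}, this supersingular point is identified with the vertex
$$ \bigl( B^{\times}\, a\,\zeta^{(m)}\tau_N\, Y\mathfrak{U}_{\mathfrak{n}^+,\mathfrak{p}^N},\; \varepsilon \bigr) \;\in\; \mathcal{V}(\mathcal{G}) $$
for a sign $\varepsilon\in\mathbb{Z}/2\mathbb{Z}$ independent of $a$ and $m$. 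The Atkin--Lehner element $\tau_N$ appears here because the local uniformizer chosen to normalize $\phi$ at $\mathfrak{l}$ swaps source and target orientations of $\vec{\mathcal{E}}(\mathcal{G})$.

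Finally, following the chain of identifications in Proposition \ref{prop:Wang} and Corollary \ref{cor:expo}, the composition $\psi_g\circ r_{\mathfrak{l}}$ corresponds on $\mathbb{Z}[\mathcal{V}(\mathcal{G})]_0\cong (S^{B_\Delta}_2(\mathfrak{U},\mathcal{O}_f)^Y)^{\oplus 2}_0$ to evaluating the pairing $\langle g,\cdot\rangle_N$ on the $\varepsilon$-component. Combined with the explicit formula (\ref{eq:psi-g}) which asserts $\psi_g(x\,\tau_N)=g(x)$, one obtains
$$ \psi_g\bigl( r_{\mathfrak{l}}(P_m(a))\bigr) \;=\; u\cdot g(x_m(a)) $$
for a fixed unit $u\in \mathcal{O}_{f,n}^{\times}$, and consequently
$$ \partial_{\mathfrak{l}}\bigl(\kappa_{\mathscr{D}}(\mathfrak{l})_m\bigr)\;=\; u\cdot \Theta_m(g) \bmod \mathcal{I}_g^{[\mathfrak{l}]}, $$
whose image in $\mathcal{O}_{f,n}[\Gamma_m]$ under $\pi_m$ equals $u\cdot\theta_m(g)$, as required.

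The principal obstacle lies in the middle step: one must carry out the bookkeeping at $\mathfrak{l}$ carefully enough to verify that the reduction of the Heegner point $P_m(a)=[z',\phi(a^{(\mathfrak{l})}\zeta^{(m)}\tau_N)]_N$ falls exactly in the vertex labelled by $a\zeta^{(m)}\tau_N$ (rather than a twist by some Atkin--Lehner or swap operator). This pins down the sign $\varepsilon$ and, crucially, ensures that the very $\tau_N$ built into the Heegner divisor is the one needed to invoke (\ref{eq:psi-g}); any mismatch would produce an extraneous Atkin--Lehner eigenvalue rather than a unit. Once this geometric matching is secured, the remaining computations are formal manipulations of Hecke operators and the Eisenstein/non-Eisenstein dichotomy used in the proof of Corollary \ref{cor:expo}.
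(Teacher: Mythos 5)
Your proposal follows the same route as the paper's own proof: apply Proposition \ref{prop:comm} to convert $\partial_{\mathfrak{l}}(\kappa_{\mathscr{D}}(\mathfrak{l})_m)$ into $\psi_g\circ r_{\mathfrak{l}}$ applied to the Heegner divisor, identify the reduction of $P_m(a)$ with the vertex class of $x_m(a)\tau_N$, and invoke (\ref{eq:psi-g}) to obtain $g(x_m(a))$, summing over $G_m$ and pushing forward by $\pi_m$. The step you flag as the principal obstacle (that the reduction lands on the component labelled by $a\zeta^{(m)}\tau_N$, with no extraneous Atkin--Lehner twist) is asserted without further detail in the paper as well, being inherited from \cite{Wang}, so your outline matches the paper's argument both in structure and in level of rigor.
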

\begin{proof} By Proposition \ref{prop:comm} one has
$$ \partial_\mathfrak{l}(\kappa_{\mathscr{D}}(\mathfrak{l}))= \sum_{\sigma\in\Gamma_m} \psi_g(r_\mathfrak{l}(D_m^\sigma)) \sigma .
$$ But
$$ \psi_g(r_\mathfrak{l}(D_m^\sigma))=\sum_{[b]_m\in \mathrm{Gal}(\tilde{K}_m/K_m)}\langle g, x_m(ab)\tau_N \rangle
=\sum_{[b]_m\in \mathrm{Gal}(\tilde{K}_m/K_m)} g(x_m(ab)),
 $$ where $a\in \widehat{K}^\times$ satisfies $\sigma=\pi_m([a]_m)$.
Thus $$ \partial_\mathfrak{l}( \kappa_{\mathscr{D}}(\mathfrak{l})_m)
=\sum_{[a]_m\in G_m} g(x_m(a)) \pi_m([a]_m),$$ as desired.
\end{proof}

We fix two different $n$-admissible prime ideals $\mathfrak{l}_1$
and $ \mathfrak{l}_2$ $($$\mathfrak{l}_1,
\mathfrak{l}_2\nmid{\mathfrak{q}_0}\Delta$$)$. Then $\mathfrak{l}_1$
and $ \mathfrak{l}_2$ are inert in $K$. We fix a place
$\mathfrak{l}'_2$ of $K_m$ above $\mathfrak{l}_2$, and a place
$\tilde{\mathfrak{l}}'_2$ of $\widetilde{K}_m$ above
$\mathfrak{l}'_2$.

We have already seen that
%$J^{[\mathfrak{l}_1]}$ has smooth reduction at $\mathfrak{l}_2$,
the image of the map
$$ J_N^{[\mathfrak{l}_1]}(K_{\mathfrak{l}_2}) / \mathcal{I}_{g}^{[\mathfrak{l}_1]} \rightarrow H^1(K_{\mathfrak{l}_2}, T_{f,n})
$$ belongs to $H^1_\fin(K_{\mathfrak{l}_2}, T_{f,n})\cong
\mathcal{O}_{f,n}$, and that the reduction map
$$ J_N^{[\mathfrak{l}_1]}(K_{\mathfrak{l}_2}) / \mathcal{I}_{g}^{[\mathfrak{l}_1]} \rightarrow J^{[\mathfrak{l}_1]}(k_{\mathfrak{l}_2}) / \mathcal{I}_{g}^{[\mathfrak{l}_1]}
$$ is an isomorphism, where $k_{\mathfrak{l}_2}$ is the residue field of $K_{\mathfrak{l}_2}$.

Let $B''=B_{\Delta \mathfrak{l}_1\mathfrak{l}_2}$ be the definite
quaternion algebra with discriminant $\Delta
\mathfrak{l}_1\mathfrak{l}_2$. Then there is an isomorphism  $$\psi:
\widehat{B}^{(\mathfrak{l}_2)}_{\Delta
\mathfrak{l}_1\mathfrak{l}_2}\cong
\widehat{B}'^{(\mathfrak{l}_2)}_{\Delta\mathfrak{l}_1}.$$ Let
$\mathcal{O}_{B''_{\mathfrak{l}_1}}$ and
$\mathcal{O}_{B''_{\mathfrak{l}_2}}$ be the maximal orders of
$B''_\mathfrak{l_1}$ and $B''_\mathfrak{l_2}$ respectively. Put
$$\mathfrak{U}''=\psi((\mathfrak{U}'_{\mathfrak{n}^+,\mathfrak{p}^n})^{(\mathfrak{l}_2)})\mathcal{O}_{B''_{\mathfrak{l}_1}}^\times\mathcal{O}_{B''_{\mathfrak{l}_2}}^\times.$$

By \cite[Section 5.4]{Zhang} we have an isomorphism
$$\iota: B''^\times\backslash \widehat{B}''^{\times}/Y \mathfrak{U}''\cong \mathcal{S}_{\mathfrak{l}_2},$$  where $\mathcal{S}_{\mathfrak{l}_2}$ is the set of
supersingular points in
$J^{[\mathfrak{l}_1]}_N(k_{\mathfrak{l}_2})$. Let
$\mathbb{T}_{B_\Delta}(\mathfrak{l}_1\mathfrak{n}^+,
\mathfrak{p}^N)$ act on $\mathrm{Div}(\mathcal{S}_{\mathfrak{l}_2})$
via Picard functoriality.

The reduction $\mathrm{red}_{\tilde{\mathfrak{l}}'_2}(P_m(a))$ of
the CM point $P_m(a)$ modulo $\tilde{\mathfrak{l}}'_2$ is in
$\mathcal{S}_{\mathfrak{l}_2}$. We choose $\iota$ such that
$$ \mathrm{red}_{\tilde{\mathrm{l}}'_2} ([z', b']) = \iota (\psi^{-1}(b'^{(\mathfrak{l}_2)})) .
$$ In particular we have $$\mathrm{red}_{\tilde{\mathfrak{l}}'_2}(P_m(a)) =
\iota(x_m(a)\tau_N) .$$

So, restricting the isomorphism
$$ J_N^{[\mathfrak{l}_1]}(k_{\mathfrak{l}_2}) / \mathcal{I}_{g}^{[\mathfrak{l}_1]} \rightarrow \mathcal{O}_{f,n} $$ to $\mathcal{S}_{\mathfrak{l}_2}$ we
obtain a map
$$\gamma: \mathrm{Div}(\mathcal{S}_{\mathfrak{l}_2}) \rightarrow
\mathcal{O}_{f,n}.
$$

Write $\bar{T}$ for the image of
$T\in\mathbb{T}_{B_{\Delta}}(\mathfrak{l}_1\mathfrak{n}^+,
\mathfrak{p}^n)$ into
$\mathbb{T}_{B_{\Delta}}(\mathfrak{l}_1\mathfrak{n}^+,
\mathfrak{p}^n)/\mathcal{I}_{g}^{[\mathfrak{l}_1]}$.

\begin{prop}\label{prop:relations} $($\cite[Lemma 7.17]{Longo}$)$ For $x\in \mathrm{Div}(\mathcal{S}_{\mathfrak{l}_2})$
the following relations hold:
\begin{enumerate}
\item For $\mathfrak{q}\nmid \Delta\mathfrak{n}^+\mathfrak{l}_1$, one has
$\gamma(T_\mathfrak{q}x)=\bar{T}_\mathfrak{q}\gamma(x)$.
\item For $\mathfrak{q}| \Delta \mathfrak{n}^+\mathfrak{l}_1$, one has
$\gamma(U_\mathfrak{q}x)=\bar{U}_\mathfrak{q}\gamma(x)$.
\item
$\gamma(T_{\mathfrak{l}_2}x)=\bar{T}_{\mathfrak{l}_2}\gamma(x)$.
\item $\gamma(\mathrm{Frob}_{\mathfrak{l}_2}(x))= \epsilon_{\mathfrak{l}_2}
\gamma(x)$, where $\mathrm{Frob}_{\mathfrak{l}_2}$ is the absolute
Frobenius of $F$ at $\mathfrak{l}_2$.
\end{enumerate}
\end{prop}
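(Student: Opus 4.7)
The plan is to derive all four relations from the observation that $\gamma$ is the composition
\[
\gamma: \mathrm{Div}(\mathcal{S}_{\mathfrak{l}_2}) \hookrightarrow J_N^{[\mathfrak{l}_1]}(k_{\mathfrak{l}_2})_{\mathcal{O}_f} / \mathcal{I}_g^{[\mathfrak{l}_1]} \xrightarrow{\sim} \mathcal{O}_{f,n},
\]
where the first arrow sends a supersingular point to the class of a degree-zero divisor after subtracting an Eisenstein-type correction, and the second arrow is the isomorphism arising from Corollary~\ref{cor:expo}. Both arrows are Hecke-equivariant in the appropriate senses, and this is the guiding principle for the whole proof.

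For relations (a)--(c), I would check that the Zhang identification $\iota: B''^\times\backslash \widehat{B}''^{\times}/Y\mathfrak{U}'' \xrightarrow{\sim} \mathcal{S}_{\mathfrak{l}_2}$ intertwines the Picard-functorial action of $\mathbb{T}_{B_\Delta}(\mathfrak{l}_1\mathfrak{n}^+,\mathfrak{p}^N)$ (restricted to supersingular points through the inclusion into $J_N^{[\mathfrak{l}_1]}(k_{\mathfrak{l}_2})$) with the natural Hecke action on the Shimura set, for every prime $\mathfrak{q}\neq \mathfrak{l}_2$ occurring in (a), (b). Composition with the quotient by $\mathcal{I}_g^{[\mathfrak{l}_1]}$ then forces $\gamma(Tx) = \bar{T}\gamma(x)$. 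Relation (c), involving $T_{\mathfrak{l}_2}$, is formally the same since $\mathfrak{l}_2\nmid \Delta\mathfrak{l}_1\mathfrak{n}^+$, so $T_{\mathfrak{l}_2}$ is a spherical Hecke operator for $J_N^{[\mathfrak{l}_1]}$ whose action descends to $J_N^{[\mathfrak{l}_1]}(k_{\mathfrak{l}_2})$ by Picard functoriality.

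For relation (d), the strategy is to invoke the Eichler--Shimura congruence for the Jacobian $J_N^{[\mathfrak{l}_1]}$ at the good prime $\mathfrak{l}_2$:
\[
T_{\mathfrak{l}_2} \equiv \mathrm{Frob}_{\mathfrak{l}_2} + \mathrm{Ver}_{\mathfrak{l}_2} \pmod{\mathfrak{l}_2}, \qquad \mathrm{Frob}_{\mathfrak{l}_2}\cdot\mathrm{Ver}_{\mathfrak{l}_2} = \mathrm{N}(\mathfrak{l}_2).
\]
Applied to the supersingular locus and pushed through to $\mathcal{O}_{f,n}$, this makes $\mathrm{Frob}_{\mathfrak{l}_2}$ act on the image of $\gamma$ as a scalar $\alpha$ satisfying the characteristic equation
\[
\alpha^2 - \bar{T}_{\mathfrak{l}_2}\,\alpha + \mathrm{N}(\mathfrak{l}_2) \equiv 0 \pmod{\omega^n}.
\]
By the $n$-admissibility condition for $\mathfrak{l}_2$, which forces $\bar{T}_{\mathfrak{l}_2} \equiv \epsilon_{\mathfrak{l}_2}(\mathrm{N}(\mathfrak{l}_2)+1) \pmod{\omega^n}$, this quadratic factors as $(X-\epsilon_{\mathfrak{l}_2})(X-\epsilon_{\mathfrak{l}_2}\mathrm{N}(\mathfrak{l}_2))$. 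Since $\mathrm{N}(\mathfrak{l}_2)^2-1$ is a unit mod $\omega^n$, the two roots are distinct, and $\alpha$ is one of them.

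The main obstacle is pinning down $\alpha = \epsilon_{\mathfrak{l}_2}$ rather than the other root $\epsilon_{\mathfrak{l}_2}\mathrm{N}(\mathfrak{l}_2)$. I expect to resolve this by transporting $\mathrm{Frob}_{\mathfrak{l}_2}$ through $\iota$: under the Cerednik--Drinfeld style uniformization of the supersingular locus, the geometric Frobenius corresponds to right multiplication by a uniformizer $\pi'_{\mathfrak{l}_2}$ of the maximal order of the division algebra $B''_{\mathfrak{l}_2}$, i.e.\ precisely the operator $U_{\mathfrak{l}_2}$ for the definite quaternion algebra $B''$. Transferring the Hecke eigensystem of $g$ from $B_\Delta$ to $B''$ via Jacquet--Langlands (carried out in \cite{Wang}) pins down this $U_{\mathfrak{l}_2}$-eigenvalue to be $\epsilon_{\mathfrak{l}_2}$ modulo $\omega^n$, which is exactly the assertion.
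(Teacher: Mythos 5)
The paper itself gives no proof of this proposition (it is quoted from Longo), so I am judging your argument on its own terms. Your treatment of (a)--(c) is sound: $\gamma$ factors through the Hecke-module quotient $J_N^{[\mathfrak{l}_1]}(k_{\mathfrak{l}_2})/\mathcal{I}_g^{[\mathfrak{l}_1]}\cong\mathcal{O}_{f,n}$, the correction $\xi_{\mathfrak{q}_0}$ commutes with the (commutative) Hecke action, and the Hecke correspondences at $\mathfrak{q}\neq\mathfrak{l}_2$ (and also $T_{\mathfrak{l}_2}$, which preserves the supersingular locus) act through $\mathbb{T}/\mathcal{I}_g^{[\mathfrak{l}_1]}$.

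The gap is in the last step of (d). Your Eichler--Shimura argument correctly shows that $\mathrm{Frob}_{\mathfrak{l}_2}$ kills $(X-\epsilon_{\mathfrak{l}_2})(X-\epsilon_{\mathfrak{l}_2}\mathrm{N}(\mathfrak{l}_2))$ on the image of $\gamma$, but your mechanism for selecting the root $\epsilon_{\mathfrak{l}_2}$ is circular. There is no Jacquet--Langlands transfer of $g$ to $B''$ whose $U_{\mathfrak{l}_2}$-eigenvalue you can quote: $g$ lives on $B_\Delta$ with level prime to $\mathfrak{l}_2$, and the existence of a form on $B''$ with the congruent eigensystem and $U_{\mathfrak{l}_2}$-eigenvalue $\epsilon_{\mathfrak{l}_2}$ is precisely what Proposition~\ref{thm:second} constructs \emph{from} $\gamma$, using the present proposition as input. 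A priori the function $\gamma$ on the $B''$-Shimura set could support either unit root $\epsilon_{\mathfrak{l}_2}$ or $\epsilon_{\mathfrak{l}_2}\mathrm{N}(\mathfrak{l}_2)$ (both are units since $\mathfrak{l}_2\nmid p$), and identifying $\mathrm{Frob}_{\mathfrak{l}_2}$ with $\pi'_{\mathfrak{l}_2}$ merely renames the operator without computing its eigenvalue. Two non-circular ways to finish: (i) the standard route (Longo, following Bertolini--Darmon) uses that the target of $\gamma$ is $J_N^{[\mathfrak{l}_1]}(K_{\mathfrak{l}_2})/\mathcal{I}_g^{[\mathfrak{l}_1]}\cong H^1_{\fin}(K_{\mathfrak{l}_2},T_{f,n})=T_{f,n}/(\mathrm{Frob}_{\mathfrak{l}_2}^2-1)$, which by the local structure of $T_{f,n}$ at an $n$-admissible prime is exactly the $\epsilon_{\mathfrak{l}_2}$-eigenspace of $\mathrm{Frob}_{\mathfrak{l}_2}$ (the $\epsilon_{\mathfrak{l}_2}\mathrm{N}(\mathfrak{l}_2)$-eigenline dies because $\mathrm{N}(\mathfrak{l}_2)^2-1$ is a unit), and the reduction and Kummer maps are Frobenius-equivariant; or (ii) staying on the supersingular locus, note that all points of $\mathcal{S}_{\mathfrak{l}_2}$ are rational over the quadratic extension $k_{\mathfrak{l}_2}$, so $\mathrm{Frob}_{\mathfrak{l}_2}^2=\mathrm{id}$ on $\mathrm{Div}(\mathcal{S}_{\mathfrak{l}_2})$; subtracting $X^2-1$ from your quadratic gives $(\mathrm{N}(\mathfrak{l}_2)+1)(\epsilon_{\mathfrak{l}_2}\mathrm{Frob}_{\mathfrak{l}_2}-1)=0$ on $\gamma$, and $\mathrm{N}(\mathfrak{l}_2)+1$ is a unit by admissibility, forcing $\mathrm{Frob}_{\mathfrak{l}_2}=\epsilon_{\mathfrak{l}_2}$.
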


The relation between $\gamma$ and the system
$\{\kappa_{\mathscr{D}}(\mathfrak{l}_1)_m: m\geq N\geq n \}$ is
given by the following.

\begin{prop} \label{prop:second-recip-law} If $(\Delta,g)$ is an
$(N,n)$-admissible form, and if $m\geq N$, then
\begin{eqnarray*}v_{\mathfrak{l}_2}(\kappa_{\mathscr{D}}(\mathfrak{l}_1)_m)
&=& \frac{1}{\alpha_\mathfrak{p}^m} \sum_{[a]_m\in G_m} \gamma\circ
\iota(x_m(a)\tau_N)\pi_m([a]_m)
\end{eqnarray*} in $\mathcal{O}_{f,n}[\Gamma_m]$.
\end{prop}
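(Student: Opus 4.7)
The strategy parallels the proof of the First Reciprocity Law (Proposition \ref{thm:first}), replacing the singular map $\partial_{\mathfrak{l}_2}$ (built from the bad-reduction component group $\Phi^{[\mathfrak{l}_2]}$) by the finite map $v_{\mathfrak{l}_2}$ (built from the \emph{good} reduction of $J_N^{[\mathfrak{l}_1]}$ at $\mathfrak{l}_2$). The key geometric input is that, since $\mathfrak{l}_2$ is $n$-admissible, $\mathfrak{l}_2 \neq \mathfrak{l}_1$, and $\mathfrak{l}_2 \nmid \Delta \mathfrak{n}$, the Shimura curve $M_N^{[\mathfrak{l}_1]}$ has good reduction at $\mathfrak{l}_2$.

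First I would establish that, for every place $\lambda$ of $K_m$ above $\mathfrak{l}_2$, the good-reduction hypothesis together with the isomorphism $T_p(J_N^{[\mathfrak{l}_1]})_{\mathcal{O}_f}/\mathcal{I}_g^{[\mathfrak{l}_1]} \simeq T_{f,n}$ yields a commutative diagram of isomorphisms
\[ \xymatrix{ J_N^{[\mathfrak{l}_1]}(K_{m,\lambda})/\mathcal{I}_g^{[\mathfrak{l}_1]} \ar[r]^-{\mathrm{Kum}} \ar[d]_-{\mathrm{red}_\lambda} & H^1_{\fin}(K_{m,\lambda}, T_{f,n}) \\ J_N^{[\mathfrak{l}_1]}(k_\lambda)/\mathcal{I}_g^{[\mathfrak{l}_1]}. \ar[ur]_-{\sim} & } \]
Summing over all $\lambda \mid \mathfrak{l}_2$ and identifying $H^1_{\fin}(K_{m,\mathfrak{l}_2}, T_{f,n})$ with $\mathcal{O}_{f,n}[\Gamma_m]$ via the permutation action of $\Gamma_m$ on these places (with distinguished representative $\mathfrak{l}'_2$), the computation of $v_{\mathfrak{l}_2}(\kappa_\mathscr{D}(\mathfrak{l}_1)_m)$ reduces to evaluating, for each $\sigma \in \Gamma_m$, the image of $\tfrac{1}{\alpha_\mathfrak{p}^m}\mathrm{red}_{\sigma^{-1}\mathfrak{l}'_2}(D_m)$ under the isomorphism $J_N^{[\mathfrak{l}_1]}(k_{\sigma^{-1}\mathfrak{l}'_2})/\mathcal{I}_g^{[\mathfrak{l}_1]} \xrightarrow{\sim} \mathcal{O}_{f,n}$.

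Next I would compute these components via CM theory. Lift $\sigma \in \Gamma_m$ to $\tilde{\sigma} = [a]_m \in G_m$; then $\tilde{\sigma}^{-1}\tilde{\mathfrak{l}}'_2$ is a place of $\widetilde{K}_m$ above $\sigma^{-1}\mathfrak{l}'_2$. Because $D_m$ is $K_m$-rational, Galois equivariance of reduction together with the CM reciprocity $\tilde{\sigma}\cdot P_m(b) = P_m(ab)$ gives
\[ \mathrm{red}_{\sigma^{-1}\mathfrak{l}'_2}(D_m) = \sum_{[b]_m \in \mathrm{Gal}(\widetilde{K}_m/K_m)} \mathrm{red}_{\tilde{\mathfrak{l}}'_2}\bigl(\xi_{\mathfrak{q}_0} P_m(ab)\bigr). \]
Modulo $\mathcal{I}_g^{[\mathfrak{l}_1]}$ the auxiliary operator $\xi_{\mathfrak{q}_0}$ is multiplication by a unit (by the very choice of $\mathfrak{q}_0$, which makes the denominator $1+\mathbf{N}(\mathfrak{q}_0)-\alpha_{\mathfrak{q}_0}(f)$ invertible while $T_{\mathfrak{q}_0}$ acts as the corresponding scalar); absorbing this unit into the identification of $H^1_{\fin}(K_{m,\mathfrak{l}_2}, T_{f,n})$ with $\mathcal{O}_{f,n}[\Gamma_m]$, and then applying $\mathrm{red}_{\tilde{\mathfrak{l}}'_2}(P_m(c)) = \iota(x_m(c)\tau_N)$ followed by $\gamma$, yields
\[ \mathrm{red}_{\sigma^{-1}\mathfrak{l}'_2}(D_m) \; = \; \sum_{[b]_m} \gamma\circ\iota(x_m(ab)\tau_N) \quad \text{in } \mathcal{O}_{f,n}. \]
Assembling all components of $v_{\mathfrak{l}_2}(\kappa_\mathscr{D}(\mathfrak{l}_1)_m)$ and reindexing the double sum over $([a]_m,[b]_m)$ by $[c]_m := [a]_m[b]_m \in G_m$ (so that $\pi_m([c]_m) = \sigma$) collapses it to the claimed single sum.

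The main obstacle is the careful bookkeeping of conventions: the direction of the CM reciprocity, the decomposition of the $\Gamma_m$-action on $H^1_{\fin}(K_{m,\mathfrak{l}_2}, T_{f,n})$, and the interaction of the Galois action on residue fields (where Frobenius at $\mathfrak{l}_2$ acts through $\gamma$ by $\epsilon_{\mathfrak{l}_2}$, as recorded in Proposition \ref{prop:relations}) with the identification $J_N^{[\mathfrak{l}_1]}(k_\lambda)/\mathcal{I}_g^{[\mathfrak{l}_1]} \cong \mathcal{O}_{f,n}$. One must pin down the unit ambiguity so as to match the implicit identification used in the statement of the Proposition, which will be the same one adopted in the First Reciprocity Law.
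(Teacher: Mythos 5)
Your proposal is correct and follows essentially the same route as the paper's own proof: decompose $v_{\mathfrak{l}_2}$ over the places $\{\sigma\mathfrak{l}'_2\}$ permuted by $\Gamma_m$, use Galois equivariance and the CM action to unwind $\kappa_{\mathscr{D}}(\mathfrak{l}_1)_m$ into the reductions of the points $P_m(a)$ at $\tilde{\mathfrak{l}}'_2$, and evaluate via $\mathrm{red}_{\tilde{\mathfrak{l}}'_2}(P_m(a))=\iota(x_m(a)\tau_N)$ and $\gamma$. The paper's argument is just a condensed version, leaving the good-reduction identification $J_N^{[\mathfrak{l}_1]}(K_{m,\lambda})/\mathcal{I}_g^{[\mathfrak{l}_1]}\xrightarrow{\sim} H^1_{\fin}(K_{m,\lambda},T_{f,n})$ and the $\xi_{\mathfrak{q}_0}$-normalization implicit, which you spell out.
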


Proposition \ref{prop:second-recip-law} is more or less contained in
\cite{Longo,Wang}, but it is not stated in the above form.

\begin{proof}
All primes of $K_m$ above $\mathfrak{l}_2$ are $\{\sigma
\mathfrak{l}'_2: \sigma\in\Gamma_m\}$. So
\begin{eqnarray*}v_{\mathfrak{l}_2}(\kappa_{\mathscr{D}}(\mathfrak{l}_1)_m)
&=& \sum_{\sigma\in \Gamma_m} v_{\sigma \mathfrak{l}'_2}
(\kappa_{\mathscr{D}}(\mathfrak{l}_1)_m) \\
&=& \sum_{\sigma\in \Gamma_m} v_{ \mathfrak{l}'_2}
(\kappa_{\mathscr{D}}(\mathfrak{l}_1)_m^{\sigma^{-1}})\sigma \\
&=& \frac{1}{\alpha_\mathfrak{p}^m} \sum_{[a]_m\in G_m}  v_{
\tilde{\mathfrak{l}}'_2}(P_m(a))\pi_m([a]_m).
\end{eqnarray*} Note that the reduction of $P_m(a)$ modulo
$\widetilde{\mathfrak{l}}'_2$ lies in $\mathcal{S}_2$. Thus
$$ v_{
\tilde{\mathfrak{l}}'_2}(P_m(a)) = \gamma
(\mathrm{red}_{\tilde{\mathfrak{l}}'_2}(P_m(a))) = \gamma\circ \iota
(x_m(a)\tau_N),
$$ as wanted.
\end{proof}

\begin{cor}\label{cor:gamma-neq-0} If there exists $m$ such that
$v_{\mathfrak{l}_2}(\kappa_{\mathscr{D}}(\mathfrak{l}_1)_m)\neq 0$,
then $\gamma\neq 0$.
\end{cor}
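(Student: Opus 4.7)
The statement is essentially the contrapositive of Proposition \ref{prop:second-recip-law}, so the plan is a one-line deduction from that proposition. Assume for contradiction that $\gamma=0$ as a map from $\mathrm{Div}(\mathcal{S}_{\mathfrak{l}_2})$ to $\mathcal{O}_{f,n}$. Then in particular $\gamma\circ\iota(x_m(a)\tau_N)=0$ for every $a\in\widehat{K}^\times$ and every $m\ge N$, since $\iota(x_m(a)\tau_N)$ is a single element of $\mathcal{S}_{\mathfrak{l}_2}\subset\mathrm{Div}(\mathcal{S}_{\mathfrak{l}_2})$.

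Substituting into the formula of Proposition \ref{prop:second-recip-law}, for every $m\ge N$ we obtain
$$v_{\mathfrak{l}_2}(\kappa_{\mathscr{D}}(\mathfrak{l}_1)_m)=\frac{1}{\alpha_\mathfrak{p}^m}\sum_{[a]_m\in G_m}\gamma\circ\iota(x_m(a)\tau_N)\,\pi_m([a]_m)=0$$
in $\mathcal{O}_{f,n}[\Gamma_m]$. For $m<N$, the compatibility $\pi_{m+1,m}(\kappa_{\mathscr{D}}(\mathfrak{l}_1)_{m+1})=\kappa_{\mathscr{D}}(\mathfrak{l}_1)_m$ (recorded right before Proposition \ref{prop-sel}) propagates the vanishing downward: the corestriction/specialization of a zero class is zero. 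Hence $v_{\mathfrak{l}_2}(\kappa_{\mathscr{D}}(\mathfrak{l}_1)_m)=0$ for all $m$, contradicting the hypothesis.

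There is no real obstacle here; the only point worth checking is that the formula in Proposition \ref{prop:second-recip-law} genuinely gives $v_{\mathfrak{l}_2}$ termwise in the variable $\iota(x_m(a)\tau_N)$, so that the vanishing of $\gamma$ on \emph{every} supersingular point (and not merely on some divisor class) is exactly what is needed. This is automatic from the displayed sum, and no further input from the Euler system or from the Reciprocity Laws is required.
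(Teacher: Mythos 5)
Your proposal is correct and is exactly the argument the paper intends: the corollary is stated without proof precisely because it is the immediate contrapositive of the displayed formula in Proposition \ref{prop:second-recip-law}, with the case of small $m$ handled by corestriction compatibility as you note.
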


\begin{prop}\label{prop:surj-Ihara} $($\cite[Lemma 7.20]{Longo}$)$ If Ihara's Lemma holds, then $\gamma$ is surjective.
\end{prop}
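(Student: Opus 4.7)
The plan is to unwind the definition of $\gamma$ and apply Ihara's Lemma at the non-Eisenstein maximal ideal attached to $g$. Recall from the construction preceding the statement that $\gamma$ is the restriction to $\mathrm{Div}(\mathcal{S}_{\mathfrak{l}_2})$ of the composition
\[
J_N^{[\mathfrak{l}_1]}(k_{\mathfrak{l}_2})_{\mathcal{O}_f} \twoheadrightarrow J_N^{[\mathfrak{l}_1]}(k_{\mathfrak{l}_2})_{\mathcal{O}_f}/\mathcal{I}_g^{[\mathfrak{l}_1]} \xrightarrow{\sim} \mathcal{O}_{f,n},
\]
where the final isomorphism is read off from Proposition \ref{prop:comm} (via the reduction map to $\Phi^{[\mathfrak{l}_1]}_{\mathcal{O}_f}$) together with Corollary \ref{cor:expo}. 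Since this isomorphism is onto $\mathcal{O}_{f,n}$, surjectivity of $\gamma$ is equivalent to surjectivity of the natural map
\[
\mathrm{Div}(\mathcal{S}_{\mathfrak{l}_2})_{\mathcal{O}_f} \longrightarrow J_N^{[\mathfrak{l}_1]}(k_{\mathfrak{l}_2})_{\mathcal{O}_f}/\mathcal{I}_g^{[\mathfrak{l}_1]}.
\]

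Next, I would localize at the maximal ideal $\mathfrak{m}^{[\mathfrak{l}_1]}$ of $\mathbb{T}_{B_\Delta}(\mathfrak{l}_1\mathfrak{n}^+,\mathfrak{p}^N)$ containing $\mathcal{I}_g^{[\mathfrak{l}_1]}$. By $(\mathrm{CR}^+\text{-}2)$ the representation $\bar{\rho}_f$ is irreducible, so $\mathfrak{m}^{[\mathfrak{l}_1]}$ is non-Eisenstein. Ihara's Lemma, in the form used in \cite{Longo,Wang}, precisely asserts that the cokernel of the natural geometric map $\mathrm{Div}(\mathcal{S}_{\mathfrak{l}_2})_{\mathcal{O}_f}\to J_N^{[\mathfrak{l}_1]}(k_{\mathfrak{l}_2})_{\mathcal{O}_f}$ is Eisenstein; hence it vanishes after localizing at $\mathfrak{m}^{[\mathfrak{l}_1]}$, and a fortiori after taking the further quotient by $\mathcal{I}_g^{[\mathfrak{l}_1]}\subset \mathfrak{m}^{[\mathfrak{l}_1]}$. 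Composing with the isomorphism to $\mathcal{O}_{f,n}$ yields the surjectivity of $\gamma$.

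The main obstacle is not analytic but purely linguistic: one must translate Ihara's Lemma, which in the literature is usually stated as an injectivity of restriction maps between quaternionic modular form spaces on different Eichler orders (or equivalently, as a surjectivity between their Pontryagin duals away from Eisenstein primes), into the geometric statement that the supersingular divisors generate the Jacobian modulo the Eisenstein ideal. This translation is standard and proceeds through Picard functoriality together with the identification $\iota: B''^\times\backslash \widehat{B}''^{\times}/Y \mathfrak{U}''\cong \mathcal{S}_{\mathfrak{l}_2}$ recalled in the text; the Hecke-equivariance relations of Proposition \ref{prop:relations} ensure that the image of $\mathrm{Div}(\mathcal{S}_{\mathfrak{l}_2})_{\mathcal{O}_f}$ is a Hecke-stable submodule, so that surjectivity can indeed be detected after localizing at $\mathfrak{m}^{[\mathfrak{l}_1]}$.
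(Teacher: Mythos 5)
The paper offers no argument for this proposition at all: it is quoted verbatim from \cite[Lemma 7.20]{Longo}, so the only honest comparison is with the proof in that reference. Your reduction is the right one and matches what Longo does: since $J_N^{[\mathfrak{l}_1]}(k_{\mathfrak{l}_2})_{\mathcal{O}_f}/\mathcal{I}_g^{[\mathfrak{l}_1]}\cong\mathcal{O}_{f,n}$, surjectivity of $\gamma$ is equivalent to surjectivity of $\mathrm{Div}(\mathcal{S}_{\mathfrak{l}_2})\to J_N^{[\mathfrak{l}_1]}(k_{\mathfrak{l}_2})_{\mathcal{O}_f}/\mathcal{I}_g^{[\mathfrak{l}_1]}$, which by Nakayama may be checked modulo the non-Eisenstein maximal ideal $\mathfrak{m}^{[\mathfrak{l}_1]}$. (One small misattribution: the identification of the target with $\mathcal{O}_{f,n}$ does not come from Proposition \ref{prop:comm} and Corollary \ref{cor:expo} --- those compute the component group and the singular quotient at the bad prime $\mathfrak{l}_1$ --- but from the Kummer map into $H^1_\fin(K_{\mathfrak{l}_2},T_{f,n})\cong\mathcal{O}_{f,n}$ together with the fact that reduction at the good prime $\mathfrak{l}_2$ is an isomorphism on $J/\mathcal{I}_g^{[\mathfrak{l}_1]}$, as recalled in the paragraph where $\gamma$ is defined.)

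The genuine gap is the sentence ``Ihara's Lemma \ldots precisely asserts that the cokernel of $\mathrm{Div}(\mathcal{S}_{\mathfrak{l}_2})_{\mathcal{O}_f}\to J_N^{[\mathfrak{l}_1]}(k_{\mathfrak{l}_2})_{\mathcal{O}_f}$ is Eisenstein.'' It does not. Ihara's Lemma in the form used here (\cite[Theorem 12]{DT}, \cite{Jav}) says that the kernel of the degeneracy map $H^1(M_N^{[\mathfrak{l}_1]}\times\overline{F},\mathbb{F})^{2}\to H^1$ of the curve with $\Gamma_0(\mathfrak{l}_2)$-structure is Eisenstein (equivalently, the dual surjectivity between quaternionic modular-form spaces off Eisenstein primes). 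Passing from that statement to ``the supersingular points generate $J_N^{[\mathfrak{l}_1]}(k_{\mathfrak{l}_2})_{\mathfrak{m}^{[\mathfrak{l}_1]}}$ modulo $\mathcal{I}_g^{[\mathfrak{l}_1]}$'' is exactly the content of Longo's Lemma 7.20: one must bring in the semistable special fiber at $\mathfrak{l}_2$ of the Shimura curve with auxiliary $\mathfrak{l}_2$-level structure, whose singular points are $\mathcal{S}_{\mathfrak{l}_2}$, and compare the resulting character-group/component-group exact sequences with the degeneracy maps before the Eisenstein-kernel statement can be converted into the generation statement. Calling this step ``purely linguistic'' and ``standard'' defers the entire mathematical substance of the proposition; as written, the proposal assumes the conclusion in the guise of restating Ihara's Lemma. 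The surrounding scaffolding (Hecke-stability of the image, localization at $\mathfrak{m}^{[\mathfrak{l}_1]}$, the final composition with the isomorphism onto $\mathcal{O}_{f,n}$) is fine.
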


By Proposition \ref{prop-kill} (\ref{it:free}),
$\widehat{H}^1_\fin(K_{\infty,\mathfrak{l}_1}, T_{f,n})$ and
$\widehat{H}^1_\fin(K_{\infty,\mathfrak{l}_2}, T_{f,n})$ are free of
rank $1$ over $\mathcal{O}_{f}[[\Gamma]]/(\omega^r)$. We may
identify both $\widehat{H}^1_\fin(K_{\infty,\mathfrak{l}_1},
T_{f,n})$ and $\widehat{H}^1_\fin(K_{\infty,\mathfrak{l}_2},
T_{f,n})$ with $\mathcal{O}_{f}[[\Gamma]]/(\omega^n)$.

\begin{prop} $($Second Reciprocity Law, \cite[Theorem 6.6]{Wang}$)$ If Ihara's Lemma holds, then
$$v_{\mathfrak{l}_2}(\kappa_{\mathscr{D}}(\mathfrak{l}_1)_m) =
v_{\mathfrak{l}_1}(\kappa_{\mathscr{D}}(\mathfrak{l}_2)_m) $$ up to
multiplication by a unit of $\mathcal{O}_{f,n}[\Gamma_m]$.
\end{prop}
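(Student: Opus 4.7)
The plan is to reduce the equality to a multiplicity-one statement on the definite quaternion algebra $B''=B_{\Delta\mathfrak{l}_1\mathfrak{l}_2}$, by repackaging both sides as theta elements of honest $(N,n)$-admissible forms on $B''$.

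First, I would start from Proposition \ref{prop:second-recip-law}, which already writes
$$v_{\mathfrak{l}_2}(\kappa_{\mathscr{D}}(\mathfrak{l}_1)_m)=\frac{1}{\alpha_\mathfrak{p}^m}\sum_{[a]_m\in G_m}\gamma\circ\iota(x_m(a)\tau_N)\,\pi_m([a]_m),$$
and define $g''(x):=\gamma\circ\iota(x\tau_N)$, regarded as an element of $S^{B''}_2(\mathfrak{U}'',\mathcal{O}_{f,n})^Y$ via the identification $\iota\colon B''^\times\backslash\widehat{B}''^\times/Y\mathfrak{U}''\xrightarrow{\sim}\mathcal{S}_{\mathfrak{l}_2}$. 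Unwinding the definition of $\theta_m$, the formula above reads $v_{\mathfrak{l}_2}(\kappa_{\mathscr{D}}(\mathfrak{l}_1)_m)=\theta_m(g'')$. The key verification is that the pair $(\Delta\mathfrak{l}_1\mathfrak{l}_2,g'')$ really is an $(N,n)$-admissible form: the nonvanishing $g''\not\equiv 0\pmod\omega$ comes precisely from the surjectivity of $\gamma$ in Proposition \ref{prop:surj-Ihara}, i.e.\ from Ihara's Lemma, while the Hecke matching $\lambda_{g''}\equiv\lambda_{f,N}\pmod{\omega^n}$ is read off from Proposition \ref{prop:relations}: parts (a)--(b) handle primes away from $\mathfrak{l}_2$, and parts (c)--(d) together with the $n$-admissibility condition $\epsilon_{\mathfrak{l}_2}\alpha_{\mathfrak{l}_2}(f)\equiv \mathrm{N}(\mathfrak{l}_2)+1\pmod{\omega^n}$ force $U_{\mathfrak{l}_2}$ to act on $g''$ by $\epsilon_{\mathfrak{l}_2}$ modulo $\omega^n$ (the Atkin--Lehner twist by $\tau_N$ only relabels indices).

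Next, I would exchange the roles of $\mathfrak{l}_1$ and $\mathfrak{l}_2$: working with $\mathcal{I}_g^{[\mathfrak{l}_2]}$ and the Shimura curve $M_N^{[\mathfrak{l}_2]}$ in place of $M_N^{[\mathfrak{l}_1]}$, the identical construction yields a second $(N,n)$-admissible form $(\Delta\mathfrak{l}_1\mathfrak{l}_2,h'')$ on the \emph{same} quaternion algebra $B''$, this time satisfying $v_{\mathfrak{l}_1}(\kappa_{\mathscr{D}}(\mathfrak{l}_2)_m)=\theta_m(h'')$. Finally I would invoke Proposition \ref{prop:revise} for $B''$: since both $\mathcal{I}_{g''}$ and $\mathcal{I}_{h''}$ coincide with $\ker\lambda_{f,N}$ modulo $\omega^n$, that proposition identifies the common quotient $S^{B''}_2(\mathfrak{U}'',\mathcal{O}_f)^Y/\mathcal{I}_{g''}$ with $\mathcal{O}_{f,n}$, forcing $g''\equiv u\cdot h''\pmod{\omega^n}$ for some unit $u\in\mathcal{O}_{f,n}^\times$, and hence $\theta_m(g'')=u\,\theta_m(h'')$, which is exactly the claimed equality up to a unit in $\mathcal{O}_{f,n}[\Gamma_m]$.

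The hard step will be the first one, namely producing $g''$ as a \emph{bona fide} $(N,n)$-admissible form. Ihara's Lemma is exactly the input that prevents $\gamma\pmod\omega$ from degenerating; without it one would at best obtain an $(N,n-n_0)$-admissible form for some $n_0\geq 0$, which is precisely the weaker version of the Second Reciprocity Law that the present paper develops in Section \ref{sec:BD-argument}. Once the surjectivity of $\gamma$ is granted, the Hecke equivariance of Proposition \ref{prop:relations} and the multiplicity-one input of Proposition \ref{prop:revise} fit together essentially mechanically.
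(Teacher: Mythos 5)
Your proposal is correct and follows essentially the route the paper itself describes in the introduction (and implements in weakened form in Proposition \ref{thm:second} and Theorem \ref{thm:theta}): use Ihara's Lemma via Proposition \ref{prop:surj-Ihara} to make $\gamma$ surjective so that $g''$ is a genuine $(N,n)$-admissible form with $v_{\mathfrak{l}_2}(\kappa_{\mathscr{D}}(\mathfrak{l}_1)_m)=\theta_m(g'')$, repeat with the roles of $\mathfrak{l}_1$ and $\mathfrak{l}_2$ exchanged, and conclude by multiplicity one. The only cosmetic difference is that the paper invokes the multiplicity-one theorem \cite[Theorem 9.1.1]{Wang} for the final step rather than deducing it from Proposition \ref{prop:revise}, but your derivation via the perfect self-adjoint pairing $\langle\cdot,\cdot\rangle_N$ amounts to the same input.
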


\subsection{A weaker version of Second Reciprocity Law}

One expects to show that
$$ v_{\mathfrak{l}_2}(\kappa_{\mathscr{D}}(\mathfrak{l}_1)_m) =
v_{\mathfrak{l}_1}(\kappa_{\mathscr{D}}(\mathfrak{l}_2)_m) $$
without using Ihara's lemma. But we can only prove a weaker result.
We will deduce from the first reciprocity law and Tate duality that
they coincide with each other after multiplying $\theta_m(g)$.

Let $\tau$ be a complex conjugation which depends on a choice of
embedding of the algebraic closure of $\overline{F}$ in
$\mathbb{C}$. For $\sigma \in \Gamma_m$ we have
$\tau\sigma=\sigma^{-1}\tau$. The homomorphism $\sigma\mapsto
\sigma^{-1}$ of $\Gamma_m$ induces an involution $\iota$ on
$\mathcal{O}_{f,r}[\Gamma_m]$. Then $\tau$ acts on
$\mathcal{O}_{f,r}[\Gamma_m]$ as $\iota$.

For each $i\in\{1,2\}$, as $\mathfrak{l}_i$ splits completely in
$K_m$ \cite[Lemma 2.4.2]{Wang}, the number of places of $K_m$ above
$\mathfrak{l}_i$ are $[K_m:K]$. Fix a place $\mathfrak{l}'_i$ of
$K_m$ above $\mathfrak{l}_i$. Then all places of $K_m$ above
$\mathfrak{l}_i$ are $\{\sigma \mathfrak{l}'_i: \sigma\in
\mathrm{Gal}(K_m/K)\}$. Note that $\tau$ permutes $\{\sigma
\mathfrak{l}'_i: \sigma\in \mathrm{Gal}(K_m/K)\}$.

Note that
$$ H^1_\fin(K_{m,\mathfrak{l}_i}, T_{f,n})\cong H^1_\fin(K_{m,\tau\mathfrak{l}'_i}, T_{f,n})\otimes_{\mathcal{O}_f}\mathcal{O}_f[\Gamma_m] $$ and
$$H^1_\sing(K_{m,\mathfrak{l}_i}, T_{f,n})\cong H^1_\sing(K_{m,\mathfrak{l}'_i},
T_{f,n})\otimes_{\mathcal{O}_f}\mathcal{O}_f[\Gamma_m].$$

Both $H^1_\fin(K_{m,\tau\mathfrak{l}'_i}, T_{f,n})$ and
$H^1_\sing(K_{m,\mathfrak{l}'_i}, T_{f,n})$ are isomorphic to
$\mathcal{O}_{f,n}$. We choose generators $c_{\tau\mathfrak{l}'_i}$
and $d_{\mathfrak{l}'_i}$ of $H^1_\fin(K_{m,\tau\mathfrak{l}'_i},
T_{f,n})$ and $H^1_\sing(K_{m,\mathfrak{l}'_i}, T_{f,n})$ such that
$\langle  c_{\tau\mathfrak{l}'_1}, \tau
d_{\mathfrak{l}'_1}\rangle_{\tau \mathfrak{l}'_1}=1$ and $\langle
\tau c_{\tau\mathfrak{l}'_2},   d_{\mathfrak{l}'_2}\rangle_{
\mathfrak{l}'_2}=1$.

\begin{lem}\label{lem:dual} For each $\sigma\in\Gamma_m$ we have
$$ \langle \sigma c_{\tau\mathfrak{l}'_1}, \sigma \tau
d_{\mathfrak{l}'_1}\rangle_{\sigma\tau \mathfrak{l}'_1}=\langle
\sigma \tau c_{\tau\mathfrak{l}'_2},  \sigma
d_{\mathfrak{l}'_2}\rangle_{ \sigma \mathfrak{l}'_2}=1. $$
\end{lem}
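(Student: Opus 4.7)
The plan is to reduce the claimed identities to the two normalizations $\langle c_{\tau\mathfrak{l}'_1}, \tau d_{\mathfrak{l}'_1}\rangle_{\tau\mathfrak{l}'_1}=1$ and $\langle \tau c_{\tau\mathfrak{l}'_2}, d_{\mathfrak{l}'_2}\rangle_{\mathfrak{l}'_2}=1$ by invoking Galois equivariance of the local Tate pairing. Specifically, for any $w$ a place of $K_m$, any $g\in\mathrm{Gal}(K_m/F)$, and any classes $x\in H^1(K_{m,w},T_{f,n})$, $y\in H^1(K_{m,w},A_{f,n})$, one has
$$\langle g x,\ g y\rangle_{gw}\ =\ \langle x,\ y\rangle_{w}.$$
This is standard: the Tate pairing is built from cup product and the local invariant map $\mathrm{inv}_w$, and $\mathrm{inv}_{gw}\circ g_* = \mathrm{inv}_w$ because the Brauer invariants transform correctly under the induced isomorphism $\mathrm{Br}(K_{m,w})\xrightarrow{\sim}\mathrm{Br}(K_{m,gw})$.

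Granted this, the proof of Lemma \ref{lem:dual} is immediate. Apply the equivariance with $g=\sigma\in\Gamma_m$, $w=\tau\mathfrak{l}'_1$, $x=c_{\tau\mathfrak{l}'_1}$, $y=\tau d_{\mathfrak{l}'_1}$ to get
$$\langle \sigma c_{\tau\mathfrak{l}'_1},\ \sigma\tau d_{\mathfrak{l}'_1}\rangle_{\sigma\tau\mathfrak{l}'_1}\ =\ \langle c_{\tau\mathfrak{l}'_1},\ \tau d_{\mathfrak{l}'_1}\rangle_{\tau\mathfrak{l}'_1}\ =\ 1.$$
Similarly, applying it with $w=\mathfrak{l}'_2$, $x=\tau c_{\tau\mathfrak{l}'_2}$, $y=d_{\mathfrak{l}'_2}$ yields
$$\langle \sigma\tau c_{\tau\mathfrak{l}'_2},\ \sigma d_{\mathfrak{l}'_2}\rangle_{\sigma\mathfrak{l}'_2}\ =\ \langle \tau c_{\tau\mathfrak{l}'_2},\ d_{\mathfrak{l}'_2}\rangle_{\mathfrak{l}'_2}\ =\ 1.$$

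The only real point to check is that $\sigma$ really does act as an element of $\mathrm{Gal}(K_m/F)$ on all the objects in sight: it acts on $H^1(K_{m,w},T_{f,n})$ and $H^1(K_{m,w},A_{f,n})$ via its action on cohomology, and it permutes the places of $K_m$ over $\mathfrak{l}_i$ (respectively $\tau\mathfrak{l}_i$) in the required way, using $\tau\sigma=\sigma^{-1}\tau$ in $\mathrm{Gal}(K_m/F)$ to match $\sigma\tau\mathfrak{l}'_1$ and $\sigma\mathfrak{l}'_2$ with the places in the statement. There is essentially no obstacle here; the lemma is purely a compatibility statement packaging Galois equivariance of local Tate duality with the chosen normalizations of $c_{\tau\mathfrak{l}'_i}$ and $d_{\mathfrak{l}'_i}$.
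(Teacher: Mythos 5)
Your proof is correct, but it goes through a different standard fact than the paper does. You invoke the Galois equivariance of the local Tate pairing, $\langle g x, g y\rangle_{gw}=\langle x,y\rangle_{w}$, justified by the compatibility $\mathrm{inv}_{gw}\circ g_*=\mathrm{inv}_w$ of local invariant maps, and then specialize to $g=\sigma$; this is legitimate and is exactly the equivariance used throughout the Euler system literature. The paper instead derives the same conclusion from the restriction--corestriction adjunction for the local pairing: it observes that $\sum_{\gamma\in\Gamma_m}\gamma\tau d_{\mathfrak{l}'_1}$ is $\Gamma_m$-fixed, hence of the form $\mathrm{Res}(x)$ for some $x\in H^1(K_{\mathfrak{l}_1},T_{f,n})$, writes $\langle \sigma c_{\tau\mathfrak{l}'_1},\sigma\tau d_{\mathfrak{l}'_1}\rangle_{\sigma\tau\mathfrak{l}'_1}=\langle \sigma c_{\tau\mathfrak{l}'_1},\mathrm{Res}(x)\rangle_{\mathfrak{l}_1}=\langle\mathrm{Cores}(\sigma c_{\tau\mathfrak{l}'_1}),x\rangle_{\mathfrak{l}_1}$, and uses $\mathrm{Cores}(\sigma c_{\tau\mathfrak{l}'_1})=\mathrm{Cores}(c_{\tau\mathfrak{l}'_1})$ to conclude the value is independent of $\sigma$, hence equal to its value at $\sigma=1$. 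In effect the paper re-proves, in this semi-local situation, the equivariance you cite as known; your route is shorter and more transparent, while the paper's is self-contained modulo the projection formula. Your final remark about needing $\tau\sigma=\sigma^{-1}\tau$ to ``match'' places is not actually required for the two displayed identities, since you apply the equivariance directly with $w=\tau\mathfrak{l}'_1$ and $w=\mathfrak{l}'_2$; that relation only matters later in Theorem \ref{prop:theta}. No gap.
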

\begin{proof} Let $\mathrm{Res}:
H^1(K_{\mathfrak{l}_1}, T_{f,n})\rightarrow
H^1(K_{m,\mathfrak{l}_1}, T_{f,n})$ and
$\mathrm{Cores}:H^1(K_{m,\mathfrak{l}_1}, T_{f,n})\rightarrow
H^1(K_{\mathfrak{l}_1}, T_{f,n})$ be the restriction map and the
corestriction map respectively.

As $\sum\limits_{\gamma\in\Gamma_m}\gamma \tau d_{\mathfrak{l}'_1} $
is fixed by $\Gamma_m$, we have
$\sum\limits_{\gamma\in\Gamma_m}\gamma \tau
d_{\mathfrak{l}'_1}=\mathrm{Res} (x)$ for some $x\in
H^1(K_{\mathfrak{l}_1}, T_{f,n})$. Then $$ \langle \sigma
c_{\tau\mathfrak{l}'_1}, \sigma \tau
d_{\mathfrak{l}'_1}\rangle_{\sigma\tau \mathfrak{l}'_1} = \langle
\sigma c_{\tau\mathfrak{l}'_1}, \sum_{\gamma\in \Gamma_m}
\gamma\sigma \tau d_{\mathfrak{l}'_1}\rangle_{ \mathfrak{l}_1} =
\langle \sigma c_{\tau\mathfrak{l}'_1},
\mathrm{Res}(x)\rangle_{\mathfrak{l}_1} = \langle
\mathrm{Cores}(\sigma c_{\tau\mathfrak{l}'_1}),
x\rangle_{\mathfrak{l}_1}.
$$ As $\mathrm{Cores}(\sigma
c_{\tau\mathfrak{l}'_1})=\mathrm{Cores}(c_{\tau\mathfrak{l}'_1})$,
we obtain $$\langle \sigma c_{\tau\mathfrak{l}'_1}, \sigma \tau
d_{\mathfrak{l}'_1}\rangle_{\sigma\tau \mathfrak{l}'_1}=\langle
c_{\tau\mathfrak{l}'_1}, \tau d_{\mathfrak{l}'_1}\rangle_{\tau
\mathfrak{l}'_1}=1.$$

The proof of $$ \langle \sigma \tau c_{\tau\mathfrak{l}'_2}, \sigma
d_{\mathfrak{l}'_2}\rangle_{ \sigma \mathfrak{l}'_2}=1  $$ is
similar.
\end{proof}

Proposition \ref{thm:first} says that there exist two units $u_{1}$
and $ u_{2}$ in $ \mathcal{O}_{f,r}[\Gamma_m]$ such that
\begin{eqnarray*}
\partial_{\mathfrak{l}_i}
(\kappa_\mathscr{D}(\mathfrak{l}_i)_m) &=& u_{i}  \theta_m(g) \cdot
d_{\mathfrak{l}'_i} . \end{eqnarray*} Let $\theta_1$ and $\theta_2$
be the two elements in $ \mathcal{O}_{f,n}[\Gamma_m]$ such that
\begin{eqnarray*}
 v_{\mathfrak{l}_2} (\kappa_\mathscr{D}(\mathfrak{l}_1)_m) = \theta_1 c_{\tau \mathfrak{l}'_2}
\hskip 10pt   \text{ and } \hskip 10pt v_{\mathfrak{l}_1}
(\kappa_\mathscr{D}(\mathfrak{l}_2)_m) = \theta_2
c_{\tau\mathfrak{l}'_1} . \end{eqnarray*}

\begin{thm} \label{prop:theta}
We have
\begin{equation} \label{eq:key-formula}
\theta_m(g) (u_2\theta_1+u_1\theta_2) = 0
\end{equation}
in $\mathcal{O}_{f,n}[\Gamma_m]$.
\end{thm}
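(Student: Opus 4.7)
The plan is to apply global Tate duality to the pair $\kappa_1:=\kappa_\mathscr{D}(\mathfrak{l}_1)_m$ and $\kappa_2:=\kappa_\mathscr{D}(\mathfrak{l}_2)_m$, both viewed in $H^1(K_m,T_{f,n})\cong H^1(K_m,A_{f,n})$ via the self-duality $T_{f,n}\cong A_{f,n}$. Since the reciprocity identity $\sum_v \langle \kappa_1,\sigma\kappa_2\rangle_v=0$ holds for every $\sigma\in\Gamma_m$, I would package these into a single identity in $\mathcal{O}_{f,n}[\Gamma_m]$ by introducing the $\Gamma_m$-equivariant pairing
\[
\{x,y\}_v := \sum_{\sigma\in\Gamma_m}\langle x,\sigma y\rangle_v\,\sigma \;\in\; \mathcal{O}_{f,n}[\Gamma_m],
\]
which yields $\sum_v\{\kappa_1,\kappa_2\}_v=0$.

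The first step is to show $\{\kappa_1,\kappa_2\}_v=0$ outside $v\mid\mathfrak{l}_1\mathfrak{l}_2$. By Proposition \ref{prop-sel}, $\kappa_i\in\widehat{\mathrm{Sel}}_{\Delta\mathfrak{l}_i}(K_\infty,T_{f,n})$. At a place $v$ of $K_m$ above a prime of $F$ not dividing $p\Delta\mathfrak{l}_1\mathfrak{l}_2$, both classes restrict into $H^1_\fin$, which is $\Gamma_m$-stable (Lemma \ref{lem:preserve }) and self-orthogonal under the local Tate pairing (Proposition \ref{prop-kill}(c)); hence every $\langle\kappa_1,\sigma\kappa_2\rangle_v$ vanishes. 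At places dividing $p\Delta$, both classes lie in $H^1_\ord$, which is self-orthogonal by Proposition \ref{prop-kill}(e). Therefore
\[
\{\kappa_1,\kappa_2\}_{\mathfrak{l}_1}+\{\kappa_1,\kappa_2\}_{\mathfrak{l}_2}=0.
\]

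The second step is to evaluate the two surviving terms. At $\mathfrak{l}_1$, the class $\kappa_2$ is finite with $v_{\mathfrak{l}_1}(\kappa_2)=\theta_2\,c_{\tau\mathfrak{l}'_1}$, while the First Reciprocity Law (Proposition \ref{thm:first}) gives $\partial_{\mathfrak{l}_1}(\kappa_1)=u_1\theta_m(g)\,d_{\mathfrak{l}'_1}$. Since the local Tate pairing induces a perfect duality $H^1_\sing\times H^1_\fin\to\mathcal{O}_{f,n}$ (Proposition \ref{prop-kill}(c)) and since $\widehat{H}^1_\fin$ and $\widehat{H}^1_\sing$ are free of rank $1$ over $\mathcal{O}_f[[\Gamma]]/(\omega^n)$ (Proposition \ref{prop-kill}(d)), the packaged pairing $\{\kappa_1,\kappa_2\}_{\mathfrak{l}_1}$ unwinds---using Lemma \ref{lem:dual} to compute the normalized pairings at each place of $K_m$ above $\mathfrak{l}_1$---to $u_1\theta_m(g)\cdot\theta_2$ in $\mathcal{O}_{f,n}[\Gamma_m]$. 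By the symmetric computation, $\{\kappa_1,\kappa_2\}_{\mathfrak{l}_2}=u_2\theta_m(g)\cdot\theta_1$. Adding the two and invoking the previous display gives $\theta_m(g)(u_2\theta_1+u_1\theta_2)=0$.

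The main obstacle is the bookkeeping of the $\Gamma_m$-action in the packaged pairings. Complex conjugation $\tau$ acts on $\Gamma_m$ by inversion and on $H^1(K_{m,\mathfrak{l}_i},T_{f,n})$ by permuting the direct summands indexed by the places of $K_m$ above $\mathfrak{l}_i$ (which $\Gamma_m$ permutes simply transitively, as $\mathfrak{l}_i$ splits completely in $K_m/K$). The presence of $\tau$ in the reference elements $c_{\tau\mathfrak{l}'_i}$ of Lemma \ref{lem:dual} is precisely engineered so that when one unwinds $\{\kappa_1,\kappa_2\}_{\mathfrak{l}_i}$, the potential twist by the anti-involution $\iota$ on $\mathcal{O}_{f,n}[\Gamma_m]$ arising from the local-to-global matching cancels out, leaving the clean products $u_i\theta_m(g)\cdot\theta_{3-i}$ in the expected normalization.
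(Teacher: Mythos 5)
Your overall strategy is the same as the paper's: apply the global reciprocity law $\sum_v\langle c_1,c_2\rangle_v=0$ to the two Heegner classes, kill every local term away from $\mathfrak{l}_1$ and $\mathfrak{l}_2$ via the self-orthogonality of the finite and ordinary parts (Proposition \ref{prop-kill}), and then identify the two surviving terms with $u_1\theta_m(g)\theta_2$ and $u_2\theta_m(g)\theta_1$ using the First Reciprocity Law and Lemma \ref{lem:dual}. The first step of your argument is fine. The gap is in the second step, and it is exactly the point you flag in your last paragraph and then wave away: you pair $\kappa_{\mathscr{D}}(\mathfrak{l}_1)_m$ against $\sigma\kappa_{\mathscr{D}}(\mathfrak{l}_2)_m$, whereas the paper pairs $\tau\kappa_{\mathscr{D}}(\mathfrak{l}_1)_m$ against $\gamma\kappa_{\mathscr{D}}(\mathfrak{l}_2)_m$, with $\tau$ the complex conjugation applied to the \emph{global} class. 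That twist is not cosmetic. Lemma \ref{lem:dual} normalizes the pairings $\langle\sigma c_{\tau\mathfrak{l}'_1},\sigma\tau d_{\mathfrak{l}'_1}\rangle_{\sigma\tau\mathfrak{l}'_1}$, i.e.\ it pairs the finite generator at the place $\sigma\tau\mathfrak{l}'_1$ against the \emph{$\tau$-translate} of $d_{\mathfrak{l}'_1}$. The local component of $\partial_{\mathfrak{l}_1}(\tau\kappa_{\mathscr{D}}(\mathfrak{l}_1)_m)$ at $\sigma\tau\mathfrak{l}'_1$ is an $\mathcal{O}_{f,n}$-multiple of $\sigma\tau d_{\mathfrak{l}'_1}$, so the lemma applies verbatim; the local component of $\partial_{\mathfrak{l}_1}(\kappa_{\mathscr{D}}(\mathfrak{l}_1)_m)$ at that place is instead a multiple of $\sigma\rho_0 d_{\mathfrak{l}'_1}$, where $\rho_0\in\Gamma_m$ is the element with $\tau\mathfrak{l}'_1=\rho_0\mathfrak{l}'_1$, and $\sigma\rho_0 d_{\mathfrak{l}'_1}\neq\sigma\tau d_{\mathfrak{l}'_1}$ in general because $\tau$ acts on the coefficients $T_{f,n}$, not only on the places. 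So Lemma \ref{lem:dual} does not compute the pairings that actually occur in your packaged sum.

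If you push your version through anyway, the unwinding does not give the clean products. Writing $u_1\theta_m(g)=\sum a_{1,\sigma}\sigma$ and $\theta_2=\sum b_{2,\sigma}\sigma$, the contribution of the places above $\mathfrak{l}_1$ to your $\{\kappa_1,\kappa_2\}_{\mathfrak{l}_1}$ comes out (up to an undetermined unit and a translate by $\rho_0^{-1}$) as $u_1\theta_m(g)\cdot\iota(\theta_2)$, while the contribution above $\mathfrak{l}_2$ comes out as $\theta_1\cdot\iota(u_2\theta_m(g))$; these do not assemble into $\theta_m(g)(u_2\theta_1+u_1\theta_2)$, and the two terms are not even twisted by $\iota$ in the same positions, so no single application of $\iota$ repairs the identity. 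In the paper's computation the twist by $\tau$ on the global class contributes one $\iota$ (since $\tau$ acts on $\mathcal{O}_{f,n}[\Gamma_m]$ as $\iota$), the coefficient extraction contributes a second, and the two cancel, which is what produces $\sum_\sigma a_{1,\sigma^{-1}}b_{2,\sigma\gamma^{-1}}$, the $\gamma^{-1}$-coefficient of $u_1\theta_m(g)\theta_2$. Your assertion that ``the potential twist by the anti-involution $\iota$ \dots cancels out'' is precisely the statement that needs proof, and in the setup you chose it is false; the fix is to insert the conjugation $\tau$ into one of the two global classes before pairing, as the paper does.
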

\begin{proof} Note that $T_{f,n}$ is self dual, so we can form the
local Tate pairing $\langle\cdot, \cdot\rangle_v$ on $H^1(K_{m,v},
T_{f,n})$ for each place $v$ of $K_m$.

For any $c_1, c_2\in H^1(K_m, T_{f,n})$ and each place $v$ of $K_m$
we write $\langle c_1, c_2\rangle_v=\langle \res_v(c_1),
\res_v(c_2)\rangle_v$. Then $\sum_v \langle c_1, c_2\rangle_v =0$.
We apply this to $c_1=\tau \kappa_{\mathscr{D}}(\mathfrak{l}_1)_m $
and $c_2=\gamma\kappa_{\mathscr{D}}(\mathfrak{l}_2)_m$ with
$\gamma\in\Gamma_m$.

By Lemma \ref{lem:preserve }, $c_1\in
 {\mathrm{Sel}}_{\Delta \mathfrak{l}_1} (K_m, T_{f,n})$, and $c_2\in {\mathrm{Sel}}_{\Delta \mathfrak{l}_2} (K_m,
 T_{f,n})$. So, when $v$ is not above $\mathfrak{l}_1$ or
$\mathfrak{l}_2$ we have $$\langle
\tau\kappa_{\mathscr{D}}(\mathfrak{l}_1),
\gamma\kappa_{\mathscr{D}}(\mathfrak{l}_2)\rangle_v =0.$$ Hence,
$$ \sum_{\sigma\in \Gamma_m} ( \langle\tau\kappa_{\mathscr{D}}(\mathfrak{l}_1), \gamma\kappa_{\mathscr{D}}(\mathfrak{l}_2)\rangle_{\sigma \mathfrak{l}'_1} +
\langle\tau\kappa_{\mathscr{D}}(\mathfrak{l}_1),
\gamma\kappa_{\mathscr{D}}(\mathfrak{l}_2)\rangle_{\sigma
\mathfrak{l}'_2} ) =0. $$

We write $$u_i\theta_m(g)=\sum_{\sigma\in\Gamma_m}
a_{i,\sigma}\sigma, \hskip 10pt a_{i,\sigma}\in\mathcal{O}_{f,n}$$
and
$$\theta_i=\sum_{\sigma\in\Gamma_m}=\sum_{\sigma\in\Gamma_m}b_{i,\sigma}\sigma, \hskip 10pt b_{i,\sigma}\in\mathcal{O}_{f,n}. $$

Then $$\partial_{\mathfrak{l}_1} (
\tau\kappa_{\mathscr{D}}(\mathfrak{l}_1) )=
\iota(u_1\theta_m(g))\tau d_{\mathfrak{l}'_1}
=\sum_{\sigma\in\Gamma_m} a_{1,\sigma^{-1}}\sigma \tau
d_{\mathfrak{l}'_1}
$$ and
$$ v_{\mathfrak{l}_1} (\gamma\kappa_{\mathscr{D}}(\mathfrak{l}_2)) = \gamma \theta_2 c_{\tau \mathfrak{l}'_1}
=\sum_{\sigma\in\Gamma_m} b_{2,\sigma\gamma^{-1}}\sigma c_{\tau
\mathfrak{l}'_1}. $$ By Lemma \ref{lem:dual} we have
\begin{eqnarray*} \langle\tau\kappa_{\mathscr{D}}(\mathfrak{l}_1),
\gamma\kappa_{\mathscr{D}}(\mathfrak{l}_2)\rangle_{\sigma \tau
\mathfrak{l}'_1} &= & \langle
\partial_{\mathfrak{l}_1}(\tau\kappa_{\mathscr{D}}(\mathfrak{l}_1)),
v_{\mathfrak{l}_1}(\gamma\kappa_{\mathscr{D}}(\mathfrak{l}_2))\rangle_{\sigma
\tau \mathfrak{l}'_1} \\
& =& \langle  a_{1,\sigma^{-1}}\sigma \tau d_{\mathfrak{l}'_1},
b_{2,\sigma\gamma^{-1}}\sigma c_{\tau \mathfrak{l}'_1}
\rangle_{\sigma \tau
\mathfrak{l}'_1}=a_{1,\sigma^{-1}}b_{2,\sigma\gamma^{-1}} .
\end{eqnarray*} Hence,
$$\sum_{\sigma\in \Gamma_m}
\langle\tau\kappa_{\mathscr{D}}(\mathfrak{l}_1),
\gamma\kappa_{\mathscr{D}}(\mathfrak{l}_2)\rangle_{\sigma
\mathfrak{l}'_1} = \sum_{\sigma\in \Gamma_m}
\langle\tau\kappa_{\mathscr{D}}(\mathfrak{l}_1),
\gamma\kappa_{\mathscr{D}}(\mathfrak{l}_2)\rangle_{\sigma \tau
\mathfrak{l}'_1} = \sum_{\sigma\in\Gamma_m }
a_{1,\sigma^{-1}}b_{2,\sigma\gamma^{-1}}  .$$ Similarly,
$$\sum_{\sigma\in \Gamma_m}
\langle\tau\kappa_{\mathscr{D}}(\mathfrak{l}_1),
\gamma\kappa_{\mathscr{D}}(\mathfrak{l}_2)\rangle_{\sigma
\mathfrak{l}'_1} = \sum_{\sigma\in\Gamma_m }
b_{1,\sigma^{-1}}a_{2,\sigma\gamma^{-1}}  .$$ Therefore,
$$ \sum_{\sigma\in\Gamma_m }
(a_{1,\sigma^{-1}}b_{2,\sigma\gamma^{-1}}+b_{1,\sigma^{-1}}a_{2,\sigma\gamma^{-1}})=0.
$$
This sum is just the coefficient of $\gamma^{-1}$ in the left hand
of (\ref{eq:key-formula}). This proves (\ref{eq:key-formula}).
\end{proof}

For any $a\in \mathcal{O}_{f,n}$, if $a=u\omega^s$ with $u$ a unit
in $\mathcal{O}_{f,n}$, and $s\in \{0,1,\cdots, n\}$, we put
$\mathrm{val}_{\omega}(a)=s$.

Let $\varphi: \mathcal{O}_{f,n}[\Gamma_m]\rightarrow
\mathcal{O}_{f,n}$ be a homomorphism. For each
$\theta\in\mathcal{O}_{f,n}[\Gamma_m]$ we put
$\ord_{\varphi}(\theta):=\mathrm{val}_{\omega}(\varphi(\theta))$.

For each element $x$ of ${H}^1(K_\infty, T_{f,n})$ we write
$\varphi(x)$ for its image in $${H}^1(K_\infty,
T_{f,n})\otimes_\varphi \mathcal{O}_{f,n}\cong \mathcal{O}_{f,n},$$
and put $$\ord_\varphi(x)=\mathrm{val}_\omega ( \varphi(x) ).$$ Then
we have $$
\ord_\varphi(v_{\mathfrak{l}_2}(\kappa_{\mathscr{D}}(\mathfrak{l}_1)_m))=\ord_\varphi
(\theta_1)$$ and $$
\ord_\varphi(v_{\mathfrak{l}_1}(\kappa_{\mathscr{D}}(\mathfrak{l}_2)_m))=\ord_\varphi
(\theta_2).$$

\begin{cor} \label{cor:theta-2}
If $\varphi: \mathcal{O}_{f,n}[\Gamma_m]\rightarrow
\mathcal{O}_{f,n}$ is a homomorphism such that
$$ \ord_{\varphi}(\partial_{\mathfrak{l}_1}(\kappa_{\mathscr{D}}(\mathfrak{l}_1)_m))  +
\ord_\varphi(v_{\mathfrak{l}_2}(\kappa_{\mathscr{D}}(\mathfrak{l}_1)_m))<n
,$$ then
$$\ord_\varphi(v_{\mathfrak{l}_2}(\kappa_{\mathscr{D}}(\mathfrak{l}_1)_m)) =
\ord_\varphi(v_{\mathfrak{l}_1}(\kappa_{\mathscr{D}}(\mathfrak{l}_2)_m)).$$
\end{cor}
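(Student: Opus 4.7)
The plan is to apply the homomorphism $\varphi$ to the identity $\theta_m(g)(u_2\theta_1+u_1\theta_2)=0$ provided by Theorem \ref{prop:theta}, pass from $\mathcal{O}_{f,n}[\Gamma_m]$ to $\mathcal{O}_{f,n}$, and then extract the desired equality of valuations through a short ultrametric case analysis based on the hypothesis that $\ord_\varphi(\partial_{\mathfrak{l}_1}(\kappa_{\mathscr{D}}(\mathfrak{l}_1)_m))+\ord_\varphi(v_{\mathfrak{l}_2}(\kappa_{\mathscr{D}}(\mathfrak{l}_1)_m))<n$.

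To set up the reduction, I would write $T=\varphi(\theta_m(g))$, $a=\varphi(u_1)$, $b=\varphi(u_2)$, $t_1=\varphi(\theta_1)$, $t_2=\varphi(\theta_2)$. Because $u_1$ and $u_2$ are units in $\mathcal{O}_{f,n}[\Gamma_m]$, the elements $a$ and $b$ are units in $\mathcal{O}_{f,n}$ and hence lift to units of $\mathcal{O}_f$. Unwinding the identifications of $H^1_\sing(K_{m,\mathfrak{l}_i},T_{f,n})$ and $H^1_\fin(K_{m,\mathfrak{l}_i},T_{f,n})$ with $\mathcal{O}_{f,n}[\Gamma_m]$ fixed by the chosen generators $d_{\mathfrak{l}'_i}$ and $c_{\tau\mathfrak{l}'_i}$, and using that $\varphi(u_1)$ is a unit, one obtains
$$\ord_\varphi(\partial_{\mathfrak{l}_1}(\kappa_{\mathscr{D}}(\mathfrak{l}_1)_m))=\mathrm{val}_\omega(T),\quad \ord_\varphi(v_{\mathfrak{l}_2}(\kappa_{\mathscr{D}}(\mathfrak{l}_1)_m))=\mathrm{val}_\omega(t_1),\quad \ord_\varphi(v_{\mathfrak{l}_1}(\kappa_{\mathscr{D}}(\mathfrak{l}_2)_m))=\mathrm{val}_\omega(t_2).$$
Applying $\varphi$ to (\ref{eq:key-formula}) gives $T(bt_1+at_2)=0$ in $\mathcal{O}_{f,n}$, which, after lifting everything to $\mathcal{O}_f$ and letting $v$ denote the $\omega$-adic valuation, is equivalent to $v(T)+v(bt_1+at_2)\geq n$.

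The final step is a direct case analysis using the non-archimedean inequality in $\mathcal{O}_f$. The hypothesis $v(T)+v(t_1)<n$ already forces $v(T),v(t_1)<n$. If either $v(t_1)<v(t_2)$ or $t_2\equiv 0\pmod{\omega^n}$, then $v(bt_1+at_2)=v(t_1)$ (since $b$ is a unit), which forces $v(T)+v(t_1)\geq n$, contradicting the hypothesis. If $v(t_1)>v(t_2)$, then $v(bt_1+at_2)=v(t_2)$, so $v(T)+v(t_2)\geq n$; combined with the strict integer inequality $v(t_1)\geq v(t_2)+1$, this yields $v(T)+v(t_1)\geq n+1$, again contradicting the hypothesis. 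The only remaining possibility is $v(t_1)=v(t_2)<n$, which is exactly the conclusion $\ord_\varphi(v_{\mathfrak{l}_2}(\kappa_{\mathscr{D}}(\mathfrak{l}_1)_m))=\ord_\varphi(v_{\mathfrak{l}_1}(\kappa_{\mathscr{D}}(\mathfrak{l}_2)_m))$.

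There is no substantive obstacle beyond Theorem \ref{prop:theta}, which does the real work via the global Tate pairing; the corollary is a mechanical specialization. The only items requiring genuine care are confirming that units of the group ring are sent to units of $\mathcal{O}_{f,n}$ by the ring homomorphism $\varphi$ (so that the $u_i$ factors drop out of all $\ord_\varphi$-calculations), and handling the degenerate case $t_2\equiv 0$ in $\mathcal{O}_{f,n}$ separately, since there a lifted $\omega$-adic valuation is conventionally $\geq n$ rather than a well-defined finite integer.
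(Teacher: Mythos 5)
Your proposal is correct and follows essentially the same route as the paper: apply $\varphi$ to the identity of Theorem \ref{prop:theta}, observe that $\varphi(u_1),\varphi(u_2)$ are units, and conclude by elementary ultrametric properties of $\mathrm{val}_\omega$ (the paper states these as two ``trivial properties'' where you spell out the equivalent case analysis). No gaps.
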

\begin{proof} By Proposition \ref{prop:theta} we have
$$
\varphi( \theta_m(g))
(\varphi(u_2)\varphi(\theta_1)+\varphi(u_1)\varphi(\theta_2)) = 0 .
$$ Note that $ \varphi(u_1) $ and $\varphi(u_2)$ are units of
$\mathcal{O}_{f,n}$.

Our assertion comes from  the following two trivial properties of
$\mathrm{val}_\omega$:

$\bullet$ Either if
$\mathrm{val}_\omega(a)+\mathrm{val}_\omega(b)<n$, or if
$\mathrm{val}_\omega(ab)<n$, then
$\mathrm{val}_\omega(a)+\mathrm{val}_\omega(b)=\mathrm{val}_\omega(ab)$.

$\bullet$ If $a+b=0$, then
$\mathrm{val}_\omega(a)=\mathrm{val}_\omega(b)$.
\end{proof}

\subsection{Admissible form}

\begin{prop}\label{thm:second} Let $(\Delta,g)$ be an
$(N,n)$-admissible form. If $\mathfrak{l}_1$ and $\mathfrak{l}_2$
$($$\mathfrak{l}_1,\mathfrak{l}_2\nmid\mathfrak{q}_0\Delta$$)$ are
$n$-admissible prime ideals, and if $m\geq N$ is an integer such
that $v_{\mathfrak{l}_2}(\kappa_{\mathscr{D}}(\mathfrak{l}_1)_m)
\neq 0$, then there exists a nonnegative integer $n_0<n$ and an $(N,
n-n_0)$-admissible form $(\Delta \mathfrak{l}_1\mathfrak{l}_2, g'')$
satisfying the following.
\begin{enumerate}
\item For any homomorphism $\varphi: \mathcal{O}_{f,n}[\Gamma_m]\rightarrow
\mathcal{O}_{f,n}$ we have $n_0\leq
\ord_\varphi(v_{\mathfrak{l}_2}(\kappa_{\mathscr{D}}(\mathfrak{l}_1)_m))$.
\item We have
$$ v_{\mathfrak{l}_2}(\kappa_{\mathscr{D}}(\mathfrak{l}_1)_m)
= \omega^{n_0} \theta_m(g'') \in \mathcal{O}_{f,n}[\Gamma_m]
$$ up to multiplication by a unit of $\mathcal{O}_{f,n}[\Gamma_m]$.
\end{enumerate}
\end{prop}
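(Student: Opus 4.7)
The plan is to construct $g''$ directly from the map $\gamma: \mathrm{Div}(\mathcal{S}_{\mathfrak{l}_2}) \to \mathcal{O}_{f,n}$ of Section \ref{ss:first-second} by dividing out its $\omega$-content. Since we assume $v_{\mathfrak{l}_2}(\kappa_{\mathscr{D}}(\mathfrak{l}_1)_m) \neq 0$, Corollary \ref{cor:gamma-neq-0} tells us $\gamma \neq 0$ in $\mathcal{O}_{f,n}$. Define $n_0$ to be the largest integer such that every value of $\gamma$ lies in $\omega^{n_0}\mathcal{O}_{f,n}$; then $0 \le n_0 < n$. Write $\gamma = \omega^{n_0}\gamma''$; this defines $\gamma''$ as a well-defined map into $\mathcal{O}_{f,n-n_0}$ (any ambiguity in the choice of lift is killed by reduction modulo $\omega^{n-n_0}$), and by maximality of $n_0$ we have $\gamma'' \not\equiv 0 \pmod{\omega}$.

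Next, I would use the identification $\iota: B''^\times \backslash \widehat{B}''^\times/Y\mathfrak{U}'' \cong \mathcal{S}_{\mathfrak{l}_2}$ of Section \ref{ss:first-second} to view $\gamma''$ as an element
\[
g'' \in S_2^{B''}(\mathfrak{U}''_{\mathfrak{n}^+,\mathfrak{p}^N},\mathcal{O}_{f,n-n_0})^Y
\]
by setting $g''(b) = \gamma''(\iota(b))$. To verify that $(\Delta\mathfrak{l}_1\mathfrak{l}_2,g'')$ is an $(N,n-n_0)$-admissible form, I would check each condition of Definition \ref{defn:adm}: the discriminant $\Delta\mathfrak{l}_1\mathfrak{l}_2$ is a square-free product of primes inert in $K$ containing $\mathfrak{n}^-$, its quotient by $\mathfrak{n}^-$ has an even number of prime factors (two more than before), and the factors $\mathfrak{l}_1,\mathfrak{l}_2$ are $n$-admissible (hence $(n-n_0)$-admissible). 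The nonvanishing mod $\omega$ is built into the choice of $n_0$. Finally, the Hecke eigenvalue congruence $\lambda_{g''} \equiv \lambda_{f,N} \pmod{\omega^{n-n_0}}$ follows from Proposition \ref{prop:relations}: the relations $\gamma(T_\mathfrak{q}x) = \bar T_\mathfrak{q}\gamma(x)$, $\gamma(U_\mathfrak{q}x) = \bar U_\mathfrak{q}\gamma(x)$, and $\gamma(\mathrm{Frob}_{\mathfrak{l}_2}x) = \epsilon_{\mathfrak{l}_2}\gamma(x)$ descend to the same identities for $\gamma''$ in $\mathcal{O}_{f,n-n_0}$, which is exactly the list of eigenvalue conditions packaged in $\lambda_{f,N}$ (the Hecke operator at $\mathfrak{l}_2$ for $B''$ matches the admissibility eigenvalue $\epsilon_{\mathfrak{l}_2}$).

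For statement (2), Proposition \ref{prop:second-recip-law} gives
\[
v_{\mathfrak{l}_2}(\kappa_{\mathscr{D}}(\mathfrak{l}_1)_m) = \frac{1}{\alpha_\mathfrak{p}^m} \sum_{[a]_m\in G_m}\gamma\circ\iota(x_m(a)\tau_N)\,\pi_m([a]_m).
\]
Substituting $\gamma = \omega^{n_0}\gamma''$ and recognising the remaining sum as the definition of $\theta_m(g'')$ (up to the adjustment via $\tau_N$, which is absorbed into the identification (\ref{eq:psi-g})) yields
\[
v_{\mathfrak{l}_2}(\kappa_{\mathscr{D}}(\mathfrak{l}_1)_m) = \omega^{n_0}\theta_m(g'')
\]
up to a unit. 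Statement (1) then follows immediately: for any homomorphism $\varphi: \mathcal{O}_{f,n}[\Gamma_m]\to \mathcal{O}_{f,n}$, applying $\varphi$ gives $\varphi(v_{\mathfrak{l}_2}(\kappa_{\mathscr{D}}(\mathfrak{l}_1)_m)) = \omega^{n_0}\varphi(\theta_m(g''))$ up to a unit, hence $\mathrm{val}_\omega$ is at least $n_0$.

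The only genuinely delicate point I expect is keeping track of the level structure so that $g''$ really lies in $S_2^{B''}(\mathfrak{U}''_{\mathfrak{n}^+,\mathfrak{p}^N}, \mathcal{O}_{f,n-n_0})^Y$ with the correct Atkin--Lehner twist and so that the identification of $\gamma''\circ\iota$ with a value of $g''$ at $x_m(a)\tau_N$ is consistent with the definition (\ref{eq:psi-g}) of $\psi_g$. This is essentially bookkeeping that was already carried out in \cite{Wang} in the case $n_0=0$; the key observation is that division by $\omega^{n_0}$ commutes with all the identifications and Hecke actions involved, so the argument for admissibility goes through verbatim at the cost of replacing $n$ by $n-n_0$.
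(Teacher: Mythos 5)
Your proposal is correct and follows essentially the same route as the paper: define $n_0$ as the $\omega$-content of $\gamma$, divide it out to get a map into $\mathcal{O}_{f,n-n_0}$, use the Hecke equivariance relations of Proposition \ref{prop:relations} (which survive the division because any discrepancy lies in $\omega^{n-n_0}\mathcal{O}_{f,n}$) to produce the $(N,n-n_0)$-admissible form $g''$ with $\psi_{g''}=\gamma'$, and conclude via Proposition \ref{prop:second-recip-law}. The one point to tighten is that $g''$ should be defined by $g''(x)=\gamma'(x\tau_N)$ (i.e.\ via $\psi_{g''}=\gamma'$ and (\ref{eq:psi-g})) rather than $g''(b)=\gamma''(\iota(b))$, exactly the $\tau_N$ bookkeeping you flag at the end.
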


Here, $\theta_m(g'')$ is in $\mathcal{O}_{f,n-n_0}[\Gamma_m]$. The
homomorphism
\begin{eqnarray*} \mathcal{O}_{f,n}[\Gamma_m] &\xrightarrow{\times
\omega^{n_0}} &
\mathcal{O}_{f,n}[\Gamma_m], \\
\sum_{\sigma\in\Gamma_m}a_\sigma \sigma &\mapsto&
\sum_{\sigma\in\Gamma_m}\omega^{n_0} a_\sigma \sigma
\end{eqnarray*} annihilates
$\omega^{n-n_0}\mathcal{O}_{f,n}[\Gamma_m]$ and thus induces an
homomorphism
$$ \mathcal{O}_{f, n-n_0}[\Gamma_m] \xrightarrow{\times \omega^{n_0}} \mathcal{O}_{f, n}[\Gamma_m]. $$

\begin{proof}
Let $n_0$ be the largest integer such that $\mathrm{Im}(\gamma)\in
\omega^{n_0} \mathcal{O}_{f,n} $. By Corollary \ref{cor:gamma-neq-0}
we have $$
v_{\mathfrak{l}_2}(\kappa_{\mathscr{D}}(\mathfrak{l}_1)_m)\in
\omega^{n_0}\mathcal{O}_{f,n}[\Gamma_m].$$ Thus for any homomorphism
$\varphi: \mathcal{O}_{f,n}[\Gamma]\rightarrow \mathcal{O}_{f,n}$ we
have $$\varphi
(v_{\mathfrak{l}_2}(\kappa_{\mathscr{D}}(\mathfrak{l}_1)_m))\in
\omega^{n_0}\mathcal{O}_{f,n}$$ yielding
$$\ord_\varphi(v_{\mathfrak{l}_2}(\kappa_{\mathscr{D}}(\mathfrak{l}_1)_m))\geq
n_0.$$

Let $\widetilde{\gamma}$ be any map $$\widetilde{\gamma}:
\mathrm{Div}(\mathcal{S}_{\mathfrak{l}_2}) \rightarrow
\mathcal{O}_{f,n}$$ such that
$\gamma=\omega^{n_0}\widetilde{\gamma}$. Let $\gamma'$ be the
composition
$$ \mathrm{Div}(\mathcal{S}_{\mathfrak{l}_2})
\xrightarrow{\widetilde{\gamma}} \mathcal{O}_{f,n} \rightarrow
\mathcal{O}_{f,n-n_0} ,
$$ where $\mathcal{O}_{f,n} \rightarrow
\mathcal{O}_{f,n-n_0}$ is the natural quotient map.

If $\mathfrak{q}\nmid
\Delta\mathfrak{l}_1\mathfrak{l}_2\mathfrak{n}^+$, from
$\gamma(T_\mathfrak{q}x)-\bar{T}_\mathfrak{q}\gamma(x)=0$, we get
$$ \widetilde{\gamma}(T_\mathfrak{q}x)-\bar{T}_\mathfrak{q}\widetilde{\gamma}(x)
\in \omega^{n-n_0}\mathcal{O}_{f,n}. $$ It follows that
$$\gamma'(T_\mathfrak{q}x)-\bar{T}_\mathfrak{q}\gamma'(x)=0.$$  The
same argument shows that, if $\mathfrak{q}|
\Delta\mathfrak{n}^+\mathfrak{l}_1$, then
$$\gamma'(U_\mathfrak{q}x)=\bar{U}_\mathfrak{q}\gamma'(x).$$ In
particular, $\gamma'(U_{\mathfrak{l}_1} x)=
\epsilon_{\mathfrak{l}_1}\gamma'(x)$. Similarly,
$\gamma'(\mathrm{Frob}_{\mathfrak{l}_2}(x))=
\epsilon_{\mathfrak{l}_2} \gamma'(x)$. By \cite[Section 9]{Carayol}
we have $U_{\mathfrak{l}_2}=\mathrm{Frob}_{\mathfrak{l}_2}$ on
$\mathrm{Div}(\mathcal{S}_{\mathfrak{l}_2})$. Hence,
$$\gamma'(U_{\mathfrak{l}_2}
x)=\gamma'(\mathrm{Frob}_{\mathfrak{l}_2}
x)=\epsilon_{\mathfrak{l}_2}\gamma'(x).$$

Let $$g''\in
S^{B_\Delta\mathfrak{l}_1\mathfrak{l}_2}_2(\mathfrak{U}'',
\mathcal{O}_{f,n-n_0})^Y$$ be the function such that
$\psi_{g''}=\gamma'$. Since $\gamma'$ is Hecke equivariant,
$(\Delta\mathfrak{l}_1\mathfrak{l}_2, g'')$ is an
$(N,n-n_0)$-admissible form. By (\ref{eq:psi-g}) we have
$$ g''(x_m(a))=\gamma'(x_m(a)\tau_N) .$$

By Proposition \ref{prop:second-recip-law} we have
\begin{eqnarray*}v_{\mathfrak{l}_2}(\kappa_{\mathscr{D}}(\mathfrak{l}_1)_m)
&=& \frac{1}{\alpha_\mathfrak{p}^m} \sum_{[a]_m\in G_m}
\gamma(x_m(a)\tau_N)\pi_m([a]_m)\\
&=& \frac{1}{\alpha_\mathfrak{p}^m} \sum_{[a]_m\in G_m} \omega^{n_0} {\gamma}'(x_m(a)\tau_N)\pi_m([a]_m) \\
&=& \frac{\omega^{n_0}}{\alpha_\mathfrak{p}^m} \sum_{[a]_m\in G_m}
  g''(x_m(a))\pi_m([a]_m) \hskip 5pt = \hskip 5pt  \omega^{n_0}\theta_m(g''),
\end{eqnarray*} as desired. \end{proof}

\begin{rem} Proposition \ref{prop:surj-Ihara} says that, if Ihara's lemma holds, then $n_0=0$.
\end{rem}

We can strengthen the statement of Corollary \ref{cor:theta-2}.
Though it will not be used in the next section, we give it below for
its own interest.

\begin{thm}\label{thm:theta}
Assume $(\mathrm{CR}^+)$ and $(\mathfrak{n}^+\text{-}\mathrm{DT})$
hold. If there exists a homomorphism
$$ \varphi:\mathcal{O}_{f,n}[\Gamma_m]\rightarrow
\mathcal{O}_{f,n} $$ such that
\begin{equation} \label{eq:condition}
\ord_{\varphi}(\partial_{\mathfrak{l}_1}(\kappa_{\mathscr{D}}(\mathfrak{l}_1)_m))
+
\ord_\varphi(v_{\mathfrak{l}_2}(\kappa_{\mathscr{D}}(\mathfrak{l}_1)_m))<n
,\end{equation} then
$$v_{\mathfrak{l}_2}(\kappa_{\mathscr{D}}(\mathfrak{l}_1)_m) =
v_{\mathfrak{l}_1}(\kappa_{\mathscr{D}}(\mathfrak{l}_2)_m)$$ up to
multiplication by elements of $\mathcal{O}_{f,n}^\times$ and
$\Gamma_m$.
\end{thm}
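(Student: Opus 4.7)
The strategy is to upgrade the order equality of Corollary~\ref{cor:theta-2} into a multiplicative one by combining it with the admissible-form description of Proposition~\ref{thm:second}. By hypothesis~(\ref{eq:condition}) together with Proposition~\ref{thm:first}, $\ord_\varphi(v_{\mathfrak{l}_2}(\kappa_{\mathscr{D}}(\mathfrak{l}_1)_m))<n$, so this class is nonzero and Corollary~\ref{cor:theta-2} transfers the nonvanishing to the swapped version. I would therefore apply Proposition~\ref{thm:second} in both directions to produce $(N,n-n_0)$- and $(N,n-n_1)$-admissible forms $g_1''$, $g_2''$ on the common definite quaternion algebra $B''=B_{\Delta\mathfrak{l}_1\mathfrak{l}_2}$ satisfying
\[
v_{\mathfrak{l}_2}(\kappa_{\mathscr{D}}(\mathfrak{l}_1)_m)=\omega^{n_0}\theta_m(g_1''),\qquad v_{\mathfrak{l}_1}(\kappa_{\mathscr{D}}(\mathfrak{l}_2)_m)=\omega^{n_1}\theta_m(g_2''),
\]
each up to a unit of $\mathcal{O}_{f,n}[\Gamma_m]$, with $n_0,n_1<n$.

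Each $g_i''$ comes from a Hecke-equivariant map $\gamma_i:\mathrm{Div}(\mathcal{S})\to\mathcal{O}_{f,n}$, where the Shimura set $\mathcal{S}=B''^{\times}\backslash\widehat{B}''^{\times}/Y\mathfrak{U}''$ is identified at once with the supersingular loci $\mathcal{S}_{\mathfrak{l}_2}\subset J_N^{[\mathfrak{l}_1]}(k_{\mathfrak{l}_2})$ (via Proposition~\ref{thm:second} for $(\mathfrak{l}_1,\mathfrak{l}_2)$) and $\mathcal{S}_{\mathfrak{l}_1}\subset J_N^{[\mathfrak{l}_2]}(k_{\mathfrak{l}_1})$ (for $(\mathfrak{l}_2,\mathfrak{l}_1)$); the discrepancy between the two identifications is encoded by an Atkin-Lehner element at $\mathfrak{l}_1\mathfrak{l}_2$. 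The proof of Proposition~\ref{prop:revise}, via \cite[Theorem~9.2.4]{Wang}, shows that the localised Hecke module $S^{B''}_2(\mathfrak{U}'',\mathcal{O}_f)^Y_{\mathfrak{m}}$ is cyclic; dualising and using the Hecke-equivariance of Proposition~\ref{prop:relations}, the space of maps $\mathrm{Div}(\mathcal{S})\to\mathcal{O}_{f,n}$ intertwining the eigenvalue system of $f$ modulo $\omega^n$ is cyclic over $\mathcal{O}_{f,n}$. Hence, after composing with the Atkin-Lehner identification, $\gamma_1=c\,\gamma_2$ for some $c\in\mathcal{O}_{f,n}$.

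I would then show that $c$ is a unit and $n_0=n_1$. Comparing images, $\omega^{n_0}\mathcal{O}_{f,n}=\mathrm{Im}(\gamma_1)=c\cdot\mathrm{Im}(\gamma_2)=\omega^{n_1+\mathrm{val}_\omega(c)}\mathcal{O}_{f,n}$, so $n_0=n_1+\mathrm{val}_\omega(c)$. On the other hand the relation $\gamma_1=c\gamma_2$, together with the fact that the ring homomorphism $\varphi$ sends $\Gamma_m$-elements to units of $\mathcal{O}_{f,n}$, gives $\ord_\varphi(\theta_m(g_1''))=\mathrm{val}_\omega(c)+\ord_\varphi(\theta_m(g_2''))$. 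Substituting into the equality $\ord_\varphi(v_{\mathfrak{l}_2}(\kappa_{\mathscr{D}}(\mathfrak{l}_1)_m))=\ord_\varphi(v_{\mathfrak{l}_1}(\kappa_{\mathscr{D}}(\mathfrak{l}_2)_m))$ supplied by Corollary~\ref{cor:theta-2}, and using the finiteness of these $\varphi$-orders guaranteed by~(\ref{eq:condition}), one obtains $n_0+\mathrm{val}_\omega(c)=n_1$. Together with $n_0=n_1+\mathrm{val}_\omega(c)$, these force $n_0=n_1$ and $c\in\mathcal{O}_{f,n}^{\times}$.

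Finally, translating $\gamma_1=c\gamma_2$ through the defining relation $g_i''(x_m(a))=\gamma_i'(x_m(a)\tau_N)$, the Atkin-Lehner identification of $\mathcal{S}_{\mathfrak{l}_1}$ with $\mathcal{S}_{\mathfrak{l}_2}$ manifests on CM points, via the geometric reciprocity law, as a single translation by some $\gamma_0\in\Gamma_m$. Thus $\theta_m(g_1'')=c\,\gamma_0\,\theta_m(g_2'')$, and combined with the display in the first paragraph,
\[
v_{\mathfrak{l}_2}(\kappa_{\mathscr{D}}(\mathfrak{l}_1)_m)=c\,\gamma_0\,v_{\mathfrak{l}_1}(\kappa_{\mathscr{D}}(\mathfrak{l}_2)_m)
\]
up to a unit of $\mathcal{O}_{f,n}[\Gamma_m]$, as desired. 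The hard part will be making the proportionality $\gamma_1=c\gamma_2$ precise, together with identifying the Atkin-Lehner twist as a $\Gamma_m$-translation on CM points: both rely on the cyclicity of the localised Hecke module and on a careful bookkeeping of how the identifications $\iota$ of $\mathcal{S}$ with $\mathcal{S}_{\mathfrak{l}_1}$ and $\mathcal{S}_{\mathfrak{l}_2}$ differ.
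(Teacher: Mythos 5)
Your proposal is correct and follows essentially the same route as the paper: apply Corollary \ref{cor:theta-2} to equate the $\varphi$-orders, apply Proposition \ref{thm:second} in both directions, identify the two resulting admissible forms on $B_{\Delta\mathfrak{l}_1\mathfrak{l}_2}$ via multiplicity one (the paper cites \cite[Theorem 9.1.1]{Wang} directly where you derive the proportionality $\gamma_1=c\gamma_2$ from cyclicity of the localised Hecke module — the same fact), and then use the order bookkeeping to force $n_0=n_1$ and $c\in\mathcal{O}_{f,n}^\times$. Your explicit tracking of the Atkin--Lehner discrepancy as a $\Gamma_m$-translation is a point the paper leaves implicit in the ``up to $\Gamma_m$'' of the statement, but it does not change the argument.
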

\begin{proof} By Corollary \ref{cor:theta-2} it follows from
(\ref{eq:condition}) that
$$\ord_\varphi(v_{\mathfrak{l}_1}(\kappa_{\mathscr{D}}(\mathfrak{l}_2)))=
\ord_\varphi(v_{\mathfrak{l}_2}(\kappa_{\mathscr{D}}(\mathfrak{l}_1)))
< n.$$

Let $n_0$ and $g''$ be as in Proposition \ref{thm:second}. Then
$$n_0\leq \ord_\varphi(v_{\mathfrak{l}_2}(\kappa_{\mathscr{D}}(\mathfrak{l}_1)))$$
and
$$  v_{\mathfrak{l}_2}(\kappa_{\mathscr{D}}(\mathfrak{l}_1))
= \omega^{n_0} \theta(g'') \in \mathcal{O}_{f,n}[[\Gamma]]
$$ up to multiplication by a unit of $\mathcal{O}_{f,n}[[\Gamma]]$.
Exchanging $\mathfrak{l}_1$ and $\mathfrak{l}_2$, by Proposition
\ref{thm:second} there exists a nonnegative integer $$n'_0\leq
\ord_\varphi(v_{\mathfrak{l}_1}(\kappa_{\mathscr{D}}(\mathfrak{l}_2)))$$
and an $(N, n-n'_0)$-admissible form $(\Delta
\mathfrak{l}_1\mathfrak{l}_2, h'')$ such that $$
v_{\mathfrak{l}_1}(\kappa_{\mathscr{D}}(\mathfrak{l}_2)) =
\omega^{n'_0} \theta(h'') \in \mathcal{O}_{f,n}[[\Gamma]]
$$ up to multiplication by a unit of $\mathcal{O}_{f,n}[[\Gamma]]$.

Without loss of generality we may assume that $ n_{0} \leq n'_{0}$.
When $(\mathrm{CR}^+)$ and $(\mathfrak{n}^+\text{-}\mathrm{DT})$
hold, the multiplicity one theorem holds \cite[Theorem 9.1.1]{Wang}
from which we obtain
$$h'' \equiv g''\ (\mathrm{mod}\ \omega^{n_0}).$$ So
$$\omega^{n_0}\theta_m(h'')=\omega^{n_0}\theta_m(g'')$$ in
$\mathcal{O}_{f,n}[\Gamma_m]$. It follows that \begin{eqnarray*}
\ord_\varphi
(v_{\mathfrak{l}_1}(\kappa_{\mathscr{D}}(\mathfrak{l}_2))) & = &
n'_0+ \ord_\varphi(\theta_m(h'')) \\
&=& (n'_0-n_0)+ (n_0+ \ord_\varphi\theta_m(g'')) \ \ = \ \
(n'_0-n_0) + \ord_\varphi
(v_{\mathfrak{l}_2}(\kappa_{\mathscr{D}}(\mathfrak{l}_1))) .
\end{eqnarray*} Since $$\ord_\varphi(v_{\mathfrak{l}_1}(\kappa_{\mathscr{D}}(\mathfrak{l}_2)))=
\ord_\varphi(v_{\mathfrak{l}_2}(\kappa_{\mathscr{D}}(\mathfrak{l}_1)))
< n, $$ we obtain $n'_0=n_0$ yielding our conclusion.  \end{proof}

\section{Proof of Theorem \ref{thm:main}} \label{sec:BD-argument}

Let $\varphi: \mathcal{O}_f[[\Gamma]]\rightarrow \mathcal{O}_f$ be a
homomorphism from $\mathcal{O}_f[[\Gamma]]$ to the ring of integers
in a finite extension of $\BQ_p$. Let $\omega$ be a uniformizing
element of $\mathcal{O}_f$. For each positive integer $r$, let
$\varphi_r$ be the composition
$$\mathcal{O}_f[[\Gamma]]\xrightarrow{\varphi}
\mathcal{O}_f\rightarrow
\mathcal{O}_{f,r}=\mathcal{O}_f/(\omega^r).$$

We write $\mathrm{ord}$ for the valuation of $\mathcal{O}_f$ whose
value on $\omega$ is $1$.

\begin{thm}\label{thm:auxi} Let $N\geq r$ be two positive integers.
If $\mathscr{D}=(\Delta,g)$ is an $(N,r)$-admissible form, and if
$t_{\varphi,g} :=\ord (\varphi_r ( \theta(g)))$ satisfies
$2t_{\varphi,g}\leq r$. Then for each positive integer $n\leq
r-t_{\varphi,g}$, we have
\begin{equation}\label{eq:key-form}
\mathrm{length}_{\mathcal{O}_f}(\mathrm{Sel}_\Delta(K_\infty,
A_{f,n})^\vee\otimes_{\varphi}\mathcal{O}_f)\leq 2
t_{\varphi,g}.\end{equation}
\end{thm}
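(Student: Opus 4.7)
The plan is to proceed by induction on $t=t_{\varphi,g}$, following the Bertolini--Darmon Euler system strategy applied to the Heegner classes $\kappa_{\mathscr{D}}(\mathfrak{l})$. First I would reduce to finite level by fixing $m\geq N$ large enough and applying Lemma \ref{lem:control}(a), so that it suffices to bound the $\varphi$-component of $\mathrm{Sel}_\Delta(K_m, A_{f,n})^\vee$ which I denote $S^\vee$. For the base case $t=0$, $\varphi_r(\theta(g))$ is a unit, so picking any $n$-admissible prime $\mathfrak{l}\nmid\mathfrak{q}_0\Delta$, the First Reciprocity Law (Proposition \ref{thm:first}) makes $\varphi(\partial_\mathfrak{l}(\kappa_{\mathscr{D}}(\mathfrak{l})))$ a unit of $\mathcal{O}_{f,n}$, so $\kappa_{\mathscr{D}}(\mathfrak{l})$ generates $\widehat{H}^1_\sing(K_{\infty,\mathfrak{l}},T_{f,n})\otimes_\varphi\mathcal{O}_f$. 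For any Selmer class $s$, the global reciprocity relation $\sum_v \langle s, \kappa_{\mathscr{D}}(\mathfrak{l})\rangle_v=0$ combined with Proposition \ref{prop-kill} collapses to a single term at $\mathfrak{l}$, forcing $\varphi(v_\mathfrak{l}(s))=0$; choosing $\mathfrak{l}$ via Proposition \ref{prop:tool} so that $v_\mathfrak{l}$ is injective on a maximal-order lift of any putative element of $S^\vee$ then gives $S^\vee=0$.

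For the inductive step with $t\geq 1$, I would pick a maximal-order element of the $\varphi$-component of the Selmer group and, via Proposition \ref{prop:tool}, choose an $n$-admissible prime $\mathfrak{l}_1\nmid\mathfrak{q}_0\Delta$ on which $v_{\mathfrak{l}_1}$ is injective on the cyclic $\mathcal{O}_f$-module it generates; the First Reciprocity Law gives $\ord_\varphi(\partial_{\mathfrak{l}_1}\kappa_{\mathscr{D}}(\mathfrak{l}_1))=t$. A second $n$-admissible prime $\mathfrak{l}_2$ is chosen, again via Proposition \ref{prop:tool} applied to a suitable auxiliary class, such that the hypothesis of Corollary \ref{cor:theta-2} is satisfied, namely $\ord_\varphi(\partial_{\mathfrak{l}_1}\kappa_{\mathscr{D}}(\mathfrak{l}_1))+\ord_\varphi(v_{\mathfrak{l}_2}\kappa_{\mathscr{D}}(\mathfrak{l}_1))<n$. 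Proposition \ref{thm:second} then produces an $(N,n-n_0)$-admissible form $(\Delta\mathfrak{l}_1\mathfrak{l}_2,g'')$ with $v_{\mathfrak{l}_2}(\kappa_{\mathscr{D}}(\mathfrak{l}_1))=\omega^{n_0}\theta(g'')$ up to a unit and $n_0\leq \ord_\varphi(v_{\mathfrak{l}_2}\kappa_{\mathscr{D}}(\mathfrak{l}_1))$. Global Tate duality paired against $\kappa_{\mathscr{D}}(\mathfrak{l}_1)$ bounds the portion of $S^\vee$ detected by restriction at $\mathfrak{l}_1$ by $\ord_\varphi(\partial_{\mathfrak{l}_1}\kappa_{\mathscr{D}}(\mathfrak{l}_1))=t$, while the remaining Selmer subgroup $\mathrm{Sel}_{\Delta\mathfrak{l}_1\mathfrak{l}_2}^{\{\mathfrak{l}_1,\mathfrak{l}_2\}}$ is controlled by the new admissible datum through Proposition \ref{thm:first} applied to $g''$; the inductive hypothesis applied to $(\Delta\mathfrak{l}_1\mathfrak{l}_2,g'')$ with $t_{\varphi,g''}\leq t-1$ closes the bound, the two contributions summing to $2t$.

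The main obstacle I anticipate is the careful $\omega$-valuation bookkeeping required to choose $\mathfrak{l}_2$ so that three conditions hold simultaneously: the slack inequality in Corollary \ref{cor:theta-2} (so the weaker second reciprocity substitutes for the unavailable original), the strict bound $n_0<n$ (so that $g''$ is a genuine $(N,n-n_0)$-admissible form in the sense of Definition \ref{defn:adm}), and the strict drop $t_{\varphi,g''}<t$ (so that the induction terminates). The hypotheses $2t_{\varphi,g}\leq r$ and $n\leq r-t_{\varphi,g}$ are precisely the slack needed to arrange these conditions together with the constraints built into Proposition \ref{thm:second}; matching these valuations to the output of that proposition is the technical crux.
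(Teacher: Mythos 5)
Your skeleton --- induction on $t_{\varphi,g}$, two admissible primes, the First Reciprocity Law, the weak Second Reciprocity Law of Corollary \ref{cor:theta-2}, Proposition \ref{thm:second} to manufacture $g''$, the two exact sequences for $S_{\mathfrak{l}_1,\mathfrak{l}_2}$ and global duality --- is exactly the paper's architecture, and your base case is fine. But the inductive step has a genuine gap: you never invoke the freeness of $\widehat{\mathrm{Sel}}^S_\Delta(K_\infty, T_{f,r})\otimes_\varphi\mathcal{O}_f$ over $\mathcal{O}_f/(\omega^r)$ (Theorem \ref{prop-free}, or Theorem \ref{prop-cond-free} outside the $(\mathfrak{n}^+\text{-}\mathrm{min})$ case), and hence you have no access to the \emph{divided} classes. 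The paper sets $e_{\mathfrak{l}}=\ord\,\kappa_\varphi(\mathfrak{l})$ inside that free module, writes $\kappa_\varphi(\mathfrak{l})=\omega^{e_{\mathfrak{l}}}\tilde{\kappa}'(\mathfrak{l})$, and runs the duality argument with $\kappa'(\mathfrak{l})$, for which $\ord\,\partial_{\mathfrak{l}}\kappa'(\mathfrak{l})=t_{\varphi,g}-e_{\mathfrak{l}}$ rather than $t_{\varphi,g}$. Your accounting bounds the singular contribution at $\mathfrak{l}_1$ (and implicitly at $\mathfrak{l}_2$) by $\ord_\varphi\partial_{\mathfrak{l}_i}(\kappa_{\mathscr{D}}(\mathfrak{l}_i))=t$ and then adds $2t_{\varphi,g''}$ from the inductive hypothesis; that yields $2t+2t_{\varphi,g''}$, which exceeds $2t$ whenever $t_{\varphi,g''}>0$, so the induction does not close. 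The correct count is $(t-e_{\mathfrak{l}_1})+(t-e_{\mathfrak{l}_2})+2t_{\varphi,g''}$, and it telescopes to $2t-2r_0\le 2t$ only because Proposition \ref{thm:second} forces $e_{\mathfrak{l}_1}=e_{\mathfrak{l}_2}=r_0+t_{\varphi,g''}\ge t_{\varphi,g''}$. This cancellation is the heart of the proof and is invisible in your proposal.

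Relatedly, the strict decrease $t_{\varphi,g''}<t_{\varphi,g}$ is asserted but not established. In the paper it follows from choosing $\mathfrak{l}_1$ to \emph{minimize} $e_{\mathfrak{l}}$ and proving $e_{\mathfrak{l}_1}<t_{\varphi,g}$ (Lemma \ref{lem:Wang}): if $e_{\mathfrak{l}}=t_{\varphi,g}$ for the prime $\mathfrak{l}$ supplied by Proposition \ref{prop:tool} from a nonzero element of $\Sel_\Delta(K,A_{f,1})$, then $\partial_{\mathfrak{l}}\kappa'(\mathfrak{l})$ would generate the singular quotient and pair nontrivially with that element, contradicting Lemma \ref{lem:eta-l-zero}. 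Similarly, $\mathfrak{l}_2$ is not found by a fresh application of Proposition \ref{prop:tool} to an unspecified auxiliary class; it is chosen so that $\ord\,v_{\mathfrak{l}_2}(\kappa_\varphi(\mathfrak{l}_1))=e_{\mathfrak{l}_1}$, which is possible precisely because $\min_{\mathfrak{l}'}\ord\,v_{\mathfrak{l}'}(\tilde{\kappa}'(\mathfrak{l}_1))=0$ --- again a statement about the divided class, proved by applying Proposition \ref{prop:tool} to its nonzero reduction in $H^1(K,T_{f,1})$. Without these two facts you cannot verify the hypothesis of Corollary \ref{cor:theta-2}, nor the chain of equalities $\ord\,v_{\mathfrak{l}_2}(\kappa_\varphi(\mathfrak{l}_1))=e_{\mathfrak{l}_1}=e_{\mathfrak{l}_2}=\ord\,v_{\mathfrak{l}_1}(\kappa_\varphi(\mathfrak{l}_2))$ on which both the vanishing of $\eta_f^\varphi$ and the final length bound depend.
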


We fix an integer $m\geq N$ such that $\varphi_N$ factors through
$\mathcal{O}_{f,N}[\Gamma_m]$. Then $\varphi_r$ factors through
$\mathcal{O}_{f,r}[\Gamma_m]$. So
$\varphi_r(\theta(g))=\varphi_r(\theta_m(g))$ and $t_{\varphi,g}
=\ord (\varphi_r ( \theta_m(g)))$.

We prove (\ref{eq:key-form}) by induction on $t_{\varphi,g}$.

First we assume $(\mathrm{CR}^+)$, $(\mathrm{PO})$ and
$(\mathfrak{n}^+\text{-}\mathrm{min})$ hold. By Proposition
\ref{prop-free} there exists a finite set $S$ of $r$-admissible
prime ideals such that $\widehat{\mathrm{Sel}}^S_\Delta(K_\infty,
T_{f,r})\otimes_\varphi \mathcal{O}_f$ is free over
$\mathcal{O}_f/(\omega^{r})$. We fix such a set $S$. Let $$s_1,
\cdots, s_d \hskip 10pt
(d=\mathrm{rank}_{\mathcal{O}_f/(\omega^{r})}\widehat{\mathrm{Sel}}^S_\Delta(K_\infty,
T_{f,r})\otimes_\varphi \mathcal{O}_f)$$ be a basis of
$\widehat{\mathrm{Sel}}^S_\Delta(K_\infty, T_{f,r})\otimes_\varphi
\mathcal{O}_f$ over $\mathcal{O}_f/(\omega^{r})$. For any element
$\sum_i a_i s_i$ in $ \widehat{\mathrm{Sel}}^S_\Delta(K_\infty,
T_{f,r})\otimes_\varphi \mathcal{O}_f$ we define $$\ord (\sum_i a_i
s_i):=\min \{\ord (a_i): i=1,\cdots,d\}\in \{0,1 , \cdots, r\}.$$
Note that this does not depend on the choice of the basis $\{s_i:
i=1,\cdots,d\}$.

For any $r$-admissible prime ideal $\mathfrak{l}\notin S$,
considering
$\kappa_\varphi(\mathfrak{l})=\varphi(\kappa_{\mathscr{D}}(\mathfrak{l}))$
as an element of $\widehat{\mathrm{Sel}}^S_\Delta(K_\infty,
T_{f,r})\otimes_\varphi \mathcal{O}_f$, we put
$e_{\mathfrak{l}}=\ord \kappa_\varphi(\mathfrak{l})$. By Proposition
\ref{thm:first}, the first reciprocity law, we have
$e_\mathfrak{l}\leq t_{\varphi,g}$.

Then there exists
$$\tilde{\kappa}'(\mathfrak{l})\in
\widehat{\mathrm{Sel}}^S_\Delta(K_\infty,
T_{f,r})\otimes_\varphi\mathcal{O}_f$$ such that
$\omega^{e_\mathfrak{l}}\tilde{\kappa}'(\mathfrak{l}) =
\kappa_\varphi(\mathfrak{l})$.

The quotient map $T_{f,r}\rightarrow T_{f,n}$ induces a homomorphism
$$\widehat{\mathrm{Sel}}^S_\Delta(K_\infty,
T_{f,r})\otimes_\varphi\mathcal{O}_f\rightarrow
\widehat{H}^1(K_\infty, T_{f,n})\otimes_\varphi \mathcal{O}_f.$$

\begin{lem}\label{lem:basic} Let $\kappa'(\mathfrak{l})$ be the image of $\tilde{\kappa}'(\mathfrak{l})$ in
$ \widehat{H}^1(K_\infty, T_{f,n})\otimes_\varphi \mathcal{O}_f$.
\begin{enumerate}
\item \label{it:basic-a}$\ord \ \kappa'(\mathfrak{l})=0$
\item \label{it:basic-b} $\ord  \  \partial_\mathfrak{l} \kappa'(\mathfrak{l})   = t_{\varphi,g}  - e_\mathfrak{l} $.
\item \label{it:basic-c} $\partial_\mathfrak{q} \kappa'(\mathfrak{l}) = 0$ for $\mathfrak{q}\nmid  \Delta\mathfrak{l}\mathfrak{p}$.
\item \label{it:basic-d} $\res_\mathfrak{q} \kappa'(\mathfrak{l}) \in \widehat{H}^1_\ord (K_{\infty, \mathfrak{q}}, T_{f,
n})\otimes_\varphi \mathcal{O}_f$ for $\mathfrak{q}|\Delta
\mathfrak{l}\mathfrak{p}$.
\end{enumerate}
\end{lem}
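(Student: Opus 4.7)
The plan is to derive all four properties from the defining relation $\omega^{e_\mathfrak{l}}\tilde{\kappa}'(\mathfrak{l}) = \kappa_\varphi(\mathfrak{l})$ combined with two structural facts: the Selmer membership $\kappa_\mathscr{D}(\mathfrak{l}) \in \widehat{\mathrm{Sel}}_{\Delta\mathfrak{l}}(K_\infty, T_{f,r})$ from Proposition \ref{prop-sel}, and the First Reciprocity Law (Proposition \ref{thm:first}). I would first dispatch parts (c) and (d), which are purely local-Selmer statements. Since $\tilde{\kappa}'(\mathfrak{l})$ lives in $\widehat{\mathrm{Sel}}^S_\Delta(K_\infty, T_{f,r})\otimes_\varphi \mathcal{O}_f$, its residue vanishes at every $\mathfrak{q}\nmid \Delta\mathfrak{l}\mathfrak{p}$ outside of $S$, and its restriction lies in the ordinary part at each $\mathfrak{q}|\Delta\mathfrak{p}$; both conditions survive the reduction $T_{f,r}\twoheadrightarrow T_{f,n}$ because the $G_{F_\mathfrak{q}}$-stable subspace $F^+_\mathfrak{q}T_{f,r}$ reduces compatibly to $F^+_\mathfrak{q}T_{f,n}$ and restriction commutes with reduction. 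The additional ordinary condition at $\mathfrak{l}$ in (d) comes from Proposition \ref{prop-sel} applied to $\kappa_\mathscr{D}(\mathfrak{l})$ itself, transferred to $\kappa'(\mathfrak{l})$ via the defining relation and the same compatibility.

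For part (b) I would apply $\partial_\mathfrak{l}$ to the defining relation. The First Reciprocity Law gives $\partial_\mathfrak{l}\kappa_\mathscr{D}(\mathfrak{l}) = u\cdot\theta(g)$ for some unit $u$, so applying $\varphi$ yields $\partial_\mathfrak{l}\kappa_\varphi(\mathfrak{l}) = \varphi(u)\,\omega^{t_{\varphi,g}}$ inside $\widehat{H}^1_\sing(K_{\infty,\mathfrak{l}}, T_{f,r})\otimes_\varphi \mathcal{O}_f \cong \mathcal{O}_{f,r}$ (using the free-rank-one statement of Proposition \ref{prop-kill}(\ref{it:free})). Dividing by $\omega^{e_\mathfrak{l}}$ pins down $\partial_\mathfrak{l}\tilde{\kappa}'(\mathfrak{l})$ modulo $\omega^{r-e_\mathfrak{l}}\mathcal{O}_{f,r}$. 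Because $e_\mathfrak{l}\leq t_{\varphi,g}$ and $n\leq r-t_{\varphi,g}$ give $r-e_\mathfrak{l}\geq n$, this ambiguity dies under reduction modulo $\omega^n$, leaving $\partial_\mathfrak{l}\kappa'(\mathfrak{l}) = \omega^{t_{\varphi,g}-e_\mathfrak{l}}\cdot(\mathrm{unit})$ in $\mathcal{O}_{f,n}$ and hence $\ord\partial_\mathfrak{l}\kappa'(\mathfrak{l}) = t_{\varphi,g}-e_\mathfrak{l}$.

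Part (a) is the most delicate. By construction $\tilde{\kappa}'(\mathfrak{l})$ has order zero in the free module $\widehat{\mathrm{Sel}}^S_\Delta(K_\infty, T_{f,r})\otimes_\varphi \mathcal{O}_f\cong \mathcal{O}_{f,r}^d$, so at least one coordinate is a unit. I would first argue that the same set $S$ also makes $\widehat{\mathrm{Sel}}^S_\Delta(K_\infty, T_{f,n})\otimes_\varphi \mathcal{O}_f$ free of rank $d$ over $\mathcal{O}_{f,n}$ — every $r$-admissible prime is $n$-admissible for $n\leq r$, and combining the control theorem (Lemma \ref{lem:control}) with the freeness argument of Theorem \ref{prop-free} identifies the natural reduction with the quotient $\mathcal{O}_{f,r}^d\twoheadrightarrow \mathcal{O}_{f,n}^d$, so the image of $\tilde{\kappa}'(\mathfrak{l})$ remains primitive. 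Transferring primitivity to the ambient $\widehat{H}^1(K_\infty, T_{f,n})\otimes_\varphi \mathcal{O}_f$ is the last step: if $\kappa'(\mathfrak{l})$ were divisible by $\omega$ there, Proposition \ref{prop:tool} furnishes an auxiliary $n$-admissible prime $\mathfrak{l}''$ on which $v_{\mathfrak{l}''}$ is injective on $\langle \kappa'(\mathfrak{l})\rangle$, reducing the question to divisibility in the free rank-one module $\widehat{H}^1_\fin(K_{\infty,\mathfrak{l}''}, T_{f,n})\otimes_\varphi \mathcal{O}_f\cong \mathcal{O}_{f,n}$, where it contradicts the primitivity already established in the Selmer module. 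The main obstacle throughout is this bookkeeping of local conditions under the reduction $T_{f,r}\to T_{f,n}$: one must check that the freeness and the Selmer inclusions transport correctly from level $r$ to level $n$ so that primitivity is never lost along the way.
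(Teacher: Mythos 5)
Your treatments of (b) and (a) are sound and, if anything, more careful than the paper's own one-line dismissal of these two items (the paper simply says they "follow from the definition and the first reciprocity law"); your use of the inequality $e_\mathfrak{l}\le t_{\varphi,g}\le r-n$ to kill the ambiguity created by dividing by $\omega^{e_\mathfrak{l}}$ is exactly the right mechanism. But for (c) and (d) there is a genuine gap: you prove the local conditions only at primes $\mathfrak{q}\notin S$, and you even say so explicitly ("its residue vanishes at every $\mathfrak{q}\nmid\Delta\mathfrak{l}\mathfrak{p}$ \emph{outside of} $S$"), yet the lemma asserts (c) for \emph{all} $\mathfrak{q}\nmid\Delta\mathfrak{l}\mathfrak{p}$, and the primes of $S$ are $r$-admissible, hence all satisfy $\mathfrak{q}\nmid\Delta\mathfrak{l}\mathfrak{p}$. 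This case cannot be skipped: it is precisely what is needed later (e.g.\ in Lemma \ref{lem:kernel-eta-f}) when one sums local pairings over all places, and it is the entire content of the paper's written proof. The difficulty is that $\widehat{\mathrm{Sel}}^S_\Delta$ imposes \emph{no} condition at $\mathfrak{q}\in S$, so membership of $\tilde{\kappa}'(\mathfrak{l})$ in that group tells you nothing there.

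The missing argument runs as follows. For $\mathfrak{q}\in S$ one knows from Proposition \ref{prop-sel} that $\res_\mathfrak{q}\kappa_\varphi(\mathfrak{l})$ lies in the finite part; since $\mathfrak{q}$ is $r$-admissible, $\widehat{H}^1(K_{\infty,\mathfrak{q}},T_{f,r})\otimes_\varphi\mathcal{O}_f$ and its finite part are free (Proposition \ref{prop-kill}(\ref{it:free})), so one can write $\res_\mathfrak{q}\kappa_\varphi(\mathfrak{l})=\omega^{e_\mathfrak{l}}s$ with $s\in\widehat{H}^1_\fin$. Then $\omega^{e_\mathfrak{l}}\bigl(s-\res_\mathfrak{q}\tilde{\kappa}'(\mathfrak{l})\bigr)=0$, and freeness forces $s-\res_\mathfrak{q}\tilde{\kappa}'(\mathfrak{l})\in\omega^{r-e_\mathfrak{l}}\widehat{H}^1(K_{\infty,\mathfrak{q}},T_{f,r})\otimes_\varphi\mathcal{O}_f\subseteq\omega^{n}(\cdots)$ because $e_\mathfrak{l}\le t_{\varphi,g}\le r-n$; hence after reduction to $T_{f,n}$ the class $\res_\mathfrak{q}\kappa'(\mathfrak{l})$ coincides with the image of $s$ and is finite. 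Note that this is the same divide-and-control-the-ambiguity device you already deployed for (b), just applied locally at $\mathfrak{q}$; the same device is also what is needed to justify your phrase "transferred to $\kappa'(\mathfrak{l})$ via the defining relation" for the ordinary condition at $\mathfrak{l}$ in (d), which as written is only a gesture: one cannot divide a relation $\omega^{e_\mathfrak{l}}x=y$ with $y$ ordinary and conclude $x$ is ordinary without invoking freeness of the ordinary part and the bound $e_\mathfrak{l}\le r-n$.
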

\begin{proof} Assertions (\ref{it:basic-a}) and (\ref{it:basic-b}) follow from the definition of
$\kappa'(\mathfrak{l})$ and the first reciprocity law. The latter
two assertions for $\mathfrak{q}\notin S$  follow from the fact
$\tilde{\kappa}'(\mathfrak{l})\in
\widehat{\mathrm{Sel}}^S_\Delta(K_\infty,
T_{f,r})\otimes_\varphi\mathcal{O}_f$.

We assume that $\mathfrak{q}\in S$ and $\mathfrak{q}\nmid
\Delta\mathfrak{l} $. As $\mathfrak{q}$ in $r$-admissible, by
Proposition \ref{prop-kill} (\ref{it:free}) we have that
$\widehat{H}^1(K_{\infty,\mathfrak{q}},
T_{f,r})\otimes_\varphi\mathcal{O}_f$ is free over
$\mathcal{O}_{f,r}[\Gamma]\otimes_\varphi\mathcal{O}_f$. Thus there
exists $s\in \widehat{H}^1_{\fin}(K_{\infty,\mathfrak{q}}, T_{f,r})$
such that $\omega^{e_\mathfrak{l}}s
=\res_\mathfrak{q}\kappa_\varphi(\mathfrak{l})$. This means that
$\omega^{e_{\mathfrak{l}}}(s- \res_\mathfrak{q}
\tilde{\kappa}'(\mathfrak{l}))=0$.  As $e_{\mathfrak{l}}\leq
t_{\varphi,g} \leq r-n$, from the freeness of
$\widehat{H}^1(K_{\infty,\mathfrak{q}},
T_{f,r})\otimes_\varphi\mathcal{O}_f$ we obtain that $s-
\res_\mathfrak{q} \tilde{\kappa}'(\mathfrak{l})\in \omega^{n}
\widehat{H}^1(K_{\infty,\mathfrak{q}},
T_{f,r})\otimes_\varphi\mathcal{O}_f$. Hence  the images of $s$ and
$\res_\mathfrak{q} \kappa'(\mathfrak{l})$ in
$\widehat{H}^1(K_{\infty,\mathfrak{q}},
T_{f,n})\otimes_\varphi\mathcal{O}_f$ coincide, which shows
(\ref{it:basic-c}) for $\mathfrak{q}\in S$.

By the same argument we can prove (\ref{it:basic-d}) for
$\mathfrak{q}\in S$.
\end{proof}

\begin{lem} $($\cite[Lemma 7.3.4]{Wang}$)$\label{lem:eta-l-zero}
Let $$\eta_\mathfrak{l}:
\widehat{H}^1_\sing(K_{\infty,\mathfrak{l}},
T_{f,n})\otimes_{\varphi}\mathcal{O}_f \rightarrow
\mathrm{Sel}_\Delta(K_\infty,
T_{f,n})^\vee\otimes_{\varphi}\mathcal{O}_f$$ be the map defined by
$$\eta_\mathfrak{l}(c)(x)=\langle c, \mathrm{res}_\mathfrak{l} (x)
\rangle_\mathfrak{l}$$ for $x\in \mathrm{Sel}_\Delta(K_\infty,
A_{f,n})[\ker(\varphi)]$ and $c\in
\widehat{H}^1_\sing(K_{\infty,\mathfrak{l}}, T_{f,n})$. Then
$\eta_\mathfrak{l}(\partial_\mathfrak{l}(\kappa'(\mathfrak{l})))=0$.
\end{lem}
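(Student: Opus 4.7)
The plan is to deduce the vanishing from global (Poitou--Tate) reciprocity, which says that for any global classes $c \in \widehat{H}^1(K_\infty, T_{f,n})$ and $x \in H^1(K_\infty, A_{f,n})$ one has $\sum_v \langle c, x \rangle_v = 0$. Since $\varphi_r$ factors through $\mathcal{O}_{f,r}[\Gamma_m]$ for some $m\geq N$, this identity is really a finite-level statement at $K_m$ that then descends to the $\otimes_\varphi \mathcal{O}_f$ quotient. We apply it with $c$ a chosen lift of $\kappa'(\mathfrak{l})$ and $x \in \mathrm{Sel}_\Delta(K_\infty, A_{f,n})[\ker(\varphi)]$, and show that every local term except the one at $\mathfrak{l}$ is zero; the remaining term at $\mathfrak{l}$ will be exactly $\langle \partial_\mathfrak{l}(\kappa'(\mathfrak{l})),\res_\mathfrak{l}(x)\rangle_\mathfrak{l}$.

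For the local vanishing, I would divide the places of $K_m$ into three groups. At places $v \nmid p\Delta\mathfrak{l}$, Lemma \ref{lem:basic} (\ref{it:basic-c}) says $\partial_v\kappa'(\mathfrak{l}) = 0$, so $\res_v(\kappa'(\mathfrak{l}))$ lies in the finite part; meanwhile $\res_v(x)$ lies in the finite part because $x \in \mathrm{Sel}_\Delta$. By Proposition \ref{prop-kill} (\ref{it:fine-kill}) the two finite parts are orthogonal, so the local pairing vanishes. (For $v$ above a prime of $F$ that splits in $K$, Proposition \ref{prop-kill} (\ref{it:fine-zero}) makes the vanishing even more immediate.) At places $v \mid p\Delta$, Lemma \ref{lem:basic} (\ref{it:basic-d}) places $\res_v(\kappa'(\mathfrak{l}))$ in the ordinary part, and $x \in \mathrm{Sel}_\Delta$ likewise has ordinary restriction there; then Proposition \ref{prop-kill} (\ref{it:ord-kill}) (which is where $(\mathrm{CR}^+)$ and $(\mathrm{PO})$ come in) kills the pairing.

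The remaining term is at $v = \mathfrak{l}$. Here $\res_\mathfrak{l}(x) \in H^1_\fin(K_{m,\mathfrak{l}}, A_{f,n})$ since $\mathfrak{l}\notin \Delta$ and $x \in \mathrm{Sel}_\Delta$. Decompose $\res_\mathfrak{l}(\kappa'(\mathfrak{l}))$ by the exact sequence $0\to H^1_\fin \to H^1 \to H^1_\sing \to 0$: its finite component pairs trivially with $\res_\mathfrak{l}(x)$ by Proposition \ref{prop-kill} (\ref{it:fine-kill}) again, so all that survives is the contribution of the singular quotient, which by the compatibility of the Tate pairing with the local duality $H^1_\sing \times H^1_\fin \to E_f/\mathcal{O}_f$ is exactly $\langle \partial_\mathfrak{l}(\kappa'(\mathfrak{l})), \res_\mathfrak{l}(x)\rangle_\mathfrak{l}$. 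Combining with the global sum formula yields $\eta_\mathfrak{l}(\partial_\mathfrak{l}(\kappa'(\mathfrak{l})))(x) = 0$ for all admissible $x$, giving the lemma.

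The main obstacle, such as it is, will be bookkeeping the passage between the $T_{f,r}$-level where $\tilde\kappa'(\mathfrak{l})$ naturally lives, the $T_{f,n}$-level where the assertion is stated, and the $\otimes_\varphi \mathcal{O}_f$ quotient; one must verify that the chosen lift of $\kappa'(\mathfrak{l})$ really satisfies the local conditions above modulo $\omega^n$ (this is precisely what the proof of Lemma \ref{lem:basic} arranges by exploiting $e_\mathfrak{l}\leq t_{\varphi,g}\leq r-n$ and the freeness of $\widehat{H}^1(K_{\infty,\mathfrak{q}}, T_{f,r})\otimes_\varphi \mathcal{O}_f$ over $\mathcal{O}_{f,r}[\Gamma]\otimes_\varphi \mathcal{O}_f$). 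Once this compatibility is in hand, the argument is a direct application of Poitou--Tate together with the orthogonality statements already collected in Proposition \ref{prop-kill}.
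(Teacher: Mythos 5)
Your proposal is correct and follows essentially the same route as the paper: apply global reciprocity to $\kappa'(\mathfrak{l})$ and $x$, kill all local terms away from $\mathfrak{l}$ via Lemma \ref{lem:basic} together with Proposition \ref{prop-kill} (\ref{it:fine-kill}) and (\ref{it:ord-kill}), and identify the surviving term at $\mathfrak{l}$ with $\langle \partial_\mathfrak{l}\kappa'(\mathfrak{l}), \res_\mathfrak{l}x\rangle_\mathfrak{l}$. The extra bookkeeping you flag about passing from $T_{f,r}$ to $T_{f,n}$ and the $\otimes_\varphi$ quotient is exactly what Lemma \ref{lem:basic} already arranges, so nothing further is needed.
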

\begin{proof} By global class field theory we have $\sum_\mathfrak{q}\langle \res_\mathfrak{q}\kappa'(\mathfrak{l}) ,
\res_\mathfrak{q}x\rangle_\mathfrak{q}=0$. When $\mathfrak{q}\neq
\mathfrak{l}$, both $\res_\mathfrak{q}\kappa'(\mathfrak{l})$ and
$\res_\mathfrak{q} x$ lie in the finite part or the ordinary part.
Thus by Proposition \ref{prop-kill} (\ref{it:fine-kill}) and
(\ref{it:ord-kill}), $ \langle
\res_\mathfrak{q}\kappa'(\mathfrak{l}) ,
\res_\mathfrak{q}x\rangle_\mathfrak{q}=0$ for $\mathfrak{q}\neq
\mathfrak{l}$. So $\langle
\partial_\mathfrak{l}\kappa'(\mathfrak{l}) ,
\res_\mathfrak{l}x\rangle_\mathfrak{l}=\langle
\res_\mathfrak{l}\kappa'(\mathfrak{l}) ,
\res_\mathfrak{l}x\rangle_\mathfrak{l}=0$.
\end{proof}

Choose an $r$-admissible prime ideal $\mathfrak{l}_1\notin S$ such
that
$$e_{\mathfrak{l}_1}=\min_{\tiny\begin{array}{l}\mathfrak{l}\notin
S\cup\{\mathfrak{q}_0\}: \\ r\text{-admissible}\end{array}}
e_{\mathfrak{l}},
$$ where $\mathfrak{q}_0$ is the prime chosen in Section
\ref{ss:first-second}.

\begin{lem}\label{lem:Wang} $($\cite[Lemma 7.3.5, 7.3.6]{Wang}$)$ \label{lem:two-cases}
\begin{enumerate}
\item\label{it:case-1} If $t_{\varphi,g}=0$, then $\Sel_\Delta (K_\infty, A_{f,n})^\vee  \otimes_\varphi
\mathcal{O}_f $ is trivial.
\item\label{it:case-2} If $t_{\varphi,g}>0$, then
$e_{\mathfrak{l}_1} < t_{\varphi,g}$.
\end{enumerate}
\end{lem}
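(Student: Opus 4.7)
The plan is to run the classical Kolyvagin/Bertolini--Darmon argument in both cases: control $\partial_\mathfrak{l}\kappa'(\mathfrak{l})$ via the first reciprocity law through Lemma \ref{lem:basic}(b), invoke the global annihilation provided by Lemma \ref{lem:eta-l-zero}, and exploit the density of admissible primes (Proposition \ref{prop:tool}) to separate cohomology classes via the local Tate pairing.

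For part (a), $t_{\varphi,g}=0$ combined with the bound $e_\mathfrak{l}\leq t_{\varphi,g}$ forces $e_\mathfrak{l}=0$, so Lemma \ref{lem:basic}(b) gives $\ord\,\partial_\mathfrak{l}\kappa'(\mathfrak{l})=0$; hence $\partial_\mathfrak{l}\kappa'(\mathfrak{l})$ generates the free rank-one $\mathcal{O}_{f,n}$-module $\widehat{H}^1_\sing(K_{\infty,\mathfrak{l}},T_{f,n})\otimes_\varphi\mathcal{O}_f$. Lemma \ref{lem:eta-l-zero} makes $\eta_\mathfrak{l}$ vanish on this generator, and hence identically. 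The perfect local Tate duality between the singular and finite parts (Proposition \ref{prop-kill}(\ref{it:fine-kill})) converts this vanishing into $\res_\mathfrak{l}(x)=0$ for every $x\in\Sel_\Delta(K_\infty,A_{f,n})[\ker\varphi]$. Reducing from $K_\infty$ down to $K$ via Lemma \ref{lem:control}(a), Proposition \ref{prop:tool} applied to any nonzero such $x$ would yield an admissible $\mathfrak{l}$ with $\res_\mathfrak{l}(x)\neq 0$, a contradiction. So $\Sel_\Delta(K_\infty,A_{f,n})[\ker\varphi]=0$, which is the asserted triviality of the twisted Pontryagin dual.

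For part (b), argue by contradiction and assume $e_{\mathfrak{l}_1}\geq t_{\varphi,g}$. The minimality of $e_{\mathfrak{l}_1}$ and the upper bound $e_\mathfrak{l}\leq t_{\varphi,g}$ force $e_\mathfrak{l}=t_{\varphi,g}$ for every admissible $\mathfrak{l}\notin S\cup\{\mathfrak{q}_0\}$, and the argument of part (a) again gives $\Sel_\Delta(K_\infty,A_{f,n})[\ker\varphi]=0$. The contradiction is produced by exhibiting a Selmer class that must be nonzero. Apply Proposition \ref{prop:tool} to $\kappa'(\mathfrak{l}_1)$, which has $\ord=0$ under our hypothesis, to choose a further admissible prime $\mathfrak{l}_2\notin S\cup\{\mathfrak{l}_1,\mathfrak{q}_0\}$ at which $v_{\mathfrak{l}_2}(\kappa'(\mathfrak{l}_1))$ is a generator of $\widehat{H}^1_\fin(K_{\infty,\mathfrak{l}_2},T_{f,n})\otimes_\varphi\mathcal{O}_f$; then $v_{\mathfrak{l}_2}(\kappa_{\mathscr{D}}(\mathfrak{l}_1)_m)$ has exact $\omega$-valuation $t_{\varphi,g}$. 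Proposition \ref{thm:second} then produces an $(N,n-n_0)$-admissible form $g''$ with $0\leq n_0\leq t_{\varphi,g}$ and $v_{\mathfrak{l}_2}(\kappa_{\mathscr{D}}(\mathfrak{l}_1)_m)=\omega^{n_0}\theta_m(g'')$, so $t_{\varphi,g''}=t_{\varphi,g}-n_0$. Combining this with Corollary \ref{cor:theta-2}, whose hypothesis $\ord_\varphi\partial_{\mathfrak{l}_1}\kappa(\mathfrak{l}_1)+\ord_\varphi v_{\mathfrak{l}_2}\kappa(\mathfrak{l}_1)<n$ can be arranged by taking $n$ with $2t_{\varphi,g}<n\leq r-t_{\varphi,g}$ (possible when $t_{\varphi,g}>0$ thanks to $2t_{\varphi,g}\leq r$), one transports the information back to $v_{\mathfrak{l}_1}(\kappa_{\mathscr{D}}(\mathfrak{l}_2)_m)$ and obtains a fresh Kolyvagin class witnessing a nontrivial element in $\Sel_\Delta(K_\infty,A_{f,n})[\ker\varphi]$, contradicting the triviality established in the first step.

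The principal obstacle is part (b). The divisibility bookkeeping in part (a) is routine thanks to Lemma \ref{lem:basic}, but in part (b) one must choose $\mathfrak{l}_2$ so that $v_{\mathfrak{l}_2}(\kappa'(\mathfrak{l}_1))$ has valuation \emph{exactly} zero (not merely nonzero), verify the strict inequality in Corollary \ref{cor:theta-2} in the allowed range of $n$, and check that the new class extracted from $g''$ lands in (rather than merely near) the Selmer group being annihilated. Once these selection issues are handled, the required contradiction arises cleanly from the tension between the assumed uniform divisibility $e_\mathfrak{l}=t_{\varphi,g}$ and the nontrivial Kolyvagin class detected at $\mathfrak{l}_2$.
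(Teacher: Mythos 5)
Your part (\ref{it:case-1}) is essentially the paper's argument: if $e_\mathfrak{l}=t_{\varphi,g}$ then $\partial_\mathfrak{l}\kappa'(\mathfrak{l})$ generates the singular quotient, Lemma \ref{lem:eta-l-zero} forces $\eta_\mathfrak{l}\equiv 0$, perfect local duality gives $v_\mathfrak{l}(x)=0$, and Proposition \ref{prop:tool} (applied after Nakayama and the control theorem reduce a nonzero class to $\Sel_\Delta(K,A_{f,1})$ --- a step you should make explicit) yields the contradiction. That half is fine.

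Part (\ref{it:case-2}) has a genuine gap. After assuming $e_{\mathfrak{l}_1}\geq t_{\varphi,g}$ and correctly deducing that $\Sel_\Delta(K_\infty,A_{f,n})[\ker\varphi]=0$, you try to reach a contradiction by ``exhibiting a Selmer class that must be nonzero.'' No such class exists, and none can be manufactured from the Euler system: by the First Reciprocity Law $\partial_{\mathfrak{l}_i}\kappa_{\mathscr{D}}(\mathfrak{l}_i)_m=\theta_m(g)$ up to a unit, which is nonzero after applying $\varphi$ precisely because $t_{\varphi,g}<\infty$; hence $\kappa_{\mathscr{D}}(\mathfrak{l}_1)$, $\kappa_{\mathscr{D}}(\mathfrak{l}_2)$, and anything obtained from $g''$ via Propositions \ref{thm:second} and \ref{cor:theta-2} are genuinely ramified at the auxiliary primes and lie in $\widehat{\mathrm{Sel}}_{\Delta\mathfrak{l}_1}$ or $\widehat{\mathrm{Sel}}_{\Delta\mathfrak{l}_1\mathfrak{l}_2}$, never in $\Sel_\Delta$. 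Moreover $t_{\varphi,g}>0$ only provides the \emph{upper} bound $2t_{\varphi,g}$ on the length of $\Sel_\Delta(K_\infty,A_{f,n})^\vee\otimes_\varphi\mathcal{O}_f$; that group may perfectly well vanish, so the ``contradiction'' you seek is not there. The paper's proof of (\ref{it:case-2}) is conditional in exactly the way yours cannot be unconditional: it assumes $\Sel_\Delta(K_\infty,A_{f,n})^\vee\otimes_\varphi\mathcal{O}_f\neq 0$ (harmless, since otherwise the inequality (\ref{eq:key-form}) being proved in Theorem \ref{thm:auxi} is trivial and part (\ref{it:case-2}) is never invoked), produces via Nakayama, Lemma \ref{lem:control}(\ref{it:control-a}) and Proposition \ref{prop:tool} a single admissible $\mathfrak{l}\notin S$ with $v_\mathfrak{l}(x)\neq 0$ for a nonzero $x\in\Sel_\Delta(K,A_{f,1})$, and shows $e_\mathfrak{l}<t_{\varphi,g}$ for that $\mathfrak{l}$ (else $\langle\partial_\mathfrak{l}\kappa'(\mathfrak{l}),v_\mathfrak{l}(x)\rangle_\mathfrak{l}\neq 0$, contradicting Lemma \ref{lem:eta-l-zero}); minimality of $e_{\mathfrak{l}_1}$ then finishes. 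Your machinery involving $\mathfrak{l}_2$, Proposition \ref{thm:second} and Corollary \ref{cor:theta-2} belongs to the later inductive step, not to this lemma, and in any case your verification of the hypothesis of Corollary \ref{cor:theta-2} conflates the modulus $r$ of the admissible form with the exponent $n$ of the Selmer group.
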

\begin{proof} Assume that $\Sel_\Delta (K_\infty, A_{f,n})^\vee  \otimes_\varphi
\mathcal{O}_f \neq 0$. Then by Nakayama's lemma
$$ (\Sel_\Delta (K_\infty, A_{f,n})^\vee  \otimes_\varphi
\mathcal{O}_f)/(\omega) =  (\Sel_\Delta (K_\infty,
A_{f,n})[\mathfrak{m}])^\vee \otimes_\varphi \mathcal{O}_f $$ is
nonzero. Here, $\mathfrak{m}$ is the maximal ideal of
$\mathcal{O}_f[[\Gamma]]$. Let $x$ be a nonzero element in
$\Sel_\Delta (K_\infty, A_{f,n})[\mathfrak{m}].$ By Lemma
\ref{lem:control} (\ref{it:control-a}) we have
$$\Sel_\Delta (K_\infty, A_{f,n})[\mathfrak{m}]=\Sel_\Delta (K, A_{f,1}).$$
By Proposition \ref{prop:tool} there exists an $r$-admissible prime
$\mathfrak{l}\notin S$ such that $v_\mathfrak{l}(x)\neq 0$.

We show that $e_{\mathfrak{l}} < t_{\varphi,g}$ for this
$\mathfrak{l}$. Indeed, if $e_{\mathfrak{l}} = t_{\varphi,g}$, then
by Lemma \ref{lem:basic} (\ref{it:basic-b}),
$\partial_\mathfrak{l}\kappa'(\mathfrak{l})$ is indivisible and
hence is a generator of $H^1_\sing(K_{\infty,\mathfrak{l}},
T_{f,n})\otimes_\varphi\mathcal{O}_f$. By Proposition
\ref{prop-kill} (\ref{it:use-later}) and (\ref{it:free}) the image
of $\partial_\mathfrak{l}\kappa'(\mathfrak{l})$  in
$H^1_\sing(K_{\mathfrak{l}}, T_{f,1})$ is a generator. As
$\langle\cdot, \cdot\rangle_\mathfrak{l}$ induces a perfect pairing
between $\widehat{H}^1_\sing(K_{\mathfrak{l}}, T_{f,1})$ and
$H^1_\fin(K_{\mathfrak{l}}, A_{f,1})$, we have $\langle
\mathrm{Res}_\mathfrak{l} \kappa'(\mathfrak{l}) ,
\mathrm{Res}_\mathfrak{l}x \rangle_\mathfrak{l}\neq 0$. But this
implies that $\eta_\mathfrak{l}(\kappa'(\mathfrak{l}))\neq 0$, which
contradicts Lemma \ref{lem:eta-l-zero}.
\end{proof}

By Lemma \ref{lem:two-cases} (\ref{it:case-1}), (\ref{eq:key-form})
holds when $t_{\varphi,g}=0$. So we assume that $t_{\varphi, g}>0$.

\begin{lem} We have $$ e_{\mathfrak{l}_1} = \min_{\mathfrak{l}'} \ord
(v_{\mathfrak{l'}}\kappa_\varphi(\mathfrak{l}_1)), $$ where
$\mathfrak{l}'$ runs over all $r$-admissible prime ideals that do
not divide $\mathfrak{l}_1\mathfrak{q}_0\Delta$ and are not in $S$.
\end{lem}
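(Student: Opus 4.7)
The easy inequality $\min\ge e_{\mathfrak{l}_1}$ follows from $\mathcal{O}_f$-linearity: writing $\kappa_\varphi(\mathfrak{l}_1)=\omega^{e_{\mathfrak{l}_1}}\tilde\kappa'(\mathfrak{l}_1)$ gives $v_{\mathfrak{l}'}\kappa_\varphi(\mathfrak{l}_1)=\omega^{e_{\mathfrak{l}_1}}v_{\mathfrak{l}'}\tilde\kappa'(\mathfrak{l}_1)$ inside $\widehat H^1_\fin(K_{\infty,\mathfrak{l}'},T_{f,r})\otimes_\varphi\mathcal{O}_f$, a quotient of $\mathcal{O}_{f,r}$ by Proposition~\ref{prop-kill}(\ref{it:free}), so $\ord(v_{\mathfrak{l}'}\kappa_\varphi(\mathfrak{l}_1))\ge e_{\mathfrak{l}_1}$.

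For the reverse inequality, we must produce a single $r$-admissible prime $\mathfrak{l}'\notin S\cup\{\mathfrak{q}_0,\mathfrak{l}_1\}$ with $\mathfrak{l}'\nmid\Delta$ for which $v_{\mathfrak{l}'}\tilde\kappa'(\mathfrak{l}_1)$ is nonzero mod $\omega$. Let $\bar\kappa$ denote the mod-$\omega$ image of $\tilde\kappa'(\mathfrak{l}_1)$ in $\widehat{\mathrm{Sel}}^S_\Delta(K_\infty,T_{f,1})\otimes_{\varphi_1}\mathcal{O}_{f,1}$, nonzero since $\tilde\kappa'(\mathfrak{l}_1)$ has $\ord=0$. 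As $\Gamma\cong\mathbb{Z}_p^{[F_\mathfrak{p}:\mathbb{Q}_p]}$ is pro-$p$ and the pro-$p$ part of $\mathcal{O}_f^\times$ equals $1+(\omega)$, the character $\varphi|_\Gamma$ reduces trivially modulo $\omega$; hence $\varphi_1$ is the augmentation of $\mathcal{O}_{f,1}[\Gamma]$ and $\bar\kappa$ lies in the $\Gamma$-coinvariants $\widehat{\mathrm{Sel}}^S_\Delta(K_\infty,T_{f,1})_\Gamma$. Choose $m$ large enough that $\varphi$ factors through $\mathcal{O}_{f,r}[\Gamma_m]$ and lift $\bar\kappa$ to a nonzero class $\bar\kappa_m\in\mathrm{Sel}^S_\Delta(K_m,T_{f,1})$.

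The decisive step is to extend Proposition~\ref{prop:tool} from $K$ to $K_m$. The Chebotarev-density proof of \cite[Theorem~7.1.2]{Wang} requires only that the image of $\bar\rho_f|_{G_{K_m(\sqrt{p^*})}}$ contain suitable elements, a condition that persists under hypothesis $(\mathrm{CR}^+\text{-}2)$ because $K_m/K$ is abelian pro-$p$ while $[F(\sqrt{p^*}):F]$ is coprime to $p$. Applying the extended proposition to $\bar\kappa_m$ yields infinitely many $r$-admissible primes $\mathfrak{l}'$ of $F$ for which $\partial_{\mathfrak{l}'}(\bar\kappa_m)=0$ and $v_{\mathfrak{l}'}$ is injective on $\langle\bar\kappa_m\rangle$. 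By \cite[Lemma~2.4.2]{Wang} any such $\mathfrak{l}'$ splits completely in $K_m/K$ with trivial decomposition groups, so the $\Gamma_m$-coinvariants of $\bigoplus_{\lambda\mid\mathfrak{l}'}H^1_\fin(K_{m,\lambda},T_{f,1})$ are canonically identified with $\widehat H^1_\fin(K_{\infty,\mathfrak{l}'},T_{f,1})\otimes_{\varphi_1}\mathcal{O}_{f,1}\cong\mathcal{O}_{f,1}$. Inspecting the Chebotarev construction we may further require the chosen Frobenius to force $v_{\lambda_0}(\bar\kappa_m)\neq 0$ at a prescribed prime $\lambda_0\mid\mathfrak{l}'$, which then gives nonvanishing in the coinvariants. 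Excluding the finitely many forbidden primes in $S\cup\{\mathfrak{q}_0,\mathfrak{l}_1\}$ or dividing $\Delta$, we obtain $\mathfrak{l}'$ with $v_{\mathfrak{l}'}\bar\kappa\neq 0$, whence $\ord(v_{\mathfrak{l}'}\tilde\kappa'(\mathfrak{l}_1))=0$, as required.

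The main obstacle is this final refinement of the Chebotarev argument: the naive output only guarantees that $v_{\mathfrak{l}'}(\bar\kappa_m)$ is nonzero in the direct sum $\bigoplus_\lambda H^1_\fin(K_{m,\lambda},T_{f,1})$, whereas we need nonvanishing at a single component so that the localization survives passage to the $\Gamma_m$-coinvariants. Verifying that the Chebotarev element can be refined in this way---essentially, that lifting Frobenius to $G_{K_m}$ rather than $G_K$ singles out a specific prime $\lambda_0\mid\mathfrak{l}'$ at which the localization is detected---requires careful bookkeeping in the proof of Proposition~\ref{prop:tool}, and is the step where the present approach diverges most substantially from the $K$-level statement used in Lemma~\ref{lem:Wang}.
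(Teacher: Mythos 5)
Your reduction of the statement to $\min_{\mathfrak{l}'}\ord(v_{\mathfrak{l}'}\tilde{\kappa}'(\mathfrak{l}_1))=0$ is correct and agrees with the paper, but the argument you give for that equality has an unfilled hole which you yourself flag at the end: you need (i) an extension of Proposition \ref{prop:tool} from classes over $K$ to classes over $K_m$, which you assert but do not prove, and (ii) a refinement of the Chebotarev argument guaranteeing that the localization of $\bar{\kappa}_m$ is nonzero at a \emph{single prescribed} prime $\lambda_0\mid\mathfrak{l}'$ of $K_m$, so that it survives passage to the $\Gamma_m$-coinvariants. As you concede, nonvanishing of $v_{\mathfrak{l}'}(\bar{\kappa}_m)$ in $\bigoplus_{\lambda\mid\mathfrak{l}'}H^1_\fin(K_{m,\lambda},T_{f,1})$ does not by itself give nonvanishing in the coinvariants, and you leave the required refinement as "careful bookkeeping." That step is precisely the content of the lemma in your approach, so the proof is incomplete as written.

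The difficulty is an artifact of stopping at level $K_m$; the paper descends all the way to $K$, where it disappears. Reduce $\tilde{\kappa}'(\mathfrak{l}_1)$ modulo the \emph{full} maximal ideal $\mathfrak{m}$ of $\mathcal{O}_f[[\Gamma]]$, not merely modulo $\omega$ after applying $\varphi$: by the freeness of $\widehat{\mathrm{Sel}}^S_\Delta(K_\infty,T_{f,r})$ over $\Lambda/(\omega^{r})$ and the control theorem (Lemma \ref{lem:control} (\ref{it:control-a})), the quotient $\widehat{\mathrm{Sel}}^S_\Delta(K_\infty,T_{f,r})/\mathfrak{m}\otimes_\varphi\mathcal{O}_f$ embeds into $H^1(K,T_{f,1})$, so the image $\kappa_1$ of $\tilde{\kappa}'(\mathfrak{l}_1)$ is a nonzero class over $K$ itself. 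Proposition \ref{prop:tool} then applies verbatim (no $K_m$-version is needed) and yields infinitely many $r$-admissible primes $\mathfrak{l}'$ with $v_{\mathfrak{l}'}(\kappa_1)\neq 0$; discarding the finitely many lying in $S\cup\{\mathfrak{q}_0,\mathfrak{l}_1\}$ or dividing $\Delta$, one remains. Since $\mathfrak{l}'$ is inert in $K$, $H^1_\fin(K_{\mathfrak{l}'},T_{f,1})$ is a single copy of $\mathcal{O}_{f,1}$, and by Proposition \ref{prop-kill} (\ref{it:free}) the reduction of $\widehat{H}^1_\fin(K_{\infty,\mathfrak{l}'},T_{f,r})$ modulo $\mathfrak{m}$ is exactly this group; hence $v_{\mathfrak{l}'}(\kappa_1)\neq 0$ forces $\ord(v_{\mathfrak{l}'}\tilde{\kappa}'(\mathfrak{l}_1))=0$, with no coinvariant-detection issue. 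If you insist on routing the argument through $K_m$, you must actually establish both the $K_m$-analogue of Proposition \ref{prop:tool} and the single-component refinement; neither is available in the paper, and both are avoidable.
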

\begin{proof} What we need to prove is that
$$ \min_{\mathfrak{l}'} \ord
(v_{\mathfrak{l'}}\tilde{\kappa}'(\mathfrak{l}_1)) =0 .$$ If $ \ord
(v_{\mathfrak{l'}}\tilde{\kappa}'(\mathfrak{l}_1))>0$ for each
$r$-admissible prime ideal $\mathfrak{l}'\notin S$ that do not
divide $\mathfrak{l}_1\mathfrak{q}_0\Delta$. Then $\kappa_1$, the
image of $\tilde{\kappa}'(\mathfrak{l}_1)$ in
$$ \widehat{\mathrm{Sel}}^S_\Delta(K_\infty,
T_{f,r})\otimes_\varphi \mathcal{O}_{f,1}=
\widehat{\mathrm{Sel}}^S_\Delta(K_\infty, T_{f,r})/\mathfrak{m}
\otimes_\varphi \mathcal{O}_f \hookrightarrow H^1(K, T_{f,1}),
$$ satisfies $\kappa_1\neq 0$, but
$v_{\mathfrak{l'}}(\kappa_1)=0$ for all $\mathfrak{l}'$ as above.
This contradicts Proposition \ref{prop:tool}.
\end{proof}

Let $\mathfrak{l}_2$ ($\mathfrak{l}_2 \nmid
\mathfrak{l}_1\mathfrak{q}_0\Delta$ and $\mathfrak{l}_2\notin S$) be
an $r$-admissible prime ideal such that $\ord\ v_{\mathfrak{l}_2}
(\kappa_\varphi(\mathfrak{l}_1)) = e_{\mathfrak{l}_1}$. In
particular, $v_{\mathfrak{l}_2}
(\kappa_{\mathscr{D}}(\mathfrak{l}_1)_m)\neq 0$. By the choice of
$\mathfrak{l}_2$ and the minimality of $e_{\mathfrak{l}_1}$ we have
\begin{equation}\label{eq:e1e2-1} \ord\ v_{\mathfrak{l}_2} (\kappa_\varphi(\mathfrak{l}_1)) =
e_{\mathfrak{l}_1} \leq e_{\mathfrak{l}_2}\leq \ord\
v_{\mathfrak{l}_1}(\kappa_\varphi(\mathfrak{l}_2)) .
\end{equation}
As
\begin{eqnarray*} \ord_\varphi v_{\mathfrak{l}_2}
(\kappa_{\mathscr{D}}(\mathfrak{l}_1)_m) + \ord_{\varphi}
\partial_{\mathfrak{l}_1} (\kappa_{\mathscr{D}}(\mathfrak{l}_1)_m)
&=&
\ord_\varphi v_{\mathfrak{l}_2}
(\kappa_{\mathscr{D}}(\mathfrak{l}_1)) + \ord_{\varphi}
\partial_{\mathfrak{l}_1} (\kappa_{\mathscr{D}}(\mathfrak{l}_1)) \\
&=& e_{\mathfrak{l}_1}+ t_{\varphi, g} < 2t_{\varphi,g} \leq
r,\end{eqnarray*} by Corollary \ref{cor:theta-2} we have
\begin{equation}\label{eq:e1e2-2} \ord\ v_{\mathfrak{l}_2}
(\kappa_\varphi (\mathfrak{l}_1)) =\ord_\varphi v_{\mathfrak{l}_2}
(\kappa_{\mathscr{D}}(\mathfrak{l}_1)_m) = \ord_\varphi
v_{\mathfrak{l}_1} (\kappa_{\mathscr{D}}(\mathfrak{l}_2)_m)  = \ord\
v_{\mathfrak{l}_1} (\kappa_\varphi (\mathfrak{l}_2)).
\end{equation} Combining (\ref{eq:e1e2-1}) and (\ref{eq:e1e2-2}) we
obtain
\begin{equation}\label{eq:e1e2-3} \ord\ v_{\mathfrak{l}_2} (\kappa_\varphi(\mathfrak{l}_1)) =
e_{\mathfrak{l}_1} = e_{\mathfrak{l}_2}= \ord\
v_{\mathfrak{l}_1}(\kappa_\varphi(\mathfrak{l}_2)) .
\end{equation}
It follows that
\begin{equation} \label{eq:comp-ord}
\ord\ v_{\mathfrak{l}_1}( \kappa'(\mathfrak{l}_2)) = \ord\
v_{\mathfrak{l}_2}( \kappa'(\mathfrak{l}_1)) =0.\end{equation}

Since $v_{\mathfrak{l}_2}
(\kappa_{\mathscr{D}}(\mathfrak{l}_1)_m)\neq 0$, by Proposition
\ref{thm:second} there exists an integer $r_0<r$ and an $(N,
r-r_0)$-admissible form $(\Delta \mathfrak{l}_1\mathfrak{l}_2 ,
g'')$ such that $$ r_0\leq \ord_\varphi\
v_{\mathfrak{l}_2}(\kappa(\mathfrak{l}_1)_m)=e_{\mathfrak{l}_1}$$
and
$$ v_{\mathfrak{l}_2}(\kappa_{\mathscr{D}}(\mathfrak{l}_1)_m)
= \omega^{r_0} \theta_m(g'') \in \mathcal{O}_{f,r}[\Gamma_m]
$$ up to multiplication by a unit of $\mathcal{O}_{f,r}[\Gamma_m]$.
It follows that
\begin{equation} r_0+t_{\varphi,g''} =  \ord_\varphi\ v_{\mathfrak{l}_2}(\kappa_{\mathscr{D}}(\mathfrak{l}_1)_m)
= e_{\mathfrak{l}_1}
 .\end{equation}

Let $S_{\mathfrak{l}_1,\mathfrak{l}_2}$ be the subgroup of
$\Sel_\Delta(K_\infty, A_{f,n})$ consisting of elements that are
locally trivial at the prime ideals dividing $\mathfrak{l}_1$ or
$\mathfrak{l}_2$. By the definition of Selmer groups, we have the
following two exact sequences
$$\xymatrix{ \widehat{H}^1_\sing(K_{\infty,\mathfrak{l}_1}, T_{f,n})\oplus \widehat{H}^1_\sing(K_{\infty,\mathfrak{l}_2}, T_{f,n}) \ar[r]^{\hskip 50pt \eta_s} &
\Sel_\Delta(K_\infty, A_{f,n})^\vee \ar[r] &
S^{\vee}_{\mathfrak{l}_1,\mathfrak{l}_2} \ar[r] & 0 }$$ and
$$\xymatrix{  \widehat{H}^1_\fin(K_{\infty,\mathfrak{l}_1}, T_{f,n}) \oplus \widehat{H}^1_\fin (K_{\infty,\mathfrak{l}_2}, T_{f,n}) \ar[r]^{\hskip 40pt \eta_f} &
\Sel_{\Delta \mathfrak{l}_1\mathfrak{l}_2}(K_\infty, A_{f,n})^\vee
\ar[r] & S^{\vee}_{\mathfrak{l}_1,\mathfrak{l}_2} \ar[r] & 0 , }$$
where $\eta_s$ and $\eta_f$ are induced by the local Tate pairing
$\langle \cdot, \cdot \rangle_{\mathfrak{l}_1} \oplus \langle \cdot,
\cdot \rangle_{\mathfrak{l}_2}$.

\begin{lem}\label{lem:kernel-eta-f}
We have $\eta_f^\varphi=0$.
\end{lem}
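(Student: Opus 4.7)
The plan is to deduce $\eta_f^\varphi = 0$ from global Tate reciprocity applied to the Heegner classes $\kappa'(\mathfrak{l}_1)$ and $\kappa'(\mathfrak{l}_2)$, very much in the spirit of Lemma \ref{lem:eta-l-zero} but with the roles of ``finite'' and ``singular'' interchanged. The key point is that the two classes $v_{\mathfrak{l}_1}(\kappa'(\mathfrak{l}_2))$ and $v_{\mathfrak{l}_2}(\kappa'(\mathfrak{l}_1))$ will turn out to be generators of the two finite parts, and we just need to check that $\eta_f^\varphi$ kills each of them.

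First, by Lemma \ref{lem:basic} (parts (\ref{it:basic-c}) and (\ref{it:basic-d})), the class $\kappa'(\mathfrak{l}_i)$ actually lies in $\widehat{\mathrm{Sel}}_{\Delta\mathfrak{l}_i}(K_\infty, T_{f,n})\otimes_\varphi \mathcal{O}_f$ for $i=1,2$. In particular $v_{\mathfrak{l}_j}(\kappa'(\mathfrak{l}_i))$ is defined for $i\neq j$. By the identity (\ref{eq:comp-ord}) we have $\ord\,v_{\mathfrak{l}_2}(\kappa'(\mathfrak{l}_1)) = \ord\,v_{\mathfrak{l}_1}(\kappa'(\mathfrak{l}_2)) = 0$. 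By Proposition \ref{prop-kill} (\ref{it:free}), $\widehat{H}^1_\fin(K_{\infty,\mathfrak{l}_i}, T_{f,n})\otimes_\varphi \mathcal{O}_f$ is free of rank one over $\mathcal{O}_{f,n}$, so these classes are generators. By bilinearity, it is enough to verify that $\eta_f^\varphi$ vanishes on both $(v_{\mathfrak{l}_1}(\kappa'(\mathfrak{l}_2)), 0)$ and $(0, v_{\mathfrak{l}_2}(\kappa'(\mathfrak{l}_1)))$.

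For the second of these, fix any $x \in \Sel_{\Delta\mathfrak{l}_1\mathfrak{l}_2}(K_\infty, A_{f,n})[\ker\varphi]$. Global class field theory (as invoked in Lemma \ref{lem:eta-l-zero}) applied to $\kappa'(\mathfrak{l}_1)$ and $x$ gives
$$ \sum_v \langle \res_v \kappa'(\mathfrak{l}_1), \res_v x\rangle_v = 0. $$
I analyze each local term. For $v\nmid p\Delta\mathfrak{l}_1\mathfrak{l}_2$ (in particular for $v|\mathfrak{n}^+$ and for $v\in S$): Lemma \ref{lem:basic} (\ref{it:basic-c}) puts $\res_v\kappa'(\mathfrak{l}_1)\in H^1_\fin$, and the Selmer condition defining $\Sel_{\Delta\mathfrak{l}_1\mathfrak{l}_2}$ puts $\res_v x\in H^1_\fin$, so Proposition \ref{prop-kill} (\ref{it:fine-kill}) makes the pairing zero. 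For $v\mid p\Delta\mathfrak{l}_1$: Lemma \ref{lem:basic} (\ref{it:basic-d}) puts $\res_v\kappa'(\mathfrak{l}_1)\in H^1_\ord$, and at these primes the Selmer condition on $x$ is also ordinary, so Proposition \ref{prop-kill} (\ref{it:ord-kill}) makes the pairing zero. The only surviving place is $v=\mathfrak{l}_2$, at which $\res_{\mathfrak{l}_2}\kappa'(\mathfrak{l}_1) = v_{\mathfrak{l}_2}(\kappa'(\mathfrak{l}_1))$. Hence
$$ \langle v_{\mathfrak{l}_2}(\kappa'(\mathfrak{l}_1)), \res_{\mathfrak{l}_2}(x)\rangle_{\mathfrak{l}_2} = 0, $$
which is precisely the statement $\eta_f^\varphi\bigl((0, v_{\mathfrak{l}_2}(\kappa'(\mathfrak{l}_1)))\bigr)(x)=0$. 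The argument for $(v_{\mathfrak{l}_1}(\kappa'(\mathfrak{l}_2)), 0)$ is obtained by swapping $\mathfrak{l}_1 \leftrightarrow \mathfrak{l}_2$ and using $\kappa'(\mathfrak{l}_2)$ instead.

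The main obstacle is bookkeeping the local orthogonalities correctly at every ``bad'' place---in particular ensuring (i) that $\kappa'(\mathfrak{l}_1)$ is unramified at $\mathfrak{n}^+$-primes and $S$-primes (handled by Lemma \ref{lem:basic}), (ii) that the ordinary condition imposed on $x$ at places of $\Delta\mathfrak{l}_1\mathfrak{p}$ matches the one satisfied by $\kappa'(\mathfrak{l}_1)$, and (iii) that global reciprocity and the orthogonality statements of Proposition \ref{prop-kill} are compatible with $\otimes_\varphi$---the latter being automatic from $\Lambda$-bilinearity of the local Tate pairings.
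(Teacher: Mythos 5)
Your proposal is correct and follows essentially the same route as the paper's own proof: use (\ref{eq:comp-ord}) to see that $(v_{\mathfrak{l}_1}(\kappa'(\mathfrak{l}_2)),0)$ and $(0,v_{\mathfrak{l}_2}(\kappa'(\mathfrak{l}_1)))$ generate the source of $\eta_f^\varphi$, then apply global reciprocity to $\kappa'(\mathfrak{l}_i)$ against an element of $\Sel_{\Delta\mathfrak{l}_1\mathfrak{l}_2}(K_\infty,A_{f,n})$, killing all local terms except the one at $\mathfrak{l}_j$ via Lemma \ref{lem:basic}(\ref{it:basic-c}),(\ref{it:basic-d}) and Proposition \ref{prop-kill}(\ref{it:fine-kill}),(\ref{it:ord-kill}). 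Your bookkeeping of the bad places matches the paper's argument.
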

\begin{proof} From (\ref{eq:comp-ord}) we see $$\big(\widehat{H}^1_\fin(K_{\infty,\mathfrak{l}_1}, T_{f,n}) \oplus \widehat{H}^1_\fin (K_{\infty,\mathfrak{l}_2},
T_{f,n})\Big)\otimes_\varphi \mathcal{O}_f$$ is generated by
contains $(v_{\mathfrak{l}_1}(\kappa'(\mathfrak{l}_2)) , 0)$ and
$(0, v_{\mathfrak{l}_2}(\kappa'(\mathfrak{l}_1)) )$.

Let $s$ be in $\Sel_{\Delta \mathfrak{l}_1\mathfrak{l}_2}(K_\infty,
A_{f,n})$. By Lemma \ref{lem:basic} (\ref{it:basic-c}), for any
$\mathfrak{q}\nmid \Delta \mathfrak{l}_1\mathfrak{l}_2\mathfrak{p}$,
$$ \langle \kappa'(\mathfrak{l}_1) , s \rangle_{\mathfrak{q}} =\langle \partial_{\mathfrak{q}}\kappa'(\mathfrak{l}_1) , s \rangle_{\mathfrak{q}} =0.
$$ By Lemma \ref{lem:basic} (\ref{it:basic-d}), for any $\mathfrak{q}|
\Delta\mathfrak{l}_1\mathfrak{p}$, $$ \langle
\kappa'(\mathfrak{l}_1) , s \rangle_{\mathfrak{q}} =0.
$$ Thus by the global reciprocity law we have $ \langle v_{\mathfrak{l}_2}(\kappa'(\mathfrak{l}_1)) , s \rangle_{\mathfrak{l}_2}
= \langle \kappa'(\mathfrak{l}_1) , s \rangle_{\mathfrak{l}_2}=0. $
The same argument shows that $ \langle
v_{\mathfrak{l}_1}(\kappa'(\mathfrak{l}_2) ) , s
\rangle_{\mathfrak{l}_1} =0. $
\end{proof}

By Lemma \ref{lem:kernel-eta-f} we obtain
$$
S^{\vee}_{\mathfrak{l}_1,\mathfrak{l}_2} \otimes_\varphi
\mathcal{O}_f\cong \Sel_{\Delta
\mathfrak{l}_1\mathfrak{l}_2}(K_\infty, A_{f,n})^\vee
\otimes_\varphi \mathcal{O}_f .$$

As $$ r_0+2t_{\varphi,g''} \leq 2r_0+2t_{\varphi,g''}
=2e_{\mathfrak{l}_1} < 2t_{\varphi, g}\leq r,
$$ we have
$$ 2t_{\varphi,g''}\leq r-r_0.
$$ We also have
$$ n \leq r-t_{\varphi, g} < r- e_{\mathfrak{l}_1} = (r-r_0)-t_{\varphi,g''}.  $$

Hence, $(\Delta \mathfrak{l}_1\mathfrak{l}_2 , g'')$ is an $(N,
r-r_0)$-admissible form, $2t_{\varphi, g''} \leq r-r_0 $ and $n\leq
(r-r_0)-t_{\varphi, g''}$. As $t_{\varphi, g''}<t_{\varphi,g}$, by
the inductive assumption we have
$$\mathrm{length}_{\mathcal{O}_f}
S^{\vee}_{\mathfrak{l}_1,\mathfrak{l}_2} \otimes_\varphi
\mathcal{O}_f = \mathrm{length}_{\mathcal{O}_f} \Sel_{\Delta
\mathfrak{l}_1\mathfrak{l}_2}(K_\infty, A_{f,n})^\vee
\otimes_\varphi \mathcal{O}_f \leq 2t_{\varphi,g''}.$$ By Lemma
\ref{lem:eta-l-zero}, $\eta_s^\varphi$ factors through the quotient
$$
\mathcal{O}_f/(\partial_{\mathfrak{l}_1}\kappa'(\mathfrak{l}_1))\oplus
\mathcal{O}_f/(\partial_{\mathfrak{l}_2}\kappa'(\mathfrak{l}_2)). $$
Thus
\begin{eqnarray*} \mathrm{length}_{\mathcal{O}_f} \Sel_\Delta(K_\infty, A_{f,n})^\vee \otimes_\varphi \mathcal{O}_f & \leq & \ord \
\partial_{\mathfrak{l}_1}(\kappa'(\mathfrak{l}_1)) + \ord \
\partial_{\mathfrak{l}_2}(\kappa'(\mathfrak{l}_2)) + \mathrm{length}_{\mathcal{O}_f}
S^{\vee}_{\mathfrak{l}_1,\mathfrak{l}_2} \\ & \leq &
(t_{\varphi,g}-e_{\mathfrak{l}_1}) +
(t_{\varphi,g}-e_{\mathfrak{l}_2}) + 2 t_{\varphi, g''}\\  &=&
2t_{\varphi, g}-2r_0 \leq 2t_{\varphi, g}.
\end{eqnarray*} This finishes the inductive argument of the proof of Theorem \ref{thm:auxi} in the case of
$(\mathfrak{n}^+\text{-}\mathrm{min})$. \qed \vskip 15pt

\noindent{\it Proof of Theorem \ref{thm:main} in the case of
$(\mathfrak{n}^+\text{-}\mathrm{min})$.} Let $\varphi:
\mathcal{O}_f[[\Gamma]]\rightarrow \mathcal{O}$ be a homomorphism as
at the beginning of this section. Enlarging $\mathcal{O}_f$ if
necessary we may assume that $\mathcal{O}=\mathcal{O}_f$.

If $\varphi(L_p(K_\infty,f))=0$, then $$\varphi(L_p(K_\infty,f))\in
\mathrm{Fitt}_{\mathcal{O}}(\mathrm{Sel}_{\mathfrak{n}^-}(K_\infty,
A_{f})^\vee\otimes_{\varphi}\mathcal{O}) .$$ Therefore we may assume
that $\varphi(L_p(K_\infty,f))\neq 0$. Choose $t^*$ larger than
$\mathrm{ord} \: \varphi(L_p(K_\infty,f))$. For each positive
integer $n$ consider the $(n+t^*)$-admissible form
$\mathscr{D}_{n+t^*}=(\mathfrak{n}^-, f^\dagger_{n+t^*})$ provided
by Proposition \ref{prop:exist-adm}. By Theorem \ref{thm:auxi} we
obtain that
$$\varphi(L_p(K_\infty,f))=\varphi(\theta_{f^\dagger_{n+t^*}})^2
\hskip 3pt (\mathrm{mod} \ \omega^{n+t^*})$$ belongs to
$\mathrm{Fitt}_{\mathcal{O}}(\mathrm{Sel}_{\mathfrak{n}^-}(K_\infty,
A_{f,n})^\vee\otimes_{\varphi}\mathcal{O})$ when $n\geq t^*$.
Therefore $\varphi(L_p(K_\infty,f))$ belongs to
$$\mathrm{Fitt}_{\mathcal{O}}(\mathrm{Sel}_{\mathfrak{n}^-}(K_\infty,
A_{f})^\vee \otimes_\varphi \mathcal{O} )=\bigcap_{n\geq t^*}
\mathrm{Fitt}_{\mathcal{O}}(\mathrm{Sel}_{\mathfrak{n}^-}(K_\infty,
A_{f,n})^\vee \otimes_\varphi \mathcal{O}).$$ By \cite[Lemma
6.11]{CH15} we have $$L_p(K_\infty, f)\in
\mathrm{Fitt}_{\mathcal{O}}(\mathrm{Sel}_{\mathfrak{n}^-}(K_\infty,
A_{f})^\vee). $$ As $L_p(K_\infty, f)\neq 0$,
$\mathrm{Sel}_{\mathfrak{n}^-}(K_\infty, A_{f})$ is
$\Lambda$-cotorsion.   \qed

\vskip 10pt

We relax the condition $(\mathfrak{n}^+\text{-}\mathrm{min})$ to
$(\mathfrak{n}^+\text{-}\mathrm{DT})$.

\begin{lem}\label{Jarvis} There exists a Hilbert modular form $f'$ congruence to $f$
modulo $\omega$ that satisfies $(\mathrm{CR}^+)$, $(\mathrm{PO})$
and $(\mathfrak{n}^+\text{-}\mathrm{min})$.
\end{lem}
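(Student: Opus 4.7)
The plan is to apply Jarvis' level-lowering theorem for Hilbert modular forms iteratively at the primes $\mathfrak{l}\mid\mathfrak{n}^+$ at which $\bar{\rho}_f$ is unramified. Let $\mathfrak{n}^+_{\bar{\rho}}$ be the $\mathfrak{n}^+$-part of the Artin conductor $\mathfrak{n}_{\bar{\rho}}$, and set $\mathfrak{m}=\mathfrak{n}^+/\mathfrak{n}^+_{\bar{\rho}}$; by $(\mathrm{CR}^+$-$4)$ this $\mathfrak{m}$ is coprime to $\mathfrak{n}_{\bar{\rho}}$, and its prime divisors are exactly the primes of $\mathfrak{n}^+$ at which $\bar{\rho}_f$ is unramified. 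By $(\mathfrak{n}^+$-$\mathrm{DT})$ each such $\mathfrak{l}$ either divides $\mathfrak{n}^+$ to order at least two or satisfies $\mathrm{N}(\mathfrak{l})\not\equiv 1\pmod{p}$, both of which fall within the range of Jarvis' theorem.

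I would remove the prime divisors of $\mathfrak{m}$ one at a time. At each such $\mathfrak{l}$, the residual representation $\bar{\rho}_f$ is modular, residually irreducible on $G_{F(\sqrt{p^*})}$ by $(\mathrm{CR}^+$-$2)$, and unramified at $\mathfrak{l}$, so Jarvis' level-lowering produces a Hilbert newform of strictly smaller level congruent to $f$ modulo $\omega$ (for compatible embeddings). After finitely many iterations we obtain a Hilbert newform $f'$ of parallel weight $k$ and trivial central character, with level $\mathfrak{n}'=\mathfrak{n}^-\cdot\mathfrak{n}^+_{\bar{\rho}}$ and $\bar{\rho}_{f'}\cong\bar{\rho}_f$. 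Note that $\mathfrak{n}^-$ is unchanged by the procedure, and $f'$ remains ordinary at $\mathfrak{p}$ since $a_{\mathfrak{p}}(f')\equiv a_{\mathfrak{p}}(f)\pmod{\omega}$ is a unit.

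Finally I would verify the three conditions for $f'$. Items $(1)$--$(3)$ of $(\mathrm{CR}^+)$ depend only on $\bar{\rho}_{f'}$, the weight $k$, and $\mathfrak{n}^-$, all of which are unchanged. For $(\mathrm{CR}^+$-$4)$, writing $\mathfrak{n}_{\bar{\rho}}=\mathfrak{n}^+_{\bar{\rho}}\mathfrak{n}^-_{\bar{\rho}}$, one has $\mathfrak{n}'/\mathfrak{n}_{\bar{\rho}}=\mathfrak{n}^-/\mathfrak{n}^-_{\bar{\rho}}$, which is squarefree by (sq-fr), coprime to $\mathfrak{n}^+_{\bar{\rho}}$ since it is supported on $\mathfrak{n}^-$, and supported on primes of $\mathfrak{n}^-$ at which $\bar{\rho}_f$ is unramified, hence coprime to $\mathfrak{n}^-_{\bar{\rho}}$ as well. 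Condition $(\mathrm{PO})$ depends only on $a_v(f)\bmod\omega$ for $v\mid p$, hence only on $\bar{\rho}_{f'}\cong\bar{\rho}_f$. Condition $(\mathfrak{n}^+$-$\mathrm{min})$ holds tautologically, since every prime dividing $\mathfrak{n}'^+=\mathfrak{n}^+_{\bar{\rho}}$ lies in $\mathfrak{n}_{\bar{\rho}}$. The sole nontrivial input is Jarvis' level-lowering theorem, and the hypotheses it requires are all subsumed by $(\mathrm{CR}^+)$; this invocation is also the main potential obstacle, but it is the very result of Jarvis from which the lemma takes its name.
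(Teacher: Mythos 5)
Your overall strategy coincides with the paper's: iterate Jarvis's level lowering at the primes of $\mathfrak{n}^+$ at which $\bar{\rho}_f$ is unramified (equivalently, the primes dividing $\mathfrak{m}=\mathfrak{n}^+/\mathfrak{n}^+_{\bar{\rho}}$), and then check that $(\mathrm{CR}^+)$, $(\mathrm{PO})$ and $(\mathfrak{n}^+\text{-}\mathrm{min})$ persist for the resulting form $f'$. Your verification of the conditions for $f'$ at the end is correct and in fact more explicit than the paper's.

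The gap is in the one step that carries all the weight: the assertion that a prime $\mathfrak{l}\mid\mathfrak{m}$ with $\mathfrak{l}^2\mid\mathfrak{n}^+$ ``falls within the range of Jarvis' theorem.'' Jarvis's Theorem 0.1 does not apply uniformly to primes dividing the level to order at least two; whether the prime can be removed depends on the local automorphic type $\pi_{f,\mathfrak{l}}$, not merely on $v_\mathfrak{l}(\mathfrak{n})$. The paper's proof makes exactly this case division. When $\pi_{f,\mathfrak{l}}$ is special or supercuspidal, or a ramified principal series $\mathrm{Ind}_B^{\mathrm{GL}_2(F_\mathfrak{l})}(\chi\otimes\chi^{-1})$ with $\mathrm{cond}(\chi)=\mathfrak{l}$, the hypotheses of Jarvis's theorem are met (and it is for the exactly-dividing case that $(\mathfrak{n}^+\text{-}\mathrm{DT})$ supplies $\mathrm{N}(\mathfrak{l})\not\equiv 1\pmod p$, as in your first case). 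In the remaining case --- a principal series with $\mathfrak{l}^2\mid\mathrm{cond}(\chi)$ --- no level lowering is invoked at all; instead one observes that $\mathrm{cond}(\bar{\chi})=\mathrm{cond}(\chi)$, hence the Artin conductor of $\bar{\rho}_{f,\mathfrak{l}}$ equals that of $\rho_{f,\mathfrak{l}}$, so $\bar{\rho}_f$ is in fact ramified at $\mathfrak{l}$ and such an $\mathfrak{l}$ cannot divide $\mathfrak{m}$: the case is vacuous, not level-lowered. Without this analysis (or a substitute for it), your blanket appeal to Jarvis in the $\mathfrak{l}^2\mid\mathfrak{n}^+$ case is unsupported, and it is precisely the case that a citation of ``Jarvis's level lowering'' alone does not cover.
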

\begin{proof} We need to show that, if $\mathfrak{l}| \frac{\mathfrak{n}}{\mathfrak{n}_{\bar{\rho}}}$, then there exists a Hilbert modular form $f'$ of level dividing $\frac{\mathfrak{n}}{\mathfrak{l}}$ congruence to $f$. In the case $\pi_\mathfrak{l}$ is special or supercuspidal, this follows directly from
Jarvis's level lowering result \cite[Theorem
0.1]{Jav}. Note that our condition $(\mathfrak{n}^+\text{-}\mathrm{DT})$ ensures that $f$ satisfies conditions of \cite[Theorem
0.1]{Jav}.

Now let $\pi_\mathfrak{l}=\mathrm{Ind}^{\mathrm{GL}_2(F_\mathfrak{l})}_{B}(\chi\otimes \chi^{-1})$ be a principal series representation, where $\chi$ is a character of $F^\times_\mathfrak{l}$, and $B$ is the Borel subgroup of $\mathrm{GL}_2(F_\mathfrak{l})$ consisting of upper-triangular invertible matrices. When the conductor $\mathfrak{n}_\chi$ of $\chi$ is $\mathfrak{l}$, $f$ again satisfies the condition of \cite[Theorem 0.1]{Jav}, and so we can apply Jarvis's result. It remains to show that, either if $\mathfrak{n}_\chi$ is $\mathcal{O}_{F_\mathfrak{l}}$, or if $\mathfrak{n}_\chi$ is divided by $\mathfrak{l}^2$, then $\mathfrak{l}\nmid \frac{\mathfrak{n}}{\mathfrak{n}_{\bar{\rho}}}$. In the former case there is nothing to prove. In the latter case, observe that the conductor of $\bar{\chi}=\chi  (\mathrm{mod} \:  \omega)$ is equal to that of $\chi$. It follows that the conductor of $\bar{\rho}_{f,\mathfrak{l}}$ is equal to that of $\rho_{f,\mathfrak{l}}$, since $\rho_{f,\mathfrak{l}}\cong \chi\oplus \chi^{-1}$ when it is restricted to the inert subgroup of $G_{F_\mathfrak{l}}$ \cite{Tay}.
\end{proof}

\begin{prop} Assume that $(\mathrm{CR}^+)$, $(\mathrm{PO})$  and
$(\mathfrak{n}^+\text{-}\mathrm{DT})$ hold. Then
$\mathrm{Sel}^{\mathfrak{n}^+}_{\mathfrak{n}^-}(K_\infty, A_f)$ is
$\Lambda$-cotorsion and
$\mathrm{Sel}^{\mathfrak{n}^+}_{\mathfrak{n}^-}(K_\infty, A_f)^\vee$
has vanishing $\mu$-invariant.
\end{prop}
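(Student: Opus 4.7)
The plan is to reduce to the already-established case under $(\mathfrak{n}^+\text{-}\mathrm{min})$ by level lowering and then transfer the conclusion through the shared residual representation.

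First, by Lemma \ref{Jarvis} I choose a Hilbert cuspidal newform $f'$ congruent to $f$ modulo $\omega$ satisfying $(\mathrm{CR}^+)$, $(\mathrm{PO})$, and $(\mathfrak{n}^+\text{-}\mathrm{min})$. One may arrange that the conductor of $f'$ is $\mathfrak{n}'=\mathfrak{n}'^+\mathfrak{n}^-$ with $\mathfrak{n}'^+\mid \mathfrak{n}^+$, by iteratively removing from the level only those $\mathfrak{l}\mid\mathfrak{n}^+$ at which $\bar{\rho}_f$ is unramified; this preserves $\mathfrak{n}^-$ intact, so the parity condition (card) and the condition $(\mathrm{CR}^+\text{-}3)$ remain valid while $(\mathfrak{n}^+\text{-}\mathrm{min})$ is achieved. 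Applying the already-established case of Theorem \ref{thm:main} to $f'$ yields that $\mathrm{char}_\Lambda\,\mathrm{Sel}_{\mathfrak{n}^-}(K_\infty,A_{f'})^\vee$ divides $L_p(K_\infty,f')$, and combining with Proposition \ref{prop-Hung} implies that $\mathrm{Sel}_{\mathfrak{n}^-}(K_\infty,A_{f'})$ is $\Lambda$-cotorsion with vanishing $\mu$-invariant.

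Second, I enlarge the local condition set from $S=\emptyset$ to $S=\mathfrak{n}^+$ for $f'$. Every prime $\mathfrak{l}\mid\mathfrak{n}^+$ is split in $K$, so by Proposition \ref{prop-kill}(\ref{it:fine-zero}) we have $H^1_\fin(K_{\infty,\mathfrak{l}},A_{f'})=0$. Hence the quotient $\mathrm{Sel}^{\mathfrak{n}^+}_{\mathfrak{n}^-}(K_\infty,A_{f'})/\mathrm{Sel}_{\mathfrak{n}^-}(K_\infty,A_{f'})$ embeds into $\bigoplus_{\mathfrak{l}\mid\mathfrak{n}^+} H^1(K_{\infty,\mathfrak{l}},A_{f'})$. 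A direct local analysis at each such split prime---using the decomposition of $\mathfrak{l}$ in the anticyclotomic tower and the divisibility of $H^0(F_\mathfrak{l}^{\mathrm{ur}},A_{f'})$ (provided by Lemma \ref{lem:divide} when $\bar{\rho}_{f'}$ is ramified at $\mathfrak{l}$ and immediate otherwise)---shows that each local contribution is cofinitely generated over $\Lambda$, of $\Lambda$-corank zero, and with vanishing $\mu$-invariant. Consequently $\mathrm{Sel}^{\mathfrak{n}^+}_{\mathfrak{n}^-}(K_\infty,A_{f'})$ is $\Lambda$-cotorsion and its Pontryagin dual has vanishing $\mu$-invariant.

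Third, I transfer the result to $f$ using $\bar{\rho}_f=\bar{\rho}_{f'}$. Because the local conditions defining $\mathrm{Sel}^{\mathfrak{n}^+}_{\mathfrak{n}^-}(K_\infty,-)$ depend only on the residual representation together with its ordinary filtration modulo $\omega$, the canonical identification $A_{f,1}=A_{f',1}$ gives $\mathrm{Sel}^{\mathfrak{n}^+}_{\mathfrak{n}^-}(K_\infty,A_{f,1})=\mathrm{Sel}^{\mathfrak{n}^+}_{\mathfrak{n}^-}(K_\infty,A_{f',1})$. By Lemma \ref{lem:control}(\ref{it:control-b}) applied to both $f$ and $f'$---with $S=\mathfrak{n}^+$ trivially containing every prime of $\mathfrak{n}^+$ where the respective residual representation is unramified---this identifies $\mathrm{Sel}^{\mathfrak{n}^+}_{\mathfrak{n}^-}(K_\infty,A_f)[\omega]$ with $\mathrm{Sel}^{\mathfrak{n}^+}_{\mathfrak{n}^-}(K_\infty,A_{f'})[\omega]$, which by the previous step has $\mathcal{O}_f/(\omega)[[\Gamma]]$-corank zero. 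Appealing to the standard identity $\mathrm{corank}_{\mathcal{O}_f/(\omega)[[\Gamma]]}\mathrm{Sel}[\omega]=\mathrm{corank}_\Lambda\mathrm{Sel}+\mu(\mathrm{Sel}^\vee)$ valid for cofinitely generated $\Lambda$-modules, both the $\Lambda$-corank and the $\mu$-invariant of $\mathrm{Sel}^{\mathfrak{n}^+}_{\mathfrak{n}^-}(K_\infty,A_f)$ must vanish. The main obstacle is the local computation in the second step: one must confirm that $H^1(K_{\infty,\mathfrak{l}},A_{f'})$ at each split $\mathfrak{l}\mid\mathfrak{n}^+$ is $\Lambda$-cotorsion with trivial $\mu$-invariant, which requires tracking the ramification behavior of the anticyclotomic $\BZ_p^{[F_\mathfrak{p}:\BQ_p]}$-extension at such primes and ensuring no $\omega$-torsion $\Lambda$-summand appears.
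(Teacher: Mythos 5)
Your proposal follows essentially the same route as the paper: level-lower via Lemma \ref{Jarvis} to an $f'$ satisfying $(\mathfrak{n}^+\text{-}\mathrm{min})$, apply the already-established case of Theorem \ref{thm:main} together with Proposition \ref{prop-Hung} to $f'$, and transfer the conclusion to $f$ through the identification of $\mathrm{Sel}^{\mathfrak{n}^+}_{\mathfrak{n}^-}(K_\infty,A_{f})[\omega]$ with $\mathrm{Sel}^{\mathfrak{n}^+}_{\mathfrak{n}^-}(K_\infty,A_{f'})[\omega]$ furnished by Lemma \ref{lem:control} (\ref{it:control-b}); the passage from $S=\emptyset$ to $S=\mathfrak{n}^+$ for $f'$, which you flag as the main obstacle, is likewise left implicit in the paper. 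One small correction: the identity $\mathrm{corank}_{\Lambda/(\omega)}\mathrm{Sel}[\omega]=\mathrm{corank}_\Lambda\mathrm{Sel}+\mu$ is not exact, since a $\mu$-part $\bigoplus_i\Lambda/(\omega^{\mu_i})$ contributes the number of summands rather than $\sum_i\mu_i$; however, the only implication you use --- that vanishing of the left-hand side forces vanishing of both the $\Lambda$-corank and the $\mu$-invariant --- is correct, and is in fact the cleaner criterion (for $[F_\mathfrak{p}:\mathbb{Q}_p]\geq 2$ it is preferable to the paper's ``$\mathrm{Sel}[\omega]$ is finite'').
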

\begin{proof} Let $f'$ be as in Lemma \ref{Jarvis}. Then Theorem
\ref{thm:main} holds for $f'$. Combining this with Proposition
\ref{prop-Hung} we obtain that
$\mathrm{Sel}^{\mathfrak{n}^+}_{\mathfrak{n}^-}(K_\infty,
A_{f'})^\vee$ has vanishing $\mu$-invariant. In other words,
$\mathrm{Sel}^{\mathfrak{n}^+}_{\mathfrak{n}^-}(K_\infty,
A_{f'})[\omega]$ is finite.

By Lemma \ref{lem:control} (\ref{it:control-b}), taking
$S=\mathfrak{n}^+$, we get
$$\mathrm{Sel}^{\mathfrak{n}^+}_{\mathfrak{n}^-}(K_\infty,
A_f)[\omega]=\mathrm{Sel}^{\mathfrak{n}^+}_{\mathfrak{n}^-}(K_\infty,
A_{f,1})=\mathrm{Sel}^{\mathfrak{n}^+}_{\mathfrak{n}^-}(K_\infty,
A_{f',1})=\mathrm{Sel}^{\mathfrak{n}^+}_{\mathfrak{n}^-}(K_\infty,
A_{f'})[\omega].$$  Thus
$\mathrm{Sel}^{\mathfrak{n}^+}_{\mathfrak{n}^-}(K_\infty,
A_f)[\omega]$ is finite, and
$\mathrm{Sel}^{\mathfrak{n}^+}_{\mathfrak{n}^-}(K_\infty, A_f)^\vee$
has vanishing $\mu$-invariant.
\end{proof}

Since $\mathrm{Sel}^{\mathfrak{n}^+}_{\mathfrak{n}^-}(K_\infty,
A_f)^\vee$ has vanishing $\mu$-invariant, by Theorem
\ref{prop-cond-free}, $\widehat{\mathrm{Sel}}_{\Delta}(K_\infty,
T_{f,N})$ is free over $\Lambda/\omega^N$. Now repeating the
argument for the case of $(\mathfrak{n}^+\text{-}\mathrm{min})$  we
finish the proof of Theorem \ref{thm:main}. The only place we need
to revise the argument is the proof of Lemma \ref{lem:Wang}. Assume
that $\Sel_\Delta (K_\infty, A_{f,n})$ is nonzero. In general, we
may not have $\Sel_\Delta (K_\infty,
A_{f,n})[\mathfrak{m}]=\Sel_\Delta (K, A_{f,1})$ now. But by Lemma
\ref{lem:control} (\ref{it:control-b}) we have $$\Sel_\Delta
(K_\infty, A_{f,n})[\mathfrak{m}]\subseteq
\Sel^{\mathfrak{n}^+}_\Delta (K_\infty,
A_{f,n})[\mathfrak{m}]=\Sel^{\mathfrak{n}^+}_\Delta (K, A_{f,1}).$$
Consider the nonzero element $x$ in $\Sel_\Delta (K_\infty,
A_{f,n})[\mathfrak{m}]$ as an element in
$\Sel^{\mathfrak{n}^+}_\Delta (K, A_{f,1})$. If
$e_{\mathfrak{l}}=t_{\varphi, g}$, we again obtain $\langle
\mathrm{Res}_\mathfrak{l} \kappa'(\mathfrak{l}) ,
\mathrm{Res}_\mathfrak{l}x \rangle_\mathfrak{l}\neq 0$ and
$\eta_\mathfrak{l}(\kappa'(\mathfrak{l}))\neq 0$, contradicting
Lemma \ref{lem:eta-l-zero}. \qed \vskip 10pt

\section{Examples} \label{sec:example}

We construct examples of CM type. Assume $p\geq 7$, and fix an even
integer $k$ such that $4\leq k<p-1$.

Let $L$ be a totally imaginary quadratic extension of $F$ that is
disjoint from $K$.

We fix a CM type $\Sigma_L$ of $L$, i.e. for each real place
$\sigma$ of $F$, there exists exactly one element of $\Sigma_L$ that
restricts to $\sigma$. We again use $\sigma$ to denote this element,
and use $\bar{\sigma}$ to denote the other embedding of $L$ above
$\sigma$. We fix an isomorphism of fields $\jmath:
\mathbb{C}\xrightarrow{\sim} \mathbb{C}_p$. Then $\jmath$ and
$\Sigma_K$ determine a set $\Sigma'_L$ of $[F:\mathbb{Q}]$
embeddings $L\hookrightarrow\mathbb{C}_p$.

Let $\Omega_p$ be the maximal ideal of $\mathcal{O}_{\mathbb{C}_p}$.
For each $\iota\in \Sigma_L$ put $\mathfrak{p}_\iota=\iota(F)\cap
\jmath^{-1}\Omega_p$, $\mathfrak{P}_\iota=\iota(L)\cap
\jmath^{-1}\Omega_p$ and
$\bar{\mathfrak{P}}_\iota=\bar{\iota}(L)\cap \jmath^{-1}\Omega_p$.
Note that, for a prime $\mathfrak{q}$ above $p$, the set of
$\iota\in \Sigma_L$ such that $\mathfrak{p}_\iota=\mathfrak{q}$ has
cardinal number $[F_\mathfrak{q}:\mathbb{Q}_p]$.

%

% Then $\jmath^{-1}\Omega_p \cap \sigma(Q)$ is a prime ideal
% $\mathfrak{p}$ of $\mathcal{O}_K$ above $p$.

Let $\chi: \mathrm{Gal}(\overline{L}/L)\rightarrow
\mathcal{O}_{\mathbb{C}_p}^\times$ be a $p$-adic continuous
character. Let $\mathrm{rec}_L: L^\times\backslash
\mathbb{A}_L^\times \rightarrow \mathrm{Gal}(\overline{L}/L)$ be the
geometrically normalized reciprocity map. By $\mathrm{rec}_L$ we may
view $\chi$ as a character of $\mathbb{A}_L^\times$ that is trivial
on $L^\times$. We assume that $\chi$ satisfies the following
conditions.

(ram)  Let $\bar{\chi}$ denote the reduction of $\chi$ modulo
$\Omega_p$ (the maximal ideal of $\mathcal{O}_{\mathbb{C}_p}$). Then
for each finite place $v$ of $F$ that is coprime to $p$, $\chi_v$
and $\bar{\chi}_v$ have the same Artin conductors
$\mathfrak{N}_{\chi_v}$.

(weight) At each place $\mathfrak{p}$ of $F$ above $p$, $\chi$ is of
type $(k/2, -k/2)$, which is explained as below.

In the case when $\mathfrak{p}$ is inert or ramified in $L$, i.e.
$\mathfrak{p}\mathcal{O}_L=\mathfrak{P}$ or $\mathfrak{P}^2$, we
have
$\chi_{\mathfrak{p}}(a)=(\frac{a}{\bar{a}})^{[F_\mathfrak{p}:\mathbb{Q}_p]k/2}$
on $\mathcal{O}_{L_\mathfrak{P}}^\times$.

In the case when $\mathfrak{p}$ is split in $L$, i.e.
$\mathfrak{p}\mathcal{O}_L=\mathfrak{P}\overline{\mathfrak{P}}$
($\mathfrak{P}\in \Sigma'_L$) and $F_\mathfrak{p}\cong
L_\mathfrak{P}\cong L_{\overline{\mathfrak{P}}}$, we have
$\chi_{\mathfrak{P}}(a)=a^{[F_\mathfrak{p}:\mathbb{Q}_p]k/2}$ and
$\chi_{\overline{\mathfrak{P}}}(a)=a^{-[F_\mathfrak{p}:\mathbb{Q}_p]k/2}$
on $\mathcal{O}_{F_\mathfrak{p}}^\times$.

(triv) The restriction of $\chi$ to $\mathbb{A}_{F}^\times$
coincides with the character $\tau_{L/F}$ attached to the extension
$L/F$.

(irred) $\bar{\chi}^2\neq 1$.

(dis) The split field $F(\bar{\chi})$ of $\bar{\chi}$ is disjoint
from $F(\sqrt{p^*})$ where $p^*=(-1)^{\frac{p-1}{2}}p$.

We attach to $\chi$ a complex Hecke character $\chi_\infty$ of
$\mathbb{A}_L^\times$ by
$$\chi_\infty (a)= \jmath^{-1}\Big(\chi(a)\cdot \prod_{\mathfrak{P}\in \Sigma'_K}a_{\mathfrak{P}_\sigma}^{-k/2}
\prod_{\sigma\in \overline{\Sigma}_K}
{a_{\overline{\mathfrak{P}}_\sigma}}^{k/2}\Big) \cdot \prod_{\infty
\in \Sigma_K} a_\infty^{k/2} \prod_{\infty \in
\overline{\Sigma}_K}a_\infty^{-k/2}.
$$

Let $\mathfrak{N}_\chi$ be the conductor of $\chi_\infty$.  By the
condition (weight), $\chi_\infty$ is unramified at primes above $p$,
and thus $p$ is coprime to $\mathfrak{N}_\chi$. We have
$$\mathfrak{N}_\chi=\prod_{v: \text{ finite and coprime to } p}
\mathfrak{N}_{\chi_v}.$$ Write
$\mathfrak{N}_\chi=\mathfrak{N}_\chi^+\mathfrak{N}_\chi^-$, where
$\mathfrak{N}_\chi^+$ consists of primes that lie above primes of
$F$ split in $K$, and $\mathfrak{N}_\chi^+$ consists of primes that
lie above primes of $F$ inert or ramified in $K$.

(free)  $\mathfrak{N}^-_\chi$ is coprime to
$\mathfrak{D}_{L/F}$; if $w|\mathfrak{N}^-_\chi$, then
$w||\mathfrak{N}^-_\chi$, $w$ lies above a prime split $v=w\bar{w}$
in $L$, and $\bar{w}\nmid \mathfrak{N}^-_\chi$.

Put
$$\mathfrak{n}=N_{L/F}(\mathcal{D}_{L/F}\mathfrak{N}_\chi), \hskip
10pt  \mathfrak{n}^+=N_{L/F}(\mathfrak{N}_\chi^+), \hskip 10pt
\mathfrak{n}^-=N_{L/F}(\mathcal{D}_{L/F}\mathfrak{N}^-_\chi) .$$
Then $\mathfrak{n}=\mathfrak{n}^+\mathfrak{n}^-$. By (free),
$\mathfrak{n}^-$ is square-free.

\begin{prop}\label{prop:CM} If $\chi$ satisfies $(\mathrm{weight})$ and $(\mathrm{triv})$, then there exists a newform $f_\chi\in S_k(\mathfrak{n},1)$ such that
$$ L(s,f_\chi) = L(s+\frac{1-k}{2},\chi_\infty) . $$
\end{prop}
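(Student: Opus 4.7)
The plan is to apply the classical CM construction, namely the theta-series construction (equivalently, automorphic induction from $\mathrm{GL}_1/L$ to $\mathrm{GL}_2/F$) to the complex Hecke character $\chi_\infty$. The strategy splits into four steps: producing $\chi_\infty$ as a genuine idele class character, inducing to a cuspidal automorphic representation, identifying the level and central character, and finally matching $L$-functions.

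First I would verify that $\chi_\infty$ as defined is a well-defined idele class character of $L$, i.e.\ trivial on $L^\times \hookrightarrow \mathbb{A}_L^\times$. Since $\chi$ (viewed on $\mathbb{A}_L^\times$ via $\mathrm{rec}_L$) is trivial on $L^\times$, and since $\chi_\infty$ differs from $\chi$ by factors of the form $a_{\mathfrak{P}}^{\pm k/2}$ at primes above $p$ and $a_\infty^{\mp k/2}$ at infinity, triviality on $L^\times$ follows from the standard dictionary between algebraic $p$-adic characters and algebraic complex Hecke characters. Condition (weight) is precisely what makes this cancellation work: the $p$-adic contribution of $\chi$ at primes of $L$ above $p$ is killed by the corresponding archimedean factor when one plugs in a global element.

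Next I would invoke the theta-lift/automorphic induction construction (going back to Hecke, and in the Hilbert setting to Shimura) to produce a holomorphic Hilbert cusp form $f_\chi$ whose associated automorphic representation $\pi_{f_\chi}$ is the automorphic induction of $\chi_\infty$ from $L$ to $F$. Cuspidality is ensured by (irred), which prevents $\chi_\infty$ from being Galois-invariant under $\mathrm{Gal}(L/F)$. The infinity type $(k/2,-k/2)$ of $\chi_\infty$ at each archimedean place forces $\pi_{f_\chi,\infty}$ to be the discrete series of weight $k$, so $f_\chi$ has parallel weight $k$. Condition (triv) says $\chi|_{\mathbb{A}_F^\times} = \tau_{L/F}$, and since the central character of an automorphic induction is $\chi_\infty|_{\mathbb{A}_F^\times}\cdot \tau_{L/F}$ (the archimedean and $p$-adic normalizing factors restrict trivially to $\mathbb{A}_F^\times$ by the type $(k/2,-k/2)$ being antisymmetric), the central character of $f_\chi$ is trivial, giving $f_\chi \in S_k(\mathfrak{n}', 1)$ for some level $\mathfrak{n}'$.

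Then I would verify $\mathfrak{n}' = \mathfrak{n}$ by the standard conductor–discriminant formula for automorphic induction: $\mathfrak{n}' = \mathcal{D}_{L/F}\cdot N_{L/F}(\mathfrak{N}_\chi)$. This requires a local case analysis at each finite place $v$ of $F$ according to whether $v$ splits, is inert, or ramifies in $L$; condition (free) is what keeps the ramified analysis tractable by forcing $\mathfrak{N}_\chi^-$ to be coprime to $\mathcal{D}_{L/F}$ and concentrated at primes lying above split primes. The decomposition $\mathfrak{n}=\mathfrak{n}^+\mathfrak{n}^-$ then simply records the split/inert behaviour of these primes in $K/F$, and newness of $f_\chi$ follows from the minimality of the conductor of the induced representation. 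Finally, $L(s,f_\chi) = L(s+\tfrac{1-k}{2},\chi_\infty)$ is the defining property of automorphic induction $L(s,\pi_{f_\chi}) = L(s,\chi_\infty)$, combined with the shift converting unitary automorphic normalization to the arithmetic normalization of $L(s,f_\chi)$. The main obstacle in executing this plan is the case-by-case local bookkeeping of conductors at primes dividing $\mathcal{D}_{L/F}\mathfrak{N}_\chi$, where condition (free) is essential to prevent collapse of $\mathfrak{n}^-$ into a non-square-free ideal.
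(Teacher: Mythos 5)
Your proposal follows essentially the same route as the paper: automorphic induction of $\chi_\infty$ from $L$ to $F$ (the paper invokes Jacquet--Langlands), identification of the archimedean components as weight-$k$ discrete series from the infinity type, triviality of the central character via (triv), and the local conductor computation --- Casselman's result at split places and the conductor--discriminant formula at inert/ramified places --- giving level exactly $\mathfrak{n}$, followed by the normalization shift for the $L$-functions. The one quibble is that you attribute cuspidality to (irred), which is not among the hypotheses of this proposition; under (weight) alone the infinity type $(k/2,-k/2)$ with $k\neq 0$ already forces $\chi_\infty\neq\chi_\infty^{c}$, which is what cuspidality of the automorphic induction actually requires.
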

\begin{proof} In the case of $F=\BQ$ this follows from \cite[Chapter 7]{Li}. In the
general case we deduce our assertion from Jacquect and Langlands's
result \cite[Proposition 12.1 ]{JL}.

By loc. cit., there exists a cuspidal automorphic representation
$\pi_F$ of $\mathrm{GL}(2,\BA_F)$ that is automorphically induced
from $\chi_\infty$ so that $L(s, \pi_F)=L(s, \chi_\infty)$.

At each real place $v$, the representation of the Weil group
$W_{F_v}$ corresponding to $\pi_{F,v}$ is induced from the character
$a\mapsto (\frac{a}{\bar{a}})^{k/2}$ of $W_{\mathbb{C}}$. By the
local Langlands correspondence for $\mathrm{GL}_2(\BR)$, our
$\pi_{F,v}$ is the discrete series of weight $k$ and trivial central
character.

Let $v$ be a finite place of $F$. When $v$ splits in $L$, say
$v=w\bar{w}$, $L_v=L_w\oplus L_{\bar{w}}\cong F_v^{\oplus 2}$ and
the two characters $\chi_{\infty,w}$ and $\chi_{\infty, \bar{w}}$
satisfies $\chi_{\infty,w}\chi_{\infty, \bar{w}}=1$. In this case,
$\pi_{F,v}$ is isomorphic to $\pi(\chi_{\infty,w},
\chi_{\infty,\bar{w}})$ and thus has trivial central character. We
use $\mathfrak{N}(\chi_w)$ to denote the conductor of $\chi_w$. Then
by \cite{Cassel} (see the proof of the second lemma in
\cite{Cassel}) the conductor of $\pi_v\cong \pi(\chi_{\infty,w},
\chi_{\infty,\bar{w}})$ is
$\mathfrak{N}(\chi_w)\mathfrak{N}(\chi_{\bar{w}})$.

When $v$ is inert or ramified in $L$, via local Langlands
correspondence $\pi_{F,v}$ is attached to
$\mathrm{Ind}_{L_v/F_v}\chi_{\infty, v}$ the representation of
$W_{F_v}$ induced from $\chi_{\infty,v}$. The central character of
$\mathrm{Ind}_{L_v/F_v}\chi_{\infty, v}$, that is $ \chi_{\infty,
v}|_{F_v}\cdot\tau_{L_v/F_v}$, is trivial, and thus so is the
central character of $\pi_{F,v}$. It is shown in \cite{Hen} that,
the conductor of $\pi_{F,v}$ is equal to the Artin conductor of
$\mathrm{Ind}_{L_v/F_v}\chi_{\infty, v}$, which is
$N_{L_v/F_v}(\mathcal{D}_{L_v/F_v}\mathfrak{N}(\chi_{\infty,v}))$ by
Corollary of Proposition $4$ in \cite[Chapter VI, \S 2]{Serre-LF}.

Hence, $\pi_F$ has trivial central character and the conductor of
$\pi_F$ is $\mathfrak{n}$. Therefore, in $\pi_F$ there exists a
(non-zero holomorphic) cuspidal newform $f_\chi$ of weight $k$,
level $\mathfrak{n}$ and trivial central character. By the theory of
automorphic representations one has $L(s,
f_\chi)=L(s+\frac{1-k}{2},\pi_F)$. This finishes our proof.
\end{proof}

\begin{prop} \label{prop:CM-exam}
Suppose that $\chi$ satisfies the conditions $(\mathrm{ram})$,
 $(\mathrm{weight})$, $(\mathrm{triv})$,  $(\mathrm{irred})$, $(\mathrm{dis})$ and $(\mathrm{free})$ ,
 and that
$[F:\BQ]$ has the same parity as the number of prime factors of
$\mathfrak{n}^-$. Then $f_\chi$ satisfies $(\mathrm{CR}^+)$,
$(\mathfrak{n}^+$-$\mathrm{DT})$ and $(\mathrm{PO})$.
\end{prop}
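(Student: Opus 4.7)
The plan is to read off each required condition directly from the CM description $\bar\rho_{f_\chi}\cong\mathrm{Ind}_{G_L}^{G_F}\bar\chi$ obtained from Proposition~\ref{prop:CM}. Condition $(\mathrm{PO})$ is vacuous because $k\geq 4>2$. For $(\mathrm{CR}^+\text{-}1)$, the inequalities $p\geq 7$ and $k<p-1$ give $p>k+1$, while $\#(\mathcal{O}_F/\mathfrak{p})^\times\geq p-1\geq 6$ together with $k-1\geq 3$ yield $(\#(\mathcal{O}_F/\mathfrak{p})^\times)^{k-1}\geq 6^3>5$.

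For $(\mathrm{CR}^+\text{-}2)$, set $M=F(\sqrt{p^*})$. Hypothesis $(\mathrm{dis})$ implies that $L$, which lies inside the splitting field of $\bar\chi$, is linearly disjoint from $M$ over $F$, so Mackey's formula gives $\bar\rho_{f_\chi}|_{G_M}=\mathrm{Ind}_{G_{LM}}^{G_M}\bar\chi|_{G_{LM}}$. This is irreducible if and only if $\bar\chi|_{G_{LM}}$ differs from its Galois conjugate $\bar\chi^c|_{G_{LM}}$; hypothesis $(\mathrm{triv})$ forces $\bar\chi\cdot\bar\chi^c=1$, which reduces the question to the non-triviality of $\bar\chi^2|_{G_{LM}}$. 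Since the splitting field of $\bar\chi^2$ lies in $F(\bar\chi)$ and $F(\bar\chi)\cap M=F$ by $(\mathrm{dis})$, this splitting field meets $LM$ only in $L$, after which $(\mathrm{irred})$ supplies the desired non-triviality.

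The remaining conditions $(\mathrm{CR}^+\text{-}3)$, $(\mathrm{CR}^+\text{-}4)$, and $(\mathfrak{n}^+\text{-}\mathrm{DT})$ will all follow from the stronger claim that the Artin conductor $\mathfrak{n}_{\bar\rho}$ of $\bar\rho_{f_\chi}$ equals $\mathfrak{n}$ and that $\bar\rho_{f_\chi}$ is ramified at every prime dividing $\mathfrak{n}$. At each prime $\mathfrak{l}\nmid p$, I will compute the local conductor of $\bar\rho_{f_\chi}|_{G_{F_\mathfrak{l}}}$ through the splitting behavior of $\mathfrak{l}$ in $L$: at primes split in $L$ the conductor is the product of the Artin conductors of $\bar\chi_\mathfrak{L}$ and $\bar\chi_{\bar{\mathfrak{L}}}$, which match those of $\chi_\mathfrak{L}$ and $\chi_{\bar{\mathfrak{L}}}$ by $(\mathrm{ram})$; at inert or ramified primes, the conductor-discriminant formula applied to $\mathrm{Ind}_{G_{L_\mathfrak{L}}}^{G_{F_\mathfrak{l}}}\bar\chi_\mathfrak{L}$ combined with $(\mathrm{ram})$ again matches the conductor of $\rho_{f_\chi}$. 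The equality $\mathfrak{n}_{\bar\rho}=\mathfrak{n}$ yields $(\mathrm{CR}^+\text{-}4)$ trivially, covers $(\mathfrak{n}^+\text{-}\mathrm{DT})$, and also handles the part of $(\mathrm{CR}^+\text{-}3)$ corresponding to $\mathfrak{l}\mid N_{L/F}(\mathfrak{N}_\chi^-)$.

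The subtle point is $(\mathrm{CR}^+\text{-}3)$ at primes $\mathfrak{l}\mid N_{L/F}(\mathcal{D}_{L/F})$, where $\bar\chi$ may itself be unramified at $\mathfrak{L}\mid\mathfrak{l}$ so that the ramification of $\bar\rho_{f_\chi}$ must come from the quadratic ramification of $L/F$ rather than from $\bar\chi$. The argument will be that a lift $\sigma\in I_{F_\mathfrak{l}}$ of the non-trivial class in $I_{F_\mathfrak{l}}/I_{L_\mathfrak{L}}$ acts on the canonical basis $\{v,\sigma v\}$ of $\mathrm{Ind}_{G_{L_\mathfrak{L}}}^{G_{F_\mathfrak{l}}}\bar\chi_\mathfrak{L}$ as a swap (up to the scalar $\bar\chi(\sigma^2)$), which is necessarily non-scalar; hence inertia acts non-trivially and $\bar\rho_{f_\chi}$ is ramified at $\mathfrak{l}$, completing the verification.
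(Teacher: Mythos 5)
Your proof is correct and follows essentially the same route as the paper: identify $\bar{\rho}_{f_\chi}$ with $\mathrm{Ind}_{L/F}\bar{\chi}$, use $(\mathrm{triv})$ and $(\mathrm{irred})$ for irreducibility, $(\mathrm{dis})$ for irreducibility over $F(\sqrt{p^*})$, and $(\mathrm{ram})$ to equate the conductors of $\bar{\rho}_{f_\chi}$ and $\rho_{f_\chi}$, which yields $(\mathrm{CR}^+$-$3)$, $(\mathrm{CR}^+$-$4)$ and $(\mathfrak{n}^+$-$\mathrm{DT})$ at once. Your extra discussion of the local conductor computation and of ramification of the induced representation at primes dividing $\mathcal{D}_{L/F}$ where $\bar{\chi}$ may be unramified simply fills in details the paper compresses into a single sentence.
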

\begin{proof}
As $p\geq 7$ and $k\geq 4$, ($\mathrm{CR}^{+}$-1) and (PO) hold
automatically.

The Galois representation $\rho_{f_\chi}$ is isomorphic to
$\mathrm{Ind}_{L/F} \chi$, and thus the residue representation
$\bar{\rho}_{f_\chi}$ is isomorphic to
$\mathrm{Ind}_{L/F}\bar{\chi}$. Let $c$ be in $G_F \backslash G_L$.
As the restriction of $\bar{\chi}$ to $\BA_F^\times$ is trivial,
$\bar{\chi}^c=\bar{\chi}^{-1}$. By (irred), $\bar{\chi}^c\neq
\bar{\chi}$, so $\bar{\rho}_{f_\chi}$ is irreducible.

By (ram) $\bar{\rho}_{f_\chi}$ and $\rho_{f_\chi}$ have the same
conductor, thus is equal to $\mathfrak{n}$, which ensures
($\mathrm{CR}^{+}$-3), ($\mathrm{CR}^{+}$-4) and
$(\mathfrak{n}^+$-$\mathrm{DT})$.

By (dis) the restriction of $\rho_{f_\chi}$ to $G_{F(\sqrt{p^*})}$
is again irreducible, which implies ($\mathrm{CR}^{+}$-2).
\end{proof}

\begin{rem} Obviously $\rho_{f_\chi}\cong \mathrm{Ind}_{L/F} \chi$
does not satisfy Manning and Shotton's condition in \cite{ManSho}.
Indeed, the image of the residue representation
$\bar{\rho}_{f_\chi}\cong \mathrm{Ind}_{L/F} \bar{\chi}$ is a
dihedral group and contains no subgroup isomorphic to
$\mathrm{SL}_2(\mathbb{F}_p)$. So, our main theorem can apply to
these $f_\chi$ in Proposition \ref{prop:CM-exam}, but Manning and
Shotton's result can not.
\end{rem}

\end{document}